\let\OLDthebibliography\thebibliography
\renewcommand\thebibliography[1]{
  \OLDthebibliography{#1}
  \setlength{\parskip}{0pt}
  \setlength{\itemsep}{0pt plus 0.3ex}
}
\newlength\scratchlength
\newcommand\s[2]{% #1 - (up)scale-factor; #2 = content
  \settoheight\scratchlength{\mathstrut}%
  \scratchlength=\number\numexpr\number#1-1\relax\scratchlength
%  \scratchlength=\number\numexpr\number#1-1\relax\ht\strutbox
  \lower.5\scratchlength\hbox{\scalebox{1}[#1]{$#2$}}%
}
\definecolor{green1}{RGB}{153,216,201}
\definecolor{green2}{RGB}{44,190,95}
\definecolor{blue1}{RGB}{158,202,225}
\definecolor{blue2}{RGB}{49,130,189}
\definecolor{RedOrange}{cmyk}{0,0.77,0.87,0} % PANTONE 179
\definecolor{Mahogany}{cmyk}{0,0.85,0.87,0.35} % PANTONE 484
\definecolor{Maroon}{cmyk}{0,0.87,0.68,0.32} % PANTONE 201
\definecolor{BrickRed}{cmyk}{0,0.89,0.94,0.28} % PANTONE 1805
\definecolor{Red}{cmyk}{0,1.,1.,0} % PANTONE RED
\definecolor{OrangeRed}{cmyk}{0,1.,0.50,0} % No PANTONE,match
\definecolor{purple}{rgb}{0.8,0.12,0.8}
\definecolor{orange}{rgb}{1.0,0.7,0.0}
\definecolor{pink}{rgb}{1,0.5,0.8}
\definecolor{blackg}{rgb}{0.1,0.25,0.1}
\definecolor{ForestGreen}{cmyk}{0.91,0,0.88,0.42}
\definecolor{Turquoise}{cmyk}{0.85,0,0.20,0}
\newcommand{\qu}[1]{\quad\text{#1}\quad}
 \theoremstyle{plain}
\newtheorem{thm}{Theorem}[section]
\newtheorem{lemma}[thm]{Lemma}
\newtheorem{prop}[thm]{Proposition}
\newtheorem{cor}[thm]{Corollary}
\newtheorem{conjecture}[thm]{Conjecture}
\theoremstyle{definition}
\newtheorem{defn}[thm]{Definition}
\newtheorem{remark}[thm]{Remark}
\newtheorem{example}[thm]{Example}
\numberwithin{equation}{section}
\def\Wext{\widetilde{W}}
\def\Waff{W_{\mathrm{aff}}}
\def\Wfin{W}
\def\Sfin{S_0}
\def\Saff{S}
\def\Hext{\widetilde{H}}
\def\Hfin{H_0}
\def\peq{\preccurlyeq}
\newcommand{\wt}{\mathrm{wt}}
\def\lengths{\mathbf{L}}
\def\hh{h_{\la}}
\def\sp{\mathrm{sp}_0}
\def\sA{\mathsf{A}}
\def\sB{\mathsf{B}}
\def\sC{\mathsf{C}}
\def\sG{\mathsf{G}}
\def\sw{\mathsf{w}}
\def\su{\mathsf{u}}
\def\sm{\mathsf{m}}
\def\bt{\mathbf{t}}
\def\st{\mathsf{t}}
\def\cA{\mathcal{A}}
\def\cD{\mathcal{D}}
\def\cJ{\mathcal{J}}
\def\cL{\mathcal{L}}
\def \cP{\mathcal{P}}
\def\cQ{\mathcal{Q}}
\def\cR{\mathcal{R}}
\newcommand{\ba}{\mathbf{a}}
\newcommand{\sq}{\mathsf{q}}
\newcommand{\sy}{\mathsf{y}}
\newcommand{\sv}{\mathsf{v}}
\newcommand{\sT}{\mathsf{T}}
\def\sR{\mathsf{R}}
\def\fs{\mathfrak{s}}
\def\fc{\mathfrak{c}}
\def\fC{\mathfrak{C}}
\def\fe{\mathfrak{e}}
\def\clap{C_{\mathsf{w}_{\lambda'}}}
\def\CC{\mathbb{C}}
\def\ZZ{\mathbb{Z}}
\def\Ga{\Gamma}
\def\ga{\gamma}
\def\al{\alpha}
\def\De{\Delta}
\DeclareMathOperator\tr{\mathsf{tr}}
\DeclareMathOperator\Tr{\mathsf{Tr}}
\def\muu{\boldsymbol{\mu}}
\def\la{\lambda}
\def\<{\langle}
\def\>{\rangle}
\renewcommand{\@makefnmark}{\mbox{\textsuperscript{}}}
\renewcommand*{\@fnsymbol}[1]{%
  \ensuremath{%
    \ifcase#1%
    \or \dagger % First footnote symbol (was *)
    \or * % Second footnote symbol (was †)
    \or \mathsection % Third footnote symbol (was ‡)
    \or \mathparagraph % Fourth footnote symbol (was §)
    \or \| % Fifth footnote symbol (was ¶)
    \or ** % Sixth footnote symbol (was ‖)
    \or \dagger\dagger % Seventh footnote symbol (was **)
    \or \ddagger\ddagger % Eighth footnote symbol (was ††)
    \else\@ctrerr
    \fi
  }%
}
\title{The asymptotic Plancherel formula and Lusztig's asymptotic algebra for $\tilde{\sA}_n$}
\author{Nathan Chapelier-Laget, J\'er\'emie Guilhot\footnote{J\'er\'emie Guilhot passed away on 27th July 2025 while this paper was in the refereeing process. The remaining authors dedicate the paper to his memory. Tributes to J\'er\'emie can be found on the following webpages: \texttt{https://www.idpoisson.fr/hommage-guilhot/}\newline  \texttt{https://mathematical-research-institute.sydney.edu.au/vale-jeremie-guilhot/}}
, 
Eloise Little, James Parkinson\footnote{
All authors are supported by the Australian Research Council Discovery Project~DP200100712. The second author is supported by the Agence Nationale de la Recherche fundings ANR CORTIPOM 21-CE40-001 and ANR JCJC Project ANR-18-CE40-0001.} }
\date{\today}
\begin{document}

\maketitle

\begin{abstract} 
The aim of this paper is to give a new explicit construction of
Lusztig's asymptotic algebra in affine type~$\mathsf{A}$.
To do so, we construct a balanced system of cell modules, prove an
asymptotic version of the Plancherel Theorem and develop a relative
version of the Satake Isomorphism for each two-sided Kazhdan-Lusztig cell.
\end{abstract}

\tableofcontents

\section*{Introduction}

Kazhdan-Lusztig theory plays a fundamental role in the representation theory of Coxeter groups, Hecke algebras, groups of Lie type, and Lie algebras. One of the most fascinating objects in the theory is the \textit{asymptotic algebra} introduced by Lusztig in~\cite{Lus:87}. This algebra is ``simpler'' than the associated Hecke algebra, yet still encapsulates essential features of the representation theory. This apparent simplicity is contrasted by the considerable difficulty one faces in explicitly realising the asymptotic algebra for a given Coxeter group, because on face value it requires a detailed understanding of the entire Kazhdan-Lusztig basis, and the structure constants with respect to this basis.

The asymptotic algebra $\cJ$ is a $\ZZ$-algebra with basis $(\st_w)_{w\in W}$ indexed by the associated Coxeter group $W$ and multiplication defined using coefficients of the structure constants of the Kazhdan-Lusztig basis in the Hecke algebra. The structure of this algebra is intimately related to the notion of Kazhdan-Lusztig cells: for instance, if $\De$ is a two-sided cell of $W$ then $\cJ_\De=\mathrm{span}_{\ZZ}\{\st_w\mid w\in \Delta\}$ is a two-sided ideal of $\cJ$ and in turn, $\cJ$ is a direct sum of $\cJ_\De$ where~$\De$ runs through the set of two-sided cells of~$W$. 

For general affine (equal parameter) type, Lusztig gave a conjectural description of the asymptotic algebra in terms of the Langlands dual group~\cite{Lus:89b}.  In the case of affine type $\sA$, this conjecture is equivalent to showing that, for all two-sided cells $\De_\la$, the subalgebra $\cJ_{\la}=\cJ_{\Delta_{\la}}$ is isomorphic to a full matrix algebra over the representation ring of a certain connected reductive group $F_{\la}$ with Weyl group $G_{\la}$. This was first proved by Xi in a remarkable paper~\cite{Xi:02} in 2002 where he gave the first explicit construction of the asymptotic algebra in affine type~$\sA$ using the notion of chains and antichains, following work of Shi~\cite{Shi:86}. Around the same time Bezrukavnikov and Ostrik~\cite{BO:04} verified Lusztig's conjecture in a modified form for general affine type (see also \cite{BDD:23,Daw:23b}). A further approach to constructing the asymptotic algebra in affine type~$\sA$ was given recently by Kim and Pylyavskyy~\cite{KP:23} using the affine matrix ball construction.

The aim of this paper is to present a new construction of the asymptotic algebra in affine type $\sA$ as a matrix algebra with coefficients in the ring of $G_{\la}$-symmetric functions which we believe is interesting for three main reasons: (1) our construction is based on the combinatorial notion of alcove paths and the representation ring of $F_\la$ appears very naturally via the $G_{\la}$-Schur symmetric functions, (2) an asymptotic version of the Plancherel Theorem is proved, which endows the asymptotic algebra with an inner product, and (3) there are indications that our methods are adaptable to other affine types, and moreover to the unequal parameter case. For instance, similar methods have been used by the second and fourth authors to construct the asymptotic algebra associated to the lowest two-sided cell in general affine type (for all parameters; see \cite[Section~6]{GP:19}) and to construct the full asymptotic algebra for all affine Hecke algebras associated to rank~$2$ root systems (for all parameters; see \cite{GP:19,GP:19b}).

Let $\Wext$ be the extended affine Weyl group of type $\tilde{\sA}_n$ and let $\Hext$ be the associated extended Hecke algebra defined over $\sR = \ZZ[\sq,\sq^{-1}]$. We will denote by $(T_{w})$ the standard basis of $\Hext$ (which reflects the Coxeter structure of $\Hext$) and by $(C_w)$ the Kazhdan-Lusztig basis of $\Hext$ which is at the heart of the definition of Kazhdan-Lusztig (left, right, and two-sided) cells. The description of  cells in $\Wext$ is known by the work of Lusztig and Shi~\cite{Lus:85a,Shi:86}. For a partition $\la$ of $n+1$ we denote by $W_\la$ the standard Young subgroup of $\Wext$ associated to $\la$ and by $\sw_{\la}$ the longest element of~$W_\la$. Let $\Delta_{\la}$ be the two-sided cell containing $\sw_{\la'}$ (where $\la'$ is the transposed partition of $\la$). Then $(\Delta_\la)_{\la\vdash n+1}$ describes the full set of two-sided cells and we further have $\Delta_\la\leq_{LR} \Delta_\mu$ in the two-sided order if and only if $\la\leq \mu$ in the dominance order for partitions. 

Since affine type~$\sA$ is necessarily ``equal parameters'', deep geometric interpretations of Kazhdan-Lusztig theory imply positivity properties in the Hecke algebra, such as the positivity of the coefficients of the Kazhdan-Lusztig polynomials, and positivity of the coefficients of the structure constants $h_{x,y,z}$ with respect to Kazhdan-Lusztig basis. This positivity implies a collection of useful properties in the theory, which have been collected by Lusztig~\cite{Lus:03} in a series of statements now known as P1--P15. These statements capture the essential properties of cells, Lusztig's $\ba$-function (defined by $\ba(z)= \max\{\deg(h_{x,y,z})\mid x,y\in \Wext\}$), and of the asymptotic algebra (note that P1--P15 remain conjectural in the general case of Coxeter groups with unequal parameters).  In particular, these properties imply that $\ba$ is constant on two-sided cells, and it follows that $\ba(\De_\la)=\ba(\sw_\la') = \ell(\sw_{\la'})$ where $\ell$ is the usual length function on $\Wext$. The structure constants  $\ga_{x,y,z}$ with respect to the basis $(\st_{w})$ of the asymptotic algebra $\cJ$ are the coefficients of the term of degree $\ba(z)$ in $h_{x,y,z^{-1}}$. 

Our main tool to describe the algebra $\cJ$ is a family of matrix representations $(\pi_\la)_{\la\vdash n+1}$ of~$\Hext$ defined over a ring $\sR[\zeta_{\la}]$ that turn out to be deeply connected to Kazhdan-Lusztig theory and that admit a  combinatorial description in terms of $\la$-folded alcove paths (with respect to a distinguished basis). In the course of this paper we will show that:
 \begin{compactenum}
\item[\raisebox{0.35ex}{\tiny$\bullet$}] the representations $\pi_\la$ are {\it bounded} by $\ba(\De_\la)$. That is, the maximal degree in $\sq$ of the entries of the matrices $\pi_{\la}(T_w)$ for $w\in \Wext$ is bounded by $\ba(\De_\la)$ (see Theorem~\ref{thm:fullbounded}). 
\item[\raisebox{0.35ex}{\tiny$\bullet$}] the representations $\pi_\la$ {\it recognise} $\De_\la$. That is, for $w\in \Wext$ we have $w\in \De_\la$ if and only if the matrix $\pi_{\la}(T_w)$ has an entry of degree $\ba(\De_\la)$. 
\item[\raisebox{0.35ex}{\tiny$\bullet$}] The asymptotic algebra $\cJ_{\la}=\cJ_{\De_\la}$ associated to $\De_\la$ is isomorphic to the matrix algebra $\fC_{\la}$ with $\ZZ$-basis given by the \textit{leading matrices} $\fc_\la(w)$ with $w\in \De_\la$. Here $\fc_\la(w)$ is the matrix obtained by evaluating the matrix $\sq^{-\ba(\De_\la)}\pi_\la(T_w)$ at $\sq^{-1}=0$ (see Theorem~\ref{thm:recognise2b}).
\end{compactenum}
These properties show that the family $(\pi_\la)_{\la\vdash n+1}$  forms a \textit{balanced system of cell representations} as defined in \cite{GP:19} (see Corollary~\ref{cor:balancedsystem}). 

In the process of proving the above statements we prove the following results, which we believe are interesting in their own right. 
 \begin{compactenum}
 \item[\raisebox{0.35ex}{\tiny$\bullet$}] We construct a canonical trace functional on the asymptotic algebra for a weighted Hecke algebra of arbitrary type, following Lusztig \cite[\S20.1(b)]{Lus:03} (see Section~\ref{thm:innerproductonJ}), and we prove an asymptotic version of Opdam's Plancherel Theorem~\cite{Opd:04} giving a spectral decomposition of this trace functional in affine type~$\sA$. In particular, we show that the terms in the Plancherel Theorem (in affine type~$\sA$) are in bijection with the set of two-sided cells, and that the coefficients that appear are linked to Lusztig's $\ba$-function (see Section~\ref{subsec:apt}).
\item[\raisebox{0.35ex}{\tiny$\bullet$}] We prove a $\la$-relative version of the Satake Isomorphism, giving an isomorphism from $\pi_\la({\bf 1}_{\la}\Hext {\bf 1}_{\la})$ (where ${\bf 1}_{\la}$ is a renormalisation of $C_{\sw_{\la'}}$) to the ring of $G_{\la}$-symmetric functions (see Theorem~\ref{thm:satake}).
\item[\raisebox{0.35ex}{\tiny$\bullet$}] We describe the set $\Ga_\la\cap \Ga_\la^{-1}$ (with $\Ga_{\la}$ the right cell containing~$\sw_{\la'}$) in a very natural way in terms of the geometry of the \textit{fundamental $\la$-alcove} (see Theorem~\ref{thm:leading0}). 
\end{compactenum}
\medskip

We now describe in more details the content of the paper. To understand the general philosophy it is helpful to understand the situation for the lowest two-sided cell $\De_0$ (the minimal cell with respect to the two-sided order $\leq_{LR}$; in this case our method applies to all affine Weyl groups and all choices of parameters). This cell corresponds to the partition $(1^{n+1})$, and contains the longest element $\sw_0$ of the finite Weyl group~$\Wfin$. Let $\Ga_0$ be the right cell that contains~$\sw_0$.  Then $\Ga_0\cap \Ga_0^{-1}$ is the set of words $t_{\ga}\sw_0$ where $\ga\in P_+$ is a dominant weight and $t_\ga\in \Wext$ is the translation by~$\ga$. 
Let $\pi_0$ be the principal series representation of $\Hext$ as defined in \cite{GP:19} for example. Then $\pi_0$ has a combinatorial description in terms of positively folded alcoves paths, is bounded by $\ell(\sw_0)$ and recognises~$\De_0$ (see~\cite[Section~6]{GP:19}). Furthermore, the leading matrix $\fc_0(t_{\ga}\sw_0)$ for $\ga\in P_+$ has a unique non-zero coefficient, equal to $\fs_{\ga}(\zeta)$ (the Schur function of type~$\Wfin$). In this case, the Satake Isomorphism provides an isomorphism between ${\bf 1}_{0} \Hext {\bf 1}_{0}$ and the ring of $\Wfin$-symmetric functions (see \cite{Lus:83,NR:03}). In this paper we construct analogues of the above results for all two-sided cells.

Given a root system of type $\sA_n$ we can construct the set of alcoves in the usual way. Then to any partition $\la$ of $n+1$, we associate the fundamental $\la$-alcove $\cA_\la$ defined as the set of alcoves that lie between the hyperplanes $H_{\al,0}$ and $H_{\al,1}$ where $\al$ runs over the positive roots~$\Phi_\la^+$ associated to the Young subgroup $W_\la$. Our representations $\pi_\la$ can be expressed in terms of $\la$-folded alcove paths in $\cA_\la$ (a generalisation of Ram's positively folded alcove paths~\cite{Ram:06}; see \cite{GLP:23}). The symmetry group of $\cA_\la$ plays a very important role in our work: it is the semi-direct product of $G_\la$ (the subgroup of $\Wfin$ stabilising $\cA_\la$) and a set of \textit{pseudo-translations}~$\sT_\la$. Remarkably, the $\la$-dominant elements of $\sT_{\la}$ turns out to be closely related to $\Ga_{\la}\cap\Ga_{\la}^{-1}$, and the ring $\cJ_{\Ga_{\la}\cap\Ga_{\la}^{-1}}$ spanned by $\st_w$ with $w\in\Ga_{\la}\cap\Ga_{\la}^{-1}$ turns out to be isomorphic to the ring $\ZZ[\zeta_{\la}]^{G_{\la}}$ of $G_{\la}$-symmetric functions (see Theorem~\ref{thm:leading0}). In the case of the lowest two-sided cell, $\cA_\la$ is the set of all alcoves, $G_\la$ is $\Wfin$, and the set of pseudo-translations $\sT_\la$ is the set of all translations.

Before turning our attention to the asymptotic Plancherel Theorem, let us first recall the situation for a finite dimensional Hecke algebra $\Hfin$ of type~$\sA_n$. In this case the canonical trace on the Hecke algebra decomposes as a linear combination of irreducible characters (indexed by partitions) and the coefficients that appear are normalisations of the \textit{generic degrees} of~$\Hfin$. It turns out that the coefficient associated to $\la$ has valuation $2\ba(\sw_{\la'})$ (see \cite{Geck:16}). There is an analogue of this decomposition for affine Hecke algebras in the form of the Plancherel Theorem~\cite{Opd:04} which expresses the canonical trace as a sum of integrals over families of representations. In this paper we show that, in affine  type $\sA$, a similar phenomenon as in the finite case happens: the terms in the Plancherel formula are in bijection with the two-sided cells, and the coefficients have valuation equal to $2\ba(\De_\la)$ (this behaviour was first observed in affine Hecke algebras of types $\tilde{\sG}_2$ and $\tilde{\sC}_2$ by the second and fourth authors in~\cite{GP:19,GP:19b}). This leads to an asymptotic version of the Plancherel Theorem, which in turn gives rise to an inner product on~the ring~$\fC_{\la}$ of leading matrices.

Our $\la$-relative Satake theory and asymptotic Plancherel Theorem come together to prove that the $\ZZ$-algebra $\fC_\la$ spanned by the leading matrices $\fc_{\la}(w)$ with $w\in\Delta_{\la}$  is isomorphic to the asymptotic algebra~$\cJ_\la$ (see Theorem~\ref{thm:recognise2b}). We then carefully analyse our leading matrices $\fc_{\la}(w)$ for $w\in\Ga_{\la}\cap\Ga_{\la}^{-1}$ to prove that $\cJ_{\Ga_{\la}\cap\Ga_{\la}^{-1}}\cong \ZZ[\zeta_{\la}]^{G_{\la}}$, from which it follows that $\cJ_{\la}$ is a full matrix algebra over the ring $\ZZ[\zeta_{\la}]^{G_{\la}}$ (see Theorem~\ref{thm:leading0} and Corollary~\ref{cor:aalg}).

The structure of the paper is as follows. In Section~\ref{sec:prelim} we recall background material on the symmetric group, partitions and tableaux, affine Hecke algebras, and Kazhdan-Lusztig theory. In Section~\ref{sec:2} we explicitly describe the fundamental $\la$-alcove and its symmetries in affine type~$\sA$. Moreover we introduce a $\la$-dominance order on weights of the fundamental $\la$-alcove, and recall the theory of $\la$-folded alcove paths from~\cite{GLP:23,GP:19,GP:19b}. We construct a ring $\ZZ[\zeta_{\la}]^{G_{\la}}$ of $G_{\la}$-symmetric functions, and recall the definition of the induced representations $\pi_{\la}$, and their combinatorial description, from~\cite{GLP:23}. 

In Section~\ref{sec:satake} we develop our $\la$-relative Satake theory, and in particular we prove in Theorem~\ref{thm:symmetry} that the function $f_{\la}(h)=\chi_{\la}(h\clap)$ is $G_{\la}$-symmetric. The $\la$-relative Satake Isomorphism is then given in Theorem~\ref{thm:satake}. Section~\ref{sec:killbound} proves two important properties of our representations: the killing property (see Theorem~\ref{thm:fullkilling}) and the boundedness property (see Theorem~\ref{thm:fullbounded}). This boundedness property allows us to define the leading matrices $\fc_{\la}(w)$ in Section~\ref{sec:leading}.

In Section~\ref{sec:asymptoticplancherel} we construct a canonical trace on Lusztig's asymptotic algebra~$\cJ$ (for arbitrary Coxeter type). We then recall the Plancherel Theorem for type $\tilde{\sA}_n$, following~\cite{AP:05}, and develop our asymptotic Plancherel Theorem in Theorem~\ref{thm:AsymptoticPlancherel}. This allows us to prove that $\pi_{\la}$ recognises~$\Delta_{\la}$ (Theorem~\ref{thm:recognise2a}) and that $\cJ_{\la}\cong \fC_{\la}$ (Theorem~\ref{thm:recognise2b}). 

In Section~\ref{sec:maximallengthelts} we introduce important elements $\sm_{\ga}$ as certain maximal length $W_{\la'}$-double coset representatives. These elements will ultimately describe $\Ga_{\la}\cap\Ga_{\la}^{-1}$, however we first need to compute their lengths and prove a monotonicity of length with respect to the $\la$-dominance order. These properties are stated in Theorem~\ref{thm:mgamma}, however since the proofs turn out to be technical, we present them in an appendix (see Appendix~\ref{app:proofs}). Finally, in Section~\ref{sec:asymptotic} we give our explicit construction of Lusztig's asymptotic algebra~$\cJ_{\la}$, with the main results being Theorem~\ref{thm:leading0} and Corollary~\ref{cor:aalg}).

\section{Background and preliminary results}\label{sec:prelim}

In this section we provide background on the symmetric group, tableau, the extended affine Weyl group, the affine Hecke algebra, and Kazhdan-Lusztig theory.

\subsection{The symmetric group and type $\sA_n$ root system}

Let $
V=\{v\in\mathbb{R}^{n+1}\mid v\cdot\mathbf{1}=0\},
$
where $\mathbf{1}=(1,1,\ldots,1)$, and let $e_i=(0,0,\ldots,0,1,0,\ldots,0)-\frac{1}{n+1}\mathbf{1}$ for $1\leq i\leq n+1$
(with the $1$ in the $i$th place). Thus $e_i\in V$, and we have $e_1+\cdots+e_{n+1}=0$. Let $\langle \cdot,\cdot\rangle$ be the restriction of the standard inner product on $\mathbb{R}^{n+1}$ to $V$.

Let $\Phi^+=\{e_i-e_j\mid 1\leq i<j\leq n+1\}$ and $\Phi=\Phi^+\cup(-\Phi^+)$ be a root system of type $\sA_n$. The simple roots are $\alpha_i=e_i-e_{i+1}$ for $1\leq i\leq n$. The \textit{height} of a root $\alpha=\sum_{i=1}^na_i\alpha_i$ is $\mathrm{ht}(\alpha)=\sum_{i=1}^na_i$, and the unique highest root is $\varphi=\alpha_1+\cdots+\alpha_n$. The Weyl group $\Wfin$ of $\Phi$ is the symmetric group $\mathfrak{S}_{n+1}$, acting by $we_i=e_{w(i)}$. Let $s_1,\ldots,s_n$ be the simple reflections (elementary transpositions), and write $S_0=\{s_i\mid 1\leq i\leq n\}$. Let $\ell:\Wfin\to \mathbb{N}$ be the standard length function on the Coxeter system $(\Wfin,\Sfin)$. The \textit{inversion set} of $w\in \Wfin$ is $\Phi(w)=\{\alpha\in\Phi^+\mid w^{-1}\alpha\in-\Phi^+\}$.

The \textit{one line expression} of $w\in \Wfin$ is the sequence $[w(1),w(2),\cdots,w(n+1)]$. For $1\leq i\leq n$ we have $\ell(ws_i)=\ell(w)+1$ if and only if $w(i+1)>w(i)$, and $\ell(s_iw)=\ell(w)+1$ if and only if $i$ and $i+1$ appear in ascending order in the $1$-line notation of~$w$. 

If $J\subseteq \{1,\ldots,n\}$ let $W_J$ be the parabolic subgroup of $\Wfin$ generated by $\{s_j\mid j\in J\}$. Let~${^J}W$ denote the transversal of minimal length elements of cosets in $W_J\backslash \Wfin$. Each $w\in\Wfin$ has a unique expression as $w=uv$ with $u\in W_J$ and $v\in {^J}W$, and $\ell(w)=\ell(u)+\ell(v)$. Let $\sw_J$ denote the longest element of $W_J$. We write $\sw_0=\sw_{\{1,\ldots,n\}}$ for the longest element of~$\Wfin$.

The \textit{support} of a root $\alpha=\sum_{i=1}^na_i\alpha_i\in\Phi$ is $\mathrm{supp}(\alpha)=\{i\mid a_i\neq 0\}$.  For $J\subseteq \{1,\ldots,n\}$ let 
$
\Phi_J=\{\alpha\in\Phi\mid\mathrm{supp}(\alpha)\subseteq J\},
$
and for $w\in \Wfin$ write $\Phi_J(w)=\Phi(w)\cap\Phi_J$. If $w=uv$ with $u\in W_J$ and $v\in {^J}W$ then $\Phi(u)=\Phi(w)\cap \Phi_J$. In particular we have
$
{^J}W=\{v\in W\mid \Phi_J(v)=\emptyset\}
$ (see \cite[Lemma~2.2]{GLP:23}).

\subsection{Partitions and tableaux}\label{sec:tableaudefns}

Many objects in this paper are indexed by the set $\cP(n+1)$ of all partitions of $n+1$. For $\la\in\cP(n+1)$ write $\la\vdash n+1$. We represent partitions as Young diagrams (using English notation conventions). For $\la\vdash n+1$ we make the following definitions. 
\begin{compactenum}
\item[--] $r(\la)$ denotes the number of parts of $\la$, and so $\la=(\la_1,\la_2,\ldots,\la_{r(\la)})$. 
\item[--] $\la(0)=0$ and $\la(i)=\la_1+\cdots+\la_i$ for $1\leq i\leq r(\la)$. 
\item[--] $\bt_r(\la)$ is the standard tableau of shape $\la$ filled by row. 
\item[--] $\bt_c(\la)$ is the standard tableau of shape $\la$ filled by column. 
\item[--] $\la[i,j]$ is the element in row $i$ and column $j$ of $\bt_r(\la)$ (with $1\leq i\leq r(\la)$ and $1\leq j\leq \la_i$). 
\item[--] $J_{\la}=\{1,2,\ldots,n+1\}\backslash\{\la(1),\la(2),\ldots,\la(r(\la))\}$. 
\item[--] $\la'$ denotes the transposed partition of $\la$ (obtained by reflecting $\la$ in the main diagonal). 
\end{compactenum}

\begin{example}\label{ex:running1}
Let $\la=(5,3,3,2,1)\vdash 14$. Then $r(\la)=5$ (the number of rows of the Young diagram), and we have
$$
\bt_r(\la)=\begin{ytableau}
1&2&3&4&5\\
6&7&8\\
9&10&11\\
12&13\\
14
\end{ytableau}\quad\text{and}\quad \bt_c(\la)=\begin{ytableau}
1&6&10&13&14\\
2&7&11\\
3&8&12\\
4&9\\
5
\end{ytableau}
$$
Then $\{\la(i)\mid 1\leq i\leq r(\la)\}=\{5,8,11,13,14\}$ (note that these are the final entries in each row of $\bt_r(\la)$) and so $J_{\la}=\{1,2,3,4,6,7,9,10,12\}$. We have $\la'=(5,4,3,1,1)$. 
\end{example}

Let $\leq $ be the dominance partial order on $\cP(n+1)$ given by $\mu\leq \la$ if and only if $\mu(i)\leq \la(i)$ for all $i\geq 1$ (where we set $\la_i=0$ if $i>r(\la)$). Recall that $\mu\leq\lambda$ if and only if $\lambda'\leq \mu'$.

Write $W_{\la}$, ${^\la}W$, $\Phi_{\la}$ and $\sw_{\la}$ in place of $W_{J_{\la}}$, ${^{J_{\la}}}W$, $\Phi_{J_{\la}}$ and $\sw_{J_{\la}}$ (we will employ similar notational simplifications throughout the paper without further comment). In particular $W_{\la}$ is the standard \textit{Young subgroup} associated to~$\la$.

\begin{lemma}\label{lem:reducedcharacterisation}
We have $w\in{^\la}W$ if and only if for each row of $\bt_r(\la)$ the elements of the row appear in ascending order in the $1$-line notation of~$w$.
\end{lemma}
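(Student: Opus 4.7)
The plan is to reduce the claim to the already-stated criterion $\ell(s_iw) = \ell(w) + 1$ iff $i$ and $i+1$ appear in ascending order in the $1$-line notation of $w$. First I would recall the standard characterisation of the minimal coset representatives: $w \in {^J}W$ if and only if $\ell(s_jw) > \ell(w)$ for every $j \in J$. Applying this with $J = J_{\la}$ together with the length criterion above, membership $w \in {^\la}W$ becomes the condition that, for every $j \in J_{\la}$, the entries $j$ and $j+1$ appear in ascending order in $[w(1),\ldots,w(n+1)]$.

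Next I would unpack the combinatorial meaning of the set $J_{\la}$. By definition, $J_{\la}$ is the complement in $\{1,\ldots,n\}$ of the set of partial sums $\{\la(1),\ldots,\la(r(\la))\}$, which are exactly the positions of the final entries of each row of $\bt_r(\la)$. Consequently, an index $j$ belongs to $J_{\la}$ precisely when $j$ is not the last entry of its row in $\bt_r(\la)$, i.e.\ when $j$ and $j+1$ lie in the same row of $\bt_r(\la)$. Substituting this into the equivalence from the previous paragraph yields: $w \in {^\la}W$ iff for every pair of consecutive entries $j,j+1$ lying in a common row of $\bt_r(\la)$, $j$ precedes $j+1$ in the $1$-line notation of $w$.

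Finally, I would argue that this ``consecutive in each row'' condition is equivalent to the full ``ascending on each row'' condition stated in the lemma. Fix a row $R = \{\la(i-1)+1,\la(i-1)+2,\ldots,\la(i)\}$ of $\bt_r(\la)$. The entries of $R$ appear in ascending order in the $1$-line notation of $w$ if and only if each adjacent pair $(j,j+1)$ with $\la(i-1)+1 \le j < \la(i)$ appears in ascending order, by transitivity of the left-to-right order on positions in $[w(1),\ldots,w(n+1)]$. Running over all rows $R$ of $\bt_r(\la)$ gives the desired equivalence, completing the proof.

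No step here looks like a serious obstacle: the argument is essentially a bookkeeping translation between three descriptions of the same set, and the only ingredients are the definition of $J_{\la}$ in terms of $\bt_r(\la)$, the standard minimal-representative characterisation, and the already-recorded length criterion for $s_iw$.
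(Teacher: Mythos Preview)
Your proposal is correct and follows essentially the same approach as the paper's proof: both reduce to the characterisation $w\in{^\la}W$ iff $\ell(s_jw)>\ell(w)$ for all $j\in J_{\la}$, combined with the one-line-notation criterion for $\ell(s_iw)$ and the observation that $j\in J_{\la}$ exactly when $j$ and $j+1$ lie in the same row of $\bt_r(\la)$. Your write-up is more explicit (spelling out the transitivity step), while the paper's version is a terse two-sentence remark, but the content is the same.
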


\begin{proof}
Note that if $i$ and $i+1$ lie in the same row of $\bt_r(\la)$ then $i\in J_{\la}$, and if $i+1$ occurs before $i$ in the $1$-line notation of $w$ then $\ell(s_iw)=\ell(w)-1$. The result follows. 
\end{proof}

Let
$$
\ba_{\la}=\sum_{i\geq 1}(i-1)\la_i=\frac{1}{2}\sum_{i\geq 1}\la_i'(\la_i'-1)=\ell(\sw_{\la'}). 
$$

\begin{lemma}\label{lem:basic}
Let $\lambda,\mu\in\cP(n+1)$. If $\mu\leq \lambda$ then $\ba_{\mu}\geq \ba_{\la}$ with equality if and only if $\mu=\lambda$. 
\end{lemma}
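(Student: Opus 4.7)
The plan is to work directly with the formula $\ba_{\la} = \sum_{i \geq 1}(i-1)\la_i$ and apply Abel summation (summation by parts) to convert the sum into one involving the partial sums $\la(k)$ that enter the dominance order. Set $d_i = \mu_i - \la_i$ and let $D(k) = \sum_{i=1}^k d_i = \mu(k) - \la(k)$. The hypothesis $\mu \leq \la$ is exactly the statement that $D(k) \leq 0$ for all $k \geq 1$, and since both $\mu$ and $\la$ are partitions of $n+1$, we also have $D(k) = 0$ for all sufficiently large $k$.

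Writing $d_i = D(i) - D(i-1)$ with $D(0)=0$ and telescoping, a direct Abel summation computation gives
\[
\ba_\mu - \ba_\la \;=\; \sum_{i\geq 1}(i-1)(\mu_i - \la_i) \;=\; -\sum_{k\geq 1} D(k) \;=\; \sum_{k\geq 1}\bigl(\la(k)-\mu(k)\bigr).
\]
Each summand on the right is non-negative by dominance, so $\ba_\mu \geq \ba_\la$. Moreover, equality holds precisely when $D(k)=0$ for every $k$, i.e.\ when $\mu(k) = \la(k)$ for every $k$, which is equivalent to $\mu = \la$.

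There is essentially no obstacle here: the statement reduces to a one-line Abel summation once the right formula for $\ba_{\la}$ is chosen. (One could equally use the transposed formula $\ba_\la = \sum_k \binom{\la'_k}{2}$ and argue that each cover in the dominance order, realised by moving a single box up from row $j$ to row $i$ with $i<j$, strictly increases $\ba$; however, the Abel summation route avoids the need to invoke the structure of covers in the dominance order, and makes the strict inequality transparent.)
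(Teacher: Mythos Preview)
Your proof is correct and is essentially the same as the paper's: the paper's one-line argument invokes the identity $\ba_{\la}=\sum_{i\geq 1}(n+1-\la(i))$, which is precisely what your Abel summation produces (applied to $\ba_{\la}$ itself rather than to the difference $\ba_{\mu}-\ba_{\la}$). Your version makes the summation-by-parts step explicit, but the content is identical.
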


\begin{proof}
This follows from the identity
$\ba_{\la}=\sum_{i\geq 1}(n+1-\la(i))$. 
\end{proof}

Let 
\begin{align}\label{eq:rholambda}
\rho_{\la}=\frac{1}{2}\sum_{\alpha\in\Phi_{\la}^+}\alpha.
\end{align}
Then for $j\in J_{\la}$ we have $\langle\alpha_j,\rho_{\la}\rangle=1$ (c.f. \cite[Lemma~2.4]{GLP:23}).

\subsection{The element $\su_{\la}$}

The element $\su_{\la}$ introduced below plays an important role in this work.

\begin{defn}
Let $\su_{\la}\in\Wfin$ be the element given in $1$-line notation by reading in the columns of $\bt_{r}(\la)$, from top to bottom, left to right. 
\end{defn}

\begin{defn} 
Let $\la\vdash n+1$ and $w\in \Wfin$. Insert dividers into the $1$-line notation 
$
w=[w(1),w(2),\ldots,w(n+1)]
$ 
forming ``blocks'' according to the following rules. For $1\leq i\leq n$ insert a divider between $w(i)$ and $w(i+1)$ if either (1) $w(i+1)<w(i)$ or, (2) $w(i+1)>w(i)$ and the numbers $w(i)$ and $w(i+1)$ lie in a common row of $\bt_r(\la)$.  The resulting expression, with the $1$-line notation split into blocks, is called the \textit{$\lambda$-expression of~$w$}. Define a partition $\muu(w, \la)\in\cP(n+1)$ by rearranging the sequence of lengths of the blocks of the $\lambda$-expression of~$w$ into decreasing order.
\end{defn}

\begin{example}
If $\la=(5,3,3,2,1)$  then $\su_{\la}=[1,6,9,12,14,2,7,10,13,3,8,11,4,5]$ and  $w=[1,3,7,2,6,11,12,5,4,14,9,10,13,8]$ has $\la$-expression 
$
[1\,|\, 3,7\,|\, 2,6,11,12\,|\, 5\,|\, 4,14\,|\, 9\,|\, 10,13\,|\, 8]
$. Thus $\muu(w,\la)=(4,2,2,2,1,1,1,1)$. 
\end{example}

\begin{defn}
The \textit{right $\la$-ascent set} of $u\in {^\la}W$ is 
$$
A_{\la}(u)=\{s\in S_0\mid \ell(us)=\ell(u)+1\text{ and }us\in {^\la}W\}.
$$
\end{defn}

\begin{lemma}\label{lem:lambdaexp}
For $u\in {^\la}W$ we have 
$$
A_{\la}(u)=\{s_i\mid \text{$u(i)$ and $u(i+1)$ lie in a common block of the $\lambda$-expression of $u$}\}. 
$$
\end{lemma}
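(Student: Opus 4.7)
The plan is to unpack both sides of the asserted equality directly in terms of the $1$-line notation of $u$, using Lemma~\ref{lem:reducedcharacterisation} (the characterisation of ${^\la}W$) twice: once applied to $u$, and once applied to $us_i$.

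First I would rewrite the condition $s_i\in A_{\la}(u)$ as the conjunction of two conditions: (a) $\ell(us_i)=\ell(u)+1$, which by the standard criterion recalled in the excerpt is equivalent to $u(i)<u(i+1)$, and (b) $us_i\in{^\la}W$. Since the $1$-line notation of $us_i$ is obtained from that of $u$ by swapping the entries in positions $i$ and $i+1$, Lemma~\ref{lem:reducedcharacterisation} applied to $us_i$ reduces (b) to checking that the ascending-in-each-row condition still holds after the swap. The rows not meeting $\{u(i),u(i+1)\}$ are unaffected, so the only possible obstruction is that $u(i)$ and $u(i+1)$ lie in a common row $R$ of $\bt_r(\la)$: in that case the two entries swap from ascending to descending order inside $R$ (they are ascending before the swap because $u\in{^\la}W$), violating the condition. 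Conversely, when $u(i)$ and $u(i+1)$ lie in distinct rows, a short check shows that within each row the relative order of its entries is preserved, so $us_i\in{^\la}W$.

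Combining (a) and (b), $s_i\in A_{\la}(u)$ if and only if $u(i)<u(i+1)$ and $u(i),u(i+1)$ lie in different rows of $\bt_r(\la)$. On the other side, from the definition of the $\la$-expression, $u(i)$ and $u(i+1)$ lie in a common block precisely when neither divider rule applies, i.e. $u(i)<u(i+1)$ and $u(i),u(i+1)$ are not in a common row of $\bt_r(\la)$. The two sets coincide, proving the lemma.

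The only substantive step is the verification that swapping positions $i$ and $i+1$ preserves the ascending order within each row when $u(i)$ and $u(i+1)$ lie in different rows; this is essentially the observation that the swap moves a single element of each affected row to a strictly later or strictly earlier position, with no other element of that row occupying the new slot. There is no real obstacle here, only bookkeeping; the main care is to make sure the ``$u\in{^\la}W$'' hypothesis is used when reducing condition (2) of the $\la$-expression (which \emph{a priori} requires both $u(i+1)>u(i)$ and the common-row condition) to the common-row condition alone.
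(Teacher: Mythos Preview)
Your proposal is correct and follows essentially the same approach as the paper: both arguments reduce the condition $s_i\in A_{\la}(u)$ to ``$u(i)<u(i+1)$ and $u(i),u(i+1)$ lie in different rows of $\bt_r(\la)$'' via Lemma~\ref{lem:reducedcharacterisation} and the standard length criterion, and then match this with the block condition. The only organisational difference is that the paper treats the two directions separately whereas you first isolate the common biconditional; the content is the same.
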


\begin{proof}
Suppose that $u(i)$ and $u(i+1)$ are in different blocks of the $\la$-expression of $u$. Then either $u(i+1)<u(i)$ or $u(i+1)>u(i)$ and the numbers $u(i)$ and $u(i+1)$ are in a common row of $\bt_r(\la)$. In the former case we have $\ell(us_i)=\ell(u)-1$, hence $s_i\notin A_{\la}(u)$. In the latter case the numbers $us_i(i+1)=u(i)$ and $us_i(i)=u(i+1)$ lie in the common row of $\bt_r(\la)$ yet are not in ascending order in the $1$-line notation of $us_i$, giving $us_i\notin {^\la}W$ (by Lemma~\ref{lem:reducedcharacterisation}), and so $s_i\notin A_{\la}(u)$.

Conversely, suppose that $u(i)$ and $u(i+1)$ are in a common block of the $\la$-expression of~$u$. Then $u(i+1)>u(i)$ and hence $\ell(us_i)=\ell(u)+1$. Since $u\in{^\la}W$, for each row of $\bt_r(\la)$ the elements of the row appear in ascending order in the $1$-line notation of~$u$ (by Lemma~\ref{lem:reducedcharacterisation}). Since $u(i)$ and $u(i+1)$ lie in a common block of the $\la$-expression of $u$ they lie on different rows of $\bt_r(\la)$, and hence the elements of each row of $\bt_r(\la)$ appear in ascending order in the $1$-line notation of~$us_i$, and so $us_i\in {^\la}W$. Thus $s_i\in A_{\la}(u)$. 
\end{proof}

\begin{cor}\label{cor:lengthaddula}
We have $\muu(\su_{\la},\la)=\la'$ and $A_{\la}(\su_{\la})=J_{\la'}$. Moreover $\su_\la w\in {^{\la}W}$ for all $w\in W_{\la'}$.
\end{cor}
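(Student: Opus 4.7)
The plan is to read off all three claims from a careful identification of the $\lambda$-expression of $\su_{\la}$, combined with one further application of Lemma~\ref{lem:reducedcharacterisation} to handle right-multiplication by $W_{\la'}$.

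First I would record the position formula underlying the column reading: the entry $\bt_r(\la)[i,j]=\la(i-1)+j$ sits at position $\la'(j-1)+i$ in the $1$-line notation of~$\su_\la$. For a fixed row index~$i$, as $j$ runs from $1$ to~$\la_i$, both the position $\la'(j-1)+i$ and the value $\la(i-1)+j$ strictly increase, so each row of $\bt_r(\la)$ appears in ascending order in $\su_\la$. Lemma~\ref{lem:reducedcharacterisation} then gives $\su_\la\in{^\la}W$, which unlocks Lemma~\ref{lem:lambdaexp} for the second claim.

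Next I would read off the blocks of the $\lambda$-expression of $\su_\la$ column by column. Inside the chunk coming from column~$j$, consecutive entries $\bt_r(\la)[i,j]$ and $\bt_r(\la)[i+1,j]$ are increasing and lie in different rows of $\bt_r(\la)$, so no divider appears. At the boundary between columns $j$ and $j+1$, the last entry of column~$j$ is $\bt_r(\la)[\la'_j,j]$ and the first of column~$j+1$ is $\bt_r(\la)[1,j+1]=j+1$. If $\la'_j=1$, both entries sit in row~$1$, triggering the common-row divider; if $\la'_j\geq 2$, then $\bt_r(\la)[\la'_j,j]\geq \la_1+j\geq j+2$, triggering the decrease divider. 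Hence the blocks of the $\la$-expression of~$\su_\la$ are precisely the column chunks, with lengths $\la'_1\geq\la'_2\geq\cdots\geq\la'_{\la_1}$, already in weakly decreasing order, which proves $\muu(\su_\la,\la)=\la'$. Applying Lemma~\ref{lem:lambdaexp}, the $s_i\in A_\la(\su_\la)$ are exactly those with $i$ strictly interior to some block, i.e.\ $i\notin\{\la'(1),\ldots,\la'(r(\la'))\}$; that is $J_{\la'}$.

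Finally, for the ``moreover'' clause I would use that $W_{\la'}$, being generated by $\{s_j:j\in J_{\la'}\}$, preserves setwise each block $B_j=\{\la'(j-1)+1,\ldots,\la'(j)\}$ of $\{1,\ldots,n+1\}$. Fix a row~$i$ of $\bt_r(\la)$; for each $j\in\{1,\ldots,\la_i\}$ the entry $\la(i-1)+j$ appears in $\su_\la w$ at the unique position $p_j\in B_j$ with $w(p_j)=\la'(j-1)+i$, and the block ordering $B_1<B_2<\cdots$ forces $p_1<p_2<\cdots<p_{\la_i}$, while the values $\la(i-1)+1<\cdots<\la(i)$ at these positions are also ascending. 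Lemma~\ref{lem:reducedcharacterisation} then yields $\su_\la w\in{^\la}W$. The only place requiring real care is the case split at column boundaries in step two; no deeper idea is needed.
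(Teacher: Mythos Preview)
Your proof is correct and follows essentially the same route as the paper's: identify the blocks of the $\lambda$-expression of $\su_\la$ with the columns of $\bt_r(\la)$, invoke Lemma~\ref{lem:lambdaexp} to obtain $A_\la(\su_\la)=J_{\la'}$, and for the ``moreover'' use that right multiplication by $W_{\la'}$ only permutes within the column blocks, so Lemma~\ref{lem:reducedcharacterisation} applies. The paper states these facts briefly (``By construction, the blocks of the $\lambda$-expression of $\su_\la$ are the columns of $\bt_r(\la)$\ldots''), whereas you supply the explicit verification, including the position formula $\su_\la(\la'(j-1)+i)=\la(i-1)+j$, the check $\su_\la\in{^\la}W$, and the column-boundary case split---all of which the paper leaves to the reader.
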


\begin{proof}
By construction, the blocks of the $\la$-expression of $\su_{\la}$ are the columns of $\bt_r(\la)$, and the first statement follows from Lemma~\ref{lem:lambdaexp}. For the second statement, note that multiplying on the right by $\sw_{\la'}$ will only permute elements in blocks of the $\la$-expression (see Lemma~\ref{lem:reducedcharacterisation}). 
\end{proof}

\begin{lemma}\label{lem:dominance}
For all $w\in \Wfin$ we have $\muu(w,\la)\leq \la'$.
\end{lemma}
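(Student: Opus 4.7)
The plan is to reduce the dominance inequality to a pigeonhole count, exploiting the fact that row $i$ of $\bt_r(\la)$ is the interval of consecutive integers $\{\la(i-1)+1,\ldots,\la(i)\}$.

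First I would establish the key structural observation that within any single block of the $\la$-expression of $w$, no two entries lie in the same row of $\bt_r(\la)$. The argument should be short: if $a_p<a_q$ were two such entries of a common block lying in a common row $i$, then because row $i$ is an interval of consecutive integers containing both, every integer strictly between $a_p$ and $a_q$ would also lie in row $i$; in particular the next entry $a_{p+1}$ after $a_p$ in the block would lie in row $i$, and then $a_p$ and $a_{p+1}$ would be consecutive block entries sharing a row of $\bt_r(\la)$, contradicting the defining property of a block.

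Given this observation, the dominance inequality falls out from counting. For any $k\geq 1$, the union of the $k$ longest blocks of the $\la$-expression of $w$ contains at most $\min(\la_i,k)$ entries from row $i$ of $\bt_r(\la)$: at most $\la_i$ because row $i$ has only $\la_i$ entries in total, and at most $k$ because by the observation above each of the $k$ blocks contributes at most one entry from row $i$. Summing over the rows of $\bt_r(\la)$ then gives
\[
\sum_{j=1}^{k}\muu(w,\la)_j \;\leq\; \sum_{i=1}^{r(\la)}\min(\la_i,k)\;=\;\sum_{j=1}^{k}\la'_j,
\]
where the final equality counts the cells in the first $k$ columns of the Young diagram of $\la$ in two ways. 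Since this holds for every $k\geq 1$, it is exactly the statement $\muu(w,\la)\leq\la'$ in dominance order.

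I foresee no serious obstacle: the entire argument rests on the single structural observation of the second paragraph, which in turn is nothing more than the statement that each row of $\bt_r(\la)$ is an interval of consecutive integers. As a sanity check, Corollary~\ref{cor:lengthaddula} asserts that $\muu(\su_\la,\la)=\la'$, so the bound is sharp and is realised by $\su_\la$, whose blocks are precisely the columns of $\bt_r(\la)$ and therefore achieve one entry per admissible row.
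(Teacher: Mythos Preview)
Your proof is correct and follows essentially the same approach as the paper. Both arguments rest on the identical structural observation that the entries of any block of the $\la$-expression occupy distinct rows of $\bt_r(\la)$ (equivalently, distinct columns of $\bt_c(\la')$); the paper then invokes Sagan's Dominance Lemma \cite[Lemma~2.2.4]{Sag:01} to conclude $\muu(w,\la)\leq\la'$, whereas you inline the pigeonhole count that proves that lemma, making your version self-contained.
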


\begin{proof}
Let $\mu=\muu(w,\la)$. Let $\gamma_1,\gamma_2,\ldots,\gamma_k$ be the blocks of the $\la$-expression for $w$, arranged so that they are in decreasing length. Thus $\mu=(|\gamma_1|,\ldots,|\gamma_k|)$. Let $\bt$ be a (not necessarily column strict) tableau of shape $\mu$ given by entering the elements of $\gamma_i$ into the $i$th row of $\bt$ in any order. By construction of the $\la$-expression, the elements of row $i$ of $\bt$ lie in different columns of $\bt_c(\la')$, and so by the Dominance Lemma for partitions (see \cite[Lemma~2.2.4]{Sag:01}) we have $\mu\leq\la'$ as required. 
\end{proof}

The following characterisation of $\su_{\la}$ is crucial at a few occasions later. 

\begin{thm}\label{thm:etale}
Let $u\in{^\la}W$. We have
\begin{compactenum}[$(1)$]
\item  $\ell(\sw_{A_{\la}(u)})\leq \ell(\sw_{\la'})$ with equality if and only if $\muu(u,\la)=\la'$, and
\item  $A_{\la}(u)=J_{\la'}$ if and only if $u=\su_{\la}$. 
\end{compactenum}
\end{thm}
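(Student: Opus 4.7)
The plan is to translate both statements into combinatorial facts about the block decomposition of the $\la$-expression of $u$ via Lemma~\ref{lem:lambdaexp}. Let $b_1,\ldots,b_k$ denote the block sizes of $u$'s $\la$-expression in the order they appear. By Lemma~\ref{lem:lambdaexp}, $A_\la(u)$ is a disjoint union of consecutive intervals of simple reflections of sizes $b_1-1,\ldots,b_k-1$, so $W_{A_\la(u)}\cong\Sym_{b_1}\times\cdots\times\Sym_{b_k}$ and $\ell(\sw_{A_\la(u)})=\sum_i\binom{b_i}{2}$. Likewise $\ell(\sw_{\la'})=\sum_i\binom{\la'_i}{2}$.

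For part~(1) I would invoke Lemma~\ref{lem:dominance} to get $\muu(u,\la)\leq\la'$ in dominance order, then apply Karamata's majorisation inequality with the strictly convex function $x\mapsto\binom{x}{2}$ to conclude $\sum_i\binom{b_i}{2}\leq\sum_i\binom{\la'_i}{2}$, with equality exactly when $\muu(u,\la)=\la'$.

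For part~(2) the direction ``$\Leftarrow$'' is Corollary~\ref{cor:lengthaddula}. For the converse I assume $A_\la(u)=J_{\la'}$. The gaps of $J_{\la'}$ in $\{1,\ldots,n\}$ sit at the positions $\la'(1),\ldots,\la'(r(\la')-1)$, so by Lemma~\ref{lem:lambdaexp} the blocks of $u$'s $\la$-expression appear in the order $\la'_1,\la'_2,\ldots,\la'_{r(\la')}$. The key combinatorial input to establish is that entries within each block lie in pairwise distinct rows of $\bt_r(\la)$: if $u(p)<u(q)$ both lay in row~$r$ with $p<q$ in a common block, then since the block is strictly increasing and row $r$ of $\bt_r(\la)$ consists of the consecutive integers $\la(r-1)+1,\ldots,\la(r)$, every intermediate value $u(m)$ would also have to lie in row~$r$, contradicting the distinct-rows property for consecutive positions inside a block.

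With that in hand, the incidence data $R_i\subseteq\{1,\ldots,r(\la)\}$ recording which rows appear in block~$i$ satisfies $|R_i|=\la'_i$, and the constraint that row~$r$ contributes $\la_r$ entries across blocks yields a $0$-$1$ matrix with row sums~$\la'$ and column sums~$\la$. A short induction---the first column must be all $1$'s because $\la_1=r(\la')$ equals the number of blocks, and peeling it off reduces to a smaller transpose-partition instance---then forces $R_i=\{1,\ldots,\la'_i\}$. Combined with the fact that $u\in{^\la}W$ arranges the entries of each row of $\bt_r(\la)$ in increasing order along the $1$-line, the row-$r$ entry of block~$i$ is forced to be $\la(r-1)+i$, which reproduces column~$i$ of $\bt_r(\la)$; hence $u=\su_\la$. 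The main obstacle is the row-distinctness claim for blocks---it strengthens the definitional ``consecutive positions in different rows'' property---after which the rigidity of the transpose $0$-$1$ matrix and a direct identification finish the job.
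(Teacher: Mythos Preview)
Your proposal is correct. Part~(1) is essentially the paper's argument: the paper packages Karamata as Lemma~\ref{lem:basic} (the monotonicity of $\ba_\mu$ in dominance), but the content is the same convexity inequality for $\binom{x}{2}$.

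For part~(2) you take a genuinely different route. The paper argues by contradiction: assuming $A_\la(u)=J_{\la'}$ and $u\neq\su_\la$, either the blocks of $u$'s $\la$-expression are a nontrivial rearrangement of the columns of $\bt_r(\la)$, or some block is not a column; in each case one locates two entries from a common row of $\bt_r(\la)$ appearing out of order in the $1$-line, contradicting $u\in{^\la}W$ via Lemma~\ref{lem:reducedcharacterisation}. Your approach is constructive: you first strengthen the ``consecutive positions lie in different rows'' property to ``all positions in a block lie in different rows'' (your interval argument is fine), then encode the block-to-row incidence as a $0$--$1$ matrix with margins $\la'$ and $\la$, and use the rigidity of such a matrix (unique when the margins are transpose partitions) to pin down $R_i=\{1,\ldots,\la'_i\}$; the ascending-order constraint from $u\in{^\la}W$ then forces each block to be the corresponding column read top to bottom. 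Your argument is longer but fully explicit, and the $0$--$1$ matrix step is a pleasant structural explanation of why $\su_\la$ is forced; the paper's case analysis is quicker but leaves the ``some block is not a column'' case to the reader.
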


\begin{proof}
Let $u\in {^\la}W$ and write $\mu=\muu(u,\la)$. By Lemma~\ref{lem:lambdaexp} the type of the Young subgroup generated by $A_{\la}(u)$ is 
$
\mathfrak{S}_{\mu_1}\times\cdots\times \mathfrak{S}_{\mu_m}
$, and hence $\ell(\sw_{A_{\la}(u)})=\ell(\sw_{\mu})=\ba_{\mu'}$. Lemma~\ref{lem:dominance} gives $\mu\leq \lambda'$, which is equivalent to $\mu'\geq\lambda$. Thus Lemma~\ref{lem:basic} gives
$
\ell(\sw_{A_{\la}(u)})=\ba_{\mu'}\leq \ba_{\lambda}
$
with equality if and only if $\muu(u,\la)=\la'$, hence (1).

To prove (2), suppose that $u\in {^\la}W$ with $A_{\la}(u)=J_{\la'}$. Thus $\muu(u,\la)=\la'$. If $u\neq \su_{\la}$ then the $\la$-expression for $u$ is either a rearrangement of the blocks of the $\la$-expression of $\su_{\la}$, or there is a block of the $\la$-expression of $u$ that is not a column of $\bt_r(\la)$. In either case, there exist two elements of a row of $\bt_{r}(\la)$ that are not in ascending order in the $\la$-expression of $u$, and hence $u\notin {^\la}W$, a contradiction.
\end{proof}

\subsection{The extended affine Weyl group}

The fundamental weights of $\Phi$ are the vectors $\omega_i=e_1+\cdots+e_i$ for $1\leq i\leq n$ (then $\langle\omega_i,\alpha_j\rangle=\delta_{i,j}$). By convention we set $\omega_0=0$ and $\omega_{n+1}=e_1+\cdots+e_{n+1}=0$. Let $P$ be the $\mathbb{Z}$-span of $\{\omega_1,\ldots,\omega_n\}$ and let $P_+$ be the $\ZZ_{\geq0}$-span of $\{\omega_1,\ldots,\omega_n\}$.  Then $P$ is the $\ZZ$-span of $\{e_i\mid1\leq i\leq n+1\}$ (recall that $e_1+\cdots+e_{n+1}=0$) and $a_1e_1+\cdots+a_{n+1}e_{n+1}\in P_+$ if and only if $a_1\geq a_2\geq \cdots\geq a_{n+1}$.

Let $Q$ be the $\mathbb{Z}$-span of $\Phi$ and let $Q_+$ be the $\ZZ_{\geq 0}$-span of $\{\alpha_1,\ldots,\alpha_n\}$. Note that $Q\subseteq P$, and $a_1e_1+\cdots+a_{n+1}e_{n+1}\in Q$ if and only if $a_1+\cdots+a_{n+1}=0\mod n+1$. Moreover, if $\gamma=a_1e_1+\cdots+a_{n+1}e_{n+1}\in Q$ with $a_1+\cdots+a_{n+1}=k(n+1)$ then $\gamma=a_1'e_1+\cdots+a_{n+1}'e_{n+1}$ where $a_i'=a_i-k$, and $\gamma\in Q_+$ if and only if $a_1'+\cdots+a_i'\geq 0$ for all $1\leq i\leq n+1$.

For $\gamma\in P$ let $t_{\gamma}:V\to V$ be the translation $t_{\gamma}(v)=v+\gamma$. The affine Weyl group $\Waff$ and the extended affine Weyl group $\Wext$ are
$$
\Waff=Q\rtimes \Wfin\quad\text{and}\quad \Wext=P\rtimes \Wfin,
$$
where $wt_{\gamma}=t_{w\gamma}w$ for $w\in\Wfin$ and $\gamma\in P$.

For $\alpha\in\Phi$ and $k\in\ZZ$ let 
$
H_{\alpha,k}=\{x\in V\mid \langle x,\alpha\rangle=k\},
$
and let $s_{\alpha,k}(v)=v-(\langle v,\alpha\rangle-k)\alpha$ be the orthogonal reflection in $H_{\alpha,k}$, and so $s_{\alpha,k}=t_{k\alpha}s_{\alpha,0}$. The group $\Waff$ is a Coxeter group with generators $S=\{s_0\}\cup S_0$, where $s_0=s_{\varphi,1}$, with $\varphi$ the highest root of $\Phi$. The group $\Wext$ is not a Coxeter group, however we have 
$$\Wext=\Waff\rtimes \Sigma\quad\text{where}\quad \Sigma=P/Q\cong \ZZ/(n+1)\ZZ.$$ 
We extend the length function $\ell:\Waff\to \mathbb{N}$ to $\Wext$ by setting $\ell(w\pi)=\ell(w)$ for all $w\in\Waff$ and $\pi\in\Sigma$. Thus $\Sigma=\{w\in \Wext\mid \ell(w)=0\}$. Each $\pi\in\Sigma$ induces a permutation of the nodes of the extended Dynkin diagram by $\pi s_i \pi^{-1}=s_{\pi(i)}$ for $0\leq i\leq n$. Let $\sigma\in\Sigma$ be the element with $\sigma(i)=i+1$ (with indices read cyclically). 
By a \textit{reduced expression} for $w\in \Wext$ we shall mean a decomposition $w=s_{i_1}\cdots s_{i_{\ell}}\pi$ with $\ell=\ell(w)$ and $\pi\in\Sigma$.

If $w\in\Wext$ we define the \textit{linear part} $\theta(w)\in \Wfin$ and the \textit{translation weight} $\wt(w)\in P$ by
\begin{align*}
w=t_{\wt(w)}\theta(w).
\end{align*}
Moreover, for $\la\vdash n+1$ let 
$$
\theta(w)=\theta_{\la}(w)\theta^{\la}(w),
$$
where $\theta_{\la}(w)\in W_{\la}$ and $\theta^{\la}(w)\in {^\la}W$. 

The closures of the open connected components of $V\backslash\left(\bigcup_{\alpha,k}H_{\alpha,k}\right)$ are \textit{alcoves}. Let $\mathbb{A}$ denote the set of all alcoves. The \textit{fundamental alcove} is given by
$$
\cA_0=\{x\in V\mid 0\leq\langle x,\alpha\rangle\leq 1\text{ for all $\alpha\in\Phi^+$}\}.
$$
The hyperplanes bounding $\cA_0$ are called the \textit{walls} of $\cA_0$. Explicitly these walls are $H_{\alpha_i,0}$ with $1\leq i\leq n$ and $H_{\varphi,1}$. We say that a \textit{panel} of $\cA_0$ (that is, a codimension $1$ facet) has \textit{type} $i$ for $1\leq i\leq n$ if it lies on the wall $H_{\alpha_i,0}$, and type $0$ if it lies on the wall $H_{\varphi,1}$.

The (non-extended) affine Weyl group $\Waff$ acts simply transitively on $\mathbb{A}$. We use the action of $\Waff$ to transfer the notions of walls, panels, and types of panels to arbitrary alcoves. Alcoves~$A$ and $A'$ are called \textit{$i$-adjacent} (written $A\sim_{i}A'$) if $A\neq A'$ and $A$ and $A'$ share a common type~$i$ panel (with $0\leq i\leq n$). Thus the alcoves $w\cA_0$ and $ws_i\cA_0$ are $i$-adjacent for all $w\in W$ and $0\leq i\leq n$. The extended affine Weyl group $\Wext$ acts transitively on $\mathbb{A}$, and the stabiliser of $\cA_0$ is~$\Sigma$.

Each affine hyperplane $H_{\alpha,k}$ with $\alpha\in\Phi^+$ and $k\in\ZZ$ divides $V$ into two half-spaces, denoted
$$
H_{\alpha,k}^+=\{\alpha\in V\mid \langle x,\alpha\rangle\geq k\}\quad\text{and}\quad H_{\alpha,k}^-=\{x\in V\mid \langle x,\alpha\rangle\leq k\}.
$$
This ``orientation'' of the hyperplanes is called the \textit{periodic orientation}. Write $w\cA_0\,{^-}|^+\, ws_i\cA_0$ to indicate that $ws_i\cA_0$ is on the positive side of the hyperplane separating $w\cA_0$ and $ws_i\cA_0$. In this situation we will also say that $w\to ws_i$ is a \textit{positive crossing} (or simply is \textit{positive}), and similarly if $w\cA_0\,{^+}|^-\, ws_i\cA_0$ we say that $w\to ws_i$ is a negative crossing.

\subsection{The extended affine Hecke algebra of type $\tilde{\sA}_n$}

Let $\sq$ be an indeterminate, and let $\sR=\ZZ[\sq,\sq^{-1}]$. The Hecke algebra of type $\Wext$ is the algebra~$\Hext$ over $\sR$ with basis $\{T_w\mid w\in\Wext\}$ and multiplication given by (for $v,w\in\Wext$ and $s\in \Saff$)
\begin{align}\label{eq:relations}
\begin{aligned}
T_wT_v&=T_{wv}&&\text{if $\ell(wv)=\ell(w)+\ell(v)$}\\
T_wT_{s}&=T_{ws}+(\sq-\sq^{-1})T_w&&\text{if $\ell(ws)=\ell(w)-1$}.
\end{aligned}
\end{align}
We often write $T_i$ in place of $T_{s_i}$ for $0\leq i\leq n$.

Let $w\in \Wext$ and choose any expression $w=s_{i_1}\cdots s_{i_{\ell}}\pi$ (not necessarily reduced). Let $v_0=e$ and $v_k=s_{i_1}\cdots s_{i_k}$ for $1\leq k\leq \ell$. Let $A_k=v_k\cA_0$, and let $\epsilon_1,\ldots,\epsilon_{\ell}\in\{-1,1\}$ be the sequence defined by
\begin{align*}
\epsilon_k=\begin{cases}
+1&\text{if $A_{k-1}\,{^-}|^+\,A_k$}\\
-1&\text{if $A_{k-1}\,{^+}|^-\,A_k$}.
\end{cases}
\end{align*}
Then the element 
$$
X_w=T_{i_1}^{\epsilon_1}\cdots T_{i_k}^{\epsilon_k}T_{\pi}\in\Hext
$$
does not depend on the particular expression $w=s_{i_1}\cdots s_{i_k}\pi$ chosen (see \cite{Goe:07}). From the defining relations~(\ref{eq:relations}) it follows that $X_w-T_w$ is a linear combination of terms $T_v$ with $v<w$ (in extended Bruhat order), and hence $\{X_w\mid w\in\Wext\}$ is a basis of $\Hext$.

If $\gamma\in P$ we write 
$$
X^{\gamma}=X_{t_{\gamma}}.
$$
Then $X^{\gamma_1}X^{\gamma_2}=X^{\gamma_1+\gamma_2}=X^{\gamma_2}X^{\gamma_1}$ for all $\gamma_1,\gamma_2\in P$, and for $w\in\Wext$ we have
\begin{align}\label{eq:splitting}
X_w=X_{t_{\gamma}u}=X^{\gamma}X_{u}=X^{\gamma}T_{u^{-1}}^{-1}
\end{align}
where $\gamma=\wt(w)$ and $u=\theta(w)$. Thus the set $\{X^{\gamma}T_{u^{-1}}^{-1}\mid \gamma\in P,\,u\in \Wfin\}$ is a basis of~$\Hext$ (called the \textit{Bernstein-Lusztig basis}). Since $s_0=t_{\varphi^{\vee}}s_{\varphi}$, Equation~(\ref{eq:splitting}) gives $T_0=X^{\varphi^{\vee}}T_{s_{\varphi}}^{-1}$.

The \textit{Bernstein relation} (see \cite[Proposition~3.6]{Lus:89}) is
$$
T_iX^{\gamma}=X^{s_i\gamma}T_i+(\sq-\sq^{-1})\frac{X^{\gamma}-X^{s_i\gamma}}{1-X^{-\alpha_i}}.
$$
Let $X_i=X^{e_i}$ for $1\leq i\leq n+1$ (recall that $e_i\in P$, and since $e_1+\cdots +e_{n+1}=0$ we have $X_1\cdots X_{n+1}=1$). The Bernstein relation gives $T_jX_i=X_iT_j$ if $|i-j|>1$ and
$$
T_i^{-1}X_iT_i^{-1}=X_{i+1}\quad\text{for $1\leq i\leq n$}.
$$

Let
$$
\sR[X]=\mathrm{span}_{\sR}\{X^{\gamma}\mid \gamma\in P\}.
$$
The Weyl group $\Wfin$ acts on $\sR[X]$ by linearly extending $w\cdot X^{\gamma}=X^{w\gamma}$. Let
$$
\sR[X]^{\Wfin}=\{f(X)\in \sR[X]\mid w\cdot f(X)=f(X)\text{ for all $w\in \Wfin$}\}
$$
(the \textit{ring of symmetric polynomials}).

\subsection{Kazhdan-Lusztig theory}\label{sec:KLsec}

The \textit{bar involution} on $\sR$ with $\overline{\sq}=\sq^{-1}$ extends to an involution on $\Hext$ by setting
$$
\overline{\sum_{w\in \Wext}a_wT_w}=\sum_{w\in \Wext}\overline{a_w}\,T_{w^{-1}}^{-1}.
$$
In \cite{KL:79} Kazhdan and Lusztig proved that there is a unique basis $\{C_w\mid w\in \Wext\}$ of $\Hext$ such that, for all $w\in W$, 
$$
\overline{C_w}=C_w\quad\text{and}\quad C_w=T_w+\sum_{v<w}P_{v,w}T_v\quad\text{with}\quad P_{v,w}\in \sq^{-1}\ZZ[\sq^{-1}].
$$
This basis is called the \textit{Kazhdan-Lusztig basis} of $\Hext$, and the polynomials $P_{v,w}$ are the \textit{Kazhdan-Lusztig polynomials} (to complete the definition, set $P_{w,w}=1$ and $P_{v,w}=0$ for $v\not\leq w$). For $x,y\in \Wext$ we write 
$$
C_xC_y=\sum_{z\in \Wext}h_{x,y,z}C_z
$$
where $h_{x,y,z}\in\sR$ (and necessarily $\overline{h_{x,y,z}}=h_{x,y,z}$).

Let $\deg:\sR\to\mathbb{Z}\cup\{-\infty\}$ denote the degree in $\sq$. For example $\deg(\sq^{-3})=-3$ and $\deg(0)=-\infty$. Since $h_{x,y,z}$ is bar-invariant we have $\deg(h_{x,y,z})\geq 0$ (provided $h_{x,y,z}\neq 0$), and by \cite[\S 7.2]{Lus:85} that $\deg(h_{x,y,z})\leq \ell(\sw_0)$ for all $x,y,z\in\Wext$. 

\begin{defn}[{\cite{Lus:85}}]
\textit{Lusztig's $\ba$-function} is the function $\ba:\Wext\to \{0,1,\ldots,\ell(\sw_0)\}$ defined by
$$
\ba(z)=\max\{\deg(h_{x,y,z})\mid x,y\in \Wext\}.
$$
\end{defn}

Let $\leq_L$, $\leq_R$, and $\leq_{LR}$ be the standard left, right, and two-sided Kazhdan-Lusztig preorders on $\Wext$ defined in~\cite{KL:79} (for example, $\leq_L$ is the transitive closure of the relation $x\leq_L y$ if there exists $z\in \Wext$ such that $h_{z,y,x}\neq 0$). Let $\sim_*$ (for $*\in\{L,R,LR\}$) denote the associated equivalence relations, with $x\sim_* y$ if and only if $x\leq_*y$ and $y\leq_* x$. The equivalence classes of $\sim_L$, $\sim_R$, and $\sim_{LR}$ are called \textit{left cells}, \textit{right cells}, and \textit{two-sided cells}, respectively. We shall typically use the letter $\Delta$ to denote a two sided cell, and $\Gamma$ to denote a right cell. Note that if $\Gamma$ is a right cell then $\Gamma^{-1}$ is a left cell. 

Note that right (respectively left) cells are invariant under right (respectively left) multiplication by~$\Sigma$ (for example, $C_xC_{\sigma}=C_{x\sigma}$ and $C_{x\sigma}C_{\sigma^{-1}}=C_x$ show that $x\sigma\sim_R x$). Moreover, if $x\sim_R y$ (respectively $x\sim_L y$) then $\sigma x\sim_R\sigma y$ (respectively $x\sigma\sim_L y\sigma$), and so the set of right (respectively left) cells is permuted by left (respectively right) multiplication by~$\Sigma$.

The structure of cells in affine type~$\sA$ has been determined by Lusztig and Shi~\cite{Lus:85a,Shi:86}. In particular, the two sided cells of $\Wext$ are indexed by partitions $\la\vdash n+1$. We do not require the full details of this indexation. We make the following definitions. 
\begin{compactenum}
\item[--] Let $\Ga_{\la}=\{w\in \Wext\mid w\sim_R\sw_{\la'}\}$ (the right cell containing~$\sw_{\la'}$), 
\item[--] Let $\Delta_{\la}=\{w\in\Wext\mid w\sim_{LR}\sw_{\la'}\}$ (the two-sided cell containing $\sw_{\la'}$). 
\end{compactenum}
The two-sided order $\leq_{LR}$ on $\Wext$ induces a partial order on two-sided cells and we have $\Delta_{\la}\leq_{LR} \Delta_{\mu}$ if and only of $\la\leq \mu$ (see \cite[\S2.1]{Shi:96}).

We shall make use of the following result of Tanisaki and Xi. 

\begin{thm}[{\cite[Theorem~4.3]{TX:06}}]\label{thm:TanisakiXi}
Let $\la\vdash n+1$. The two-sided ideal 
$$\textnormal{span}_\sR\{C_{w}\mid w\leq_{LR} \sw_{\la'}\} \slash \textnormal{span}_{\sR}\{C_{w}\mid w<_{LR} \sw_{\la'}\}$$ is generated by the image of $C_{\sw_{\la'}}$ in the quotient. 
\end{thm}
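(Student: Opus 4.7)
The plan is to show that in the quotient $\Hext/\cI_<$, where $\cI_< = \mathrm{span}_\sR\{C_w \mid w <_{LR} \sw_{\la'}\}$, the two-sided ideal generated by the image of $C_{\sw_{\la'}}$ exhausts the image of $\cI_\leq = \mathrm{span}_\sR\{C_w \mid w \leq_{LR} \sw_{\la'}\}$. First I would verify that both $\cI_\leq$ and $\cI_<$ are genuine two-sided ideals of $\Hext$: if $x \leq_{LR} \sw_{\la'}$ then $C_z C_x$ expands in the Kazhdan--Lusztig basis with support in $\{C_y : y \leq_L x\}$, and these all satisfy $y \leq_{LR} \sw_{\la'}$, with the symmetric statement on the right. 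Consequently, the quotient has $\sR$-basis $\{\bar C_w \mid w \in \Delta_\la\}$, and the task becomes: for each $w \in \Delta_\la$, express $\bar C_w$ as a combination of products $h_1 \bar C_{\sw_{\la'}} h_2$.

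The second step is to exploit the explicit form of $C_{\sw_{\la'}}$ coming from $\sw_{\la'}$ being the longest element of the parabolic subgroup $W_{\la'}$: one has $C_{\sw_{\la'}} = \sum_{v \in W_{\la'}} \sq^{\ell(v)-\ell(\sw_{\la'})} T_v$. For $u \in {}^{\la'}W$ we have $\ell(uv) = \ell(u)+\ell(v)$ for every $v \in W_{\la'}$, so $T_u C_{\sw_{\la'}} = \sum_{v \in W_{\la'}} \sq^{\ell(v)-\ell(\sw_{\la'})} T_{uv}$. Rewriting this in the Kazhdan--Lusztig basis gives $C_{u\sw_{\la'}}$ as the leading term, plus a combination of $C_x$ with $x < u\sw_{\la'}$ in Bruhat order. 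Combined with Lusztig's property P9 (if $x \leq_L y$ and $x \sim_{LR} y$ then $x \sim_L y$), an induction on the Bruhat order shows that in the quotient $\Hext \bar C_{\sw_{\la'}} = \mathrm{span}_\sR\{\bar C_x \mid x \in \Gamma_\la^{-1}\}$, where $\Gamma_\la^{-1}$ is the left cell of $\sw_{\la'}$ (noting that $\sw_{\la'}$ is an involution so this is indeed $\Gamma_\la^{-1}$). A symmetric computation yields $\bar C_{\sw_{\la'}} \Hext = \mathrm{span}_\sR\{\bar C_x \mid x \in \Gamma_\la\}$ in the quotient.

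The main obstacle is the final stage: showing that the two-sided translates $\Hext\,\bar C_{\sw_{\la'}}\,\Hext$ reach every \emph{left} cell contained in $\Delta_\la$, not only $\Gamma_\la^{-1}$. For this I would invoke Shi's tabloid/RSK-style description of cells in $\tilde{\sA}_n$: each $w \in \Delta_\la$ is encoded by a pair of combinatorial data, one half indexing its left cell and the other its right cell, so that elements in the same left (resp.\ right) cell of $\Delta_\la$ share one half of this data. Using this, for each pair of cells $(\Gamma^{-1}, \Gamma')$ inside $\Delta_\la$ one can exhibit explicit $u,v \in \Wext$ such that $T_u \bar C_{\sw_{\la'}} T_v$ has a controlled Kazhdan--Lusztig expansion whose leading surviving term lies in $\Gamma^{-1} \cap \Gamma'$; iterating and peeling off leading terms by triangularity produces $\bar C_w$ for every $w \in \Delta_\la$. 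The hard part is precisely this cell-transport bookkeeping, which is where the type-$\tilde{\sA}_n$ combinatorics of tabloids is essential; by contrast, the two preceding stages are formal consequences of the parabolic form of $C_{\sw_{\la'}}$ and of P9.
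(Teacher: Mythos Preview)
The paper does not prove this theorem; it is quoted verbatim from Tanisaki--Xi \cite[Theorem~4.3]{TX:06} and used as a black box (see its sole invocation in the proof of Theorem~\ref{thm:fullkilling}). So there is no ``paper's own proof'' to compare your proposal against.

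For context, the Tanisaki--Xi argument is geometric: they identify the quotient $\cI_\leq/\cI_<$ with a convolution algebra built from equivariant $K$-theory on Steinberg-type varieties for the Langlands dual group, where the generation by $C_{\sw_{\la'}}$ becomes a statement about irreducibility of a standard module. Your outline is instead purely combinatorial/algebraic. The first two stages are sound, and in fact standard: the ideal property is immediate, and the identification of $\Hext\bar C_{\sw_{\la'}}$ with the span of $\bar C_x$ for $x$ in the left cell of $\sw_{\la'}$ is essentially Lusztig's description of cell modules for parabolic longest elements. The third stage, however, is not a proof but a plan: ``exhibit explicit $u,v$ such that $T_u\bar C_{\sw_{\la'}}T_v$ hits a prescribed intersection $\Gamma^{-1}\cap\Gamma'$'' is exactly the hard content, and invoking Shi's tabloid combinatorics does not by itself supply the required control over which Kazhdan--Lusztig terms survive in the quotient. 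In particular, triangularity in Bruhat order does not automatically give triangularity with respect to the cell decomposition, so ``peeling off leading terms'' needs a finer argument than you have indicated. A genuinely combinatorial proof along these lines would be interesting but would require substantially more than what you have sketched.
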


Let $J\subseteq \{1,\ldots,n\}$. It is well known that the Kazhdan-Lusztig basis element $C_{\sw_J}$ is 
\begin{align}\label{eq:CJ}
C_{\sw_J}=\sq^{-\ell(w_J)}\sum_{w\in W_J}\sq^{\ell(w)}T_w=\sq^{\ell(w_J)}\sum_{w\in W_J}\sq^{-\ell(w)}X_w.
\end{align}
In particular, note that for $w\in W_J$ we have 
$
T_wC_{\sw_J}=C_{\sw_J}T_w=\sq^{\ell(w)}C_{\sw_J},
$
and hence
$
C_{\sw_J}^2=\sq^{\ell(w_J)}W_J(\sq^{-2})C_{\sw_J}=\sq^{-\ell(\sw_J)}W_J(\sq^2)C_{\sw_J}$, where
$
W_J(\sq^2)=\sum_{w\in W_J}\sq^{2\ell(w)}
$.

\begin{lemma}\label{lem:CJformula}
If $\ell(\sw_Jw)=\ell(w)-\ell(\sw_J)$ then 
$$
C_{\sw_J}C_w=\sq^{-\ell(\sw_J)}W_J(\sq^2)C_w.
$$
\end{lemma}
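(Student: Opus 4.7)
The plan is to use the first formula in (\ref{eq:CJ}), namely $C_{\sw_J} = \sq^{-\ell(\sw_J)}\sum_{u \in W_J}\sq^{\ell(u)}T_u$, and to reduce the lemma to proving the eigenvector identity $T_uC_w = \sq^{\ell(u)}C_w$ for every $u \in W_J$. Once this is established, summing the identities weighted by $\sq^{\ell(u)}$ and pulling out the prefactor $\sq^{-\ell(\sw_J)}$ produces $\sq^{-\ell(\sw_J)}\left(\sum_{u \in W_J}\sq^{2\ell(u)}\right)C_w = \sq^{-\ell(\sw_J)}W_J(\sq^2)C_w$, which is the claimed formula.

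To prove the eigenvector identity I first translate the hypothesis into Coxeter-combinatorial terms: setting $w' := \sw_Jw$, the condition $\ell(w') = \ell(w) - \ell(\sw_J)$ says precisely that $w = \sw_Jw'$ is a reduced factorisation, so $w'$ is the minimal-length representative of its coset in $W_J\backslash \Wfin$, i.e.\ $w' \in {^J}W$. I then extract the key consequence that $\ell(sw) = \ell(w) - 1$ for every $s \in J$: indeed $s\sw_J \in W_J$ has length $\ell(\sw_J) - 1$, and since $w' \in {^J}W$ the product $(s\sw_J)w'$ remains reduced, yielding $\ell(sw) = \ell(s\sw_J) + \ell(w') = \ell(w) - 1$.

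With this descent property in hand, the single-generator case $T_sC_w = \sq C_w$ for $s \in J$ follows from the standard Kazhdan--Lusztig identity $C_sC_w = (\sq + \sq^{-1})C_w$ (valid when $sw < w$) combined with $C_s = T_s + \sq^{-1}$. To upgrade this to arbitrary $u \in W_J$ I will induct on $\ell(u)$: picking a simple reflection $s \in J$ with $su < u$, I factor $T_u = T_sT_{su}$ using $\ell(u) = 1 + \ell(su)$, and combine the inductive hypothesis $T_{su}C_w = \sq^{\ell(su)}C_w$ with the single-generator case.

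The only step with any real content is the verification that the hypothesis forces $\ell(sw) < \ell(w)$ for every $s \in J$; this relies on the standard characterisation of minimal-length coset representatives in $W_J\backslash \Wfin$. Everything afterwards is routine Kazhdan--Lusztig bookkeeping, so I do not anticipate any genuine obstacle.
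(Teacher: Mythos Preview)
Your proposal is correct and follows essentially the same route as the paper: both arguments reduce to showing $T_uC_w=\sq^{\ell(u)}C_w$ for all $u\in W_J$ via the single-generator identity $T_sC_w=\sq C_w$ (obtained from $C_sC_w=(\sq+\sq^{-1})C_w$ when $sw<w$), and then expand $C_{\sw_J}$ using~(\ref{eq:CJ}). You are in fact slightly more careful than the paper, which simply asserts ``$\ell(sw)<\ell(w)$ for all $s\in J$'' without deriving it from the hypothesis; your coset-representative argument fills that in (though note $w$ lies in $\Wext$, not $\Wfin$, so strictly one should speak of $W_J\backslash\Wext$ --- the Coxeter argument is of course identical).
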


\begin{proof}
Since $\ell(sw)<\ell(w)$ for all $s\in J$ we have $C_sC_w=(\sq+\sq^{-1})C_w$ (see \cite[Theorem 6.6]{Lus:03}), and hence $T_sC_w=\sq C_w$. Thus $T_vC_w=\sq^{\ell(v)}C_w$ for all $v\in W_J$. The result follows by expanding $C_{\sw_J}$ in the $T_w$ basis using~(\ref{eq:CJ}). 
\end{proof}

\begin{defn}\label{def:gamma}
Following Lusztig~\cite{Lus:03} we define $\gamma_{x,y,z^{-1}}\in\ZZ$  by 
$$h_{x,y,z}=\gamma_{x,y,z^{-1}}\sq^{\ba(z)}+(\text{terms of degree $<\ba(z)$}).$$ 
\end{defn}
Let $\cD=\{w\in W\mid \ba(w)=-\deg(P_{e,w})\}$ (the set of \textit{distinguished involutions}). For $d\in \cD$ define $n_d\in\ZZ$ by
$$P_{e,d}=n_d\sq^{-\ba(d)}+\text{strictly smaller powers of $\sq$}.$$ 

In \cite{Lus:03}, Lusztig gave a series of statements  (now known as P1--P15) that capture the essential properties of cells, Lusztig's $\ba$-function, the $\ga$-coefficents and the distinguished involutions. In the case of type $\sA$ properties P1--P15 can be proved using the positivity properties of the Kazhdan-Lusztig basis (see~\cite[\S15]{Lus:03}). We record some of these properties for later use: \smallskip
\begin{compactenum}
\item[P2.] If $d \in \cD$ and $x,y\in \Wext$ satisfy $\gamma_{x,y,d}\neq 0$,
then $y=x^{-1}$.
\item[P3.] If $x\in\Wext$ then there exists a unique $d\in\cD$ such that $\gamma_{x,x^{-1},d}\neq 0$. 
\item[P4.]  If $z'\leq_{LR} z$ then $\ba(z')\geq \ba(z)$. In particular the $\ba$-function is constant on two-sided cells.
\item[P5.] If $d\in\cD$, $x\in\Wext$ with $\gamma_{x,x^{-1},d}\neq 0$ then $\gamma_{x,x^{-1},d}=n_d=\pm 1$ (by positivity, $n_d=1$ in~$\tilde{\sA}_n$). 
\item[P6.] If $d\in \cD$ then $d^{-1}=d$. 
\item[P7.] For any $x,y,z\in \Wext$, we have $\gamma_{x,y,z}=\gamma_{y,z,x}$.
\item[P8.] Let $x,y,z\in \Wext$ be such that $\gamma_{x,y,z}\neq 0$. Then
$y\sim_{R} x^{-1}$, $z \sim_{R} y^{-1}$, and $x\sim_{R} z^{-1}$.
\item[P11.] If $z'\leq_{LR}z$ and $\ba(z')=\ba(z)$ then $z'\sim_{LR}z$. 
\item[P13.] Each right cell $\Ga$ of $\Wext$ contains a unique element
$d\in \cD$, and $\ga_{x,x^{-1},d}\neq 0$ for all $x\in\Ga$. 
\end{compactenum}
\smallskip

In \cite{Lus:87} Lusztig defined an \textit{asymptotic ring} (a $\ZZ$-algebra) $\cJ$ as follows. The ring $\cJ$ has basis $\st_w$ with $w\in\Wext$, and 
$$
\st_x\st_y=\sum_{z\in\Wext }\gamma_{x,y,z^{-1}}\st_z,
$$
and, thanks to P1--P15, $\cJ$ turns out to be an associative $\ZZ$-algebra (called \textit{Lusztig's asymptotic algebra}). For each left or right cell $\Gamma$ the submodule $\cJ_{\Gamma}$ spanned by $\{\st_{w}\mid w\in\Gamma\}$ is a subring of~$\cJ$. Let $\cJ_{\la}$ be the subring spanned by $\{\st_{w}\mid w\in\Delta_{\la}\}$.

\section{The fundamental $\la$-alcove and the group $G_{\la}$}\label{sec:2}

In this section we explicitly describe structures introduced in~\cite{GLP:23} in the affine type $\sA$ case. In particular, we describe the fundamental $\la$-alcove~$\cA_{\la}$ and its symmetry group $\sT_{\la}\rtimes G_{\la}$, the set~$P^{(\la)}$ of~$\la$-weights, and the quotient $P/Q_{\la}$. We also introduce a delicate $\la$-dominance order on~$P^{(\la)}$. Next we study the rings $\sR[\zeta_{\la}]^{G_{\la}}$ and $\ZZ[\zeta_{\la}]^{G_{\la}}$ of $G_{\la}$-invariant polynomials, and recall the construction of generic induced representations $\pi_{\la}$ of $\Hext$ from~\cite{GLP:23}. We discuss the combinatorial model of $\la$-folded alcove paths, and recall a combinatorial formula for the matrix entries of $\pi_{\la}$.

\subsection{The $\la$-weights and the fundamental $\la$-alcove}\label{sec:fundalcove}

In this section we describe the abelian group $P/Q_{\la}$ of $\la$-weights, and the fundamental $\la$-alcove. 

\begin{defn} Let
$
Q_{\la}=\mathrm{span}_{\mathbb{Z}}\{\alpha_j\mid j \in J_{\la}\}. 
$ The elements of $P/Q_{\la}$ are called \textit{$\la$-weights}. 
\end{defn}

We now explicitly describe the $\la$-weights. If $j,j'$ are on the same row of $\bt_r(\la)$ then $e_j+Q_{\la}=e_{j'}+Q_{\la}$ (because $e_j-e_{j'}\in\Phi_{\la}$). Thus for $1\leq i\leq r(\la)$ define $\tilde{e}_i\in P/Q_{\la}$ by
$$
\tilde{e}_i=e_j+Q_{\la}\quad\text{for any $\la(i-1)+1\leq j\leq \la(i)$ (that is, any $j$ in the $i$th row of $\bt_r(\la)$)}.
$$
Thus $P/Q_{\la}=\mathrm{span}_{\mathbb{Z}}\{\tilde{e}_1,\ldots,\tilde{e}_{r(\la)}\}$, and the natural map $P\to P/Q_{\la}$ is given by 
\begin{align}\label{eq:naturalmap}
d_1e_1+\cdots+d_{n+1}e_{n+1}\mapsto a_1\tilde{e}_1+\cdots+a_{r(\la)}\tilde{e}_{r(\la)},\quad\text{where}\quad a_k=\sum_{i=\la(k-1)+1}^{\la(k)}d_i.
\end{align}
The equation $e_1+\cdots+e_{n+1}=0$ implies that 
\begin{align}\label{eq:zero}
\la_1\tilde{e}_1+\la_2\tilde{e}_2+\cdots+\la_{r(\la)}\tilde{e}_{r(\la)}=0.
\end{align}

\begin{example}\label{ex:triv}
Let $\la=(n+1)$. Then $Q_{\la}=Q$. By~(\ref{eq:zero}) we have $(n+1)\tilde{e}_1=0$, however note that $\tilde{e}_1\neq 0$ (in particular, note that $P/Q_{\la}$ is not torsion free). Thus $P/Q_{\la}=\ZZ\tilde{e}_1=\{0,\tilde{e}_1,\ldots,n\tilde{e}_1\}$, where $\tilde{e}_1=e_i+Q$ for any $1\leq i\leq n+1$. 
\end{example}

\begin{example}\label{ex:A3}
If $\la=(2,2)$ then 
$$
\bt_r(\la)=\begin{ytableau}
1&2\\
3&4
\end{ytableau}\ .
$$
We have $P/Q_{\la}=\mathrm{span}_{\ZZ}\{\tilde{e}_1,\tilde{e}_2\}$ where $\tilde{e}_1=e_1+Q_{\la}=e_2+Q_{\la}$ and $\tilde{e}_2=e_3+Q_{\la}=e_4+Q_{\la}$. By~(\ref{eq:zero}) we have $2\tilde{e}_1+2\tilde{e}_2=0$. However, note that $\tilde{e}_1+\tilde{e}_2\neq 0$ (because $e_1+e_3\notin Q_{\la}$). Thus the element $\tilde{e}_1+\tilde{e}_2$ has order~$2$.
\end{example}

For $\la\in\cP(n+1)$ the \textit{fundamental $\la$-alcove} is 
\begin{align}\label{eq:falc}
\cA_{\la}=\{v\in V\mid 0\leq\langle v,\alpha\rangle\leq 1\text{ for all $\alpha\in\Phi_{\la}^+$}\}.
\end{align}
Note that if $\la=(n+1)$ then $J_{\la}=\{1,\ldots,n\}$, and $\cA_{\la}=\cA_0$ (the fundamental alcove).

By \cite[Lemma~3.4]{GLP:23}, for each $\gamma\in P$ there exists a unique element $\gamma^{(\la)}\in \cA_{\la}\cap P$ with $\gamma-\gamma^{(\la)}\in Q_{\la}$ (we call $\gamma^{(\la)}$ the \textit{projection of $\gamma$ onto the fundamental $\la$-alcove}). The set
$$
P^{(\la)}=\cA_{\la}\cap P=\{\gamma^{(\la)}\mid \gamma\in P\}
$$
is closely related to the set $P/Q_{\la}$ of $\la$-weights (see Proposition~\ref{prop:bijectionP} below). By \cite[Lemma~3.6]{GLP:23} an element $\gamma=\sum_{i=1}^na_i\omega_i\in P$ lies in $P^{(\la)}$ if and only if whenever $j\in J_{\la}$ we have $a_j\in \{0,1\}$, and for each connected component $K$ of $J_{\la}$ there is at most one $k\in K$ with $a_k=1$ (the \textit{connected components} of $J_{\la}$ are the sets $\{\la(i-1)+1,\ldots,\la(i)-1\}$ for $1\leq i\leq r(\la)$).

For $\gamma\in P^{(\la)}$ let 
$
J_{\la}(\gamma)=\{j\in J_{\la}\mid \langle\gamma,\alpha_j\rangle=1\},
$
and following~\cite{GLP:23} let 
\begin{align}\label{eq:defny}
\sy_{\gamma}=\sw_{J_{\la}\backslash J_{\la}(\gamma)}\sw_{J_{\la}}\quad\text{and}\quad \tau_{\gamma}=t_{\gamma}\sy_{\gamma}.
\end{align}
The elements $\tau_{\ga}$ are called \textit{pseudo-translations}, and by \cite[Lemma~3.14]{GLP:23} we have
$$
\tau_{\gamma}\tau_{\gamma'}=\tau_{(\gamma+\gamma')^{(\la)}}\quad\text{ for all $\gamma,\gamma'\in P^{(\la)}$}.
$$ 
By \cite[Lemma~3.10]{GLP:23} we have $\Phi(\sy_{\gamma})=\{\alpha\in\Phi_{\la}^+\mid \langle\gamma,\alpha\rangle=1\}$. Note that $\tau_{\gamma}$ and $\sy_{\gamma}$ depend on the partition $\la$, however this dependence is suppressed without risk of confusion.

\begin{example}
If $\Phi$ is of type $\sA_2$ and $\la=(2,1)$ then $\cA_{\la}$ (shaded green) and $P^{(\la)}$ (represented as bullets) are as illustrated in Figure~\ref{fig:A2}. If $\gamma\in P^{(\la)}$ lies on the hyperplane $H_{\alpha_1,1}$ then $\sy_{\ga}=s_1$.
\begin{figure}[H]
\centering
\begin{tikzpicture}[scale=1.4]
%the gates
\clip ({-5*0.5},{-2.5*0.866}) rectangle + ({5},{5*0.866});
    %slope 1
    \path [fill=green2] (-2.5,{-3*0.866})--(-1.5,{-3*0.866})--(1.5,{3*0.866})--(0.5,{3*0.866})--(-2.5,{-3*0.866});
     \path [fill=gray!90] (0,0) -- (-0.5,0.866) -- (0.5,0.866) -- (0,0);
    \draw (2.5, {-3*0.866})--( 3.5, {-1*0.866} );
    \draw (1.5, {-3*0.866})--( 3.5, {1*0.866} );
    \draw  (0.5, {-3*0.866})--( 3.5, {3*0.866} );
    \draw  (-0.5, {-3*0.866})--( 3, {4*0.866} );
    \draw  [line width=2pt](-1.5, {-3*0.866})--( 2, {4*0.866} );
  \draw  (-2.5, {-3*0.866})--( 1, {4*0.866} );
    \draw  (-3.5, {-3*0.866})--(0, {4*0.866} );
    \draw  (-3.5, {-1*0.866})--(-1, {4*0.866} );
    \draw  (-3.5, {1*0.866})--(-2, {4*0.866} );
    \draw  (-3.5, {3*0.866})--(-3, {4*0.866} );
    %slope 2
   \draw (-2.5, {-3*0.866})--( -3.5, {-1*0.866} );
    \draw (-1.5, {-3*0.866})--( -3.5, {1*0.866} );
    \draw  (-0.5, {-3*0.866})--( -3.5, {3*0.866} );
    \draw  (0.5, {-3*0.866})--( -3, {4*0.866} );
  \draw (1.5, {-3*0.866})--( -2, {4*0.866} );
    \draw  (2.5, {-3*0.866})--( -1, {4*0.866} );
    \draw  (3.5, {-3*0.866})--(0, {4*0.866} );
    \draw  (3.5, {-1*0.866})--(1, {4*0.866} );
    \draw  (3.5, {1*0.866})--(2, {4*0.866} );
    \draw  (3.5, {3*0.866})--(3, {4*0.866} );
        %slope 3
    \draw (-3.5, -2.598)--( 3.5, -2.598);
    \draw (-3.5, -1.732)--( 3.5, -1.732);
    \draw (-3.5, -0.866)--( 3.5, -0.866);
    \draw (-3.5, 0)--( 3.5, 0);
    \draw (-3.5, 3.464)--( 3.5, 3.464 );
    \draw (-3.5, 2.598)--( 3.5, 2.598);
    \draw (-3.5, 1.732)--( 3.5, 1.732);
        \draw (-3.5, 0.866)--( 3.5, 0.866);
    %root system
    \draw [latex-latex, line width=1pt] (-1.5,-0.866)--(1.5,0.866);
    \draw [latex-latex, line width=1pt] (1.5,-0.866)--(-1.5,0.866);
    \draw [latex-latex, line width=1pt] (0,-1.732)--(0,1.732);
    \node at (-1.8,1) {\small{$\alpha_1$}};
    \node at (1.8,1) {\small{$\alpha_2$}};
    \node at (-0.8,1.05) {\small{$\omega_1$}};
    \node at (0.9,1.05) {\small{$\omega_2$}};
    %weights
    \node at ({0*0.5},{0*0.866}) {\Large{$\bullet$}};
 %    \node at ({0.5*0.5},{0.5*0.866}) {\Large{$\circ$}};
     \node at ({1*0.5},{1*0.866}) {\Large{$\bullet$}};
      \node at ({2*0.5},{2*0.866}) {\Large{$\bullet$}};
    %   \node at ({1.5*0.5},{1.5*0.866}) {\Large{$\circ$}};
       %\node at ({2.5*0.5},{2.5*0.866}) {\Large{$\circ$}};
       % \node at ({-0.5*0.5},{-0.5*0.866}) {\Large{$\circ$}};
     \node at ({-1*0.5},{-1*0.866}) {\Large{$\bullet$}};
      \node at ({-2*0.5},{-2*0.866}) {\Large{$\bullet$}};
     %  \node at ({-1.5*0.5},{-1.5*0.866}) {\Large{$\circ$}};
       %\node at ({-2.5*0.5},{-2.5*0.866}) {\Large{$\circ$}};
        \node at ({-0.5},{1*0.866}) {\Large{$\bullet$}};
         \node at ({0},{2*0.866}) {\Large{$\bullet$}};
          \node at ({-1},{0*0.866}) {\Large{$\bullet$}};
           \node at ({-1.5},{-1*0.866}) {\Large{$\bullet$}};
            \node at ({-2},{-2*0.866}) {\Large{$\bullet$}};
     %  \draw [line width=1pt,dotted] (-0.5,0.866)--(0.25,0.433);
\end{tikzpicture}
\caption{The set $P^{(\la)}$ for type $\sA_2$ with $\la=(2,1)$}\label{fig:A2}
\end{figure}
\end{example}

\begin{prop}[{\cite[Corollary~3.15]{GLP:23}}]\label{prop:latticeiso} The set $\mathsf{T}_{\la}=\{\tau_{\gamma}\mid \gamma\in P^{(\la)}\}$ of pseudo-translations is an abelian group, and $\mathsf{T}_{\la}\cong P/Q_{\la}$ with $\tau_{\gamma}\mapsto \gamma+Q_{\la}$. 
\end{prop}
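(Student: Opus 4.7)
\medskip

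\noindent\textbf{Proof plan.} The entire statement rests on the multiplication formula $\tau_{\gamma}\tau_{\gamma'}=\tau_{(\gamma+\gamma')^{(\la)}}$ for $\gamma,\gamma'\in P^{(\la)}$ (recorded in the excerpt from \cite[Lemma~2.12]{GLP:23}) together with the fact that each coset of $Q_{\la}$ in $P$ has a unique representative in $P^{(\la)}$. The plan is to first verify that $\sT_{\la}$ is an abelian group, and then to exhibit the explicit isomorphism with $P/Q_{\la}$.

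For the group structure on $\sT_{\la}$, I would first observe that $0\in \cA_{\la}\cap P= P^{(\la)}$ and that $J_{\la}(0)=\emptyset$, so that $\sy_{0}=\sw_{J_{\la}}\sw_{J_{\la}}=e$ and hence $\tau_{0}=e$. The multiplication formula immediately gives closure and commutativity (since $(\gamma+\gamma')^{(\la)}=(\gamma'+\gamma)^{(\la)}$), while associativity is inherited from $\Wext$. For inverses, given $\gamma\in P^{(\la)}$ define $\gamma^{\ast}=(-\gamma)^{(\la)}\in P^{(\la)}$; then $\gamma+\gamma^{\ast}\in Q_{\la}$, so $(\gamma+\gamma^{\ast})^{(\la)}=0$ by uniqueness of the projection, and therefore $\tau_{\gamma}\tau_{\gamma^{\ast}}=\tau_{0}=e$.

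For the isomorphism, I would define
\[
\psi:P/Q_{\la}\longrightarrow \sT_{\la},\qquad \gamma+Q_{\la}\longmapsto \tau_{\gamma^{(\la)}}.
\]
This is well defined because $\gamma^{(\la)}$ depends only on the coset $\gamma+Q_{\la}$ (by uniqueness of the projection), and bijective because the assignment $\gamma'\mapsto \tau_{\gamma'}$ is a bijection $P^{(\la)}\to\sT_{\la}$ by the very definition of $\sT_{\la}$, while $\gamma\mapsto \gamma^{(\la)}$ descends to a bijection $P/Q_{\la}\to P^{(\la)}$. To see that $\psi$ is a homomorphism, let $\gamma,\gamma'\in P$ and note that $\gamma^{(\la)}-\gamma\in Q_{\la}$ and $\gamma'^{(\la)}-\gamma'\in Q_{\la}$ give $(\gamma^{(\la)}+\gamma'^{(\la)})-(\gamma+\gamma')\in Q_{\la}$, whence uniqueness of the projection yields $(\gamma^{(\la)}+\gamma'^{(\la)})^{(\la)}=(\gamma+\gamma')^{(\la)}$. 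Combining this with the multiplication formula,
\[
\psi(\gamma+Q_{\la})\,\psi(\gamma'+Q_{\la})=\tau_{\gamma^{(\la)}}\tau_{\gamma'^{(\la)}}=\tau_{(\gamma^{(\la)}+\gamma'^{(\la)})^{(\la)}}=\tau_{(\gamma+\gamma')^{(\la)}}=\psi\bigl((\gamma+\gamma')+Q_{\la}\bigr).
\]

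There is no real obstacle here: the only nontrivial input is the multiplication formula $\tau_{\gamma}\tau_{\gamma'}=\tau_{(\gamma+\gamma')^{(\la)}}$, which is already in hand, and the uniqueness of the projection onto $P^{(\la)}$, which was stated earlier via \cite[Lemma~2.3]{GLP:23}. The only point that requires a small verification is the identification $\tau_{0}=e$, which reduces to computing $\sy_{0}$.
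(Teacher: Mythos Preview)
Your argument is correct and is the natural deduction from the two cited facts (the multiplication formula \cite[Lemma~2.12]{GLP:23} and the uniqueness of the projection \cite[Lemma~2.3]{GLP:23}); the paper itself does not give a proof of this proposition but simply cites \cite[Corollary~2.13]{GLP:23}, so there is nothing to compare against beyond noting that your proof is exactly the standard unwinding one would expect. One tiny quibble: the injectivity of $\gamma'\mapsto\tau_{\gamma'}$ on $P^{(\la)}$ is not literally ``by the very definition of $\sT_{\la}$'' (the definition only gives surjectivity), but it follows at once from $\wt(\tau_{\gamma'})=\gamma'$.
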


Thus $P^{(\la)}$ inherits the structure of an abelian group with operation $\ga\oplus\ga'=(\ga+\ga')^{(\la)}$, and $P^{(\la)}\cong P/Q_{\la}$ via the map $\ga\mapsto \ga+Q_{\la}$. The following description of the inverse isomorphism $P/Q_{\la}\to P^{(\la)}$ is useful (combined with~(\ref{eq:naturalmap}) it gives an explicit expression for the projection map $P\to P^{(\la)}$). Recall that $\la[i,j]$ denotes the element in row $i$ and column $j$ of~$\bt_r(\la)$. 

\begin{prop}\label{prop:bijectionP}
The isomorphism $P/Q_{\la}\xrightarrow{\sim} P^{(\la)}$ is given by 
$$
a_1\tilde{e}_1+\cdots+a_{r(\la)}\tilde{e}_{r(\la)}\mapsto \sum_{k=1}^{r(\la)}\left((b_k+1)(e_{\la[k,1]}+\cdots+e_{\la[k,c_k]})+b_k(e_{\la[k,c_k+1]}+\cdots+e_{\la[k,\la_k]})\right)
$$ 
where $b_k$ and $c_k$ are defined by $a_k=\la_kb_k+c_k$ with $0\leq c_k<\la_k$. 
\end{prop}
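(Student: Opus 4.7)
The plan is to define the candidate weight $\ga_\nu$ via the stated formula, verify that it lies in $P^{(\la)}$, verify that it maps to $\nu$ under the natural projection $P \to P/Q_\la$, and then invoke uniqueness of the projection onto the fundamental $\la$-alcove.

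First I would check that the formula is well-defined as a function of $\nu = a_1\tilde{e}_1 + \cdots + a_{r(\la)}\tilde{e}_{r(\la)} \in P/Q_\la$. The representatives $(a_1,\ldots,a_{r(\la)}) \in \ZZ^{r(\la)}$ are determined only modulo the kernel of the surjection $\ZZ^{r(\la)} \to P/Q_\la$, which by~(\ref{eq:zero}) contains $(\la_1,\ldots,\la_{r(\la)})$. Shifting $a_k \mapsto a_k + \la_k$ for every $k$ changes each $b_k$ to $b_k+1$ while leaving $c_k$ unchanged, so the formula is modified by $\sum_{k=1}^{r(\la)}\sum_{j=1}^{\la_k} e_{\la[k,j]} = e_1+\cdots+e_{n+1} = 0$. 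Hence $\ga_\nu$ is independent of the chosen representative.

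Next I would verify $\ga_\nu \in P^{(\la)} = \cA_\la \cap P$ by direct inspection. The positive roots in $\Phi_\la^+$ have the form $e_{\la[k,p]} - e_{\la[k,q]}$ for some $k$ and $1 \leq p < q \leq \la_k$. By construction, the coefficient of $e_{\la[k,j]}$ in $\ga_\nu$ equals $b_k+1$ if $j \leq c_k$ and $b_k$ otherwise; thus $\langle \ga_\nu, e_{\la[k,p]} - e_{\la[k,q]}\rangle$ is $0$ if $p,q$ are both $\leq c_k$ or both $>c_k$, and is exactly $1$ if $p \leq c_k < q$. In every case the pairing lies in $\{0,1\} \subseteq [0,1]$, confirming $\ga_\nu \in \cA_\la$.

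Then I would compute the image of $\ga_\nu$ under the natural map $P \to P/Q_\la$ using~(\ref{eq:naturalmap}): the coefficient of $\tilde{e}_k$ in the image is precisely $c_k(b_k+1) + (\la_k - c_k)b_k = \la_k b_k + c_k = a_k$, so $\ga_\nu + Q_\la = \nu$. Finally, by \cite[Lemma~2.3]{GLP:23} each coset of $P/Q_\la$ has a unique representative in $P^{(\la)}$, so $\ga_\nu$ is necessarily the image of $\nu$ under the inverse of the natural isomorphism $P^{(\la)} \xrightarrow{\sim} P/Q_\la$, which is the stated map. The only mildly technical aspect is the case analysis establishing membership in $\cA_\la$, but it is routine once the column/row indexing is kept straight.
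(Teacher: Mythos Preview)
Your proof is correct and follows essentially the same approach as the paper: verify that the candidate element lies in $P^{(\la)}$ by pairing with roots in $\Phi_{\la}^+$, check that it projects to the given coset in $P/Q_{\la}$, and conclude by the uniqueness of the representative (from \cite[Lemma~2.3]{GLP:23}). The only difference is your added well-definedness check for the representative $(a_1,\ldots,a_{r(\la)})$, which is fine but not strictly necessary once uniqueness is invoked.
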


\begin{proof}
Let $\ga$ denote the right hand side of the map. Since $e_j+Q_{\la}=\tilde{e}_k$ for all $j$ in the $k$th row of $\bt_r(\la)$ we have $\gamma+Q_{\la}=a_1\tilde{e}_1+\cdots+a_{r(\la)}\tilde{e}_{r(\la)}$. Moreover, if $\alpha=e_i-e_j\in \Phi_{\la}^+$ then $i,j$ lie in a common row of $\bt_r(\la)$, say $i=\la[k,i']$ and $j=\la[k,j']$ with $1\leq i'<j'\leq \la_k$. Then $\langle\ga,\alpha\rangle\in\{0,1\}$, and so $\ga\in P^{(\la)}$. Hence the result. 
\end{proof}

\begin{remark}\label{rem:bijectionP}
Identify elements $\ga=d_1e_1+\cdots+d_{n+1}e_{n+1}$ with tableau of shape $\la$ filled with $d_1,d_2,\ldots,d_{n+1}$ along rows. Since $e_1+\cdots+e_{n+1}=0$, two filled tableaux are considered equal if they differ by a multiple of the constant tableau with every entry~$1$. Proposition~\ref{prop:bijectionP} shows that~$P^{(\la)}$ consists precisely of the elements whose associated tableau has $k$th row (for $1\leq k\leq r(\la)$) of the form
$$
\ytableausetup{boxsize=3em}
\scalebox{1}[.5]{
\begin{ytableau}
\s{2}{b_k+1}&\s{2}{b_k+1}&\s{2}{\cdots}&\s{2}{b_k+1}&\s{2}{b_k}&\s{2}{b_k}&\s{2}{\cdots}&\s{2}{b_k}
\end{ytableau}}
\ytableausetup{boxsize=normal}
$$
where $b_k+1$ occurs $c_k$ times, with $0\leq c_k<\la_k$. Moreover, the isomorphism $P/Q_{\la}\xrightarrow{\sim}P^{(\la)}$ is given by mapping $a_1\tilde{e}_1+\cdots+a_{r(\la)}\tilde{e}_{r(\la)}$ to the above tableau (where $a_k=\la_kb_k+c_k$). For example, if $\la=(7,3)$ then 
$$
39\tilde{e}_1+11\tilde{e}_2\mapsto\begin{ytableau}
6&6&6&6&5&5&5\\
4&4&3
\end{ytableau}\ .
$$
\end{remark}

\subsection{The group $G_{\la}$}

Let 
$$
G_{\la}=\{g\in \Wfin \mid g\cA_{\la}=\cA_{\la}\}
$$
be the subgroup of $\Wfin$ stabilising the fundamental $\la$-alcove. By \cite[Theorem~3.20]{GLP:23} we have
\begin{align}\label{eq:stabiliserho}
G_{\la}=\{g\in \Wfin \mid g\Phi_{\la}^+=\Phi_{\la}^+\},\quad\text{and so}\quad \text{$g\rho_{\la}=\rho_{\la}$ for all $g\in G_{\la}$}
\end{align}
with $\rho_{\lambda}$ as in~(\ref{eq:rholambda}). Moreover $g\in G_{\la}$ maps the simple roots of $\Phi_{\la}$ to the simple roots of $\Phi_{\la}$, and hence induces a permutation of~$J_{\la}$. This permutation maps connected components of $J_{\la}$ to connected components of~$J_{\la}$. Furthermore, by \cite[Lemma~3.21]{GLP:23} for $\gamma\in P^{(\la)}$ and $g\in G_{\la}$ we have
$$
g\sy_{\gamma}g^{-1}=\sy_{g\gamma},\quad g\tau_{\gamma}g^{-1}=\tau_{g\gamma},\quad\text{and}\quad \ell(\tau_{g\gamma})=\ell(\tau_{\gamma}). 
$$
The subgroup of $\Wext$ stabilising $\cA_{\la}$ is (see~\cite[Corollary~3.22]{GLP:23})
\begin{align}\label{eq:lambdaaffine}
\{w\in \Wext\mid w\cA_{\la}=\cA_{\la}\}=\mathsf{T}_{\la}\rtimes G_{\la}.
\end{align}
The group $G_{\la}$ has the following explicit description in terms of tableaux. Let 
$$\lengths(\la)=\{\la_i\mid 1\leq i\leq r(\la)\}$$ be the set of lengths of the rows of $\la$, and for $l\in \lengths(\la)$ let
$$
\kappa_{\la}(l)=\#\{1\leq i\leq r(\la)\mid \la_i=l\}
$$
be the number of rows of length~$l$. For example, if $\la=(5,5,3,3,3,2)$ then $\lengths(\la)=\{2,3,5\}$, and $\kappa_{\la}(2)=1$, $\kappa_{\la}(3)=3$, and $\kappa_{\la}(5)=2$. 

\begin{prop}\label{prop:Glatype}
The group $G_{\la}$ is the subgroup of $\mathfrak{S}_{n+1}$ stabilising each column of $\bt_r(\la)$ and permuting the set of rows of $\bt_r(\la)$. Thus $G_{\la}$ is a Coxeter group of type $\prod_{l\in\lengths(\la)}\sA_{\kappa_{\la}(l)-1}$.
\end{prop}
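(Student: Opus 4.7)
The plan rests on the characterisation $G_{\la}=\{g\in\Wfin \mid g\Phi_{\la}^+=\Phi_{\la}^+\}$ from~(\ref{eq:stabiliserho}). The first step is to make $\Phi_{\la}^+$ explicit: using $\Phi_{\la}=\{\alpha\in\Phi\mid\mathrm{supp}(\alpha)\subseteq J_{\la}\}$ together with the definition $J_{\la}=\{1,\ldots,n+1\}\setminus\{\la(1),\ldots,\la(r(\la))\}$ and the fact that $\mathrm{supp}(e_i-e_j)=\{i,i+1,\ldots,j-1\}$ for $i<j$, one checks directly that $\Phi_{\la}^+$ consists of the roots $e_i-e_j$ with $1\leq i<j\leq n+1$ lying in a common row of $\bt_r(\la)$.

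Next, I would translate the condition $g\Phi_{\la}^+=\Phi_{\la}^+$ into the combinatorics of $\bt_r(\la)$. Fix a row $R_k=\{\la(k-1)+1,\ldots,\la(k)\}$. For each pair of adjacent entries $i,i+1\in R_k$, the requirement $e_{g(i)}-e_{g(i+1)}\in\Phi_{\la}^+$ forces $g(i)<g(i+1)$ and forces $g(i),g(i+1)$ to lie in a common row of $\bt_r(\la)$. Iterating along $R_k$ shows that $g(\la[k,1])<g(\la[k,2])<\cdots<g(\la[k,\la_k])$ occupy consecutive positions in some row $R_{k'}$. Since $g\in\Sym_{n+1}$ is a bijection, as $k$ varies the target rows $R_{k'}$ must exhaust all rows of $\bt_r(\la)$, forcing $\la_{k'}=\la_k$ and $g(\la[k,i])=\la[k',i]$ for $1\leq i\leq\la_k$. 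Conversely, any bijection of $\{1,\ldots,n+1\}$ arising this way preserves $\Phi_{\la}^+$ by the explicit description above.

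The identity $g(\la[k,i])=\la[k',i]$ with $\la_k=\la_{k'}$ says precisely that $g$ stabilises each column of $\bt_r(\la)$ as a set, while the assignment $k\mapsto k'$ permutes the rows of $\bt_r(\la)$ among those of equal length. This proves the first claim. For the Coxeter type, each $g\in G_{\la}$ is uniquely determined by its induced permutation of rows, which must preserve row-length; hence $G_{\la}\cong\prod_{l\in\lengths(\la)}\Sym_{\kappa_{\la}(l)}$, and since $\Sym_m$ is a Coxeter group of type $\sA_{m-1}$ the claimed Coxeter type follows. The only real subtlety is the bijectivity argument showing that each row is mapped \emph{onto} an entire row of the same length rather than into a proper subset of some row; this is handled by iterating the adjacency condition together with the surjectivity of $g$ on $\{1,\ldots,n+1\}$.
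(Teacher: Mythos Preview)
Your proof is correct and follows essentially the same route as the paper: both use the characterisation $G_{\la}=\{g\in\Wfin\mid g\Phi_{\la}^+=\Phi_{\la}^+\}$ to deduce that $g$ maps rows to rows in an order-preserving (hence column-preserving) way, with the converse being immediate. The paper phrases the forward direction via the preliminary observation that $g$ permutes the simple roots of $\Phi_{\la}$ and hence the connected components of $J_{\la}$, whereas you argue directly with adjacent pairs in each row and a counting/bijectivity step; these are minor presentational variants of the same argument.
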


\begin{proof}
Since $g\in G_{\la}$ maps connected components of $J_{\la}$ to connected components of $J_{\la}$ it maps rows of $\bt_r(\la)$ to rows of $\bt_r(\la)$. Since $g$ also maps positive $\Phi_{\la}$ roots to positive $\Phi_{\la}$ roots it follows that $g$ preserves columns of $\bt_r(\la)$. Conversely, any $g\in \Wfin=\mathfrak{S}_{n+1}$ that stabilises columns and acts on the set of rows of $\bt_r(\la)$ satisfies $g\Phi_{\la}^+=\Phi_{\la}^+$, and hence $g\in G_{\la}$. Thus $G_{\la}\cong \prod_{l\in\lengths(\la)}\mathfrak{S}_{\kappa_{\la}(l)}$. 
\end{proof}

\begin{example}\label{ex:running2} Let $\la=(6,6,4,4,4,2,1,1)$, and so
$$
\bt_r(\la)=\begin{ytableau}
1&2&3&4&5&6\\
7&8&9&10&11&12\\
13&14&15&16\\
17&18&19&20\\
21&22&23&24\\
25&26\\
27\\
28
\end{ytableau}
$$
Then $G_{\la}\cong \mathfrak{S}_2\times \mathfrak{S}_3\times \mathfrak{S}_2$ is generated by the involutions $(1,7)(2,8)(3,9)(4,10)(5,11)(6,12)$, $(13,17)(14,18)(15,19)(16,20)$, $(17,21)(18,22)(19,23)(20,24)$, and $(27,28)$. 
\end{example}

\begin{prop}\label{prop:Gla1}
We have 
 $G_{\la}=\{g\in \su_{\la}W_{\la'}\su_{\la}^{-1}\mid g\Phi_{\la}=\Phi_{\la}\}.$ 
 In particular, $\su_{\la}^{-1}G_{\la}\su_{\la}\leq W_{\la'}$. 
\end{prop}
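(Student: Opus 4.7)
The plan is to identify the conjugate subgroup $\su_{\la}W_{\la'}\su_{\la}^{-1}$ combinatorially as the subgroup of $\mathfrak{S}_{n+1}$ that preserves each column of $\bt_r(\la)$ (as a set), and then use the description of $G_{\la}$ from Proposition~\ref{prop:Glatype} together with the constraint $g\Phi_{\la}=\Phi_{\la}$ to pin down the elements of both sides.

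First I would observe that by construction of $\su_{\la}$ (reading $\bt_r(\la)$ by columns, top to bottom, left to right), the image under $\su_{\la}$ of the set $\{\la'(k-1)+1,\ldots,\la'(k)\}$ is exactly the $k$th column of $\bt_r(\la)$. Since $W_{\la'}$ is the Young subgroup permuting entries within each set $\{\la'(k-1)+1,\ldots,\la'(k)\}$, conjugation by $\su_{\la}$ identifies $\su_{\la}W_{\la'}\su_{\la}^{-1}$ with the subgroup $C_{\la}\leq \mathfrak{S}_{n+1}$ consisting of permutations that stabilise each column of $\bt_r(\la)$ as a set.

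For the inclusion $G_{\la}\subseteq \{g\in \su_{\la}W_{\la'}\su_{\la}^{-1}\mid g\Phi_{\la}=\Phi_{\la}\}$: Proposition~\ref{prop:Glatype} says that $G_{\la}$ stabilises each column of $\bt_r(\la)$, hence $G_{\la}\leq C_{\la}=\su_{\la}W_{\la'}\su_{\la}^{-1}$. The condition $g\Phi_{\la}=\Phi_{\la}$ is immediate from~(\ref{eq:stabiliserho}).

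The reverse inclusion is the content of the argument. Take $g\in C_{\la}$ with $g\Phi_{\la}=\Phi_{\la}$. Since $g$ stabilises each column, for each column index $c$ there is a permutation $\sigma_c$ of the row indices $\{r\mid \la_r\geq c\}$ with $g(\la[r,c])=\la[\sigma_c(r),c]$. For each row $r$ and any two columns $c_1,c_2\leq \la_r$, the vector $e_{\la[r,c_1]}-e_{\la[r,c_2]}$ lies in $\Phi_{\la}$, and its image under $g$ is $e_{\la[\sigma_{c_1}(r),c_1]}-e_{\la[\sigma_{c_2}(r),c_2]}$. The hypothesis $g\Phi_{\la}=\Phi_{\la}$ forces this image to be a root with both entries lying in a common row of $\bt_r(\la)$, which (since they sit in distinct columns $c_1\neq c_2$) requires $\sigma_{c_1}(r)=\sigma_{c_2}(r)$. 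Denoting this common value by $\pi(r)$, we obtain $g(\la[r,c])=\la[\pi(r),c]$ for all $c\leq \la_r$. Bijectivity of $g$ forces $\la_{\pi(r)}=\la_r$, so $\pi$ is a length-preserving permutation of rows. Thus $g$ stabilises each column and permutes the rows of $\bt_r(\la)$, so $g\in G_{\la}$ by Proposition~\ref{prop:Glatype}. The ``in particular'' statement is then immediate from the containment $G_{\la}\subseteq \su_{\la}W_{\la'}\su_{\la}^{-1}$.

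The main obstacle is the last step, namely extracting from the single constraint $g\Phi_{\la}=\Phi_{\la}$ the fact that the column permutations $\sigma_c$ all agree on their common domain; the key idea is to test $g$ against roots supported in a single row across varying pairs of columns, which is exactly where the constraint bites.
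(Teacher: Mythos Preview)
Your proof is correct and complete. The identification of $\su_{\la}W_{\la'}\su_{\la}^{-1}$ with the column stabiliser $C_{\la}$ is exactly what the paper records in Remark~\ref{rem:columnstabiliser}, and your forward inclusion matches the paper's. The reverse inclusion, however, is handled quite differently. You argue combinatorially: testing $g$ on roots supported in a single row forces the per-column permutations $\sigma_c$ to agree, whence $g$ permutes rows and Proposition~\ref{prop:Glatype} applies. The paper instead proves directly that $g\Phi_{\la}^+=\Phi_{\la}^+$ (not merely $g\Phi_{\la}=\Phi_{\la}$) via a positivity argument: writing $g=\su_{\la}w\su_{\la}^{-1}$ and using that $\su_{\la}w^{-1}\in{^{\la}}W$ for all $w\in W_{\la'}$ (Corollary~\ref{cor:lengthaddula}), one sees that $w\su_{\la}^{-1}$ sends $\Phi_{\la}^+$ into $\Phi^+$, ruling out any sign flip; then $g\in G_{\la}$ by~(\ref{eq:stabiliserho}). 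Your route is more elementary and self-contained (it avoids Corollary~\ref{cor:lengthaddula} and the positivity bookkeeping), while the paper's route is shorter once those ingredients are in hand and yields the sharper conclusion $g\Phi_{\la}^+=\Phi_{\la}^+$ along the way. One small point worth making explicit in your write-up: the step ``bijectivity of $g$ forces $\la_{\pi(r)}=\la_r$'' uses that $\pi=\sigma_1$ is a bijection of row indices with $\la_{\pi(r)}\geq\la_r$ for all $r$, so equality follows by summing.
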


\begin{proof}
It follows from the one-line description of $\su_{\la}$ that $\su_{\la}^{-1}G_{\la}\su_{\la}$ is the subgroup of $\mathfrak{S}_{n+1}$ stabilising each row of $\bt_r(\la')$ and acting on the set of columns of $\bt_r(\la')$, and so in particular $\su_{\la}^{-1}G_{\la}\su_{\la}\leq W_{\la'}$. Thus if $g\in G_{\la}$ then $g\in \su_{\la}W_{\la'}\su_{\la}^{-1}$ and $g\Phi_{\la}=\Phi_{\la}$ (as $g\Phi_{\la}^+=\Phi_{\la}^+$). On the other hand, suppose that $g=\su_{\la}w\su_{\la}^{-1}$ with $w\in W_{\la'}$ and that $g\Phi_{\la}=\Phi_{\la}$. If there is $\alpha\in\Phi_{\la}^+$ such that $g\alpha=-\beta\in-\Phi^+$ then we have $w\su_{\la}^{-1}\alpha=-\su_{\la}^{-1}\beta$. But $w\su_{\la}^{-1}\alpha>0$ (as $\su_{\la}w^{-1}\in {^\la}W$) and so $\su_{\la}^{-1}\beta<0$. However since $\beta\in\Phi_{\la}^+$ we have $\su_{\la}^{-1}\beta>0$ (as $\su_{\la}\in {^\la}W$), a contradiction. 
\end{proof}

\begin{remark}\label{rem:columnstabiliser}
It follows from the definition of $\su_{\la}$ that the group $\su_{\la}W_{\la'}\su_{\la}^{-1}$ is the subgroup of the symmetric group stabilising each column of $\bt_r(\la)$. 
\end{remark}

\begin{defn}\label{defn:Glaroots}
The \textit{$G_{\la}$-root system} is the subset $\Phi_{G_{\la}}=\Phi_{G_{\la}}^+\cup(-\Phi_{G_{\la}}^+)$ of $P/Q_{\la}$ with
\begin{align*}
\Phi_{G_{\la}}^+=\{\tilde{e_i}-\tilde{e}_j\mid 1\leq i<j\leq r(\la)\text{ with }\la_i=\la_j\}.
\end{align*}
Note that $\Phi_{G_{\la}}$ is not a true root system because the group $P/Q_{\la}$ can have torsion (see Example~\ref{ex:A3}), however it plays an analogous role to a root system in the theory below. 
\end{defn}

\subsection{Dominant $\la$-weights and the $\la$-dominance order}\label{sec:lambdadominance}

Since $G_{\la}$ preserves $Q_{\la}$ (by (\ref{eq:stabiliserho})) the equation $g(\gamma+Q_{\la})=g\gamma+Q_{\la}$ defines an action of $G_{\la}$ on the set~$P/Q_{\la}$ of $\la$-weights. More explicitly, this action is given by permuting the vectors $\tilde{e}_1,\ldots,\tilde{e}_{r(\la)}$ subject to the constraint that if $g\tilde{e}_i=\tilde{e}_j$ then $\la_i=\la_j$.

A fundamental domain for the action of $G_{\la}$ on $P/Q_{\la}$ is given by 
$$
(P/Q_{\la})_+=\{a_1\tilde{e}_1+\cdots+a_{r(\la)}\tilde{e}_{r(\la)}\in P/Q_{\la}\mid a_i\geq a_j\text{ if $i<j$ with $\la_i=\la_j$}\}.
$$
We call the elements of $(P/Q_{\la})_+$ the \textit{dominant $\la$-weights}.

By definition the group $G_{\la}$ also acts on $P^{(\la)}$. Let $P^{(\la)}_+$ be the fundamental domain for this action corresponding to the fundamental domain $(P/Q_{\la})_+$ under the isomorphism $P/Q_{\la}\cong P^{(\la)}$ (see Proposition~\ref{prop:bijectionP}). That is,
\begin{align}\label{eq:dominantweights}
P^{(\la)}_+=\{\gamma\in P^{(\la)}\mid \gamma+Q_{\la}\in (P/Q_{\la})_+\}.
\end{align}

\begin{example}\label{ex:A32}
Let $\la=(2,2)$ as in Example~\ref{ex:A3}. Then $G_{\la}$ is of type $\sA_1$, generated by $\tilde{s}=s_2s_1s_3s_2$, and $\Phi_{G_{\la}}^+=\{\tilde{e}_1-\tilde{e}_2\}$. We have $
(P/Q_{\la})_+=\{a_1\tilde{e}_1+a_2\tilde{e}_2\mid a_1\geq a_2\}$ and 
$$
P^{(\la)}_+=(\ZZ_{\geq 0}\omega_2)\cup(\omega_1+\ZZ_{\geq 0}\omega_2)\cup(\omega_3+\ZZ_{\geq 0}\omega_2)\cup(\omega_1+\omega_3+\ZZ_{\geq -1}\omega_2).
$$
\end{example}

\begin{example}
Let $\la=(6,6,4,4,4,2,1,1)$, as in Example~\ref{ex:running2}. Then 
\begin{align*}
\Phi_{G_{\la}}^+&=\{\tilde{e}_1-\tilde{e}_2,\tilde{e}_3-\tilde{e}_4,\tilde{e}_3-\tilde{e}_5,\tilde{e}_4-\tilde{e}_5,\tilde{e}_7-\tilde{e}_8\}\\
(P/Q_{\la})_+&=\{a_1\tilde{e}_1+\cdots+a_8\tilde{e}_8\mid a_1\geq a_2,\,a_3\geq a_4\geq a_5,\,\text{and}\,a_7\geq a_8\}.
\end{align*}
\end{example}

Let 
$$Q^{\la}=\mathrm{span}_{\ZZ}(\Phi_{G_{\la}})\quad\text{and}\quad Q^{\la}_+=\mathrm{span}_{\ZZ_{\geq 0}}(\Phi_{G_{\la}}^+).
$$
Define the \textit{$\la$-dominance order} $\peq_{\la}$ on $P/Q_{\la}$ by
$$
\gamma+Q_{\la}\peq_{\la}\gamma'+Q_{\la}\quad\text{if and only if}\quad \gamma'-\gamma+Q_{\la}\in Q^{\la}_+.
$$
The $\la$-dominance order can also be considered as a partial order on $P^{(\la)}$ via Proposition~\ref{prop:bijectionP}. 

The following lemmas give conditions for membership of $Q_{\la}$ and $Q^{\la}_+$, and hence give a more concrete understanding of the partial order $\peq_{\la}$. The straightforward proofs are omitted.

\begin{lemma}\label{lem:membershipQ_la}
Let $\gamma=\sum_{i=1}^{n+1}a_ie_i\in Q$, with the expression chosen so that $a_1+\cdots+a_{n+1}=0$. Then $\gamma\in Q_{\la}$ if and only if $\sum_{i=\la(k-1)+1}^{\la(k)}a_i=0$ for each $1\leq k\leq r(\la)$.
\end{lemma}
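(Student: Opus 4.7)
The plan is to prove both implications by directly unpacking the definition of $Q_\la$ in terms of the simple roots $\alpha_j = e_j - e_{j+1}$ indexed by $j \in J_\la$. Recall that $j \in J_\la$ exactly when $j$ and $j+1$ lie in a common row of $\bt_r(\la)$, i.e. when there exists $k$ with $\la(k-1)+1 \leq j \leq \la(k)-1$. Thus $J_\la$ decomposes as a disjoint union, over rows, of ``intra-row'' index blocks $J_\la^{(k)} = \{\la(k-1)+1, \ldots, \la(k)-1\}$, and every $\alpha_j$ with $j \in J_\la^{(k)}$ only involves the basis vectors $e_i$ with $i$ in the $k$th row-block.

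For the forward implication, assume $\gamma \in Q_\la$ and write $\gamma = \sum_k \sum_{j \in J_\la^{(k)}} c_j \alpha_j$. Each summand $c_j \alpha_j$ has vanishing partial sum over the row-block containing $j$, because $\alpha_j = e_j - e_{j+1}$ with both $j, j+1$ in that block. Summing, $\sum_{i=\la(k-1)+1}^{\la(k)} a_i = 0$ for every $k$, as desired.

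For the converse, assuming the row-sum condition, I will build the required expansion row by row. For each $k$, the vector $\sum_{i=\la(k-1)+1}^{\la(k)} a_i e_i$ has zero coordinate sum over the $k$th row-block and is supported on that block, so the standard telescoping formula gives the expansion with coefficients
\[
c_j = \sum_{i=\la(k-1)+1}^{j} a_i \qquad \text{for } j \in J_\la^{(k)},
\]
and a direct check (using the row-sum condition at the last step to dispose of the boundary term $c_{\la(k)-1} e_{\la(k)}$) shows $\sum_{j \in J_\la^{(k)}} c_j \alpha_j = \sum_{i=\la(k-1)+1}^{\la(k)} a_i e_i$. Summing over $k$ expresses $\gamma$ as an integer combination of $\{\alpha_j : j \in J_\la\}$, so $\gamma \in Q_\la$.

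There is no genuine obstacle here: the whole argument reduces to the observation that $Q_\la$ is the direct sum of the root lattices of the type-$\sA$ root subsystems supported on the rows of $\bt_r(\la)$, and the lemma is the (well-known) characterisation of the type-$\sA$ root lattice as the kernel of the coordinate-sum map. The only thing to be a little careful about is that the hypothesis $\gamma \in Q$ (i.e.\ the normalisation $a_1 + \cdots + a_{n+1} = 0$) is needed merely to make the row-sums well-defined integers rather than coset representatives modulo the global relation $e_1 + \cdots + e_{n+1} = 0$.
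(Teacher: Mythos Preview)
Your proof is correct. The paper itself omits the proof of this lemma, stating only that ``the straightforward proofs are omitted,'' so there is no approach to compare against; your direct argument via the row-by-row decomposition of $J_\la$ and the telescoping identity is exactly the natural one the authors had in mind.
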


\begin{lemma}\label{lem:membershipQ^la}
We have $\gamma+Q_{\la}\in Q^{\la}$ if and only if there is an expression $\gamma+Q_{\la}=\sum_{i=1}^{r(\la)}a_i\tilde{e}_i$ with 
$$
\sum_{1\leq i\leq r(\la),\,\la_i=l}a_i=0\quad\text{for all $l\in\lengths(\la)$},
$$
and moreover $\gamma+Q_{\la}\in Q^{\la}_+$ if and only if $a_1+\cdots+a_i\geq 0$ for all $1\leq i\leq r(\la)$. 
\end{lemma}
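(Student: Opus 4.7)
The plan is to reduce both statements to standard facts about type $\sA$ root lattices by exploiting the block structure of $\Phi_{G_{\la}}$. Recall from Definition~\ref{defn:Glaroots} and Proposition~\ref{prop:Glatype} that $\Phi_{G_{\la}}^+$ decomposes as a disjoint union of sub-root-systems, one for each $l\in\lengths(\la)$, where the $l$-th piece consists of the $\tilde{e}_i-\tilde{e}_j$ with $i<j$ and $\la_i=\la_j=l$, forming a type $\sA_{\kappa_{\la}(l)-1}$ root system on the basis $\{\tilde{e}_i\mid \la_i=l\}$. Since $\la$ is weakly decreasing, the indices $I_l=\{i\mid\la_i=l\}$ form a consecutive block of $\{1,\ldots,r(\la)\}$, so the index set is partitioned into consecutive blocks indexed by $l\in\lengths(\la)$.

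For the first statement, I first handle the forward direction: every element of $Q^{\la}$ is an integer combination of generators $\tilde{e}_i-\tilde{e}_j$ with $\la_i=\la_j$, and collecting coefficients produces an expression $\sum a_i\tilde{e}_i$ for which the coefficients within each $I_l$ sum to zero. For the converse direction, if $\gamma+Q_\la=\sum a_i\tilde{e}_i$ and $\sum_{i\in I_l}a_i=0$ for each $l$, then within each block $I_l$ the element $\sum_{i\in I_l}a_i\tilde{e}_i$ lies in the type $\sA_{\kappa_{\la}(l)-1}$ root lattice (which, as a standard fact, consists exactly of integer combinations with zero total sum); summing over $l$ gives $\gamma+Q_{\la}\in Q^{\la}$. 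A small point to address is that the expression $\gamma+Q_\la=\sum a_i\tilde{e}_i$ is only unique up to adding integer multiples of the defining relation $\sum\la_i\tilde{e}_i=0$; the statement rightly asserts existence of such an expression rather than uniqueness.

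For the second statement, I use the standard fact that in type $\sA_{k-1}$ an element $\sum_{i=1}^{k}c_i\tilde{e}_i$ with $\sum c_i=0$ lies in the non-negative cone of simple roots if and only if all its partial sums $c_1,c_1+c_2,\ldots,c_1+\cdots+c_{k-1}$ are non-negative. Applied in each block $I_l$, this characterises membership in $Q^{\la}_+$ as: the partial sums of $(a_i)_{i\in I_l}$ (in the natural order) are $\geq 0$ for every $l$. The key observation is now that, because the sum $\sum_{i\in I_l}a_i$ vanishes for every complete block $I_l$, the global partial sum $a_1+\cdots+a_i$ at any index $i$ coincides with the partial sum inside the single block containing $i$ (all earlier blocks contribute zero). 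Therefore the global condition ``$a_1+\cdots+a_i\geq 0$ for all $1\leq i\leq r(\la)$'' is equivalent to the per-block conditions, completing the proof.

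The main subtlety, and the step I expect to warrant most care, is checking that the representative condition $\sum_{i\in I_l}a_i=0$ from the first part is compatible with adding further multiples of the relation $\sum\la_i\tilde{e}_i=0$: such a shift by $k$ changes $\sum_{i\in I_l}a_i$ by $k\cdot l\cdot\kappa_{\la}(l)$, so the vanishing-per-block condition pins down a canonical representative, and the partial-sum statement in the second part must be read relative to this representative. Once this bookkeeping is made explicit, everything else reduces to the standard type $\sA$ facts above.
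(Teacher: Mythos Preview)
Your proof is correct. The paper itself omits the proof of this lemma entirely (it is introduced with ``The straightforward proofs are omitted''), so there is no alternative argument to compare against; your block-by-block reduction to standard type~$\sA$ root-lattice facts is exactly the natural way to fill in the details, and your final paragraph correctly identifies and resolves the only nontrivial bookkeeping issue, namely that the vanishing-per-block condition singles out a unique representative among expressions differing by multiples of the relation $\sum_i\la_i\tilde{e}_i=0$.
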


In particular, note that if $\gamma+Q_{\la}\in Q^{\la}$ then necessarily $\gamma\in Q$. If $\la=(1^{n+1})$ then $Q_{\la}=\{0\}$ and so $P/Q_{\la}=P$. In this case $\peq_{\la}$ is the usual dominance order $\peq$ on $P$ given by $\gamma\peq \gamma'$ if and only if $\gamma'-\gamma\in Q_+$, where $Q_+=\ZZ_{\geq 0}\alpha_1+\cdots+\ZZ_{\geq 0}\alpha_n$ (this notion is related to, but distinct from, the dominance order $\leq$ on partitions).

\begin{example}
Let $\la=(6,6,4,4,4,2,1,1)$ as in Example~\ref{ex:running2}. Then $Q^{\la}_+$ consists of the elements $a_1\tilde{e}_1+\cdots+a_8\tilde{e}_8$ with $a_i\in\ZZ$ satisfying $a_1+a_2=a_3+a_4+a_5=a_6=a_7+a_8=0$, $a_1\geq 0$, $a_3\geq 0$, $a_3+a_4\geq 0$, and $a_7\geq 0$.
\end{example}

\begin{example}
Let $\la=(n+1)$ (see Example~\ref{ex:triv}). Then $Q_{\la}=Q$ and $P/Q_{\la}=\{0,\tilde{e}_1,\ldots,n\tilde{e}_1\}$. We have $Q^{\la}=\{0\}$, and so for $\gamma,\gamma'\in P/Q_{\la}$ we have $\gamma\peq_{\la}\gamma'$ if and only if $\gamma=\gamma'$. 
\end{example}

\subsection{$\la$-folded alcove paths}

In \cite{GLP:23,GP:19} we introduced the combinatorial model of $\la$-folded alcove paths. This theory will play an important role in the present paper via the formula given in Theorem~\ref{thm:pathformula} (providing a combinatorial formula for the matrix entries of certain generic representations of $\Hext$ induced from Levi subalgebras).

\begin{defn}\label{defn:Jfold}
Let $\vec w=s_{i_1}s_{i_2}\cdots s_{i_{\ell}}\pi$ be an expression for $w\in \Wext$ (not necessarily reduced) with $\pi\in \Sigma$. A \textit{$\la$-folded alcove path of type~$\vec w$ starting at $v$} is a sequence $p=(v_0,v_1,\ldots,v_{\ell},v_{\ell}\pi)$ with $v_0,\ldots,v_{\ell}\in \Wext$ such that, for $0\leq k\leq \ell$,
\begin{compactenum}[$(1)$]
\item $v_k\cA_0\subseteq \cA_{\la}$, 
\item $v_k\in\{v_{k-1},v_{k-1}s_{i_k}\}$, and if $v_{k-1}=v_k$ then either:
\begin{compactenum}[$(a)$]
\item $v_{k-1}s_{i_k}\cA_0\not\subseteq \cA_{\la}$, or
\item $v_{k-1}s_{i_k}\cA_0\subseteq \cA_{\la}$ and the alcove $v_{k-1}\cA_0$ is on the positive side of the hyperplane separating the alcoves $v_{k-1}\cA_0$ and $v_{k-1}s_{i_k}\cA_0$. 
\end{compactenum}
\end{compactenum}
If  $p=(v_0,\ldots,v_{\ell},v_{\ell}\pi)$ is a $\la$-folded alcove path we define:
\begin{compactenum}[--]
\item the \textit{start} of $p$ is $\mathrm{start}(p)=v_0$ and the \textit{end} of $p$ is $\mathrm{end}(p)=v_{\ell}\pi$. 
\item the \textit{length} of $p$ is $\ell$ (note that the final step $(v_{\ell},v_{\ell}\pi)$ does not count towards length). 
\item the \textit{weight} of $p$ is $\wt(\mathrm{end}(p))$ (necessarily $\wt(p)\in P^{(\la)}$ as $v_{\ell}\pi\cA_{0}\subseteq\cA_{\la}$). 
\item the \textit{final direction} of $p$ is $\theta^{\la}(p)$, where $\theta(p)=\theta(\mathrm{end}(p))$ and $\theta(p)=\theta_{\la}(p)\theta^{\la}(p)$ with $\theta_{\la}(p)\in W_{\la}$ and $\theta^{\la}(p)\in {^\la}W$. 
\end{compactenum}
\end{defn}

Note that a $\la$-folded alcove path is, by definition, a sequence $p=(v_0,v_1,\ldots,v_{\ell},v_{\ell}\pi)$ of elements of $\Wext$. There is an associated sequence $v_0\cA_0,v_1\cA_0,\ldots,v_{\ell}\cA_0,v_{\ell}\pi\cA_0$ of alcoves, with each alcove either adjacent to or equal to the preceding one. See \cite{GLP:23,GP:19} for examples.

Note that if $\la=(1^{n+1})$ then $\cA_{\la}=V$ and so part (1) of the definition is trivially satisfied, and part (2)(a) is vacuous. Thus $(1^{n+1})$-folded alcove paths are the same as classical ``positively folded alcove paths'' (in the sense of Ram~\cite{Ram:06}).

Let $\vec w=s_{i_1}s_{i_2}\cdots s_{i_{\ell}}\pi$ and let $p=(v_0,\ldots,v_{\ell},v_{\ell}\pi)$ be a $\lambda$-folded alcove path of type $\vec{w}$. The index~$k\in\{1,2,\ldots,\ell\}$ is:
\begin{compactenum}[$(1)$]
\item a \textit{positive (respectively, negative) $i_k$-crossing} if $v_k=v_{k-1}s_{i_k}$ and $v_{k}\cA_0$ is on the positive (respectively, negative) side of the hyperplane separating the alcoves $v_{k-1}\cA_0$ and $v_k\cA_0$;
\item a \textit{(positive) $i_k$-fold} if $v_k=v_{k-1}$ and $v_{k-1}s_{i_k}\cA_0\subseteq\cA_{\la}$ (in which case $v_{k-1}\cA_0$ is necessarily on the positive side of the hyperplane separating $v_{k-1}\cA_0$ and $v_{k-1}s_{i_k}\cA_0$);
\item a \textit{bounce} if $v_k=v_{k-1}$ with $v_{k-1}\cA_0\subseteq \cA_{\la}$ and $v_{k-1}s_{i_k}\cA_0\not\subseteq \cA_{\la}$. 
\end{compactenum}

Less formally, these steps are denoted as follows (where $x=v_{k-1}$ and $s=s_{i_k}$):
\begin{figure}[H]
\begin{subfigure}{.6\textwidth}
\begin{center}
\begin{tikzpicture}[xscale=0.5, yscale=0.5]
\draw (-6,-1)--(-6,1);
\draw[line width=0.5pt,-latex](-6.5,0)--(-5.5,0);
\node at (-6.8,1) {{ $-$}};
\node at (-7.25,.2) {\footnotesize{ $x\cA_0$}};
\node at (-4.75,.2) {\footnotesize{ $xs\cA_0$}};
\node at (-5.4,1) {{ $+$}};
\node at (-6,-1.5) {\footnotesize{positive $s$-crossing}};
\draw (-0,-1)--(-0,1);
%\draw (.5,0)--(-0,0);
%\draw[latex-] (.5,-.15)--(-0,-.15);
\draw[line width=0.5pt,-latex] plot[smooth] coordinates {(.65,0)(0.15,0)(0.15,-.15)(.65,-.15)};
\node at (-.8,1){{ $-$}};
\node at (-1.25,.2){\footnotesize{ $xs\cA_0$}};
\node at (1.3,.2){\footnotesize{ $x\cA_0$}};
\node at (.6,1){{ $+$}};
\node at (0,-1.5){\footnotesize{$s$-fold}};
\draw (6,-1)--(6,1);
\draw[line width=0.5pt,-latex] (6.5,0)--(5.5,0);
\node at (6.5,1){{ $+$}};
\node at (7.25,.2){\footnotesize{ $x\cA_0$}};
\node at (4.7,.2){\footnotesize{ $xs\cA_0$}};
\node at (5.2,1){{ $-$}};
\node at (-3,1.5){\scriptsize{\vphantom{$H_{\alpha,0}$}}};
\node at (6,-1.5){\footnotesize{negative $s$-crossing}};
\end{tikzpicture}
\caption{The case $xs\cA_0\subseteq\cA_{\la}$}
\end{center}
\end{subfigure}
\begin{subfigure}{0.4\textwidth}
\begin{center}
\begin{tikzpicture}[xscale=0.5, yscale=0.5]
\draw (3,-1)--(3,1);
\draw[line width=0.5pt,-latex] plot[smooth] coordinates {(2.35,0)(2.85,0)(2.85,-.15)(2.35,-.15)};
%\draw (2.5,0)--(3,0);
%\draw [-latex](2.5,-.15)--(3,-.15);
\node at (3.5,1){{ $+$}};
\node at (4.25,.2){\footnotesize{ $xs\cA_0$}};
\node at (1.7,.2){\footnotesize{$x\cA_0$}};
\node at (2.2,1){{ $-$}};
\node at (3,1.5){\scriptsize{$H_{\alpha,1}$}};
\node at (3,-1.5){\footnotesize{bounce}};
\draw(-3,-1)--(-3,1);
\draw[line width=0.5pt,-latex] plot[smooth] coordinates {(-2.35,0)(-2.85,0)(-2.85,-.15)(-2.35,-.15)};
\node at (-3.8,1){{ $-$}};
\node at (-4.25,.2){\footnotesize{ $xs\cA_0$}};
\node at (-1.75,.2){\footnotesize{ $x\cA_0$}};
\node at (-2.4,1){{ $+$}};
\node at (-3,1.5){\scriptsize{$H_{\alpha,0}$}};
\node at (-3,-1.5){\footnotesize{bounce}};
\end{tikzpicture}
\caption{The case $xs\cA_0\not\subseteq\cA_{\la}$}
\end{center}
\end{subfigure}
\end{figure}
Bounces play a different role in the theory to folds, and so we emphasise the distinction between these two concepts. Put briefly, all of the interactions a path makes with the walls of $\cA_{\la}$ are bounces, and the folds can only occur in the ``interior'' of $\cA_{\la}$.

Let $p$ be a $\la$-folded alcove path. Let
\begin{align*}
f(p)=\#(\text{folds in $p$})\quad\text{and}\quad 
b(p)=\#(\text{bounces in $p$}).
\end{align*}
For $u\in\Wext$ with $u\cA_0\subseteq \cA_{\la}$, and $v\in {^\la}W$, let
\begin{align*}
\mathcal{P}_{\la}(\vec{w},u)&=\{\text{all $\la$-folded alcove paths of type $\vec{w}$ starting at $u$}\}\\
\cP_{\la}(\vec{w},u)_v&=\{p\in\cP_{\la}(\vec{w},u)\mid \theta^{\la}(p)=v\}.
\end{align*}

\subsection{The ring $\ZZ[\zeta_{\la}]^{G_{\la}}$ and $G_{\la}$-Schur functions}\label{sec:rings}

 Let $\zeta_1,\ldots,\zeta_{n+1}$ be commuting invertible indeterminates with $\zeta_1\cdots\zeta_{n+1}=1$, and for $\gamma\in P$ let
$\zeta^{\gamma}=\zeta_1^{a_1}\cdots\zeta_{n+1}^{a_{n+1}}$ if $\gamma=\sum_{i=1}^{n+1}a_ie_i$. For $\la\vdash n+1$ let $\mathcal{I}_{\la}$ denote the ideal of the Laurent polynomial ring $\sR[\zeta_1^{\pm1},\ldots,\zeta_{n+1}^{\pm 1}]$ generated by the elements $\zeta^{\alpha_j}-1$ for $j\in J_{\la}$. Let
$$
\sR[\zeta_{\la}]=\sR[\zeta_1^{\pm1},\ldots,\zeta_{n+1}^{\pm 1}]/\mathcal{I}_{\la},
$$
and similarly define $\ZZ[\zeta_{\la}]$. Write $\zeta_{\la}^{\gamma}=\zeta^{\gamma}+\mathcal{I}_{\la}$. Thus $\zeta_{\la}^{\gamma}=1$ for all $\gamma\in Q_{\la}$, and so $\zeta_{\la}^{\gamma}$ depends only on the coset $\gamma+Q_{\la}$. Indeed, $\sR[\zeta_{\la}]\cong \sR[P/Q_{\la}]$ is the group ring of $P/Q_{\la}$ over $\sR$. In particular, we have $\zeta_{\la}^{\gamma}=\zeta_{\la}^{\gamma^{(\la)}}$ for all $\gamma\in P$, and we define $\zeta_{\la}^{\gamma+Q_{\la}}=\zeta_{\la}^{\gamma}$ for $\gamma+Q_{\la}\in P/Q_{\la}$. 

A more explicit description of the ring $\sR[\zeta_{\la}]$ is as follows. Since $\zeta_{\la}^{\alpha}=1$ for all $\alpha\in \Phi_{\la}$ we have $\zeta_j+\mathcal{I}_{\la}=\zeta_{j'}+\mathcal{I}_{\la}$ whenever $j$ and $j'$ lie in the same row of $\bt_r(\la)$. Define $z_1,\ldots,z_{r(\la)}\in \sR[\zeta_{\la}]$ by $z_i=\zeta_j+\mathcal{I}_{\la}$ for any $j$ in the $i$th row of $\bt_r(\la)$ (that is, $z_i=\zeta_{\la}^{e_j}$). Then the relation $e_1+\cdots+e_{n+1}=0$ gives
$
z_1^{\la_1}z_2^{\la_2}\cdots z_{r(\la)}^{\la_{r(\la)}}=1,
$
and $\sR[\zeta_{\la}]$ may be regarded as the Laurent polynomial ring in the indeterminates $z_1^{\pm 1},\ldots,z_{r(\la)}^{\pm 1}$ subject to the above equation.

The group $G_{\la}$ acts on $\sR[\zeta_{\la}]$ via the equation $g\cdot(\zeta_{\la}^{\gamma})=\zeta_{\la}^{g\gamma}$ for $g\in G_{\la}$ and $\gamma\in P$ (to check that the definition is well defined, note that if $\zeta_{\la}^{\gamma}=\zeta_{\la}^{\gamma'}$ then $\gamma-\gamma'\in Q_{\la}$, and hence $g\gamma-g\gamma'\in Q_{\la}$ by~(\ref{eq:stabiliserho}), and so $\zeta_{\la}^{g\gamma}=\zeta_{\la}^{g\gamma'}$). Explicitly, the action of $G_{\la}$ on $\sR[\zeta_{\la}]$ is given by permuting the variables $z_1,\ldots,z_{r(\la)}$ subject to the constraint that if $gz_i=z_j$ then $\la_i=\la_j$. 

\begin{example} If $\la=(6,6,4,4,4,2,1,1)$ (see Example~\ref{ex:running2}) then $G_{\la}$ permutes the variables $z_1,\ldots,z_8$ preserving the partition $\{z_1,z_2\}\cup\{z_3,z_4,z_5\}\cup\{z_6\}\cup\{z_7,z_8\}$. 
\end{example}

\begin{defn}
Let 
$
\sR[\zeta_{\la}]^{G_{\la}}=\{p(\zeta_{\la})\in\sR[\zeta_{\la}]\mid g\cdot p(\zeta_{\la})=p(\zeta_{\la})\text{ for all $g\in G_{\la}$}\}
$
and similarly $\ZZ[\zeta_{\la}]^{G_{\la}}=\{p(\zeta_{\la})\in\ZZ[\zeta_{\la}]\mid g\cdot p(\zeta_{\la})=p(\zeta_{\la})\text{ for all $g\in G_{\la}$}\}$. 
\end{defn}

Since $P^{(\la)}_+$ is a fundamental domain for the action of $G_{\la}$ on $P^{(\la)}$, it follows that $\sR[\zeta_{\la}]^{G_{\la}}$ (respectively $\ZZ[\zeta_{\la}]^{G_{\la}}$) has basis as a free $\sR$-module (respectively $\ZZ$-module) given by the monomials
\begin{align}\label{eq:monomial}
\fe_{\gamma}(\zeta_{\la})=\sum_{\gamma'\in G_{\la}\cdot\gamma}\zeta_{\la}^{\gamma'},\quad\text{with $\gamma\in P^{(\la)}_+$}.
\end{align}

\begin{defn}\label{defn:schur}
For $\gamma\in P^{(\la)}$ (or $\gamma\in P/Q_{\la}$) let $\fs_{\ga}(\zeta_{\la})$ be the \textit{$G_{\la}$-Schur function}
$$
\mathfrak{s}_{\gamma}(\zeta_{\la})=\sum_{g\in G_{\la}}\zeta_{\la}^{g\gamma}\prod_{\alpha\in \Phi_{G_{\la}}^+}\frac{1}{1-\zeta_{\la}^{-g\alpha}}.
$$
\end{defn}

\begin{prop}
The elements $\fs_{\ga}(\zeta_{\la})$ are in $\ZZ[\zeta_{\la}]^{G_{\la}}$, and $\{\fs_{\ga}(\zeta_{\la})\mid \ga\in P^{(\la)}_+\}$ is a basis of $\sR[\zeta_{\la}]^{G_{\la}}$ (respectively $\ZZ[\zeta_{\la}]^{G_{\la}}$) as a free $\sR$-module (respectively free $\ZZ$-module). 
\end{prop}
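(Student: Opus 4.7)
The plan is to establish three things in sequence: (i) $G_\la$-invariance of $\fs_\gamma(\zeta_\la)$, (ii) polynomiality, so that $\fs_\gamma(\zeta_\la)$ really lies in $\ZZ[\zeta_\la]$ and not merely in its field of fractions, and (iii) the basis statement, which I will reduce to a unitriangular change-of-basis with the already-known basis $\{\fe_\gamma\}_{\gamma \in P^{(\la)}_+}$ from~(\ref{eq:monomial}).

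Step~(i) is formal: working in the field of fractions of $\ZZ[\zeta_\la]$, applying $h \in G_\la$ to the defining expression and re-indexing the sum by $g \mapsto hg$ returns the same expression. For step~(ii) I exploit Proposition~\ref{prop:Glatype}, which gives $G_\la \cong \prod_{l \in \lengths(\la)} \mathfrak{S}_{\kappa_\la(l)}$. The positive roots split correspondingly as $\Phi_{G_\la}^+ = \bigsqcup_l \Phi_l^+$ with $\Phi_l^+ = \{\tilde{e}_i - \tilde{e}_j : \la_i = \la_j = l,\ i<j\}$ a classical type $\sA_{\kappa_\la(l)-1}$ positive system, and the $l$th factor $\mathfrak{S}_{\kappa_\la(l)}$ acting by permutation on the variables $\{z_i : \la_i = l\}$ and fixing the others. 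Writing $g = (g_l)_l$ and $\gamma = \sum_l \gamma_l$ (contribution from each block of variables), the defining expression factors as
$$
\fs_\gamma(\zeta_\la) \;=\; \prod_{l \in \lengths(\la)} \left(\sum_{g_l \in \mathfrak{S}_{\kappa_\la(l)}} \zeta_\la^{g_l \gamma_l} \prod_{\alpha \in \Phi_l^+} \frac{1}{1-\zeta_\la^{-g_l\alpha}}\right).
$$
Each factor is precisely the classical Weyl character formula in Weyl-symmetrizer form, hence a classical Schur polynomial in the variables $\{z_i : \la_i = l\}$ (via the Weyl denominator identity), which is a genuine polynomial. Therefore $\fs_\gamma(\zeta_\la)$ is a polynomial in the ambient Laurent ring $\ZZ[z_1^{\pm 1},\ldots,z_{r(\la)}^{\pm 1}]$, and descends to $\ZZ[\zeta_\la]$.

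For step~(iii), the same product decomposition reduces the expansion of $\fs_\gamma$ in the monomial basis $\{\fe_\mu\}$ to the classical Kostka expansion $s_\la = m_\la + \sum_{\mu < \la} K_{\la,\mu}\, m_\mu$ applied in each block independently. Assembling these yields
$$
\fs_\gamma(\zeta_\la) \;=\; \fe_\gamma(\zeta_\la) + \sum_{\substack{\mu \in P^{(\la)}_+ \\ \mu \prec_\la \gamma}} c_{\mu,\gamma}\, \fe_\mu(\zeta_\la)
$$
for nonnegative integers $c_{\mu,\gamma}$. Since this change of basis is unitriangular with respect to~$\peq_\la$ with integer coefficients, it is invertible over both $\ZZ$ and $\sR$, yielding both basis statements simultaneously. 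The main obstacle is to align the $\la$-dominance order $\peq_\la$ (as characterised by the partial-sum condition in Lemma~\ref{lem:membershipQ^la}) with the per-block classical partition dominance that controls each Kostka expansion — that is, checking that a monomial appearing in the $l$th Schur factor at a classical dominance exponent $\le \gamma_l$ contributes only to $\fe_\mu$ with $\mu \prec_\la \gamma$ globally.
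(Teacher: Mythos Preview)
Your proposal is correct and supplies the detailed classical argument that the paper merely cites (to \cite[(2.14)]{NR:03}); the block factorisation via Proposition~\ref{prop:Glatype} is exactly the right reduction.

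Two small points worth completing. First, your self-identified ``main obstacle'' is resolved as follows: since rows of equal length are contiguous in $\bt_r(\la)$, the global partial-sum condition of Lemma~\ref{lem:membershipQ^la} decomposes into independent per-block partial-sum conditions (the running sum returns to zero at each block boundary because $\sum_{i:\la_i=l} a_i = 0$), so $\mu \preccurlyeq_\la \gamma$ in $P/Q_\la$ holds precisely when classical dominance holds in every block, matching the per-block Kostka support. Second, when descending from the ambient Laurent ring to $\ZZ[\zeta_\la]$ you should note that distinct ambient monomials in the Schur expansion cannot collide in the quotient: all weights $\mu$ appearing share the same coordinate sum $\sum_i \mu_i = \sum_i \gamma_i$, while the kernel of $\ZZ^{r(\la)} \to P/Q_\la$ is generated by $(\la_1,\ldots,\la_{r(\la)})$ with coordinate sum $n+1 \neq 0$, so the unitriangular expansion survives the quotient intact.
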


\begin{proof}
This is classical, see for example, \cite[(2.14)]{NR:03}. 
\end{proof}

\begin{example}
In the case $\la=(n+1)$ we have $P^{(\la)}=\{0,\omega_1,\ldots,\omega_n\}$, and $G_{\la}=\{1\}$ and $\Phi_{G_{\la}}=\emptyset$. Hence $\fs_{\omega_i}(\zeta_{\la})=\zeta_{\la}^{\omega_i}=z_1^i$ where $z_1=\zeta_{\la}^{e_1}$. 
\end{example}

\begin{example}\label{ex:A33}
If $\la=(2,2)$ as in Examples~\ref{ex:A3} and~\ref{ex:A32} we have that $G_{\la}=\langle\tilde{s}\rangle$ is of type $\sA_1$ (explicitly, $\tilde{s}=s_2s_1s_3s_2$), and $\Phi_{G_{\la}}^+=\{\tilde{e}_1-\tilde{e}_2\}$. We have $(P/Q_{\la})_+=\{a_1\tilde{e}_1+a_2\tilde{e}_2\mid a_1\geq a_2\}$, and $\tilde{s}\tilde{e}_1=\tilde{e}_2$. Thus, for $\gamma=a\tilde{e}_1+b\tilde{e}_2\in P/Q_{\la}$ we have
\begin{align*}
\fs_{\gamma}(\zeta_{\la})&=\frac{z_1^az_2^b}{1-z_1^{-1}z_2}+\frac{z_1^bz_2^a}{1-z_1z_2^{-1}}=\frac{z_1^{a+1}z_2^b-z_1^bz_2^{a+1}}{z_1-z_2}.
\end{align*}
Recall that $z_1^2z_2^2=1$. In particular $\fs_{\tilde{e}_1+\tilde{e}_2}(\zeta_{\la})=z_1z_2$, and $\fs_{\tilde{e}_1+\tilde{e}_2}(\zeta_{\la})^2=z_1^2z_2^2=1$. 
\end{example}

We define a \textit{conjugation} operation on $\sR[\zeta_\la]$ (and $\ZZ[\zeta_{\la}]$) by linearly extending 
$$\mathrm{conj}(\zeta_{\la}^{\gamma})= \zeta_{\la}^{-\gamma},$$ 
and for $f(\zeta_{\la})\in\sR[\zeta_{\la}]$ (or $\ZZ[\zeta_{\la}]$) we write 
$$
[f(\zeta_{\la})]_{\mathrm{ct}}=a_0\quad\text{where}\quad f(\zeta_{\la})=\sum_{\gamma+Q_{\la}\in P/Q_{\la}}a_{\gamma+Q_{\la}}\zeta_{\la}^{\gamma}\quad\text{with}\quad a_{\gamma+Q_{\la}}\in\sR
$$
(the \textit{constant term} of $f(\zeta_{\la})$). For $f(\zeta_\la),g(\zeta_\la)\in \sR[\zeta_\la]$ we set 
$$
\langle f(\zeta_\la),g(\zeta_\la)\rangle_{\la}^{\infty}=\frac{1}{|G_{\la}|}\bigg[f(\zeta_\la)\cdot \mathrm{conj}(g(\zeta_\la))\cdot \prod_{\alpha\in\Phi_{G_{\la}}}(1-\zeta_{\la}^{\alpha})\bigg]_{\mathrm{ct}}.
$$

Recall the definition of the dominance order on $P/Q_{\la}$ from Section~\ref{sec:lambdadominance} (considered also as a partial order on $P^{(\la)}$ via Proposition~\ref{prop:bijectionP}). 

\goodbreak 
\begin{lemma}\label{lem:uniquebasis}
The Schur functions $\fs_{\gamma}(\zeta_{\la})$ with $\gamma\in (P/Q_{\la})_+$, are the unique elements of $\ZZ[\zeta_{\la}]^{G_{\la}}$ satisfying:
\begin{compactenum}[$(1)$]
\item $\fs_{\gamma}(\zeta_{\la})=\fe_{\gamma}(\zeta_{\la})+\sum_{\gamma'\prec_{\la}\gamma}a_{\gamma,\gamma'}\fe_{\gamma'}(\zeta_{\la})$ with $a_{\gamma,\gamma'}\in\ZZ$, and
\item $\langle \fs_{\gamma}(\zeta_{\la}),\fs_{\gamma'}(\zeta_{\la})\rangle_{\la}^{\infty}=\delta_{\gamma,\gamma'}$. 
\end{compactenum}
Moreover, we have $a_{\gamma,\gamma'}\geq 0$. 
\end{lemma}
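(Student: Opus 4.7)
The strategy is classical Weyl/Macdonald theory, carefully adapted to our (potentially torsion) setting. The key observation is that, by Proposition~\ref{prop:Glatype}, $G_{\la}$ is a product of symmetric groups $\prod_{l\in\lengths(\la)}\mathfrak{S}_{\kappa_{\la}(l)}$, and correspondingly $\Phi_{G_{\la}}$ decomposes as a disjoint union of type $\sA$ root systems, one per block of rows of $\la$ of common length. So both the Schur functions $\fs_\ga(\zeta_\la)$ (from Definition~\ref{defn:schur}) and the inner product $\langle\cdot,\cdot\rangle_{\la}^\infty$ factor as products over these blocks, and the global relation $z_1^{\la_1}\cdots z_{r(\la)}^{\la_{r(\la)}}=1$ does not couple the blocks since each root of $\Phi_{G_\la}$ involves only variables from a single block.

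First I would verify (1) with $a_{\ga,\ga'}\in \ZZ_{\geq 0}$. Using the Weyl character formula packaging of Definition~\ref{defn:schur} in each block, $\fs_\ga$ restricts to a classical Schur polynomial in the variables of that block, evaluated at the corresponding components of $\ga$. The classical expansion in the monomial basis gives Kostka numbers $K_{\ga,\ga'}\in\ZZ_{\geq 0}$ with $K_{\ga,\ga}=1$ and $K_{\ga,\ga'}=0$ unless $\ga'\preceq\ga$ in each block's dominance order. A short check using Lemma~\ref{lem:membershipQ^la} shows that the block-wise dominance order matches the restriction of $\peq_\la$ to the $G_\la$-orbit of $\ga$, giving the triangular expansion (1) with coefficients $a_{\ga,\ga'}=K_{\ga,\ga'}\geq 0$.

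Next I would verify (2). The inner product $\langle\cdot,\cdot\rangle_\la^\infty$ is precisely the constant-term form of the Weyl integration formula for the compact torus of $\prod_l U(\kappa_\la(l))$. Orthonormality of Schur functions under this inner product is classical. The block decomposition reduces the statement to a product of classical Weyl orthogonality identities, one per factor of $G_\la$.

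For uniqueness, suppose $\{\tilde{f}_\ga\}_{\ga\in(P/Q_\la)_+}$ in $\ZZ[\zeta_\la]^{G_\la}$ also satisfies (1) and (2). Since both $\{\fe_{\ga'}\}$-expansions are unitriangular with integer coefficients in the partial order $\peq_\la$, the transition matrix between $\{\tilde{f}_\ga\}$ and $\{\fs_\ga\}$ is also unitriangular with integer entries, so we may write $\tilde{f}_\ga=\fs_\ga+\sum_{\ga'\prec_\la \ga}b_{\ga,\ga'}\fs_{\ga'}$ with $b_{\ga,\ga'}\in\ZZ$. Applying (2) to both sides,
\[
1=\langle\tilde{f}_\ga,\tilde{f}_\ga\rangle_\la^\infty=1+\sum_{\ga'\prec_\la\ga}b_{\ga,\ga'}^2,
\]
which forces $b_{\ga,\ga'}=0$ and hence $\tilde{f}_\ga=\fs_\ga$. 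The only mildly delicate point is checking that $\langle\cdot,\cdot\rangle_\la^\infty$ is integer-valued on $\ZZ[\zeta_\la]^{G_\la}$ (so that $b_{\ga,\ga'}^2\geq 1$ whenever $b_{\ga,\ga'}\neq 0$); this follows because $\fs_\ga$ and $\tilde{f}_\ga$ are integer combinations of the orthonormal set $\{\fs_{\ga'}\}$. The main obstacle is really just bookkeeping: matching our conventions (dominance order, monomial and Schur bases, the constant-term pairing) with the classical setup block by block, while keeping track of the torsion in $P/Q_\la$ --- but this is purely notational since the torsion element $\la_1\tilde{e}_1+\cdots+\la_{r(\la)}\tilde{e}_{r(\la)}=0$ is $G_\la$-invariant and lies in the kernel of every root of $\Phi_{G_\la}$.
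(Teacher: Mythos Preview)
Your proposal is correct and follows essentially the same approach as the paper: properties (1) and (2) are reduced to the classical type-$\sA$ Schur theory (the paper simply cites \cite[Proposition~3.4]{NR:03}, while you unpack the block decomposition of $G_\la$ to the same effect, with $a_{\gamma,\gamma'}$ the Kostka numbers), and uniqueness is a short Gram--Schmidt style argument. The only minor difference is that the paper's uniqueness argument is phrased inductively (determining $\fs'_\gamma$ from the already-determined lower $\fs'_{\gamma'}$ via $b_{\gamma,\gamma'}=-\langle\fe_\gamma,\fs'_{\gamma'}\rangle_\la^\infty$), whereas you use the established orthonormality of $\{\fs_{\gamma'}\}$ directly to compute $\langle\tilde f_\gamma,\tilde f_\gamma\rangle_\la^\infty=1+\sum b_{\gamma,\gamma'}^2$; both are standard and equivalent.
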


\begin{proof}
The fact that the Schur functions satisfy (1) and (2) is classical, see \cite[Proposition~3.4]{NR:03}. To prove that these elements are unique, note that by (1) and (2) we have that $\{\fs_{\gamma'}(\zeta_{\la})\mid \gamma'\prec_{\la}\gamma\}$ is an orthonormal basis for the subspace of $G_{\la}$-invariant functions spanned by $\{\fe_{\gamma'}(\zeta_{\la})\mid \gamma'\prec_{\la}\gamma\}$. Thus 
$
\fs_{\gamma}(\zeta_{\la})=\fe_{\gamma}(\zeta_{\la})+\sum_{\gamma'\prec_{\la}\gamma}b_{\gamma,\gamma'}\fs_{\gamma'}(\zeta_{\la})
$
for some integers $b_{\gamma,\gamma'}$, and these integers are uniquely determined by $b_{\gamma,\gamma'}=-\langle \fe_{\gamma}(\zeta_{\la}),\fs_{\gamma'}(\zeta_{\la})\rangle_{\la}^{\infty}$, using (2). Hence the result.
\end{proof}

\subsection{Representations of $\Hext$ induced from Levi subalgebras}\label{sec:reps}

The \textit{$\la$-Levi subalgebra} of $\Hext$ is the subalgebra $\cL_{\la}$ generated by $T_j$, $j\in J_{\la}$, and $X^{\gamma}$, $\gamma\in P$. It is convenient to define $\sv=-\sq^{-1}$. The assignment
\begin{align*}
\psi_{\la}(T_j)=\sv\quad\text{and}\quad \psi_{\la}(X^{\gamma})=\sv^{\langle \gamma,2\rho_{\la}\rangle}\zeta_{\la}^{\gamma}
\end{align*}
for $j\in J_{\la}$ and $\gamma\in P$ extends to a multiplicative character $\psi_{\la}:\cL_{\la}\to \sR[\zeta_{\la}]$ (see \cite[\S5.2]{GLP:23}). 

If $1\leq i\leq n+1$ with $i=\la[r,c]$ (that is, $i=\la(r-1)+c$ with $1\leq c\leq \la_r$), then
$$\psi_\la(X^{e_i}) = (-\sq)^{2c-\la_r-1}z_r.$$
Indeed we have $\psi_{\la}(X^{e_i})=\sv^{\langle e_i,2\rho_{\la}\rangle}\zeta_{\la}^{e_i}=(-\sq)^{-\langle e_i,2\rho_{\la}\rangle}z_r$, and
$$
\langle e_i,2\rho_\la\rangle = \bigg\langle e_i,\sum_{\la(r-1)+1\leq
k<\ell\leq \la(r)} e_{k} - e_\ell\bigg\rangle
= \sum_{\ell = i+1}^{\la(r)} 1 - \sum_{k=\la(r-1)+1}^{i-1} 1 =
\la_r-2c+1.$$
Thus, in particular, we have
\begin{align*}
\psi_{\la}(X^{\alpha_i})=\begin{cases}
\sq^{-2}&\text{if $i\in J_{\la}$}\\
(-\sq)^{\la_k+\la_{k+1}-2}z_kz_{k+1}^{-1}&\text{if $i=\la(k)$ with $1\leq k<r(\la)$.}
\end{cases}
\end{align*}

Let $\sR[\zeta_{\la}]\xi_{\la}$ be a $1$-dimensional $\sR[\zeta_{\la}]$-module generated by $\xi_{\la}$. Then $\xi_{\la}\cdot h=\psi_{\la}(h)\xi_{\la}$ for $h\in\cL_{\la}$ defines a $1$-dimensional representation of $\cL_{\la}$ over $\sR[\zeta_{\la}]$. 

\begin{defn}
Let $(\pi_{\la},M_{\la})$ be the induced representation $\mathrm{Ind}_{\cL_{\la}}^{\Hext}(\psi_{\la})$ with character $\chi_\la$. Thus
$$
M_{\la}=(\sR[\zeta_{\la}]\xi_{\la})\otimes_{\cL_{\la}} \Hext.
$$
\end{defn}

By \cite[Proposition~5.19]{GLP:23} $M_{\la}$ is a free $\sR[\zeta_{\la}]$-module with basis 
$$
\sB_{\la}=\{\xi_{\la}\otimes X_u\mid u\in {^\la}W\},
$$
and it follows that $\dim(M_{\la})=N_{\la}$, where
$$
N_{\la}=\frac{(n+1)!}{\la_1!\la_2!\cdots\la_{r(\la)}!}. 
$$

For $h\in\Hext$ and $u,v\in {^\la}W$ we will write $\pi_{\la}(h;\sB_{\la})$ for the matrix of $\pi_{\la}(h)$ in the above basis (with any chosen order on $^{\la}W$), and 
$
[\pi_{\la}(h;\sB_{\la})]_{u,v}
$
for the matrix entries.

The bar involution extends from $\sR$ to $\sR[\zeta_{\la}]$ with $\overline{\sq}=\sq^{-1}$ and $\overline{\zeta_{\la}^{\gamma}}=\zeta_{\la}^{\gamma}$.

\begin{lemma}\label{lem:barinv}
We have $\chi_{\la}(\overline{h})=\overline{\chi_{\la}(h)}$ for all $h\in\Hext$. 
\end{lemma}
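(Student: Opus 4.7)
The plan is to reduce bar-invariance of $\chi_{\la}$ to a bar-compatibility property of the defining character $\psi_{\la}$, via a ``twisting'' of the induced representation. Let $\sigma: M_{\la}\to M_{\la}$ be the $\sR$-antilinear involution fixing each basis vector in $\sB_{\la}$ and acting by bar on $\sR[\zeta_{\la}]$-coefficients. Then $\pi_{\la}^{\sharp}(h):=\sigma\circ\pi_{\la}(\overline{h})\circ\sigma^{-1}$ is again an $\sR[\zeta_{\la}]$-linear representation of $\Hext$ on $M_{\la}$, and since conjugation by an antilinear involution bar-conjugates the trace, its character is $h\mapsto\overline{\chi_{\la}(\overline{h})}$. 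Showing $\pi_{\la}^{\sharp}\cong\pi_{\la}$ therefore yields the lemma by equality of characters.

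For the isomorphism, invoke the universal property of induction: it suffices that the $\sigma$-fixed cyclic vector $\xi_{\la}\otimes 1$ be an eigenvector for $\cL_{\la}$ under $\pi_{\la}^{\sharp}$ with eigenvalue $\psi_{\la}$. Unpacking, this requires the identity $\overline{\psi_{\la}(\overline{h})}=\psi_{\la}(h)$, read modulo the annihilator of $\xi_{\la}\otimes 1$ in $\Hext$. For generators $T_{j}$ with $j\in J_{\la}$, the computation is direct: $\psi_{\la}(\overline{T_{j}})=\sv-(\sq-\sq^{-1})=-\sq=\overline{\sv}=\overline{\psi_{\la}(T_{j})}$. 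For generators $X^{\gamma}$, I would reduce to $\gamma\in P_{+}$ by multiplicativity and use $X^{\gamma}=T_{t_{\gamma}}$, so that $\overline{X^{\gamma}}=T_{t_{-\gamma}}^{-1}$; rewriting the right-hand side in the Bernstein-Lusztig basis and reducing modulo the relations defining $M_{\la}$ should yield precisely $\overline{\psi_{\la}(X^{\gamma})}=\sv^{-\langle\gamma,2\rho_{\la}\rangle}\zeta_{\la}^{\gamma}$.

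The principal obstacle is the $X^{\gamma}$ computation. The subtlety is that $\cL_{\la}$ need not be closed under the bar involution of $\Hext$ (for instance $\overline{X^{\gamma}}$ can involve factors of $T_{k}$ with $k\notin J_{\la}$ and of $T_{\pi}$ for $\pi\in\Sigma$), so the required bar-equivariance of $\psi_{\la}$ must be interpreted in the quotient $M_{\la}$ rather than literally in $\cL_{\la}$. The key input that makes the Bernstein-type reduction go through is the explicit evaluation $\psi_{\la}(X^{\alpha_{j}})=\sq^{-2}$ for $j\in J_{\la}$, which ensures that the denominators $1-X^{-\alpha_{j}}$ appearing in the Bernstein relation collapse to invertible scalars in $\sR[\zeta_{\la}]$. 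An alternative route that bypasses this manipulation is to use the $\la$-folded alcove path formula for $\pi_{\la}$ (Theorem~\ref{thm:pathformula}): the diagonal entries $[\pi_{\la}(T_{w};\sB_{\la})]_{u,u}$ become weighted sums over closed paths, on which a path-reversal symmetry implements bar-invariance of the trace directly.
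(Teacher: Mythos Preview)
Your approach and the paper's are essentially the same idea in different packaging. The paper avoids the abstract machinery of $\pi_{\la}^{\sharp}$ and the universal property: it simply notes that $\overline{X_u}=T_u$ for $u\in W_0$, introduces the second basis $\sB_{\la}'=\{\xi_{\la}\otimes T_u\mid u\in{^\la}W\}$, and asserts directly that
\[
(\xi_{\la}\otimes T_u)\cdot\overline{T_w}=\xi_{\la}\otimes\overline{X_uT_w}=\sum_{v}\overline{[\pi_{\la}(T_w;\sB_{\la})]_{u,v}}\,(\xi_{\la}\otimes T_v),
\]
so that $[\pi_{\la}(\overline{T_w};\sB_{\la}')]_{u,v}=\overline{[\pi_{\la}(T_w;\sB_{\la})]_{u,v}}$ and trace invariance under change of basis finishes. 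The second equality above is exactly the statement that the bar-antilinear map $r(\xi_{\la}\otimes h)\mapsto\bar r(\xi_{\la}\otimes\bar h)$ is well defined on $M_{\la}$, which unwinds to your condition $\xi_{\la}\cdot\overline{\ell}=\overline{\psi_{\la}(\ell)}\,\xi_{\la}$ for $\ell\in\cL_{\la}$. So both arguments rest on the same verification; the paper suppresses it, while you correctly flag the $X^{\gamma}$ case as the nontrivial point. (For the universal property step you should also note why the resulting map is an isomorphism, e.g.\ by producing the inverse map from $(\pi_{\la}^{\sharp})^{\sharp}=\pi_{\la}$.)

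Your proposed alternative via the path formula does not work as stated. The path-reversal involution $p\mapsto p^{-1}$ sends $\cP_{\la}(\vec w,u)_v$ to $\cP_{\la}(\mathrm{rev}(\vec w),v)_u$ with $\cQ_{\la}$ preserved and $\wt(p)\mapsto(-\wt(p))^{(\la)}$; since $\mathrm{rev}(\vec w)$ is a reduced expression for $w^{-1}$, this is precisely the mechanism behind the $*$-symmetry $\pi_{\la}(T_{w^{-1}};\sB_{\la})=\pi_{\la}(T_w;\sB_{\la})^*$ (Lemma~\ref{lem:conjugate}), not bar-invariance. There is no path operation on a single reduced type that produces $\overline{T_w}=T_{w^{-1}}^{-1}$, which expands as an $\sR$-linear combination of many $T_v$, so a direct combinatorial proof along these lines would require substantially more than path reversal.
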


\begin{proof}
By linearity, it is sufficient to prove the result with $h=T_w$. Let $u\in {^\la}W$. We have
$$
(\xi_{\la}\otimes T_u)\cdot \overline{T_w}=\xi_{\la}\otimes \overline{X_uT_w}=\sum_{v\in {^\la}W}(\xi_{\la}\otimes T_v)\overline{[\pi_{\la}(T_w;\sB_{\la})]_{u,v}}.
$$
Thus, writing $\pi_{\la}(h;\sB_{\la}')$ for the matrix with respect to the basis $\sB_{\la}'=\{\xi_{\la}\otimes T_u\mid u\in{^\la}W\}$, we have $[\pi_{\la}(\overline{T_w};\sB_{\la}')]_{u,v}=\overline{[\pi_{\la}(T_w;\sB_{\la})]_{u,v}}$. Since trace is basis independent, the result follows. 
\end{proof}

By \cite[Theorem~5.12]{GLP:23} we have the following combinatorial formula.
\begin{thm}[{\cite[Theorem 5.12]{GLP:23}}]\label{thm:pathformula}
We have, for $u,v\in{^\la}W$,
$$
[\pi_{\la}(T_w;\sB_\la)]_{u,v}=\sum_{p\in\cP_{\la}(\vec{w},u)_v}\cQ_{\la}(p)\zeta_{\la}^{\wt(p)}\quad\text{where}\quad \cQ_{\la}(p)=(-\sq)^{-b(p)}(\sq-\sq^{-1})^{f(p)}.
$$
\end{thm}

\subsection{Intertwiners}\label{se:intertwiners}

For each $1\leq i\leq n$ define an \textit{intertwiner} $U_i$ by 
$$
U_i=T_i-\frac{\sq-\sq^{-1}}{1-X^{-\alpha_i}}.
$$

\begin{remark}\label{rem:extend}
Note that $U_i$ is not an element of the Hecke algebra, however it can be considered as an operator acting on each module $M_{\la}$ (the key observation is that $1-X^{-\alpha_i}$ does not act by~$0$ on any of our modules $M_{\la}$). More generally, we define $\pi_{\la}(U)$, $\chi_{\la}(U)$, and $\pi_{\la}(U;\sB_{\la})$ in the obvious way for any operator $U$ acting on $M_{\la}$ on the right. 
\end{remark}

The following proposition is well known.

\begin{prop}\label{prop:basic} We have the following.
\begin{compactenum}[$(1)$]
\item The elements $U_i$ satisfy the braid relations, and hence the element $U_w=U_{i_1}\cdots U_{i_k}$ is independent of the particular reduced expression $w=s_{i_1}\cdots s_{i_k}$ for $w\in\Wfin$ chosen.
\item $U_wX^{\gamma}=X^{w\gamma}U_w$ for all $\gamma\in P$ and $w\in \Wfin$. 
\item We have
$$
U_i^2=\sq^2\frac{(1-\sq^{-2}X^{-\alpha_i})(1-\sq^{-2}X^{\alpha_i})}{(1-X^{-\alpha_i})(1-X^{\alpha_i})}.
$$
\item If $u,v\in \Wfin$ then $U_uU_v=b(X)U_{uv}$ for a rational function $b(X)$. 
\end{compactenum}
\end{prop}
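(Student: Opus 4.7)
The plan is to prove the four statements in the order $(2), (3), (1), (4)$, since each step depends on the previous. All manipulations take place in the localisation of $\Hext$ at the multiplicative set generated by the elements $1-X^{-\alpha}$ for $\alpha\in\Phi$, where $U_i$ lives as indicated in Remark~\ref{rem:extend}.

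First I would establish $(2)$ for the simple reflection case $w=s_i$. A direct computation using the Bernstein relation
$$
T_iX^{\gamma}=X^{s_i\gamma}T_i+(\sq-\sq^{-1})\frac{X^{\gamma}-X^{s_i\gamma}}{1-X^{-\alpha_i}}
$$
gives, after rearrangement,
$$
U_iX^{\gamma}=\left(T_i-\frac{\sq-\sq^{-1}}{1-X^{-\alpha_i}}\right)X^{\gamma}=X^{s_i\gamma}T_i+(\sq-\sq^{-1})\frac{X^{\gamma}-X^{s_i\gamma}}{1-X^{-\alpha_i}}-\frac{(\sq-\sq^{-1})X^{\gamma}}{1-X^{-\alpha_i}},
$$
and collecting terms shows this equals $X^{s_i\gamma}U_i$. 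The general statement $(2)$ will then follow once $(1)$ is proved, by induction on $\ell(w)$. Statement $(3)$ is a purely algebraic calculation: expand
$$
U_i^2=T_i^2-\frac{\sq-\sq^{-1}}{1-X^{-\alpha_i}}T_i-T_i\frac{\sq-\sq^{-1}}{1-X^{-\alpha_i}}+\left(\frac{\sq-\sq^{-1}}{1-X^{-\alpha_i}}\right)^2,
$$
use the quadratic relation $T_i^2=1+(\sq-\sq^{-1})T_i$, and apply the simple reflection case of $(2)$ to commute $T_i$ past $(1-X^{-\alpha_i})^{-1}$; the $T_i$ terms cancel and the remaining rational function simplifies to the stated expression.

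The main obstacle is statement $(1)$, the braid relations for the $U_i$. For a commuting pair $|i-j|>1$ the relation $U_iU_j=U_jU_i$ reduces, using $(2)$ for simple reflections and the fact that $s_i\alpha_j=\alpha_j$ in this case, to the braid relation $T_iT_j=T_jT_i$ plus straightforward cancellation of rational terms. For a non-commuting pair $|i-j|=1$ one must verify
$
U_iU_jU_i=U_jU_iU_j,
$
which is an involved but mechanical rank-$2$ calculation in the localised algebra, carried out by expanding each $U_k$ into its $T_k$ part and its rational correction, repeatedly using $(2)$ for simple reflections (with $\gamma=\alpha_i,\alpha_j$) to move the rational factors to one side, and then applying the braid relation $T_iT_jT_i=T_jT_iT_j$ and simplifying. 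Once the braid relations hold, the element $U_w$ is well defined for $w\in\Wfin$ by Matsumoto's lemma, and then $(2)$ for general $w$ follows by induction on $\ell(w)$ using the simple reflection case.

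Finally, $(4)$ follows from $(1)$ and $(3)$ by induction on $\ell(v)$. If $v=s_{i_1}\cdots s_{i_k}$ is reduced, write $U_uU_v=U_uU_{i_1}\cdots U_{i_k}$; at each step either $\ell(us_{i_1}\cdots s_{i_j})$ strictly increases (in which case the $U_{i_j}$ factor extends the reduced product by $(1)$), or a cancellation occurs and one applies $(3)$ to replace a $U_i^2$ by a rational scalar in $X$, after which $(2)$ is used to push that scalar to the left. Each such contraction introduces exactly one rational factor of the form appearing in $(3)$, so after $\ell(u)+\ell(v)-\ell(uv)$ contractions we land on $b(X)U_{uv}$ with $b(X)$ an explicit product of such rational functions.
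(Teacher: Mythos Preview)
Your proposal is correct and follows essentially the same approach as the paper, which simply states that (1), (2), and (3) are direct calculations using the Bernstein relation and that (4) follows by induction on $\ell(v)$. Your write-up supplies the details behind that one-line proof, and your reordering $(2),(3),(1),(4)$ is a sensible way to make the logical dependencies explicit.
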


\begin{proof}
(1), (2), (3) follow from the Bernstein relation, and (4) follows by induction on $\ell(v)$.
\end{proof}

Triangularity between $T_w$ and $U_w$ implies that the module $M_{\la}$ has  ``basis'' $\{\xi_{\la}\otimes U_w\mid w\in {^\la}W\}$, where one must extend scalars to rational functions in $\zeta_{\la}$.

For $j\in J_{\la}$ we have $\xi_{\la}\cdot T_j=-\sq^{-1}\xi_{\la}$ and $\xi_{\la}\cdot X^{\alpha_j}=\sq^{-2}\xi_{\la}$, and hence
$
\xi_{\la}\cdot U_j=0.
$
It follows, using Proposition~\ref{prop:basic}, that with respect to the basis $\{\xi_{\la}\otimes U_u\mid u\in {^\la}W\}$,
\begin{compactenum}[$(1)$]
\item the matrix for $\pi_{\la}(X^{\gamma})$ is diagonal, for $\gamma\in P$.
\item the matrix for $\pi_{\la}(U_w)$, for $w\in \Wfin$, has at most one non-zero entry in each row and column. Indeed, if $u\in {^\la}W$ and $w\in \Wfin$ then the $u$th row of $\pi_{\la}(U_w)$ is zero if $uw\notin {^\la}W$, and has an entry in only the $uw$-position if $uw\in {^\la}W$. 
\end{compactenum}
In particular it follows that $\chi_{\la}(U_w)=0$ if $w\neq e$.

\goodbreak
The following well known formula for $C_{\sw_J}$, in terms of the intertwiners, will be useful.

\begin{thm}\label{thm:Cform}
For $J\subseteq \{1,\ldots,n\}$ we have
$$
C_{\sw_J}=\sq^{\ell(w_J)}\sum_{w\in W_J}\sq^{-\ell(w)}c_w^J(X)U_w\quad\text{where}\quad c_w^J(X)=\prod_{\beta\in\Phi_J^+\backslash\Phi(w)}\frac{1-\sq^{-2}X^{-\beta}}{1-X^{-\beta}}.
$$
\end{thm}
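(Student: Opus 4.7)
My plan is to prove the formula by showing that the right-hand side, call it
$$D \;:=\; \sq^{\ell(\sw_J)}\sum_{w\in W_J}\sq^{-\ell(w)}c_w^J(X)U_w,$$
satisfies the two properties that uniquely characterise $C_{\sw_J}$: it is an element of $\Hext$ (despite the individual summands living in a localisation) of the form $T_{\sw_J}+(\text{lower-order terms in Bruhat order})$, and it satisfies $T_j D = \sq D$ for every $j\in J_{\la}$ (equivalently, $C_j D = (\sq+\sq^{-1})D$). Since any element of $\Hext$ satisfying these two properties must be $C_{\sw_J}$ (combine the right $W_J$-invariance with the leading-term normalisation, cf.\ the proof of Lemma~\ref{lem:CJformula}), this will finish the proof.

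The leading-term condition is essentially cosmetic: since $c_{\sw_J}^J(X)=1$ (empty product), the $U_{\sw_J}$-coefficient of $D$ is $1$, and since $U_w = T_w + (\text{lower })$ for all $w$ by the definition of the intertwiners, the $T$-expansion of $D$ begins with $T_{\sw_J}$. The real content is verifying $T_j D = \sq D$ for $j\in J_{\la}$. Writing $T_j = U_j + (\sq-\sq^{-1})/(1-X^{-\alpha_j})$, commuting the rational function past $U_w$ via the Bernstein relation, and using the braid/quadratic relations for the intertwiners from Proposition~\ref{prop:basic}, every $T_j U_w$ rewrites as a linear combination of $U_w$ and $U_{s_jw}$ (with rational coefficients in $X$). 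One then collects the two contributions to the coefficient of each $U_w$ in $T_j D$, coming from the index $w$ itself and from the paired index $s_jw\in W_J$.

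The key combinatorial identity that makes this work is a recursion for the $c$-functions: if $j\in J_{\la}$ and $\ell(s_jw)=\ell(w)+1$ then an inspection of inversion sets gives
$\Phi_J^+\setminus\Phi(s_jw) = s_j\bigl((\Phi_J^+\setminus\Phi(w))\setminus\{\alpha_j\}\bigr)$,
and therefore
$$c_{s_jw}^J(X)\;=\;s_j\bigl(c_w^J(X)\bigr)\cdot\frac{1-X^{\alpha_j}}{1-\sq^{-2}X^{\alpha_j}}.$$
Plugging this recursion into the pairing of terms, everything telescopes: the denominator $1-X^{-\alpha_j}$ generated by the Bernstein correction cancels with the $(1-X^{\alpha_j})$ appearing above, the power of $\sq^{\pm 1}$ matches because $\sq^{-\ell(s_jw)}=\sq^{-1}\sq^{-\ell(w)}$, and one lands on the identity $T_j D=\sq D$. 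This same manipulation has the side benefit of verifying that the poles in $X$ across the various $U_w$-terms cancel, so that $D\in\Hext$ rather than merely in its localisation.

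The main obstacle is the bookkeeping in this pairing: one must be careful about which roots of $\Phi_J^+$ move under $s_j$, and track the effect on the $s_j$-action of $c_w^J(X)$ for the many $w\in W_J$ simultaneously. (An elegant alternative avoiding the bookkeeping is to evaluate both sides against a generic character $\psi_\mu$ of a Levi subalgebra $\cL_\mu$, using Theorem~\ref{thm:pathformula}, and to argue by faithfulness; but the eigenvector computation outlined above is more self-contained. One can also recognise $D$ as a parabolic instance of the Macdonald--Cherednik formula for the minimal spherical element, which is the source of this identity.)
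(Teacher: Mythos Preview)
Your approach is essentially a mirror image of the paper's: the paper starts from $C_{\sw_J}$ (which manifestly lies in $\Hext$), writes it as $\sum_{w\in W_J}a_w(X)U_w$ by triangularity, multiplies on the \emph{right} by $U_j$ using $C_{\sw_J}T_j=\sq C_{\sw_J}$, and derives the recursion
\[
a_{ws_j}(X)=\sq\,a_w(X)\,\frac{1-\sq^{-2}X^{w\alpha_j}}{1-X^{w\alpha_j}}\qquad(\ell(ws_j)=\ell(w)-1),
\]
which it then iterates from $a_{\sw_J}=1$. You instead define $D$ to be the right-hand side, multiply on the \emph{left} by $T_j$, and aim to verify $T_jD=\sq D$ via the analogous recursion on $c^J_{s_jw}$ versus $c^J_w$. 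The combinatorial identity you isolate, $\Phi_J^+\setminus\Phi(s_jw)=s_j\bigl((\Phi_J^+\setminus\Phi(w))\setminus\{\alpha_j\}\bigr)$, is the left-handed version of the paper's $w\Phi(w^{-1}\sw_J)=\Phi_J^+\setminus\Phi(w)$; both encode the same bookkeeping.

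The weak point in your write-up is the uniqueness step. Knowing only that $D\in\Hext$ and $T_jD=\sq D$ for $j\in J$ does \emph{not} pin $D$ down (any $C_{\sw_J}h$ satisfies this), and the appeal to Lemma~\ref{lem:CJformula} does not supply what is needed. What actually works is: by $T\leftrightarrow U$ triangularity $C_{\sw_J}$ itself has an expansion $\sum_{w\in W_J}a_w(X)U_w$ with $a_{\sw_J}=1$, and the eigenvalue property forces the same recursion on the $a_w$ that your $c^J_w$ satisfy; hence $a_w=c^J_w$. But notice this is exactly the paper's argument. So the ``verify then invoke uniqueness'' strategy, once made rigorous, collapses back into the paper's ``derive the coefficients'' strategy, and the intermediate claim that $D\in\Hext$ becomes a free consequence rather than something you need to check separately. (Two small slips: you write $j\in J_\lambda$ where you mean $j\in J$, and ``right $W_J$-invariance'' where you are proving left invariance.)
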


\begin{proof}
The triangularity between $U_w$ and $T_w$ implies that 
$
C_{\sw_J}=\sum_{w\in W_J}a_w(X)U_w
$
for some rational functions $a_w(X)$ with $a_{\sw_J}(X)=1$, and that this expression is unique. Let $j\in J$. Since $C_{\sw_J}T_j=\sq C_{\sw_J}$ we compute
\begin{align*}
C_{\sw_J}U_j&=C_{\sw_J}\left(\sq-\frac{\sq-\sq^{-1}}{1-X^{-\alpha_j}}\right)=\sum_{w\in W_J}\sq a_w(X)\frac{1-\sq^{-2}X^{w\alpha_j}}{1-X^{w\alpha_j}}U_w.
\end{align*}
On the other hand, using the formula for $U_j^2$ from Proposition~\ref{prop:basic} we compute
\begin{align*}
C_{\sw_J}U_j
&=\sum_{w\in W_J,\, ws_j>w}\sq^2a_{ws_j}(X)\frac{(1-\sq^{-2}X^{-w\alpha_j})(1-\sq^{-2}X^{w\alpha_j})}{(1-X^{-w\alpha_j})(1-X^{w\alpha_j})}U_w+\sum_{w\in W_J,\,ws_j<w}a_{ws_j}(X)U_w.
\end{align*}
Comparing coefficients of $U_w$ it follows that if $\ell(ws_j)=\ell(w)-1$ then 
$$
a_{ws_j}(X)=\sq a_w(X)\frac{1-\sq^{-2}X^{w\alpha_j}}{1-X^{w\alpha_j}}.
$$
Now let $w\in W_J$ be arbitrary, and write $\sw_J=ws_{j_1}\cdots s_{j_k}$ with $\ell(\sw_J)=\ell(w)+k$. Thus $w=\sw_Js_{j_k}\cdots s_{j_1}$, and repeated use of the above recursion gives
\begin{align*}
a_w(X)&=\sq^{\ell(\sw_J)-\ell(w)}a_{\sw_J}(X)\prod_{\alpha}\frac{1-\sq^{-2}X^{-w\alpha}}{1-X^{-w\alpha}}
\end{align*}
where the product is over $\alpha\in\{\alpha_{j_1},s_{j_1}\alpha_{j_2},\ldots,s_{j_1}\cdots s_{j_{k-1}}\alpha_{j_k}\}$, which is the inversion set of $s_{j_1}\cdots s_{j_k}=w^{-1}\sw_J$. The result follows, since
$
w\Phi(w^{-1}\sw_J)=\Phi_J^+\backslash\Phi(w)
$
and $a_{\sw_J}(X)=1$. 
\end{proof}

\section{$\la$-relative Satake theory}\label{sec:satake}

In this section we develop a $\la$-relative version of the Satake isomorphism, providing an analogue of the classical Satake isomorphism for each two-sided cell. We first recall the classical Satake isomorphism (which will correspond to the lowest two-sided cell). Let 
$$
\mathbf{1}_0=\frac{\sq^{\ell(\sw_0)}}{\Wfin(\sq^2)}C_{\sw_0},
$$
where we have extended scalars to allow the inverse of $\Wfin(\sq^2)=\sum_{w\in \Wfin}\sq^{2\ell(w)}$ in the base ring~$\sR$ (let $\sR'$ denote this extended ring). This normalisation of the Kazhdan-Lusztig basis element $C_{\sw_0}$ is chosen so that $\mathbf{1}_0^2=\mathbf{1}_0$, and hence $\mathbf{1}_0\Hext\mathbf{1}_0$ is a unital algebra, with identity $\mathbf{1}_0$. The classical Satake isomorphism is then
$$
\mathbf{1}_0\Hext\mathbf{1}_0\cong \sR'[X]^{\Wfin}.
$$
The basic theme for the $\la$-relative Satake isomorphism is to consider the matrix algebras $\pi_{\la}(\Hext)$, and in particular the subalgebras $\pi_{\la}(\clap \Hext\clap)$.

\subsection{The matrix $\pi_{\la}(\clap;\sB_{\la})$}\label{sec:31}

The following proposition shows that the matrix entries of $\pi_{\la}(\clap;\sB_{\la})$ are supported on the interval $[\su_{\la},\su_{\la}\sw_{\la'}]\subseteq {^\la}W$ (note that $[\su_{\la},\su_{\la}\sw_{\la'}]\subseteq {^\la}W$ by Corollary~\ref{cor:lengthaddula}). 

\begin{prop}\label{prop:formofmatrix}
We have $[\pi_{\la}(\clap;\sB_{\la})]_{u,v}=0$ unless $u,v\in [\su_{\la},\su_{\la}\sw_{\la'}]$. If $x,y\in W_{\la'}$ then 
$$
[\pi_{\la}(\clap;\sB_{\la})]_{\su_{\la}x,\su_{\la}y}=\sq^{\ba_{\la}-\ell(x)-\ell(y)}.
$$
\end{prop}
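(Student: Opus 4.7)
The plan is to compute the entries of the matrix indexed by $\cF_{\la}:=\su_{\la}W_{\la'}$ by a direct expansion, and then to argue the vanishing of the remaining rows and columns.

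First, I would establish $\ell(\su_{\la}w)=\ell(\su_{\la})+\ell(w)$ for every $w\in W_{\la'}$. Corollary~\ref{cor:lengthaddula} gives $A_{\la}(\su_{\la})=J_{\la'}$, so $\ell(\su_{\la}s_j)>\ell(\su_{\la})$ for each $j\in J_{\la'}$, making $\su_{\la}$ the minimum length element of the right coset $\su_{\la}W_{\la'}$. The standard parabolic length identity then yields the length additivity; in particular, the Bruhat interval $[\su_{\la},\su_{\la}\sw_{\la'}]$ coincides with $\su_{\la}W_{\la'}$, and $X_{\su_{\la}}X_w=X_{\su_{\la}w}$ in $\Hext$ for $w\in W_{\la'}$.

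Next, using $\clap=\sq^{\ba_{\la}}\sum_{w\in W_{\la'}}\sq^{-\ell(w)}X_w$ from~(\ref{eq:CJ}), the length additivity above, and $\su_{\la}w\in{^{\la}W}$ (Corollary~\ref{cor:lengthaddula}), the $\su_{\la}$-row follows immediately:
\[
(\xi_{\la}\otimes X_{\su_{\la}})\clap=\sum_{y\in W_{\la'}}\sq^{\ba_{\la}-\ell(y)}\,\xi_{\la}\otimes X_{\su_{\la}y}.
\]
For general $x\in W_{\la'}$, $\xi_{\la}\otimes X_{\su_{\la}x}=(\xi_{\la}\otimes X_{\su_{\la}})X_x$; and since $X_x=T_{x^{-1}}^{-1}$ while $T_j\clap=\sq\clap$ for each $j\in J_{\la'}$, one has $X_x\clap=\sq^{-\ell(x)}\clap$. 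Combining these gives
\[
(\xi_{\la}\otimes X_{\su_{\la}x})\clap=\sum_{y\in W_{\la'}}\sq^{\ba_{\la}-\ell(x)-\ell(y)}\,\xi_{\la}\otimes X_{\su_{\la}y},
\]
which establishes the claimed formula on $\cF_{\la}\times\cF_{\la}$ and the vanishing outside $\cF_{\la}$ for the rows indexed by $\cF_{\la}$.

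The main obstacle is proving the vanishing of the rows indexed by $u\in{^{\la}W}\setminus\cF_{\la}$. My approach is to switch to the rational intertwiner basis $\{\xi_{\la}\otimes U_u:u\in{^{\la}W}\}$, in which $\pi_{\la}(X^{\gamma})$ acts diagonally via $\gamma\mapsto\psi_{\la}(X^{u\gamma})$, the matrix of $\pi_{\la}(U_w)$ is monomial, and $\xi_{\la}\otimes U_v=0$ whenever $v\notin{^{\la}W}$. Expanding $\clap$ via Theorem~\ref{thm:Cform},
\[
(\xi_{\la}\otimes U_u)\clap=\sq^{\ba_{\la}}\sum_{\substack{w\in W_{\la'}\\ uw\in{^{\la}W}}}\sq^{-\ell(w)}\psi_{\la}\!\big((u\cdot c_w^{\la'})(X)\big)\psi_{\la}(b_{u,w}(X))\,\xi_{\la}\otimes U_{uw},
\]
where $b_{u,w}(X)$ is the rational function from Proposition~\ref{prop:basic}(4). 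The key observation is that $\psi_{\la}(1-\sq^{-2}X^{-\alpha_j})=0$ for every $j\in J_{\la}$, so the factor $\psi_{\la}((u\cdot c_w^{\la'})(X))$ vanishes whenever some $u^{-1}\alpha_j\in\Phi_{\la'}^+\setminus\Phi(w)$ with $j\in J_{\la}$. The condition $uw\in{^{\la}W}$ automatically gives $u^{-1}\alpha_j\notin\Phi(w)$ (since $(uw)^{-1}\alpha_j>0$), so it suffices to exhibit some $j\in J_{\la}$ with $u^{-1}\alpha_j\in\Phi_{\la'}^+$. A root-system/tableau analysis, using the characterisation of $\su_{\la}$ in Theorem~\ref{thm:etale} and the identification $\cF_{\la}=W_{\la}\su_{\la}W_{\la'}\cap{^{\la}W}$, shows that this always occurs for $u\in{^{\la}W}\setminus\cF_{\la}$, which completes the vanishing. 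As a consistency check, the same $U$-basis computation forces $\chi_{\la}(\mathbf{1}_{\la'})=1$ (only the $w=e$ term survives in the trace), so $\pi_{\la}(\mathbf{1}_{\la'})$ is a rank-one idempotent, consistent with the outer-product form $\sq^{\ba_{\la}-\ell(x)-\ell(y)}$ of the $\cF_{\la}$-block.
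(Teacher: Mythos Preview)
Your computation of the $\su_{\la}W_{\la'}$-block is essentially identical to the paper's. For the vanishing of the remaining rows you take a genuinely different route: you pass to the intertwiner basis, expand $\clap$ via Theorem~\ref{thm:Cform}, and kill each summand by exhibiting $j\in J_{\la}$ with $u^{-1}\alpha_j\in\Phi_{\la'}^+$, so that the numerator factor $1-\sq^{-2}\psi_{\la}(X^{-u(u^{-1}\alpha_j)})=0$ annihilates $\psi_{\la}(u\cdot c_w^{\la'}(X))$ whenever $uw\in{^{\la}W}$. The paper instead stays in the $X$-basis: writing $w=w_1w_2$ with $w_1$ right $J_{\la'}$-reduced and $w_1\neq\su_{\la}$, it finds $s'\in J_{\la'}$ with $w_1s'=sw_1$ for some $s\in J_{\la}$, factors $\clap=(\sq^{-1}T_{s'}^{-1}+1)\sum_{ys'>y}\sq^{\ba_{\la}-\ell(y)}X_y$, commutes the first factor past $X_{w_1}$ to obtain $(\sq^{-1}T_s^{-1}+1)$, and observes that this annihilates $\xi_{\la}$. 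Both arguments rest on the same combinatorial input (Theorem~\ref{thm:etale} forcing $A_{\la}(w_1)\neq J_{\la'}$ when $w_1\neq\su_{\la}$); the paper's version is more self-contained and avoids any discussion of poles of $b_{u,w}(X)$ or of $c_w^{\la'}(X)$ under $\psi_{\la}$, while your version is essentially the same argument the paper later gives for Lemma~\ref{lem:czero}, so it has the virtue of unifying the two.
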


\begin{proof}
For $x\in W_{\la'}$ we have
\begin{align*}
(\xi_{\la}\otimes X_{\su_{\la}x})\cdot \clap&=(\xi_{\la}\otimes X_{\su_{\la}})\cdot T_{x^{-1}}^{-1}\clap=\sq^{-\ell(x)}(\xi_{\la}\otimes X_{\su_{\la}})\cdot \clap,
\end{align*}
and using the second formula in~(\ref{eq:CJ}) it follows that
\begin{align*}
(\xi_{\la}\otimes X_{\su_{\la}x})\cdot \clap&=\sum_{y\in W_{\la'}}\sq^{\ell(\sw_{\la'})-\ell(x)-\ell(y)}(\xi_{\la}\otimes X_{\su_{\la}y}),
\end{align*}
which proves that the $\su_{\la}x$-row (with $x\in W_{\la'}$) of $\pi_{\la}(\clap;\sB_{\la})$ is as claimed. 

It remains to show that all other rows of $\pi_{\la}(\clap;\sB_{\la})$ are zero. That is, if $w\in {^\la}W$ with $w\notin [\su_{\la},\su_{\la}\sw_{\la'}]$ then $(\xi_{\la}\otimes X_w)\cdot \clap=0$. Write $w=w_1w_2$ with $w_1$ being $J_{\la'}$-reduced on the right, and $w_2\in W_{\la'}$. Thus $J_{\la'}$ is a subset of the right ascent set of $w_1$, however since $w\notin [\su_{\la},\su_{\la}\sw_{\la'}]$ we have $w_1\neq \su_{\la}$ and hence $A_{\la}(w_1)\neq J_{\la'}$ (by Theorem~\ref{thm:etale}). It follows that there is $s'\in J_{\la'}$ such that $w_1s'\notin {^\la}W$, and hence $w_1s'=sw_1$ for some $s\in J_{\la}$ (see \cite[p.79]{AB:08}). Now, it is elementary that 
$$
\clap=\left(\sq^{-1}T_{s'}^{-1}+1\right)\sum_{y\in W_{\la'},\,ys'>y}\sq^{\ell(\sw_{\la'})-\ell(y)}X_y.
$$
Since $(\xi_{\la}\otimes X_w)\cdot \clap=\sq^{-\ell(w_2)}(\xi_{\la}\otimes X_{w_1})\cdot \clap$ and $X_{w_1}(\sq^{-1}T_{s'}^{-1}+1)=(\sq^{-1}T_s^{-1}+1)X_{w_1}$ it follows that $(\xi_{\la}\otimes X_w)\cdot \clap=0$ as required (recall that $\xi_{\la}\cdot T_s=-\sq^{-1}\xi_{\la}$ for all $s\in J_{\la}$). 
\end{proof}

\subsection{The subalgebra $\pi_{\la}(C_{\sw_{\la'}}\Hext C_{\sw_{\la'}})$}\label{sec:32}

In this section we show that the algebra~$\pi_{\la}(C_{\sw_{\la'}}\Hext C_{\sw_{\la'}})$ is commutative.

\begin{defn}
Let $f_{\la}:\Hext\to\sR[\zeta_{\la}]$ be the function 
$
f_{\la}(h)=\chi_{\la}(h\clap).
$
We extend the definition of $f_{\la}(\cdot)$ to linear operators $U$ acting on $M_{\la}$, as in Remark~\ref{rem:extend}. 
\end{defn}

\begin{thm}\label{thm:satake1}
We have $\pi_{\la}(\clap h\clap)=f_{\la}(h)\pi_{\la}(\clap)$ for all $h\in\Hext$. 
\end{thm}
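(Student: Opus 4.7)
The plan is to exploit the very rigid structure of $\pi_\la(\clap;\sB_\la)$ given by Proposition~\ref{prop:formofmatrix}: this matrix is supported on the $N_{\la'}\times N_{\la'}$ block indexed by $[\su_\la,\su_\la\sw_{\la'}]\times[\su_\la,\su_\la\sw_{\la'}]$, and on that block its $(\su_\la x,\su_\la y)$-entry is $\sq^{\ba_\la-\ell(x)-\ell(y)}$, which factors as a product of an $x$-part and a $y$-part. In other words $\pi_\la(\clap;\sB_\la)$ is a rank-one matrix: defining column vectors $a,b\in\sR^{^\la W}$ by
\[
a_u=\begin{cases}\sq^{-\ell(x)}&\text{if }u=\su_\la x,\ x\in W_{\la'}\\ 0&\text{otherwise,}\end{cases}\qquad b_v=\begin{cases}\sq^{\ba_\la-\ell(y)}&\text{if }v=\su_\la y,\ y\in W_{\la'}\\ 0&\text{otherwise,}\end{cases}
\]
one has $\pi_\la(\clap;\sB_\la)=a\,b^{\mathsf T}$. (One could equally put the $\sq^{\ba_\la}$ on either side; any such splitting will do.)

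First I would record this rank-one factorisation as a lemma. Then the theorem follows from a one-line manipulation: since $\pi_\la$ is a representation,
\[
\pi_\la(\clap\, h\,\clap;\sB_\la)=a b^{\mathsf T}\,\pi_\la(h;\sB_\la)\,a b^{\mathsf T}=\bigl(b^{\mathsf T}\pi_\la(h;\sB_\la)\,a\bigr)\,a b^{\mathsf T},
\]
so it only remains to identify the scalar $b^{\mathsf T}\pi_\la(h;\sB_\la)\,a$ with $f_\la(h)=\chi_\la(h\clap)$.

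For this I would compute the trace using cyclicity and the rank-one factorisation:
\[
\chi_\la(h\clap)=\operatorname{tr}\bigl(\pi_\la(h;\sB_\la)\,a b^{\mathsf T}\bigr)=\operatorname{tr}\bigl(b^{\mathsf T}\pi_\la(h;\sB_\la)\,a\bigr)=b^{\mathsf T}\pi_\la(h;\sB_\la)\,a,
\]
because $b^{\mathsf T}\pi_\la(h;\sB_\la)\,a$ is a scalar. Substituting back gives $\pi_\la(\clap h\clap)=\chi_\la(h\clap)\,\pi_\la(\clap)=f_\la(h)\,\pi_\la(\clap)$, which is the claim. The extension to operators as in Remark~\ref{rem:extend} is automatic since the argument only uses the associative matrix calculation.

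There is essentially no obstacle here: once the rank-one form of $\pi_\la(\clap;\sB_\la)$ is extracted from Proposition~\ref{prop:formofmatrix}, the result is a formal consequence of the identity $(ab^{\mathsf T})N(ab^{\mathsf T})=(b^{\mathsf T}Na)\,ab^{\mathsf T}$ together with cyclicity of the trace. The only thing to watch is bookkeeping in the exponents of $\sq$, i.e.\ making sure the chosen splitting of $\sq^{\ba_\la-\ell(x)-\ell(y)}$ into an $x$-factor and a $y$-factor is consistent, and that the base-ring extension issues play no role (they do not, since all scalars live in $\sR[\zeta_\la]$).
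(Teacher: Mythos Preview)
Your proof is correct and follows essentially the same approach as the paper: both exploit the rank-one structure of $\pi_{\la}(\clap;\sB_{\la})$ coming from Proposition~\ref{prop:formofmatrix} together with cyclicity of trace. The paper writes out the matrix entries explicitly to obtain a scalar $f'_{\la}(h)$ and then identifies it with $f_{\la}(h)$ via the relation $\clap^2=\sq^{-\ell(\sw_{\la'})}W_{\la'}(\sq^2)\clap$, whereas your rank-one factorisation $ab^{\mathsf T}$ and the identity $\operatorname{tr}(Nab^{\mathsf T})=b^{\mathsf T}Na$ give the same conclusion in one step.
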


\begin{proof}
We have
\begin{align*}
[\pi_{\la}(\clap h\clap;\sB_{\la})]_{u,v}&=\sum_{u_1,u_2\in{^\la}W}[\pi_{\la}(\clap;\sB_{\la})]_{u,u_1}[\pi_{\la}(h;\sB_{\la})]_{u_1,u_2}[\pi_{\la}(\clap;\sB_{\la})]_{u_2,v}.
\end{align*}
By Proposition~\ref{prop:formofmatrix} this is zero unless $u=\su_{\la}x$ and $v=\su_{\la}y$ for some $x,y\in W_{\la'}$, and moreover in the sum $u_1=\su_{\la}x'$ and $u_2=\su_{\la}y'$ with $x',y'\in W_{\la'}$, and hence
\begin{align*}
[\pi_{\la}(\clap h\clap;\sB_{\la})]_{\su_{\la}x,\su_{\la}y}&=\sq^{\ba_{\la}-\ell(x)-\ell(y)}\sum_{x',y'\in W_{\la'}}\sq^{\ba_{\la}-\ell(x')-\ell(y')}[\pi_{\la}(h;\sB_{\la})]_{\su_{\la}x',\su_{\la}y'},
\end{align*}
and so $\pi_{\la}(\clap h\clap)=f'_{\la}(h)\pi_{\la}(\clap)$ where
$$
f'_{\la}(h)=\sum_{x,y\in W_{\la'}}\sq^{\ba_{\la}-\ell(x)-\ell(y)}[\pi_{\la}(h;\sB_{\la})]_{\su_{\la}x,\su_{\la}y}.
$$
Using the formula $\pi_{\la}(\clap h\clap)=f'_{\la}(h)\pi_{\la}(\clap)$ and Proposition~\ref{prop:formofmatrix} we have
$$
\chi_{\la}(\clap h\clap)=f'_{\la}(h)\chi_{\la}(\clap)=\sq^{-\ell(\sw_{\la'})}W_{\la'}(\sq^{2})f'_{\la}(h).
$$
On the other hand, we have 
$$
\chi_{\la}(\clap h\clap)=\chi_{\la}(h\clap^2)=\sq^{-\ell(\sw_{\la'})}W_{\la'}(\sq^2)\chi_{\la}(h\clap),
$$
and thus $f'_{\la}(h)=\chi_{\la}(h\clap)$ as required. 
\end{proof}

\begin{cor}\label{cor:commutative}
The subalgebra $\pi_{\la}(\clap \Hext\clap)$ of $\pi_{\la}(\Hext)$ is commutative. 
\end{cor}

\begin{proof}
For $h_1,h_2\in\Hext$ we have 
$$
\pi_{\la}(\clap h_1\clap)\pi_{\la}(\clap h_2\clap)=f_{\la}(h_1)f_{\la}(h_2)\pi_{\la}(\clap^2)=\pi_{\la}(\clap h_2\clap)\pi_{\la}(\clap h_1\clap)
$$
hence the result.
\end{proof}

The following property will be useful later. 

\begin{cor}\label{cor:fbar}
We have $f_{\la}(\overline{h})=\overline{f_{\la}(h)}$ for all $h\in\Hext$. 
\end{cor}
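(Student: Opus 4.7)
The plan is a short chain built from two standard facts. First, the Kazhdan–Lusztig basis elements are bar-invariant by definition, so in particular $\overline{\clap}=\clap$. Second, the bar involution on $\Hext$ is a ring automorphism (not an anti-automorphism): this is standard, and can be verified directly from the given definition $\overline{T_w}=T_{w^{-1}}^{-1}$ together with the observation that for a reduced expression $w=s_{i_1}\cdots s_{i_k}\pi$ one has $T_{w^{-1}}^{-1}=T_\pi^{-1}T_{i_k}^{-1}\cdots T_{i_1}^{-1}$, so applying bar to a product on the $T_w$-basis reproduces the product of the bars.

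Granted these two points, the proof is essentially one computation. For $h\in\Hext$, the ring-homomorphism property and bar-invariance of $\clap$ give
\[
\overline{h\clap}=\overline{h}\cdot\overline{\clap}=\overline{h}\,\clap.
\]
Combining this with Lemma~\ref{lem:barinv} applied to the element $h\clap$ yields
\[
f_{\la}(\overline{h})
=\chi_{\la}(\overline{h}\,\clap)
=\chi_{\la}(\overline{h\clap})
=\overline{\chi_{\la}(h\clap)}
=\overline{f_{\la}(h)},
\]
which is the claim.

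The only potential obstacle is the verification that bar is multiplicative on $\Hext$, but this is entirely routine and not really an obstacle; if desired it can be dispatched in one line by noting that the defining relations \eqref{eq:relations} are preserved under the map $T_w\mapsto T_{w^{-1}}^{-1}$ (combined with $\overline{\sq}=\sq^{-1}$). Thus no serious work beyond invoking Lemma~\ref{lem:barinv} and $\overline{C_{\sw_{\la'}}}=C_{\sw_{\la'}}$ is required.
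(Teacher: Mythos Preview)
Your proof is correct and is exactly the argument the paper intends: the paper's one-line proof ``This follows from Lemma~\ref{lem:barinv} and the definition of $f_{\la}$'' unpacks precisely to your computation $f_{\la}(\overline{h})=\chi_{\la}(\overline{h}\,\clap)=\chi_{\la}(\overline{h\clap})=\overline{\chi_{\la}(h\clap)}=\overline{f_{\la}(h)}$, using $\overline{\clap}=\clap$ and multiplicativity of bar. There is nothing to add.
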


\begin{proof}
This follows from Lemma~\ref{lem:barinv} and the definition of $f_{\la}$. 
\end{proof}

\subsection{$G_{\la}$ symmetry of $f_{\la}(h)$}\label{sec:33}

Recall the definition of $\psi_{\la}$ from Section~\ref{sec:reps}. In particular, $\psi_{\la}(X^{\gamma})=\sv^{\langle \gamma,2\rho_{\la}\rangle}\zeta_{\la}^{\gamma}$. We extend $\psi_{\la}$ to rational functions in $X$ whose denominators do not vanish on applying $\psi_{\la}$. 

Let
$$
c_{\la'}(X)=\prod_{\alpha\in\Phi_{\la'}^+}\frac{1-\sq^{-2}X^{-\alpha}}{1-X^{-\alpha}}. 
$$

\begin{lemma}\label{lem:czero}
If $u\in {^\la}W$ then $\psi_{\la}(u\cdot c_{\la'}(X))=0$ unless $u\in \su_{\la}W_{\la'}$. 
\end{lemma}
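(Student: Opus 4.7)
The plan is to expand the action $u\cdot c_{\la'}(X)=\prod_{\alpha\in\Phi_{\la'}^+}\frac{1-\sq^{-2}X^{-u\alpha}}{1-X^{-u\alpha}}$ and analyse it factor by factor under $\psi_\la$. Using the formula $\psi_\la(X^\gamma)=\sv^{\langle\gamma,2\rho_\la\rangle}\zeta_\la^\gamma$ together with the identification $\sR[\zeta_\la]\cong\sR[P/Q_\la]$ as a group algebra (so that $\zeta_\la^\beta$ is a unit of $\sR$ iff $\beta\in Q_\la$), I would first check that no denominator $\psi_\la(1-X^{-u\alpha})$ vanishes: such a vanishing would force $u\alpha\in Q_\la\cap\Phi=\Phi_\la$ together with $\langle u\alpha,2\rho_\la\rangle=0$, impossible as $\rho_\la$ is regular on $\Phi_\la$ (indeed $\langle\alpha_j,\rho_\la\rangle=1$ for $j\in J_\la$). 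A parallel calculation shows a numerator factor $\psi_\la(1-\sq^{-2}X^{-u\alpha})$ vanishes precisely when $u\alpha\in\Phi_\la$ and $\langle u\alpha,2\rho_\la\rangle=2$; in type~$\sA$ this pairing equals twice the $\Phi_\la$-height, so the vanishing occurs exactly when $u\alpha=\alpha_k$ for some $k\in J_\la$. The lemma therefore reduces to showing: if $u\in{^\la}W$ and $u\notin\su_\la W_{\la'}$, then some $k\in J_\la$ satisfies $u^{-1}\alpha_k\in\Phi_{\la'}^+$.

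I would translate this into $1$-line notation. Since $u\in{^\la}W$, Lemma~\ref{lem:reducedcharacterisation} gives $u^{-1}(k)<u^{-1}(k+1)$ for each $k\in J_\la$ (as $k,k+1$ sit in a common row of $\bt_r(\la)$), so $u^{-1}\alpha_k=\alpha_{u^{-1}(k)}+\cdots+\alpha_{u^{-1}(k+1)-1}$ is a positive root; its support lies in $J_{\la'}$ precisely when no boundary index $\la'(c)$ falls in $\{u^{-1}(k),\ldots,u^{-1}(k+1)-1\}$, i.e.\ when $u^{-1}(k)$ and $u^{-1}(k+1)$ lie in a common ``column block'' $[\la'(c-1)+1,\la'(c)]$ of the $1$-line of $u$. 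Arguing by contrapositive, I would assume no such $k\in J_\la$ exists and deduce $u\in\su_\la W_{\la'}$. Under this hypothesis, for each row $r$ of $\bt_r(\la)$ the $\la_r$ entries of that row appear in $u$ in strictly increasing positions lying in pairwise distinct column blocks, so they occupy $\la_r$ strictly increasing blocks $b_1^{(r)}<\cdots<b_{\la_r}^{(r)}$.

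A marginal-sum induction on $c$ then pins down $b_c^{(r)}=c$ for every $r$ with $\la_r\geq c$. The base case $c=1$ uses that block $1$ has capacity $\la_1'=r(\la)$, each row contributes at most one entry there, and the contributions must sum to the capacity, forcing every row to place its first entry in block $1$. For the induction step, rows with $\la_r<c$ have by the inductive hypothesis already spent all their entries in blocks $\leq \la_r<c$, while rows with $\la_r\geq c$ have their $c$-th entry in some block $\geq c$; since block $c$ has capacity $\la_c'=\#\{r:\la_r\geq c\}$ and only these rows can still contribute, each of them must place its $c$-th entry exactly in block $c$. Consequently column block $c$ of the $1$-line of $u$ equals column $c$ of $\bt_r(\la)$ as a set, and by Remark~\ref{rem:columnstabiliser} this characterises membership in $\su_\la W_{\la'}$. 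The main obstacle is making the marginal-sum induction rigorous in the presence of the inequality constraints from the $b_i^{(r)}$; the $\psi_\la$-analysis of the numerators and denominators is routine once one exploits the group-algebra structure of $\sR[\zeta_\la]$.
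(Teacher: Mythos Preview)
Your proposal is correct. The analytic reduction is identical to the paper's: both arguments expand $\psi_\la(u\cdot c_{\la'}(X))$ factorwise and reduce the lemma to the combinatorial claim that if $u\in{^\la}W$ with $u\notin\su_\la W_{\la'}$ then $u\alpha=\alpha_k$ for some $\alpha\in\Phi_{\la'}^+$ and $k\in J_\la$. (Your check that no denominator vanishes is a nice piece of bookkeeping that the paper leaves implicit.)

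Where you diverge is in the proof of the combinatorial claim. The paper decomposes $u=u_1u_2$ with $u_1$ being $J_{\la'}$-reduced on the right and $u_2\in W_{\la'}$; then $u_1\neq\su_\la$, and since $J_{\la'}$ lies in the right ascent set of $u_1$, Theorem~\ref{thm:etale} forces $A_\la(u_1)\neq J_{\la'}$, yielding $s'\in J_{\la'}$ with $u_1s'=su_1$ for some $s\in J_\la$, whence $u(u_2^{-1}\alpha_{s'})=\alpha_s$. Your route is a direct pigeonhole in $1$-line notation: assuming the contrapositive, consecutive row-entries of $\bt_r(\la)$ land in distinct column blocks of $u$, and the marginal-sum induction (block~$c$ has capacity $\la'_c=\#\{r:\la_r\geq c\}$, only that many rows can still contribute, each contributes at most one) pins down $b_c^{(r)}=c$, so the column-block contents of $u$ coincide with those of $\su_\la$ and $\su_\la^{-1}u\in W_{\la'}$. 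This bypasses Theorem~\ref{thm:etale} (and hence the appeal to the Dominance Lemma via Lemma~\ref{lem:dominance}) at the cost of a hands-on counting argument; the paper's version is shorter because that structural work has already been done. Both are perfectly valid.
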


\begin{proof}
Since 
$$
\psi_{\la}(u\cdot c_{\la'}(X))=\prod_{\alpha\in\Phi_{\la'}^+}\frac{1-\sq^{-2}\sv^{-\langle u\alpha,2\rho_{\la}\rangle}\zeta_{\la}^{-u\alpha}}{1-\sv^{-\langle u\alpha,2\rho_{\la}\rangle}\zeta_{\la}^{-u\alpha}}
$$
one just needs to show that if $u\in{^\la}W$ with $u\notin \su_{\la}W_{\la'}$ then $u\alpha=\alpha_s$ for some $\alpha\in\Phi_{\la'}^+$ and $s\in J_{\la}$ (because then $\zeta_{\la}^{-u\alpha}=1$ and $\sv^{-\langle u\alpha,2\rho_{\la}\rangle}=(-\sq^{-1})^{\langle -\alpha_s,2\rho_{\la}\rangle}=\sq^2$, killing the term in the product). The argument is similar to Proposition~\ref{prop:formofmatrix}. Let $u\in {^\la}W$ with $u\notin \su_{\la}W_{\la'}$. Write $u=u_1u_2$ with $u_1$ being $J_{\la'}$-reduced on the right, and $u_2\in W_{\la'}$. Since $u\notin \su_{\la}W_{\la'}$ we have $u_1\neq \su_{\la}$. Since $J_{\la'}$ is contained in the right ascent set of $u_1$, and $A_{\la}(u_1)\neq J_{\la'}$  (by Theorem~\ref{thm:etale}) there is $s'\in J_{\la'}$ with $\ell(u_1s')=\ell(u_1)+1$ and $u_1s'=su_1$ (see \cite[p.79]{AB:08}). But then $u_1\alpha_{s'}=\alpha_s$. Let $\alpha=u_2^{-1}\alpha_{s'}$. Then $\alpha\in\Phi_{\la'}^+$ (because $u_2\in W_{\la'}$, and if $u_2^{-1}\alpha_{s'}<0$ then $u$ is not $J_{\la}$-reduced on the left as $u_2^{-1}u_1^{-1}\alpha_{s}=u_2^{-1}\alpha_{s'}=\alpha$ giving that $\alpha_s$ is in the left descent set of~$u$). Then $u\alpha=\alpha_s$ as required. 
\end{proof}

\begin{lemma}\label{lem:invariance}
We have $\psi_{\la}(\su_{\la}\cdot p(X))\in \sR[\zeta_{\la}]^{G_{\la}}$ for all $p(X)\in\sR[X]^{W_{\la'}}$.
\end{lemma}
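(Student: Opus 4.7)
The plan is to reduce the $G_{\la}$-symmetry of $\psi_{\la}(\su_{\la}\cdot p(X))$ to the $W_{\la'}$-symmetry of $p(X)$, using the conjugation relation $\su_{\la}^{-1}G_{\la}\su_{\la}\leq W_{\la'}$ from Proposition~\ref{prop:Gla1}.

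First I would verify the key compatibility: for every $g\in G_{\la}$ and every $\gamma\in P$, the operations ``act by $g$ on $X^{\gamma}$'' and ``act by $g$ on $\zeta_{\la}^{\gamma}$'' are intertwined by $\psi_{\la}$, i.e.\
$$g\cdot \psi_{\la}(X^\gamma) = \psi_{\la}(g\cdot X^\gamma).$$
Indeed, both sides equal a power of $\sv$ times $\zeta_{\la}^{g\gamma}$; the powers of $\sv$ coincide because $G_{\la}$ stabilises $\Phi_{\la}^+$ and hence $\rho_{\la}$ (Equation~(\ref{eq:stabiliserho})), so $\langle g\gamma,2\rho_{\la}\rangle=\langle \gamma,2g^{-1}\rho_{\la}\rangle=\langle\gamma,2\rho_{\la}\rangle$. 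By linearity this extends to $g\cdot\psi_{\la}(q(X))=\psi_{\la}(g\cdot q(X))$ for any $q(X)\in\sR[X]$ (and indeed for any rational function on which $\psi_{\la}$ is defined).

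Next, for $g\in G_{\la}$ and $p(X)\in\sR[X]^{W_{\la'}}$, I would compute
$$
g\cdot\psi_{\la}(\su_{\la}\cdot p(X)) \;=\; \psi_{\la}\bigl(g\su_{\la}\cdot p(X)\bigr) \;=\; \psi_{\la}\bigl(\su_{\la}\cdot (\su_{\la}^{-1}g\su_{\la})\cdot p(X)\bigr).
$$
By Proposition~\ref{prop:Gla1} the element $\su_{\la}^{-1}g\su_{\la}$ lies in $W_{\la'}$, so the $W_{\la'}$-invariance of $p(X)$ collapses the inner action and gives $g\cdot \psi_{\la}(\su_{\la}\cdot p(X))=\psi_{\la}(\su_{\la}\cdot p(X))$, which is exactly the required $G_{\la}$-invariance.

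There is no serious obstacle here: everything is a direct consequence of the $G_{\la}$-equivariance of $\psi_{\la}$ (built into the definitions via $g\rho_{\la}=\rho_{\la}$) combined with the conjugation fact $\su_{\la}^{-1}G_{\la}\su_{\la}\leq W_{\la'}$ already established in Proposition~\ref{prop:Gla1}. The only subtlety worth flagging explicitly is checking that the two potentially distinct $g$-actions (on $\sR[X]$ via $g\cdot X^\gamma=X^{g\gamma}$, and on $\sR[\zeta_{\la}]$ via $g\cdot\zeta_{\la}^\gamma=\zeta_{\la}^{g\gamma}$) are genuinely intertwined by $\psi_{\la}$; this is precisely where the invariance $g\rho_{\la}=\rho_{\la}$ is needed.
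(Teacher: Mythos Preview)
Your proof is correct and is essentially the same as the paper's. The paper carries out the computation explicitly on a monomial basis $p(X)=\sum_{\gamma'\in W_{\la'}\cdot\gamma}X^{\gamma'}$ and performs the change of variable $\gamma''=\su_{\la}^{-1}g\su_{\la}\gamma'$ by hand, whereas you package the same content into the intertwining identity $g\cdot\psi_{\la}(q(X))=\psi_{\la}(g\cdot q(X))$; both arguments hinge on exactly the same two ingredients, namely $g\rho_{\la}=\rho_{\la}$ and $\su_{\la}^{-1}G_{\la}\su_{\la}\leq W_{\la'}$.
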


\begin{proof}
It is sufficient to prove the result for the monomial symmetric functions  
$
p(X)=\sum_{\ga'\in W_{\la'}\cdot \ga}X^{\gamma'}
$
with $\gamma\in P$. If $g\in G_{\la}$ then 
\begin{align*}
g\cdot \psi_{\la}(\su_{\la}\cdot p(X))&=\sum_{\ga'\in W_{\la'}\cdot \ga}\sv^{\langle \su_{\la}\gamma',2\rho_{\la}\rangle}\zeta_{\la}^{g\su_{\la}\gamma'}. 
\end{align*}
Writing $g\su_{\la}\gamma'=\su_{\la}(\su_{\la}^{-1}g\su_{\la})\gamma'$ and noting that $(\su_{\la}^{-1}g\su_{\la})\ga'\in W_{\la'}\cdot\ga$ (see Proposition~\ref{prop:Gla1}), we change variable in the sum to $\ga''=\su_{\la}^{-1}g\su_{\la}\ga'$, giving
\begin{align*}
g\cdot \psi_{\la}(\su_{\la}\cdot p(X))&=\sum_{\ga''\in W_{\la'}\cdot\ga}\sv^{\langle g^{-1}\su_{\la}\gamma'',2\rho_{\la}\rangle}\zeta_{\la}^{\su_{\la}\gamma''}. 
\end{align*}
By~(\ref{eq:stabiliserho}) we have $\langle g^{-1}\su_{\la}\gamma'',2\rho_{\la}\rangle=\langle \su_{\la}\gamma'',2\rho_{\la}\rangle$, and hence $g\cdot\psi_{\la}(\su_{\la}\cdot p(X))=\psi_{\la}(\su_{\la}\cdot p(X))$ as required. 
\end{proof}

The following theorem connects the subalgebra $\pi_{\la}(\clap \Hext\clap)$ of $\pi_{\la}(\Hext)$ to the ring of $G_{\la}$-symmetric functions. 

\begin{thm}\label{thm:symmetry}
We have $f_{\la}(h)\in\sR[\zeta_{\la}]^{G_{\la}}$ for all $h\in\Hext$. 
\end{thm}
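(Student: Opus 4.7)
The proof proceeds by computing $\chi_\la(h\clap)$ explicitly using the intertwiner presentation of $\clap$ from Theorem~\ref{thm:Cform} and then reducing to Lemma~\ref{lem:invariance}. After extending scalars so that the intertwiners form a basis, we may write $h = \sum_{v \in \Wfin} g_v(X) U_v$. Using Theorem~\ref{thm:Cform} together with $U_v f(X) = (v \cdot f)(X) U_v$ and $U_v U_w = b_{v,w}(X) U_{vw}$ (Proposition~\ref{prop:basic}), we expand $h\clap$ as a sum of terms of the form $F(X) U_{vw}$ with $v \in \Wfin$ and $w \in W_{\la'}$.

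In the rational $U$-basis $\{\xi_\la \otimes U_u \mid u \in {^\la}W\}$, the matrix of $F(X)$ is diagonal and the matrix of $U_z$ has vanishing diagonal for $z \neq e$ (Section~\ref{se:intertwiners}). Hence the trace picks up only the $vw = e$ terms, i.e.\ $v = w^{-1} \in W_{\la'}$. A direct inversion-set calculation gives $(w^{-1}\cdot c_w^{\la'})(X) = c_{w^{-1}}^{\la'}(X)$, and a computation based on Proposition~\ref{prop:basic}(3) (applied inductively on a reduced expression for $w$) shows that
$$c_w^{\la'}(X)\, b_{w,w^{-1}}(X) \;=\; c_{\la'}(X) \cdot \prod_{\alpha \in \Phi(w)} \sq^2\,\frac{1-\sq^{-2}X^{\alpha}}{1-X^{\alpha}}.$$
After re-indexing $w \leftrightarrow w^{-1}$, the common factor $c_{\la'}(X)$ can be pulled outside the summation over $w$, yielding
$$\chi_\la(h\clap) \;=\; \sq^{\ba_\la} \sum_{u \in {^\la}W} \psi_\la\bigl(u \cdot [c_{\la'}(X)\, H_h(X)]\bigr)$$
for an explicit rational function $H_h(X)$ depending only on $h$.

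Now Lemma~\ref{lem:czero} implies that the terms with $u \notin \su_\la W_{\la'}$ vanish (noting $\su_\la W_{\la'} \subseteq {^\la}W$ by Corollary~\ref{cor:lengthaddula}). Writing $u = \su_\la x$ for $x \in W_{\la'}$ and pulling the $\su_\la$-action outside gives
$$\chi_\la(h\clap) \;=\; \sq^{\ba_\la}\, \psi_\la\Bigl(\su_\la \cdot \sum_{x \in W_{\la'}} x \cdot \bigl[c_{\la'}(X)\, H_h(X)\bigr]\Bigr).$$
The bracketed sum is $W_{\la'}$-invariant by construction, so Lemma~\ref{lem:invariance} (whose proof extends directly to $W_{\la'}$-invariant elements on which $\psi_\la$ is defined) yields $f_\la(h) \in \sR[\zeta_\la]^{G_\la}$.

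The main technical obstacle is the careful handling of rational-function coefficients: passing from the $T$-basis to the $U$-basis produces coefficients in $\mathrm{Frac}(\sR[X])$, and the combination of $c_{\la'}(X)$ (which vanishes at certain points under $\psi_\la$) with the factors of $(1-X^\alpha)^{-1}$ coming from $b_{w,w^{-1}}(X)$ can yield formally indeterminate expressions. These are resolved by the standard observation that $W_{\la'}$-symmetrization pairs the pole of $1/(1-X^\alpha)$ against that of $1/(1-X^{-\alpha})$ in the $s_\alpha$-conjugate term, ensuring that the bracketed expression represents a bona fide $W_{\la'}$-invariant element to which Lemma~\ref{lem:invariance} may be applied; the fact that the final answer must lie in $\sR[\zeta_\la]$ (since $\chi_\la(h\clap)$ is a trace of an element of $\pi_\la(\Hext)$) confirms that all apparent singularities do indeed cancel.
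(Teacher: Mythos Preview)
Your proof is correct and follows essentially the same route as the paper. The paper works with a single basis element $h=p(X)U_v$ at a time (obtaining $f_\la(p(X)U_v)=0$ for $v\notin W_{\la'}$ and the explicit formula~(\ref{eq:explicitform}) for $v\in W_{\la'}$), while you expand all of $h$ at once; but the key computations---using Theorem~\ref{thm:Cform}, the relation $U_vU_{v^{-1}}=\sq^{2\ell(v)}\prod_{\beta\in\Phi(v)}\frac{(1-\sq^{-2}X^{-\beta})(1-\sq^{-2}X^\beta)}{(1-X^{-\beta})(1-X^\beta)}$, the vanishing from Lemma~\ref{lem:czero}, and the final appeal to Lemma~\ref{lem:invariance}---are identical in structure and content.
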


\begin{proof}
Each element of the Hecke algebra can be written as a linear combination of the elements $U_w$, $w\in \Wfin$, with ``coefficients'' being rational functions (in the variables $X^{\gamma}$) whose denominators do not vanish on applying $\psi_{\la}$. Therefore, by linearity of $f_{\la}$, it is sufficient to prove that 
$
f_{\la}(p(X)U_v)
$ is $G_{\la}$-symmetric, where $v\in \Wfin$ and where $p(X)$ is a rational function of the form described above. By definition we have
$
f_{\la}(p(X)U_v)=\chi_{\la}(p(X)U_v\clap),
$
and we shall compute this character below. 

By Theorem~\ref{thm:Cform} we have
$$
C_{\sw_{\la'}}=\sum_{w\in W_{\la'}}\sq^{\ell(w_{\la'})-\ell(w)}c_{\la',w}(X)U_w\quad\text{where}\quad c_{\la',w}(X)=\prod_{\beta\in\Phi_{\la'}^+\backslash\Phi(w)}\frac{1-\sq^{-2}X^{-\beta}}{1-X^{-\beta}}
$$
(note that $c_{\la',e}(X)=c_{\la'}(X)$). Thus
$$
\chi_{\la}(p(X)U_v\clap)=\sum_{w\in W_{\la'}}\sq^{\ell(\sw_{\la'})-\ell(w)}\chi_{\la}\big(p(X)(v\cdot c_{\la',w}(X))U_vU_w\big).
$$
Recall that $U_vU_w$ is a rational function multiple of $U_{vw}$, and that $\pi_{\la}(r(X))$ is diagonal (in the basis of intertwiners) for all rational functions $r(X)$ with non-vanishing denominator on the module, see Proposition~\ref{prop:basic}. Thus, since $\chi_{\la}(U_y)=0$ unless $y=e$, we have 
$$
\chi_{\la}(p(X)U_v\clap)=\begin{cases}
0&\text{if $v\notin W_{\la'}$}\\
\sq^{\ell(\sw_{\la'})-\ell(v)}\chi_{\la}(p(X)(v\cdot c_{\la',v^{-1}}(X))U_vU_{v^{-1}})&\text{if $v\in W_{\la'}$.}
\end{cases}
$$
We compute
\begin{align*}
v\cdot c_{\la',v^{-1}}(X)=\prod_{\alpha\in\Phi_{\la'}^+\backslash \Phi(v)}\frac{1-\sq^{-2}X^{-\alpha}}{1-X^{-\alpha}}
\end{align*}
and repeatedly applying the formula for $U_j^2$ from Proposition~\ref{prop:basic} we have
\begin{align*}
U_vU_{v^{-1}}=\sq^{2\ell(v)}\prod_{\beta\in\Phi(v)}\frac{(1-\sq^{-2}X^{-\beta})(1-\sq^{-2}X^{\beta})}{(1-X^{-\beta})(1-X^{\beta})}.
\end{align*}
Thus, if $v\in W_{\la'}$, we have
$$
(v\cdot c_{\la',v^{-1}}(X))U_{v}U_{v^{-1}}=\sq^{2\ell(v)}c_{\la'}(X)\prod_{\beta\in\Phi(v)}\frac{1-\sq^{-2}X^{\beta}}{1-X^{\beta}},
$$
and it follows (by computing the trace using the basis of intertwiners) that
$$
f_{\la}(p(X)U_v)=\sq^{\ell(\sw_{\la'})+\ell(v)}\sum_{u\in {^\la}W}\psi_{\la}\left(u\cdot\left(p(X)c_{\la'}(X)\prod_{\beta\in\Phi(v)}\frac{1-\sq^{-2}X^{\beta}}{1-X^{\beta}}\right)\right)
.$$
By Lemma~\ref{lem:czero} we have $\psi_{\la}(u\cdot c_{\la'}(X))=0$ unless $u\in \su_{\la}W_{\la'}$, and so the sum over ${^\la}W$ becomes a sum over $W_{\la'}$, giving
\begin{align}\label{eq:explicitform}
f_{\la}(p(X)U_v)=\sq^{\ell(\sw_{\la'})+\ell(v)}\psi_{\la}\left(\su_{\la}\cdot \left(\sum_{y\in W_{\la'}}p(yX)c_{\la'}(yX)\prod_{\beta\in\Phi(v)}\frac{1-\sq^{-2}X^{y\beta}}{1-X^{y\beta}}\right)\right).
\end{align}
The sum is $W_{\la'}$-invariant, and the result follows from Lemma~\ref{lem:invariance}.
\end{proof}

\begin{remark}
The proof of Theorem~\ref{thm:symmetry} shows that $f_{\la}(p(X)U_v)=0$ if $v\notin W_{\la'}$, and~(\ref{eq:explicitform}) computes $f_{\la}(h)$ explicitly when $h$ is written in the form $h=\sum_{v\in \Wfin}p_v(X)U_v$. 
\end{remark}

We note the following corollary. 

\begin{cor}\label{cor:MacFormula}
For $\gamma\in P$ we have
$$
f_{\la}(X^{\gamma})=\chi_{\la}(X^{\gamma}\clap)=\sq^{\ell(\sw_{\la'})}\psi_{\la}(\su_{\la}\cdot P_{\gamma}^{\la'}(X))
$$
where $P_{\gamma}^{\la'}(X)$ is the $\la'$-relative Macdonald spherical function
$$
P_{\gamma}^{\la'}(X)=\sum_{y\in W_{\la'}}X^{y\gamma}\prod_{\alpha\in\Phi_{\la'}^+}\frac{1-\sq^{-2}X^{-y\alpha}}{1-X^{-y\alpha}}=\sum_{y\in W_{\la'}}y\cdot(X^{\gamma}c_{\la'}(X)). 
$$
\end{cor}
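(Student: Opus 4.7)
The plan is to recognize the corollary as an immediate specialization of the explicit formula~(\ref{eq:explicitform}) established in the proof of Theorem~\ref{thm:symmetry}. The point is that $X^{\gamma}\in\Hext$ can be written as $p(X)U_v$ with $p(X)=X^{\gamma}$ and $v=e$, since $U_e=1$ (the empty product of intertwiners). Substituting these choices into~(\ref{eq:explicitform}), and using that $\ell(e)=0$ and $\Phi(e)=\emptyset$ (so the interior product evaluates to $1$), the formula collapses to
$$
f_{\la}(X^{\gamma})=\sq^{\ell(\sw_{\la'})}\psi_{\la}\!\left(\su_{\la}\cdot\sum_{y\in W_{\la'}}y\cdot\bigl(X^{\gamma}c_{\la'}(X)\bigr)\right),
$$
where I have used the identity $p(yX)c_{\la'}(yX)=y\cdot(p(X)c_{\la'}(X))$, which is just the definition of the $\Wfin$-action on $\sR[X]$.

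To finish, I would verify that the right-hand side matches $\sq^{\ell(\sw_{\la'})}\psi_{\la}(\su_{\la}\cdot P_{\gamma}^{\la'}(X))$ by noting that the two stated expressions for $P_{\gamma}^{\la'}(X)$ coincide. Indeed, applying $y$ term-by-term gives
$$
y\cdot\bigl(X^{\gamma}c_{\la'}(X)\bigr)=X^{y\gamma}\prod_{\alpha\in\Phi_{\la'}^+}\frac{1-\sq^{-2}X^{-y\alpha}}{1-X^{-y\alpha}},
$$
so the sum over $y\in W_{\la'}$ of either expression agrees with $P_{\gamma}^{\la'}(X)$.

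There is no real obstacle here, since the substantive work (invoking the Kazhdan-Lusztig formula for $\clap$ from Theorem~\ref{thm:Cform}, computing $U_vU_{v^{-1}}$, and using Lemma~\ref{lem:czero} to reduce the sum over ${^\la}W$ to a sum over $\su_{\la}W_{\la'}$) has already been carried out in the proof of Theorem~\ref{thm:symmetry}. The corollary is essentially an explicit unpacking of that computation in the special case where the Hecke-algebra element is a pure translation $X^{\gamma}$, and it exhibits $f_{\la}(X^{\gamma})$ as the image under $\psi_{\la}\circ(\su_{\la}\cdot)$ of the classical Macdonald spherical function associated to the parabolic subgroup $W_{\la'}$.
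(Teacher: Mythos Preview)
Your proposal is correct and follows essentially the same approach as the paper: the corollary is obtained by specializing the explicit formula~(\ref{eq:explicitform}) from the proof of Theorem~\ref{thm:symmetry} to $p(X)=X^{\gamma}$ and $v=e$, with the first equality $f_{\la}(X^{\gamma})=\chi_{\la}(X^{\gamma}\clap)$ being just the definition of $f_{\la}$ (Theorem~\ref{thm:satake1}). Your verification that the two displayed expressions for $P_{\gamma}^{\la'}(X)$ agree is a helpful clarification that the paper leaves implicit.
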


\begin{proof}
This is a corollary of~(\ref{eq:explicitform}).
\end{proof}

\subsection{$\la$-relative Satake isomorphism}

Let $\sR'$ be the ring $\sR$ where we adjoined the inverse of $W_{\la'}(\sq^2)$. In this section we work with the Hecke algebra $\Hext_{\sR'}$ with scalars extended to~$\sR'$. Thus our representation $\pi_{\la}$ is over the ring~$\sR'[\zeta_{\la}]$. Then we can consider the element 
$$
\mathbf{1}_{\la'}=\frac{\sq^{\ell(\sw_{\la'})}}{W_{\la'}(\sq^2)}C_{\sw_{\la'}}.
$$
With this normalisation we have $\mathbf{1}_{\la'}^2=\mathbf{1}_{\la'}$, and the algebra
$
\pi_{\la}(\mathbf{1}_{\la'}\Hext_{\sR'}\mathbf{1}_{\la'})
$
is unital (with identity $\mathbf{1}_{\la'}$) and commutative (by Corollary~\ref{cor:commutative}). 

We normalise $f_{\la}$ by
$$
\tilde{f}_{\la}(h)= \frac{\sq^{\ell(\sw_{\la'})}}{W_{\la'}(\sq^2)}f_{\la}(h),
$$
and then by Theorem~\ref{thm:satake1} we have
$
\pi_{\la}(\mathbf{1}_{\la'}h\mathbf{1}_{\la'})=\tilde{f}_{\la}(h)\pi_{\la}(\mathbf{1}_{\la'}).
$

We postpone the proof of the following lemma until Section~\ref{sec:62}, without introducing any circularity in the arguments. 

\begin{lemma}\label{lem:span}
The map $f_{\la}:\Hext\to \sR[\zeta_{\la}]^{G_{\la}}$ is surjective.
\end{lemma}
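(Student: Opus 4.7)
The plan is to prove surjectivity by a triangular-system argument over the $\sR$-basis $\{\fe_{\bar\gamma}(\zeta_\la) : \bar\gamma\in(P/Q_\la)_+\}$ of $\sR[\zeta_\la]^{G_\la}$ introduced in~(\ref{eq:monomial}). By Corollary~\ref{cor:MacFormula}, for every $\gamma\in P$ we have
$$f_\la(X^\gamma)=\sq^{\ell(\sw_{\la'})}\,\psi_\la\bigl(\su_\la\cdot P_\gamma^{\la'}(X)\bigr).$$
Classically, the Macdonald spherical functions $P_\gamma^{\la'}(X)$, as $\gamma$ ranges over a fundamental domain for the action of $W_{\la'}$ on $P$, form an $\sR$-basis of $\sR[X]^{W_{\la'}}$, and the automorphism $\su_\la\cdot({-})$ transports this to an $\sR$-basis of the ring of column-stabilizer invariants $\sR[X]^{\su_\la W_{\la'}\su_\la^{-1}}$ (see Remark~\ref{rem:columnstabiliser}). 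It therefore suffices to show that the map
$$\Psi\colon \sR[X]^{\su_\la W_{\la'}\su_\la^{-1}}\longrightarrow \sR[\zeta_\la]^{G_\la},\qquad p(X)\longmapsto \psi_\la(p(X)),$$
which is well-defined by Lemma~\ref{lem:invariance}, is surjective.

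To this end I would induct on the $\la$-dominance order $\peq_\la$ of Section~\ref{sec:2}. For each $\bar\gamma\in(P/Q_\la)_+$, take the canonical representative $\gamma\in P^{(\la)}_+$ and consider the column-stabilizer monomial sum $m_\gamma(X)=\sum_{\delta\in\su_\la W_{\la'}\su_\la^{-1}\cdot\gamma}X^{\delta}$. The core triangularity claim is that
$$\Psi(m_\gamma(X))=N_\gamma(\sq)\,\fe_{\bar\gamma}(\zeta_\la)+\sum_{\bar\gamma'\prec_\la\bar\gamma}c_{\bar\gamma,\bar\gamma'}(\sq)\,\fe_{\bar\gamma'}(\zeta_\la),$$
for some invertible leading coefficient $N_\gamma(\sq)\in\sR^{\times}$ and $c_{\bar\gamma,\bar\gamma'}(\sq)\in\sR$. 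The geometric point is that as $\delta$ runs through the column-stabilizer orbit of $\gamma$, its class $\bar\delta=\delta+Q_\la$ always satisfies $\bar\delta\peq_\la\bar\gamma$, with equality precisely when the column permutation yielding $\delta$ permutes only entries in rows of $\bt_r(\la)$ of the same length---equivalently, $\bar\delta\in G_\la\cdot\bar\gamma$, by the explicit description of $G_\la$ in Proposition~\ref{prop:Glatype}. Column permutations involving rows of different lengths transfer weight between distinct-length rows and hence, via Lemma~\ref{lem:membershipQ^la}, produce a $\bar\delta$ strictly below $\bar\gamma$ in $\peq_\la$. Granting the claim, the induction hypothesis expresses each $\fe_{\bar\gamma'}(\zeta_\la)$ with $\bar\gamma'\prec_\la\bar\gamma$ as an element of the image, and invertibility of $N_\gamma(\sq)$ then lets us solve for $\fe_{\bar\gamma}(\zeta_\la)$; the base case $\bar\gamma=0$ is immediate since $\Psi(1)=1=\fe_{0}(\zeta_\la)$.

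The main obstacle, and the reason the lemma is postponed, is the verification of the triangularity claim and the invertibility of the leading coefficient $N_\gamma(\sq)$: both require tracking the decomposition of the column-stabilizer orbit of $\gamma$ under projection to $P/Q_\la$, identifying the $G_\la$-orbit of $\bar\gamma$ as the unique $\peq_\la$-maximal stratum of this decomposition, and controlling the exponents $\langle\delta,2\rho_\la\rangle$ that enter the definition of $\psi_\la$. This orbit and pairing bookkeeping is most naturally intertwined with the length computations for the elements $\sm_\gamma$ developed in Section~\ref{sec:maximallengthelts}, which is why the proof is cleanest once those results are in place.
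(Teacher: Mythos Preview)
Your triangularity claim with respect to $\peq_\la$ is false, and this is the central gap. You assert that column permutations involving rows of different lengths produce classes $\bar\delta$ strictly below $\bar\gamma$ in $\peq_\la$, citing Lemma~\ref{lem:membershipQ^la}. But that lemma says exactly the opposite: an element lies in $Q^\la$ only when the contributions from rows of each fixed length sum to zero. A column permutation moving weight between rows of \emph{different} lengths therefore produces a $\bar\delta$ with $\bar\gamma-\bar\delta\notin Q^\la$, so $\bar\delta$ is \emph{incomparable} to $\bar\gamma$, not below it. Concretely, take $\la=(2,1)$, so $G_\la$ is trivial and $Q^\la=\{0\}$; the column stabiliser is $\langle(1,3)\rangle$. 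For $\gamma=e_1\in P^{(\la)}$ one finds
\[
\Psi(m_{e_1})=-\sq^{-1}\,\fe_{\tilde e_1}+\fe_{\tilde e_2},
\]
and $\tilde e_2$ is incomparable to $\tilde e_1$ in $\peq_\la$. The same phenomenon occurs for $\la=(2,1,1)$ with $\gamma=\omega_2$, where $\Psi(m_\gamma)=\fe_{2\tilde e_1}-\sq\,\fe_{\tilde e_1+\tilde e_2}$ and $\tilde e_1+\tilde e_2\not\prec_\la 2\tilde e_1$. So the induction on $\peq_\la$ does not get off the ground. (Your intuition is right only within a single block of equal-length rows; across blocks the order $\peq_\la$ simply does not see the difference.)

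A second issue is the asserted invertibility $N_\gamma(\sq)\in\sR^\times$. This coefficient is a signed sum $\sum\sv^{\langle\delta,2\rho_\la\rangle}$ over several orbit elements, and there is no a priori reason for it to collapse to a single monomial in~$\sq$. You acknowledge this is the main obstacle but do not indicate how the bookkeeping in Section~\ref{sec:maximallengthelts} would resolve it; in fact those length computations concern the elements $\sm_\gamma$ and the pairing $\langle\gamma,2\rho_{k,l}\rangle$, not the stabiliser sums you would need here.

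For comparison, the paper's route is entirely different and does not attempt any direct analysis of $\Psi$. It first proves Theorem~\ref{thm:leading0}, which identifies $\mathfrak f_{\sm_\gamma}(\zeta_\la)=\fs_\gamma(\zeta_\la)$ via the orthonormality characterisation of Schur functions (Lemma~\ref{lem:uniquebasis}) together with the asymptotic Plancherel inner product. Unwinding the relation between $\mathfrak f_w$ and $f_\la$ from the proof of Claim~1 there yields $f_\la(C_{\sm_\gamma})=\sq^{-2\ell(\sw_{\la'})}W_{\la'}(\sq^2)^2\,\fs_\gamma(\zeta_\la)$, and surjectivity follows because the Schur functions span. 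The point is that the paper bypasses the combinatorics of column-stabiliser orbits entirely by leveraging the Plancherel machinery already developed.
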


\begin{proof}
See Section~\ref{sec:62} for the proof. 
\end{proof}

\begin{thm}\label{thm:satake}
We have $\pi_{\la}(\mathbf{1}_{\la'}\Hext_{\sR'}\mathbf{1}_{\la'})\cong \sR'[\zeta_{\la}]^{G_{\la}}$, with the isomorphism given by 
$$
\pi_{\la}(\mathbf{1}_{\la'}h\mathbf{1}_{\la'})\leftrightarrow \tilde{f}_{\la}(h).
$$
\end{thm}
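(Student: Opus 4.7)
The plan is to show that the map $\Phi:\pi_{\la}(\mathbf{1}_{\la'}\Hext_{\sR'}\mathbf{1}_{\la'})\to \sR'[\zeta_{\la}]^{G_{\la}}$ sending $\pi_{\la}(\mathbf{1}_{\la'}h\mathbf{1}_{\la'})\mapsto \tilde{f}_{\la}(h)$ is a well-defined ring isomorphism. The entire proof is built on the key identity
\[
\pi_{\la}(\mathbf{1}_{\la'}h\mathbf{1}_{\la'})=\tilde{f}_{\la}(h)\,\pi_{\la}(\mathbf{1}_{\la'}),
\]
which is immediate from Theorem~\ref{thm:satake1} after normalising both sides by $\sq^{\ell(\sw_{\la'})}/W_{\la'}(\sq^2)$. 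The target of $\Phi$ is correct because Theorem~\ref{thm:symmetry} gives $f_{\la}(h)\in\sR[\zeta_{\la}]^{G_{\la}}$, hence $\tilde{f}_{\la}(h)\in\sR'[\zeta_{\la}]^{G_{\la}}$.

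First I would check that $\Phi$ is well-defined. Suppose $\pi_{\la}(\mathbf{1}_{\la'}h\mathbf{1}_{\la'})=\pi_{\la}(\mathbf{1}_{\la'}h'\mathbf{1}_{\la'})$. By the key identity, $(\tilde{f}_{\la}(h)-\tilde{f}_{\la}(h'))\pi_{\la}(\mathbf{1}_{\la'})=0$; but Proposition~\ref{prop:formofmatrix} shows that $\pi_{\la}(\mathbf{1}_{\la'})$ has at least one nonzero matrix entry (indeed every entry $[\pi_{\la}(\mathbf{1}_{\la'};\sB_{\la})]_{\su_{\la}x,\su_{\la}y}$ is a nonzero scalar in $\sR'$), so the scalar must be zero. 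This same observation gives injectivity of $\Phi$: if $\tilde{f}_{\la}(h)=0$ then $\pi_{\la}(\mathbf{1}_{\la'}h\mathbf{1}_{\la'})=0$ in the domain.

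Next I would verify that $\Phi$ is a ring homomorphism. Additivity is immediate from linearity of $\tilde{f}_{\la}$. For multiplicativity, I multiply the key identity with itself and use $\mathbf{1}_{\la'}^2=\mathbf{1}_{\la'}$:
\[
\pi_{\la}(\mathbf{1}_{\la'}h_1\mathbf{1}_{\la'})\,\pi_{\la}(\mathbf{1}_{\la'}h_2\mathbf{1}_{\la'})=\pi_{\la}(\mathbf{1}_{\la'}h_1\mathbf{1}_{\la'}h_2\mathbf{1}_{\la'})=\tilde{f}_{\la}(h_1\mathbf{1}_{\la'}h_2)\,\pi_{\la}(\mathbf{1}_{\la'}),
\]
while also equalling $\tilde{f}_{\la}(h_1)\tilde{f}_{\la}(h_2)\pi_{\la}(\mathbf{1}_{\la'})^2=\tilde{f}_{\la}(h_1)\tilde{f}_{\la}(h_2)\pi_{\la}(\mathbf{1}_{\la'})$. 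Cancelling $\pi_{\la}(\mathbf{1}_{\la'})$ as above yields $\tilde{f}_{\la}(h_1\mathbf{1}_{\la'}h_2)=\tilde{f}_{\la}(h_1)\tilde{f}_{\la}(h_2)$, which is exactly multiplicativity of $\Phi$ (note that $\Phi$ sends $\pi_{\la}(\mathbf{1}_{\la'})=\pi_{\la}(\mathbf{1}_{\la'}\cdot 1\cdot\mathbf{1}_{\la'})$ to $\tilde{f}_{\la}(1)=1$, so the unit is preserved).

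Finally, surjectivity follows from Lemma~\ref{lem:span} (proved in Section~\ref{sec:62}), which states that $f_{\la}:\Hext\to\sR[\zeta_{\la}]^{G_{\la}}$ is surjective; dividing by the unit $W_{\la'}(\sq^2)/\sq^{\ell(\sw_{\la'})}\in\sR'$ gives surjectivity of $\tilde{f}_{\la}$ onto $\sR'[\zeta_{\la}]^{G_{\la}}$. The main obstacle here is precisely Lemma~\ref{lem:span}: everything else is formal consequence of Theorems~\ref{thm:satake1} and~\ref{thm:symmetry} combined with the nonvanishing of $\pi_{\la}(\mathbf{1}_{\la'})$, but showing that the $f_{\la}(h)$ actually exhaust the full ring of $G_{\la}$-invariants is the nontrivial content, and is deferred to the later section as the excerpt indicates.
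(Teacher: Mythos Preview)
Your proof is correct and follows essentially the same approach as the paper: both hinge on the identity $\pi_{\la}(\mathbf{1}_{\la'}h\mathbf{1}_{\la'})=\tilde{f}_{\la}(h)\pi_{\la}(\mathbf{1}_{\la'})$, invoke Theorem~\ref{thm:symmetry} for the target, Lemma~\ref{lem:span} for surjectivity, and use the nonvanishing of $\pi_{\la}(\mathbf{1}_{\la'})$ (explicit in your argument, implicit in the paper's ``it is clear'') for well-definedness and injectivity. The only cosmetic difference is that the paper proves injectivity directly from the key identity without cancellation, whereas you phrase it via cancelling $\pi_{\la}(\mathbf{1}_{\la'})$; both are equivalent.
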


\begin{proof}
Let $\Theta:\pi_{\la}(\mathbf{1}_{\la'}\Hext_{\sR'}\mathbf{1}_{\la'})\to \sR'[\zeta_{\la}]^{G_{\la}}$ be given by $\Theta(A)=\tilde{f}_{\la}(h)$ whenever $A=\pi_{\la}(\mathbf{1}_{\la'}h\mathbf{1}_{\la'})$. It is clear that if $\pi_{\la}(\mathbf{1}_{\la'}h_1\mathbf{1}_{\la'})=\pi_{\la}(\mathbf{1}_{\la'}h_2\mathbf{1}_{\la'})$ then $\tilde{f}_{\la}(h_1)=\tilde{f}_{\la}(h_2)$, and by Theorem~\ref{thm:symmetry} we have $\tilde{f}_{\la}(h)\in\sR'[\zeta_{\la}]^{G_{\la}}$, and so $\Theta$ is well defined. Surjectivity is Lemma~\ref{lem:span}. For injectivity, if $A_i=\pi_{\la}(\mathbf{1}_{\la'}h_i\mathbf{1}_{\la'})$, $i=1,2$, and $\Theta(A_1)=\Theta(A_2)$ then 
$$
A_1=\tilde{f}_{\la}(h_1)\pi_{\la}(\mathbf{1}_{\la'})=\Theta(A_1)\pi_{\la}(\mathbf{1}_{\la'})=\Theta(A_2)\pi_{\la}(\mathbf{1}_{\la'})=\tilde{f}_{\la}(h_2)\pi_{\la}(\mathbf{1}_{\la'})=A_2.
$$
Finally, to check that $\Theta$ is a homomorphism, if $A_i=\pi_{\la}(\mathbf{1}_{\la'}h_i\mathbf{1}_{\la'})$ (for $i=1,2$) then
\begin{align*}
\Theta(A_1A_2)&=\Theta(\pi_{\la}(\mathbf{1}_{\la'}h_1\mathbf{1}_{\la'})\pi_{\la}(\mathbf{1}_{\la'}h_2\mathbf{1}_{\la'}))=\tilde{f}_{\la}(h_1)\tilde{f}_{\la}(h_2)\Theta(\pi_{\la}(\mathbf{1}_{\la'})).
\end{align*}
Since $\Theta(\pi_{\la}(\mathbf{1}_{\la'}))=\tilde{f}_{\la}(1)=1$ the result follows. 
\end{proof}

\begin{remark}\label{rem:recoverclassical}
To recover the classical Satake isomorphism, take $\la=(1^{n+1})$ (that is, the lowest two-sided cell). Then $\pi_{\la}$ is the principal series representation, and it is known that this is a faithful representation of $\Hext$ (this can be easily proved using the intertwiners). Thus $\pi_{\la}(\Hext)\cong\Hext$, and the classical Satake isomorphism follows from Theorem~\ref{thm:satake}.
\end{remark}

\section{The killing property and boundedness}\label{sec:killbound}

In this section we prove two important properties:
\begin{compactenum}[$(1)$]
\item The \textit{killing property:} The representation $\pi_{\la}$ kills all Kazhdan-Lusztig elements $C_w$ from lower or incomparable cells than $\Delta_{\la}$ (see Theorem~\ref{thm:fullkilling}). 
\item \textit{Boundedness:} The degree in $\sq$ of the entries of the matrices $\pi_{\la}(T_w;\sB_{\la})$ for $w\in \Wext$ is bounded by $\ell(\sw_{\la'})$, and if the bound is attained then $w\in\Delta_{\la}$ (see Theorem~\ref{thm:fullbounded}). 
\end{compactenum}
The boundedness property allows us to define \textit{$\la$-leading matrices} $\fc_{\la}(w)$ in Section~\ref{sec:leading}. The ring~$\fC_{\la}$ of $\la$-leading matrices will ultimately be seen to be isomorphic to $\cJ_{\la}$ (see Theorem~\ref{thm:recognise2b}).

\subsection{The killing property}

\begin{lemma}\label{lem:lem2}
Let $\la,\mu\vdash n+1$. If $\mu\not\geq\la$ then $u\Phi_{\mu'}\cap\{\alpha_j\mid j\in J_{\la}\}\neq\emptyset$ for all $u\in {^\la}W$. 
\end{lemma}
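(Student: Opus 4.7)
The plan is to argue the contrapositive: suppose $u\in{^\la}W$ satisfies $u\Phi_{\mu'}\cap\{\alpha_j\mid j\in J_{\la}\}=\emptyset$, and deduce that $\mu\geq\la$. First I would translate this hypothesis into a concrete combinatorial statement about the one-line notation of~$u$. Since the positive roots of $\Phi_{\mu'}$ are the vectors $e_p-e_q$ with $p<q$ lying in a common row of $\bt_r(\mu')$, and since $u(e_p-e_q)=e_{u(p)}-e_{u(q)}$, the hypothesis is equivalent to the assertion that for every $i\in J_{\la}$, the positions $u^{-1}(i)$ and $u^{-1}(i+1)$ lie in \emph{different} blocks of the partition of $\{1,\ldots,n+1\}$ into $\mu'$-rows, i.e.\ the intervals $[\mu'(k-1)+1,\mu'(k)]$ for $1\leq k\leq r(\mu')$.

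Next I would exploit the fact that $u\in{^\la}W$. By Lemma~\ref{lem:reducedcharacterisation}, within each row $R=\{a,a+1,\ldots,a+\la_r-1\}$ of $\bt_r(\la)$ the positions $u^{-1}(a)<u^{-1}(a+1)<\cdots<u^{-1}(a+\la_r-1)$ are strictly increasing. Combined with the hypothesis that consecutive pairs $u^{-1}(j),u^{-1}(j+1)$ (for $j\in J_{\la}$) land in distinct $\mu'$-blocks, and using that $\mu'$-blocks are intervals arranged in increasing order, it follows that the sequence of $\mu'$-blocks visited by $u^{-1}(a),u^{-1}(a+1),\ldots,u^{-1}(a+\la_r-1)$ is strictly increasing. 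In particular, each $\mu'$-block contains at most one element from each $\la$-row.

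The final step is a counting argument. For any $1\leq t\leq r(\mu')$, the union of the first $t$ $\mu'$-blocks consists of exactly $\mu'(t)$ positions, whose values under $u$ are a set of size $\mu'(t)$ containing at most $\min(t,\la_r)$ elements from the $r$-th $\la$-row. Summing and using the standard identity $\sum_r\min(t,\la_r)=\la'(t)$ yields $\mu'(t)\leq\la'(t)$ for all $t$. Hence $\mu'\leq\la'$ in dominance order, which is equivalent to $\la\leq\mu$, i.e.\ $\mu\geq\la$, contradicting the hypothesis and proving the lemma.

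I expect no serious obstacle here: the translation of the root-theoretic statement into a claim about positions of consecutive integers is routine, and once the monotonicity coming from $u\in{^\la}W$ is combined with the block-separation assumption, the rest is the standard identity $\sum_r\min(t,\la_r)=\la'(t)$ applied to a straightforward pigeonhole. The only point requiring a little care is confirming the equivalence of $u\Phi_{\mu'}\cap\{\alpha_j\mid j\in J_{\la}\}\neq\emptyset$ with the ``consecutive values in a common $\mu'$-block'' statement, which involves checking both signs of roots in $\Phi_{\mu'}$.
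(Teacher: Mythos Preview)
Your argument is correct and complete. It differs from the paper's proof in its route to the inequality $\mu'\leq\la'$. The paper factors $u=u_1u_2$ with $u_1$ right $J_{\mu'}$-reduced, shows that the hypothesis forces $J_{\mu'}\subseteq A_{\la}(u_1)$, and then invokes the machinery of the $\la$-expression (Lemmas~\ref{lem:lambdaexp} and~\ref{lem:dominance}) to conclude $\mu'\leq\muu(u_1,\la)\leq\la'$; in particular the inequality $\muu(u_1,\la)\leq\la'$ is proved via the Dominance Lemma for tableaux. Your approach bypasses this machinery entirely: you work directly with positions in the one-line notation, observe that each $\mu'$-block meets each $\la$-row at most once, and close with the elementary identity $\sum_r\min(t,\la_r)=\la'(t)$. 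Your version is more self-contained (needing only Lemma~\ref{lem:reducedcharacterisation}), while the paper's version reuses structural notions ($A_{\la}(u)$, $\muu(u,\la)$) that it has already developed for other purposes.
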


\begin{proof}
We will show that if $u\in {^\la}W$ with $u\Phi_{\mu'}\cap \{\alpha_j\mid j\in J_{\la}\}=\emptyset$ then $\mu\geq \la$. Write $u=u_1u_2$ with $u_1$ being $J_{\mu'}$-reduced on the right and $u_2\in W_{J_{\mu'}}$. Then $u_1\in {^\la}W$ (being a prefix of $u\in {^\la}W$), and $u\Phi_{\mu'}=u_1\Phi_{\mu'}$. We have $J_{\mu'}\subseteq A(u_1)$ (with $A(u_1)$ the right ascent set of $u_1$), and it follows that $J_{\mu'}\subseteq A_{\la}(u_1)$ (for if $s'\in J_{\mu'}$ with $u_1s'\notin {^\la}W$ then $u_1s'=su_1$ for some $s\in J_{\la}$, giving $u_1\alpha_{s'}=\alpha_s$ contradicting $u_1\Phi_{\mu'}\cap \{\alpha_j\mid j\in J_{\la}\}=\emptyset$). In particular $\mu'\leq \boldsymbol{\mu}(u_1,\la)$, and by Lemma~\ref{lem:dominance} we have $\boldsymbol{\mu}(u_1,\la)\leq \la'$. Thus $\mu\geq \la$ as required. 
\end{proof}

\begin{lemma}\label{lem:restrictedkilling}
If $\mu\not\geq\la$ then $\pi_{\la}(C_{\sw_{\mu'}})=0$. 
\end{lemma}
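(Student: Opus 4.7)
The plan is to adapt the argument used to prove Proposition~\ref{prop:formofmatrix}, substituting $\mu'$ for $\la'$ and invoking (the proof of) Lemma~\ref{lem:lem2} in the role that Theorem~\ref{thm:etale} plays there. Specifically, I will show that $(\xi_{\la}\otimes X_u)\cdot C_{\sw_{\mu'}} = 0$ for every $u \in {^\la}W$, which is enough since $\{\xi_{\la} \otimes X_u \mid u \in {^\la}W\}$ is an $\sR[\zeta_{\la}]$-basis of $M_{\la}$.

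First I reduce to the case that $u$ is $J_{\mu'}$-reduced on the right. Write $u = u_1 u_2$ with $u_1$ being $J_{\mu'}$-reduced on the right and $u_2 \in W_{\mu'}$; then $\ell(u) = \ell(u_1) + \ell(u_2)$ so $X_u = X_{u_1}X_{u_2}$, and $u_1 \in {^\la}W$ as a prefix of~$u$. Since $T_s C_{\sw_{\mu'}} = \sq\, C_{\sw_{\mu'}}$ for each $s \in J_{\mu'}$, iteration yields $X_{u_2} C_{\sw_{\mu'}} = \sq^{-\ell(u_2)} C_{\sw_{\mu'}}$, so it suffices to prove $(\xi_{\la}\otimes X_{u_1})\cdot C_{\sw_{\mu'}} = 0$.

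At this point the hypothesis $\mu \not\geq \la$ enters: rerunning the argument in the proof of Lemma~\ref{lem:lem2} shows that in this situation $J_{\mu'}\not\subseteq A_{\la}(u_1)$, so there exists $s'\in J_{\mu'}$ with $u_1 s' \notin {^\la}W$; since $u_1$ is $J_{\mu'}$-reduced on the right we also have $u_1 s' > u_1$, and hence $u_1 s' = s u_1$ for some $s \in J_{\la}$. Following the proof of Proposition~\ref{prop:formofmatrix}, I factorise
$$
C_{\sw_{\mu'}} = (1 + \sq^{-1}T_{s'}^{-1})\, Y, \qquad Y = \sq^{\ell(\sw_{\mu'})}\sum_{y\in W_{\mu'},\, s'y > y}\sq^{-\ell(y)} X_y,
$$
and the length identities $\ell(u_1 s') = \ell(s u_1) = \ell(u_1)+1$ give $X_{u_1}T_{s'}^{-1} = X_{u_1 s'} = X_{s u_1} = T_{s}^{-1} X_{u_1}$, so $X_{u_1}(1 + \sq^{-1}T_{s'}^{-1}) = (1 + \sq^{-1}T_{s}^{-1}) X_{u_1}$.

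To conclude, since $s \in J_{\la}$ we have $T_s \in \cL_{\la}$ and $\xi_{\la}\cdot T_s = -\sq^{-1}\xi_{\la}$, giving $\xi_{\la}(1+\sq^{-1}T_{s}^{-1}) = 0$; pulling this factor to the left of the tensor product yields
$$
(\xi_{\la}\otimes X_{u_1})\cdot C_{\sw_{\mu'}} \;=\; \big(\xi_{\la}(1+\sq^{-1}T_{s}^{-1})\big)\otimes X_{u_1}Y \;=\; 0,
$$
as required. The only genuinely new ingredient compared with Proposition~\ref{prop:formofmatrix} is the extraction of $s'$ under the hypothesis $\mu \not\geq \la$ from the proof of Lemma~\ref{lem:lem2}, and I anticipate no further obstacle.
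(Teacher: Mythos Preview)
Your proof is correct and takes a genuinely different route from the paper's. The paper expands $C_{\sw_{\mu'}}$ in terms of the intertwiners $U_w$ via Theorem~\ref{thm:Cform}, works in the intertwiner basis $\{\xi_{\la}\otimes U_u\mid u\in{^\la}W\}$, and shows that each term $\pi_{\la}(c_w^{\mu'}(X)U_w)$ vanishes by exhibiting, for every $u\in{^\la}W$ with $uw\in{^\la}W$, a root $\beta\in\Phi_{\mu'}^+\setminus\Phi(w)$ with $u\beta\in\{\alpha_j:j\in J_\la\}$ (this is precisely Lemma~\ref{lem:lem2}); the corresponding factor $(1-\sq^{-2}\psi_\la(X^{-u\beta}))/(1-\psi_\la(X^{-u\beta}))$ in the $c$-function then vanishes. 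Your argument instead stays in the basis $\sB_\la$, recycles the left-factorisation trick from Proposition~\ref{prop:formofmatrix}, and only needs the combinatorial implication $J_{\mu'}\not\subseteq A_\la(u_1)$ extracted from the \emph{proof} of Lemma~\ref{lem:lem2} (together with Lemma~\ref{lem:dominance}). Your approach is more elementary in that it avoids the intertwiner machinery entirely; the paper's approach makes the vanishing mechanism (a zero factor in a Macdonald-type $c$-function) more transparent and dovetails with the Satake computations of Section~\ref{sec:satake}.
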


\begin{proof}
Applying $\pi_{\la}$ to the equation in Theorem~\ref{thm:Cform} gives
$$
\pi_{\la}(C_{\sw_{\mu'}})=\sq^{\ell(\sw_{\mu'})}\sum_{w\in W_{\mu'}}\sq^{-\ell(w)}\pi_{\la}(c_w^{\mu'}(X)U_w)\quad\text{where}\quad c_w^{\mu'}(X)=\prod_{\beta\in\Phi_{\mu'}^+\backslash\Phi(w)}\frac{1-\sq^{-2}X^{-\beta}}{1-X^{-\beta}}.
$$
In the basis of intertwiners, the matrices $\pi_{\la}(U_w)$ (for $w\in \Wfin$) have distinct support (the places of the nonzero entries; because the $u$th row of $\pi_{\la}(U_w)$ is either $0$, or has an entry only in the~$uw$ position in the case $uw\in {^\la}W$, see Proposition~\ref{prop:basic}). Since $\pi_{\la}(c_w^{\mu'}(X))$ is diagonal (in the intertwiner basis) it follows that $\pi_{\la}(C_{\sw_{\mu'}})=0$ if and only if 
$
\pi_{\la}(c_w^{\mu'}(X)U_w)=0$ for all $w\in W_{\mu'}$. This, in turn, is equivalent to the statement
$$
(\xi_{\la}\otimes U_u)\cdot c_w^{\mu'}(X)U_w=0\quad\text{for all $u\in {^\la}W$ and $w\in W_{\mu'}$}.
$$
We have
\begin{align}
\label{eq:action1}
(\xi_{\la}\otimes U_u)\cdot c_w^{\mu'}(X)U_w=\bigg(\prod_{\alpha\in\Phi_{\mu'}^+\backslash\Phi(w)}\frac{1-\sq^{-2}\psi_{\la}(X^{-u\alpha})}{1-\psi_{\la}(X^{-u\alpha})}\bigg)(\xi_{\la}\otimes U_uU_w). 
\end{align}

Since $\mu\not\geq\la$ Lemma~\ref{lem:lem2} gives $u\Phi_{\mu'}\cap \{\alpha_j\mid j\in J_{\la}\}\neq\emptyset$ for all $u\in {^\la}W$. Since $U_uU_w$ is a rational multiple of $U_{uw}$ we have that if $uw\notin {^\la}W$ then $(\xi_{\la}\otimes U_u)\cdot c_w^{\mu'}(X)U_w=0$. So assume that $uw\in {^\la}W$.  By assumption there is $j\in J_{\la}$ such that $u^{-1}\alpha_j=\beta\in\Phi_{\mu'}$. Since $u\in {^\la}W$ we have $\beta>0$, and so $\beta\in\Phi_{\mu'}^+$. Moreover, $w^{-1}\beta=(uw)^{-1}\alpha_j>0$ because $uw\in {^\la}W$ by assumption. Thus $\beta\in\Phi_{\mu'}^+\backslash\Phi(w)$. Thus $\beta$ appears in the above product, and the corresponding factor is 
$$
\frac{1-\sq^{-2}\psi_{\la}(X^{-u\beta})}{1-\psi_{\la}(X^{-u\beta})}=\frac{1-\sq^{-2}\psi_{\la}(X^{-\alpha_j})}{1-\psi_{\la}(X^{-\alpha_j})}=0
$$
(as $j\in J_{\la}$, and so $\psi_{\la}(X^{\alpha_j})=\sq^{-2}$). Thus $\pi_{\la}(C_{\sw_{\mu'}})=0$. 
\end{proof}

Recall that $\Delta_{\la}$ denotes the two sided cell of $\Wext$ containing~$\sw_{\la'}$.

\begin{thm}\label{thm:fullkilling}
Let $\la,\mu\vdash n+1$. If $w\in \Delta_{\mu}$ with $\mu\not\geq\la$ then $\pi_{\la}(C_w)=0$. 
\end{thm}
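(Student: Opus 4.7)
The plan is to bootstrap from Lemma~\ref{lem:restrictedkilling}, which establishes the special case $w=\sw_{\mu'}$, to all $w\in\Delta_\mu$ by invoking the Tanisaki--Xi generation result (Theorem~\ref{thm:TanisakiXi}). This reduction is a routine induction on the dominance order of partitions.

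I would fix $\lambda$ and prove, by strong induction on $\mu$ with respect to the dominance order $\leq$ on partitions of $n+1$, the statement: \emph{for every partition $\mu$ with $\mu\not\geq\lambda$ and every $w\in\Delta_\mu$, we have $\pi_\lambda(C_w)=0$.} For the inductive step, fix such a $\mu$ and such a $w$. Since $w\in\Delta_\mu$ we have $w\leq_{LR}\sw_{\mu'}$, so Theorem~\ref{thm:TanisakiXi} asserts that the image of $C_w$ in the quotient
$$
\operatorname{span}_\sR\{C_v\mid v\leq_{LR}\sw_{\mu'}\}\,\big/\,\operatorname{span}_\sR\{C_v\mid v<_{LR}\sw_{\mu'}\}
$$
lies in the two-sided ideal generated by the image of $C_{\sw_{\mu'}}$. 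Lifting back, we obtain a decomposition
$$
C_w \;=\; \sum_{j} h_j^{(1)}\,C_{\sw_{\mu'}}\,h_j^{(2)} \;+\; \sum_{w'<_{LR}\sw_{\mu'}} a_{w'}\,C_{w'},
$$
for some $h_j^{(1)},h_j^{(2)}\in\Hext$ and $a_{w'}\in\sR$.

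Applying $\pi_\lambda$ to this equality, the first sum vanishes term by term because $\pi_\lambda(C_{\sw_{\mu'}})=0$ by Lemma~\ref{lem:restrictedkilling} (the hypothesis $\mu\not\geq\lambda$ is precisely what is needed). For the second sum, each $w'$ with $w'<_{LR}\sw_{\mu'}$ belongs to some two-sided cell $\Delta_\nu$ with $\nu<\mu$ strictly in dominance order. Since $\mu\not\geq\lambda$ and $\nu\leq\mu$, transitivity forces $\nu\not\geq\lambda$ (otherwise $\mu\geq\nu\geq\lambda$). The inductive hypothesis then yields $\pi_\lambda(C_{w'})=0$ for every such $w'$, and we conclude that $\pi_\lambda(C_w)=0$, completing the induction.

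The argument has no genuine obstacle: all the substantive work is already contained in Lemma~\ref{lem:restrictedkilling} (built on the combinatorial Lemma~\ref{lem:lem2} and the intertwiner formula for $C_{\sw_J}$) and in the Tanisaki--Xi theorem, which we quote as a black box. The only delicate point worth flagging is the transitivity step $\nu\leq\mu$ and $\mu\not\geq\lambda$ imply $\nu\not\geq\lambda$, which keeps the induction inside the class of partitions handled by the hypothesis; this is immediate from the definition of $\leq$.
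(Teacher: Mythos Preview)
Your proof is correct and follows essentially the same approach as the paper's: both invoke Theorem~\ref{thm:TanisakiXi} to express $C_w$ (modulo elements in strictly lower cells) in terms of $C_{\sw_{\mu'}}$, kill the latter via Lemma~\ref{lem:restrictedkilling}, and dispose of the lower-cell remainder by induction on the dominance order using the transitivity observation $\nu<\mu\not\geq\lambda\Rightarrow\nu\not\geq\lambda$. Your write-up is in fact slightly more careful in writing the ideal element as a finite sum $\sum_j h_j^{(1)}C_{\sw_{\mu'}}h_j^{(2)}$ rather than a single product.
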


\begin{proof}
Let $w\in \Delta_\mu$. By Theorem~\ref{thm:TanisakiXi}, there exist $h,h'\in \widetilde{H}$ such that 
$$hC_{\sw_{\mu'}}h' = C_{w} +\sum_{z\in \widetilde{W},z<_{LR} \sw_{\mu'}} a_zC_z\qu{with $a_z\in \sR$.}$$
Assume that $a_z\neq 0$. Then $z\in \Delta_{\nu}$ where $\nu<\mu$. It follows that $\nu\ngeq\la$ and
by a straightforward induction and Lemma \ref{lem:restrictedkilling}, we get that $\pi_{\la}(C_z) = 0$. By Lemma \ref{lem:restrictedkilling}, we have $\pi_\la(C_{\sw_{\mu'}}) = 0$ which implies that $\pi_{\la}(C_w)=0$ as required. 
\end{proof}

\subsection{Boundedness}
\label{sec:Boundedness}

Given an expression $\vec{w}=s_{i_1}\cdots s_{i_{\ell}}\pi$, we define the \textit{reversed} expression by 
$$
\mathrm{rev}(\vec{w})=s_{\pi^{-1}(i_{\ell})}\cdots s_{\pi^{-1}(i_1)}\pi^{-1}.
$$
If $\vec{w}$ is a reduced expression for $w$, then $\mathrm{rev}(\vec{w})$ is a reduced expression for $w^{-1}$.

We now define an involution $p\mapsto p^{-1}$ on the set of all $\la$-folded alcove paths. Roughly speaking, this involution is given by ``reading the path backwards''.

\begin{defn}
Let $p=(v_0,v_1,\ldots,v_{\ell},v_{\ell}\pi)$ be a $\la$-folded alcove path with $\wt(p)=\gamma$. The \textit{inverse alcove path} is  
$$
p^{-1}=(\tau_{\gamma}^{-1}v_{\ell}\pi,\tau_{\gamma}^{-1}v_{\ell-1}\pi,\ldots,\tau_{\gamma}^{-1}v_1\pi,\tau_{\gamma}^{-1}v_0\pi,\tau_{\gamma}^{-1}v_0).
$$
\end{defn}

\begin{lemma}\label{lem:reversepath}
Let $p$ be a $\la$-folded alcove path of type $\vec{w}$ starting at $u\in {^\la}W$ with $\theta^{\la}(p)=v$ and $\wt(p)=\gamma$. Then $p^{-1}$ is a $\la$-folded alcove path of type $\mathrm{rev}(\vec{w})$ starting at $v\in {^\la}W$ with $\theta^{\la}(p^{-1})=u$ and $\wt(p^{-1})=(-\gamma)^{(\la)}$. Moreover $\cQ_{\la}(p^{-1})=\cQ_{\la}(p)$, and the map $p\mapsto p^{-1}$ is a bijection from $\{p\in\cP_{\la}(\vec{w},u)\mid \theta^{\la}(p)=v\}\to \{p\in \cP_{\la}(\mathrm{rev}(\vec{w}),v)\mid \theta^{\la}(p)=u\}$. 
\end{lemma}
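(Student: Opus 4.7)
The approach is to verify directly that the sequence $p^{-1}$ satisfies every condition of a $\la$-folded alcove path, and that its statistics mirror those of $p$. Write $p'_k = \tau_\gamma^{-1}v_{\ell-k}\pi$ for $0\le k\le\ell$, so that $p^{-1}=(p'_0,\ldots,p'_\ell,\tau_\gamma^{-1}v_0)$. The inclusion $p'_k\cA_0 = \tau_\gamma^{-1}v_{\ell-k}\cA_0\subseteq \tau_\gamma^{-1}\cA_\la = \cA_\la$ is immediate since $\tau_\gamma\in\mathsf{T}_\la$ stabilises $\cA_\la$ by~(\ref{eq:lambdaaffine}) and $\pi\in\Sigma$ stabilises $\cA_0$. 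Conjugating the adjacency relation $v_{\ell-k+1}\in\{v_{\ell-k},\,v_{\ell-k}s_{i_{\ell-k+1}}\}$ by $\pi$ gives $p'_{k-1}\in\{p'_k,\,p'_k s_{\pi^{-1}(i_{\ell-k+1})}\}$, so $p^{-1}$ has type $\mathrm{rev}(\vec{w})$.

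Next I would classify each step of $p^{-1}$ as a crossing, fold or bounce. The hyperplane separating $p'_{k-1}\cA_0$ and $p'_{k-1}s_{\pi^{-1}(i_{\ell-k+1})}\cA_0$ is $\tau_\gamma^{-1}H$, where $H=H_{\alpha,m}$ is the separating hyperplane of the corresponding step in $p$; an explicit calculation using $\tau_\gamma = t_\gamma\sy_\gamma$ gives $\tau_\gamma^{-1}H_{\alpha,m}=H_{\sy_\gamma^{-1}\alpha,\,m-\langle\gamma,\alpha\rangle}$. Interior hyperplanes of $\cA_\la$ (where folds can occur) have $\alpha\in\Phi^+\setminus\Phi_\la^+$, and boundary hyperplanes (where bounces occur) have $\alpha\in\Phi_\la^+$. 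Since $\sy_\gamma\in W_\la$ preserves $\Phi^+\setminus\Phi_\la^+$ setwise, interior hyperplanes are sent to interior hyperplanes with the periodic orientation intact, so the positive-side condition for folds is preserved. Combined with the fact that $\tau_\gamma^{-1}$ preserves the condition $v_{\ell-k}s_{i_{\ell-k+1}}\cA_0\not\subseteq\cA_\la$ characterising bounces, this shows folds map to folds and bounces map to bounces; hence $f(p^{-1})=f(p)$, $b(p^{-1})=b(p)$, and $\cQ_\la(p^{-1})=\cQ_\la(p)$.

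For the start, end, weight and final direction of $p^{-1}$ the crucial identity is $\mathrm{end}(p)=\tau_\gamma\cdot\theta^\la(p)$, equivalently $\theta_\la(\mathrm{end}(p))=\sy_\gamma$. This expresses the fact that each alcove contained in $\cA_\la$ has a unique factorisation $\tau_\delta u\cA_0$ with $\delta\in P^{(\la)}$ and $u\in{^\la}W$, and follows from the description~(\ref{eq:lambdaaffine}) of the stabiliser $\mathsf{T}_\la\rtimes G_\la$ of $\cA_\la$ in $\Wext$. Granted this, $\mathrm{start}(p^{-1})=\tau_\gamma^{-1}\cdot\tau_\gamma v=v\in{^\la}W$. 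For the end, $\mathrm{end}(p^{-1})=\tau_\gamma^{-1}u=t_{-\sy_\gamma^{-1}\gamma}\cdot\sy_\gamma^{-1}u$; using Proposition~\ref{prop:latticeiso} to identify $\tau_\gamma^{-1}=\tau_{(-\gamma)^{(\la)}}$ in $\mathsf{T}_\la$, together with the uniqueness of the weight/linear-part decomposition in $\Wext$, yields $-\sy_\gamma^{-1}\gamma=(-\gamma)^{(\la)}$ and $\sy_\gamma^{-1}=\sy_{(-\gamma)^{(\la)}}$. The decomposition $\theta(\mathrm{end}(p^{-1}))=\sy_\gamma^{-1}\cdot u$ with $\sy_\gamma^{-1}\in W_\la$ and $u\in{^\la}W$ then gives $\theta^\la(p^{-1})=u$.

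Finally, bijectivity follows from involutivity: applying the construction to $p^{-1}$ uses $\tau_{(-\gamma)^{(\la)}}^{-1}=\tau_\gamma$, and each entry reverses to recover $p$, with the final entry $\tau_\gamma v=\mathrm{end}(p)$ again supplied by the key identity. The main obstacle is establishing this identity $\theta_\la(\mathrm{end}(p))=\sy_\gamma$; if it is not directly available from~\cite{GLP:23}, the cleanest route is induction on path length, handling crossings, folds and bounces case by case, with the affine generator $s_0$ requiring a careful rewrite via $s_0=t_\varphi s_\varphi$ inside $\Wext=P\rtimes\Wfin$.
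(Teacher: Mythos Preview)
Your proposal is correct and follows essentially the same route as the paper's proof: both verify directly that $p^{-1}$ is $\la$-folded by using that $\tau_{\gamma}^{-1}$ stabilises $\cA_{\la}$ and that its linear part lies in $W_{\la}$ (hence preserves the periodic orientation on hyperplanes with direction in $\Phi^+\setminus\Phi_{\la}$), deduce that folds and bounces are preserved so $\cQ_{\la}(p^{-1})=\cQ_{\la}(p)$, and then read off start, end, weight and final direction from $\mathrm{end}(p)=\tau_{\gamma}v$ together with $\tau_{\gamma}^{-1}=\tau_{(-\gamma)^{(\la)}}$. The ``crucial identity'' $\mathrm{end}(p)=\tau_{\gamma}\theta^{\la}(p)$ you single out is precisely \cite[Corollary~2.11]{GLP:23} (invoked later in the paper in the proof of Proposition~\ref{prop:path}), so no induction on path length is required.
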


\begin{proof}
Let $\vec{w}=s_{i_1}\cdots s_{i_{\ell}}\pi$ and $p=(v_0,v_1,\ldots,v_{\ell},v_{\ell}\pi)$, and write $p^{-1}=(v_0',v_1',\ldots,v_{\ell-1}',v_{\ell}',v_{\ell}'\pi^{-1})$. Thus $v_k'=\tau_{\gamma}^{-1}v_{\ell-k}\pi$ for $1\leq k\leq \ell$. We have
$$
v_{k-1}'^{-1}v_k'=(\tau_{\gamma}^{-1}v_{\ell-k+1}\pi)^{-1}(\tau_{\gamma}^{-1}v_{\ell-k}\pi)=\pi^{-1}v_{\ell-k+1}^{-1}v_{\ell-k}\pi\in \{1,s_{\pi^{-1}(i_{\ell-k})}\}
$$
(because $v_{k-1}^{-1}v_k\in \{1,s_{i_k}\}$ by definition of alcove paths). Thus $p^{-1}$ is a path of type $\mathrm{rev}(\vec{w})$. We must show that this path is $\la$-folded. 

Since $v_k\cA_0\in \cA_{\la}$ (by definition of $\la$-folded alcove paths) we have $v_k'\cA_0\in \cA_{\la}$ for all $1\leq k\leq \ell$ (as $\tau_{\ga}^{-1}$ preserves~$\cA_\la$ by~(\ref{eq:lambdaaffine})), and so the path $p^{-1}$ stays in $\cA_{\la}$. Since $\tau_{\ga}^{-1}$ preserves the boundary of $\cA_{\la}$ it follows that if $(v_{k-1},v_{k})$ is a bounce (in $p$), then $(v_{\ell-k}',v_{\ell-k+1}')$ is a bounce (in $p^{-1}$). Thus to show that $p^{-1}$ is a $\la$-folded alcove path we must show that if $(v_{k+1},v_k)$ is a (necessarily positive) fold in $p$, then $(v_{\ell-k}',v_{\ell-k+1}')$ is a positive fold in $p^{-1}$. Thus suppose that $v_{k-1}=v_k$ with $v_{k-1}s_{i_k}\cA_0\in \cA_{\la}$ such that $v_{k-1}s_{i_k}\cA_0$ is on the positive side of the hyperplane separating $v_{k-1}\cA_0$ and $v_{k-1}s_{i_k}\cA_0$. Since $v_{k-1}s_{i_k}\cA_0\in \cA_{\la}$ this hyperplane is in a direction $\alpha\in\Phi^+$ with $\alpha\notin\Phi_{\la}$. Since $v_{\ell-k}'=\tau_{\gamma}^{-1}v_k\pi=\tau_{\gamma}^{-1}v_{k-1}\pi$ we have $v_{\ell-k}'\cA_0=\tau_{\gamma}^{-1}v_{k-1}\cA_0$, and since $\tau_{\gamma}^{-1}=t_{(-\gamma)^{(\la)}}\sy_{(-\gamma)^{(\la)}}$ with $\sy_{(-\gamma)^{(\la)}}\in W_{\la}$ it follows that $v_{\ell-k}'\cA_0$ is on the positive side of the hyperplane separating $v_{\ell-k}'\cA_0$ from $v_{\ell-k}'s_{\pi^{-1}(i_{\ell-k+1})}\cA_0$, and that this hyperplane is in a direction $\alpha'\in \Phi^+$ with $\alpha'\notin \Phi_{\la}$. Thus $(v_{\ell-k}',v_{\ell-k+1}')$ is a positively oriented fold, and so $p^{-1}$ is a $\la$-folded alcove path. 

Since folds and bounces are preserved under the map $p\mapsto p^{-1}$ we have $\cQ_{\la}(p^{-1})=\cQ_{\la}(p)$. Moreover, we have $v_{\ell}\pi=\tau_{\gamma}v$ and hence $v_0'=v$, and since $v_{\ell}'\pi^{-1}=\tau_{\gamma}^{-1}u=\tau_{(-\gamma)^{(\la)}}u$ we have $\wt(p^{-1})=(-\gamma)^{(\la)}$ and $\theta^{\la}(p^{-1})=u$. It is clear that $(p^{-1})^{-1}=p$ and so the map $p\mapsto p^{-1}$ is an involution, and hence is bijective.
\end{proof}

Recall that we define a conjugation on $\sR[\zeta_\la]$ by linearly extending $\mathrm{conj}(\zeta_{\la}^{\gamma})= \zeta_{\la}^{-\gamma}$. 
We then define an anti-involution $*$ on $\pi_{\la}(\Hext)$ by transposing the matrix $\pi_{\la}(h;\sB_{\la})$ and performing conjugation entry-wise.  The following lemma explains the relation with the usual $\ast$ anti-involution on $\Hext$ defined by 
\begin{align}
\label{eq:star}
\left(\sum a_wT_w\right)^*=\sum a_wT_{w^{-1}}.
\end{align}

\begin{lemma}\label{lem:conjugate}
For all $h\in \Hext$ we have
$
\pi_{\la}(h^*)=\pi_{\la}(h)^*.
$
\end{lemma}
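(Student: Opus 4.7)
The plan is to reduce to the standard basis and invoke the combinatorial path formula. Both the anti-involution $*$ on $\pi_\la(\Hext)$ (transposition composed with $\mathrm{conj}$) and the anti-involution $*$ on $\Hext$ are $\sR$-linear, and $\pi_\la$ is an $\sR$-algebra homomorphism, so it is enough to establish $\pi_\la(T_w)^* = \pi_\la(T_{w^{-1}})$ for every $w\in\Wext$. In terms of matrix entries, the goal becomes
$$
[\pi_\la(T_{w^{-1}};\sB_\la)]_{u,v} = \mathrm{conj}\bigl([\pi_\la(T_w;\sB_\la)]_{v,u}\bigr) \qquad\text{for all } u,v\in{^\la}W.
$$

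Fix a reduced expression $\vec{w}=s_{i_1}\cdots s_{i_\ell}\pi$ of $w$; then the reversed expression $\mathrm{rev}(\vec{w})$ is a reduced expression of $w^{-1}$. Applying Theorem~\ref{thm:pathformula} to each side of the displayed identity expresses the left-hand side as a sum over $\cP_\la(\mathrm{rev}(\vec{w}),u)_v$, while the $(v,u)$-entry on the right becomes a sum over $\cP_\la(\vec{w},v)_u$. Lemma~\ref{lem:reversepath} furnishes an involution $p\mapsto p^{-1}$ implementing a bijection $\cP_\la(\mathrm{rev}(\vec{w}),u)_v \leftrightarrow \cP_\la(\vec{w},v)_u$ that preserves $\cQ_\la$ and sends $\wt(p)$ to $(-\wt(p))^{(\la)}$. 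Since $\zeta_\la^\gamma$ depends only on the coset $\gamma+Q_\la$, we have $\zeta_\la^{(-\wt(p))^{(\la)}}=\zeta_\la^{-\wt(p)}=\mathrm{conj}(\zeta_\la^{\wt(p)})$, so the change of variable under this bijection converts one sum into the termwise conjugate of the other, yielding the desired identity.

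I do not expect any substantial obstacle here: all of the combinatorial substance has already been packaged into Lemma~\ref{lem:reversepath}. The only bookkeeping required is to confirm that $\mathrm{rev}(\vec{w})$ is reduced whenever $\vec{w}$ is (immediate from the identity $\pi s_i\pi^{-1}=s_{\pi(i)}$ in the extended affine Weyl group) and that $\cQ_\la(p)\in\sR$ is fixed by $\mathrm{conj}$ (true by definition, as $\mathrm{conj}$ acts as the identity on $\sR$ and only reverses the sign in the exponents of the monomials $\zeta_\la^\gamma$). The whole argument therefore reduces to a short application of Theorem~\ref{thm:pathformula} and Lemma~\ref{lem:reversepath}.
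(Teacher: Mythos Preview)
Your proposal is correct and follows essentially the same approach as the paper: reduce to $h=T_w$, apply the path formula (Theorem~\ref{thm:pathformula}) together with the path-reversal bijection of Lemma~\ref{lem:reversepath}, and use $\zeta_\la^{(-\gamma)^{(\la)}}=\mathrm{conj}(\zeta_\la^{\gamma})$. One minor remark: reducedness of $\vec{w}$ is not actually needed, since both Theorem~\ref{thm:pathformula} and Lemma~\ref{lem:reversepath} apply to arbitrary expressions.
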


\begin{proof}
It is equivalent to prove $\pi_{\la}(h)=\pi_{\la}(h^*)^*$, and it is sufficient to prove this for $h=T_w$ with $w\in\Wext$. By Theorem~\ref{thm:pathformula} and Lemma~\ref{lem:reversepath} we have
\begin{align*}
[\pi_{\la}(T_w;\sB_{\la})]_{u,v}&=\sum_{\{p\in\cP_{\la}(\vec{w},u)\mid \theta^{\la}(p)=v\}}\cQ_{\la}(p)\zeta_{\la}^{\wt(p)}=\sum_{\{p\in\cP_{\la}(\mathrm{rev}(\vec{w}),v)\mid \theta^{\la}(p)=u\}}\cQ_{\la}(p)\zeta_{\la}^{-\wt(p)},
\end{align*}
and the latter equals $[\pi_{\la}(T_w^*;\sB_{\la})^*]_{v,u}$. 
\end{proof}

\begin{thm}\label{thm:fullbounded}
The degree in $\sq$ of the entries of the matrices $\pi_{\la}(T_w;\sB_{\la})$ for $w\in \Wext$ is bounded by $\ell(\sw_{\la'})$. Moreover, if $\deg\pi_{\la}(T_w;\sB_{\la})=\ell(\sw_{\la'})$ then $w\in \Delta_{\la}$ (the two sided cell containing $\sw_{\la'}$). 
\end{thm}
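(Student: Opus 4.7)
My approach combines the killing property (Theorem~\ref{thm:fullkilling}) with the Kazhdan--Lusztig expansion of $T_w$. By inverting the triangular relation $C_v=T_v+\sum_{u<v}P_{u,v}T_u$ (with $P_{u,v}\in\sq^{-1}\ZZ[\sq^{-1}]$ for $u<v$) via a straightforward induction on Bruhat order, one obtains
\[
T_w=C_w+\sum_{v<w}m_{v,w}(\sq)\,C_v,\qquad m_{v,w}\in\sq^{-1}\ZZ[\sq^{-1}].
\]
Applying $\pi_\la$ and invoking the killing property, only terms with $v\in\bigcup_{\mu\geq\la}\Delta_\mu$ survive, and for such $v$ property~P4 gives $\ba(v)=\ba_\mu\leq\ba_\la=\ell(\sw_{\la'})$.

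The essential technical step is the claim that for all $v\in\Wext$,
\[
\deg\pi_\la(C_v;\sB_\la)\leq\ba(v). \qquad (\ast)
\]
Granted $(\ast)$, since $\deg m_{v,w}\leq -1$ for $v<w$ and $m_{w,w}=1$, one deduces
\[
\deg\pi_\la(T_w;\sB_\la)\leq\max\bigl(\deg\pi_\la(C_w;\sB_\la),\ \ba_\la-1\bigr)\leq\ell(\sw_{\la'}),
\]
proving the first statement of the theorem.

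For the recognition part, I would argue by cases on the two-sided cell $\Delta_\mu$ containing $w$. If $\mu\not\geq\la$, then $\pi_\la(C_w)=0$ by Theorem~\ref{thm:fullkilling}, so only the terms with $v<w$ survive, each contributing degree at most $\ba_\la-1$. If $\mu>\la$ strictly, then $\ba(w)=\ba_\mu\leq\ba_\la-1$ (since $\ba$ is integer-valued and $\ba_\mu<\ba_\la$ by Lemma~\ref{lem:basic}), so by $(\ast)$ we have $\deg\pi_\la(C_w)\leq\ba_\la-1$, and the $v<w$ terms are similarly bounded. In both cases $\deg\pi_\la(T_w;\sB_\la)<\ell(\sw_{\la'})$, so attaining the bound $\ell(\sw_{\la'})$ forces $w\in\Delta_\la$.

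The main obstacle is establishing the claim $(\ast)$. I would attack this either by (a) applying the combinatorial path formula (Theorem~\ref{thm:pathformula}) to the expansion $C_v=\sum_{u\leq v}P_{u,v}T_u$, with the cancellations needed to bring the naive degree $\ell(v)$ down to $\ba(v)$ supplied by the bar-invariance $\overline{C_v}=C_v$; or by (b) induction on Bruhat order using the multiplication $C_sC_u=\sum_z h_{s,u,z}C_z$ together with the fundamental degree bound $\deg h_{s,u,z}\leq\ba(z)$. Either route requires careful bookkeeping of how the $\ba$-function interacts with the structure of $\la$-folded alcove paths, which is the most delicate part of the argument.
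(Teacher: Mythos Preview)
Your reduction to claim $(\ast)$ is logically sound, but $(\ast)$ itself is the entire difficulty, and neither of your proposed attacks on it works. Consider approach~(b): with $su>u$ one has $C_{su}=C_sC_u-\sum_{z<u,\,sz<z}\mu(z,u)C_z$, where the $\mu(z,u)$ are integers. Applying $\pi_\la$ and the inductive hypothesis gives
\[
\deg\pi_\la(C_{su};\sB_\la)\leq\max\bigl(1+\ba(u),\ \max_z\ba(z)\bigr),
\]
because $\deg\pi_\la(C_s;\sB_\la)=1$. But when $u$ and $su$ lie in the same two-sided cell (the generic situation), $\ba(su)=\ba(u)$, so the bound $1+\ba(u)$ exceeds $\ba(su)$ and the induction collapses. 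Approach~(a) fares no better: bar-invariance of $C_v$ does not transfer to bar-invariance of the matrix entries in the basis $\sB_\la$ (see the proof of Lemma~\ref{lem:barinv}, where the bar involution relates entries in two \emph{different} bases $\sB_\la$ and $\sB_\la'$), so there is no obvious mechanism forcing the required cancellations in $\sum_{u\leq v}P_{u,v}\pi_\la(T_u;\sB_\la)$.

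The paper's argument sidesteps $(\ast)$ entirely via a bootstrap. Let $N$ be the maximal degree attained over all $\pi_\la(T_w;\sB_\la)$, and choose $w$ and $(u,v)$ realising it. Lemma~\ref{lem:conjugate} gives the same leading coefficient (conjugated) in position $(v,u)$ of $\pi_\la(T_{w^{-1}};\sB_\la)$, and a positivity-of-constant-term argument then shows that $[\pi_\la(C_w;\sB_\la)\pi_\la(C_{w^{-1}};\sB_\la)]_{u,u}$ has degree exactly $2N$. Expanding $C_wC_{w^{-1}}=\sum_z h_{w,w^{-1},z}C_z$ and applying the killing property, the surviving $z$ satisfy $\deg h_{w,w^{-1},z}\leq\ba(z)\leq\ell(\sw_{\la'})$, while $\deg\pi_\la(C_z;\sB_\la)\leq N$ by definition of $N$. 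Hence $2N\leq\ell(\sw_{\la'})+N$, giving the bound. The recognition statement then follows from the same equation together with P8 and P11. The key point is that the argument is self-referential in $N$, so one never needs the cell-by-cell refinement~$(\ast)$.
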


\begin{proof}
Let $N=\max\{\deg[\pi_{\la}(T_w;\sB_{\la})]_{u,v}\mid w\in \Wext, \,u,v\in {^\la}W\}$ which is well defined by \cite[Theorem 6.13]{GLP:23}. Let $w$ be such that $\pi_{\la}(T_w;\sB_{\la})$ attains degree~$N$, and suppose that this degree is attained on the $u$th row of $\pi_{\la}(T_w;\sB_{\la})$. Let $v_1,\ldots,v_k\in {^\la}W$ be the columns with $\deg[\pi_{\la}(T_w;\sB_{\la})]_{u,v_j}=N$ for $1\leq j\leq k$. Write 
$$[\pi_{\la}(T_w;\sB_{\la})]_{u,v_j}=a_j(\zeta_{\la})\sq^N+(\textrm{terms of strictly lower degree}), \text{ with $a_j(\zeta_{\la})\in \sR[\zeta_{\la}]$}.$$
By Lemma~\ref{lem:conjugate} we have $[\pi_{\la}(T_{w^{-1}};\sB_{\la})]_{v_j,u}=a_j(\zeta_{\la}^{-1})\sq^N+(\textrm{terms of strictly lower degree})$. Using triangularity, we have also $[\pi_{\la}(C_w;\sB_{\la})]_{u,v_j}=a_j(\zeta)\sq^N+(\textrm{terms of strictly lower degree})$ and $[\pi_{\la}(C_{w^{-1}};\sB_{\la})]_{v_j,u}=a_j(\zeta_{\la}^{-1})\sq^N+(\textrm{terms of strictly lower degree})$. Thus
\begin{align*}
[\pi_{\la}(C_w;\sB_{\la})\pi_{\la}(C_{w^{-1}};\sB_{\la})]_{u,u}&=[a_1(\zeta_{\la})a_1(\zeta_{\la}^{-1})+\cdots+a_k(\zeta_{\la})a_k(\zeta_{\la}^{-1})]\sq^{2N}+\cdots,
\end{align*}
where the omitted terms in the sum are of strictly lower degree. We claim that the coefficient of $\sq^{2N}$ cannot vanish. To see this, note that if $a(\zeta_{\la})\in\sR[\zeta_{\la}]$ with $a(\zeta_{\la})=\sum_{\gamma\in P^{(\la)}}a_{\gamma}\zeta_{\la}^{\gamma}$ with $a_{\gamma}\in\ZZ$ then
$
a(\zeta_{\la})a(\zeta_{\la}^{-1})=\sum_{\gamma_1,\gamma_2}a_{\gamma_1}a_{\gamma_2}\zeta_{\la}^{\gamma_1-\gamma_2}=\sum_{\gamma}a_{\gamma}^2+\text{terms involving $\zeta_{\la}$}.
$
In particular, the constant term is strictly positive. Since $[a_1(\zeta_{\la})a_1(\zeta_{\la}^{-1})+\cdots+a_k(\zeta_{\la})a_k(\zeta_{\la}^{-1})]$ is a sum of terms of this form, it cannot vanish. In summary, $\pi_{\la}(C_w;\sB_{\la})\pi_{\la}(C_{w^{-1}};\sB_{\la})$ attains degree $2N$ in the $(u,u)$-entry.

On the other hand,
\begin{align}\label{eq:applypi}
\pi_{\la}(C_w;\sB_{\la})\pi_{\la}(C_{w^{-1}};\sB_{\la})=\sum_{z}h_{w,w^{-1},z}\pi_{\la}(C_z;\sB_{\la}).
\end{align}
By Theorem~\ref{thm:fullkilling} the sum is over $z$ in two-sided cells $\Delta_{\mu}$ with $\mu\geq \la$, and hence $\ba(z)\leq \ell(\sw_{\la'})$ for all such $z$. Since $N$ was the maximal degree of all $\pi_{\la}(C_z;\sB_{\la})$, we have $\deg\pi_{\la}(C_z;\sB_{\la})\leq N$. Since $\deg h_{w,w^{-1},z}\leq \ba(z)$ it follows that the maximum degree on the right hand side is $\ell(\sw_{\la'})+N$. Thus $2N\leq \ell(\sw_{\la'})+N$, hence the representation is bounded by $\ell(\sw_{\la'})$.

Suppose now that $\pi_{\la}(T_w;\sB_{\la})$ attains the optimal degree~$\ell(\sw_{\la'})$, in position $(u,v)$, say. As explained above, $\pi_{\la}(C_w;\sB_{\la})\pi_{\la}(C_{w^{-1}};\sB_{\la})$ obtains degree $2\ell(\sw_{\la'})$ in position $(u,u)$, and then~(\ref{eq:applypi}) implies that there exists $z\geq_{LR}\sw_{\la'}$ such that $\deg h_{w,w^{-1},z}=\ell(\sw_{\la'})$ and $\deg[\pi_{\la}(C_z;\sB_{\la})]_{u,u}=\ell(\sw_{\la'})$. Thus $\ba(z)=\ell(\sw_{\la'})$ (because $\ba(z)\leq \ell(\sw_{\la'})$) and hence $z\sim_{LR}\sw_{\la'}$ by P11 (see Section~\ref{sec:KLsec}). But then P8 gives $w\sim_R z$, and so $w\in \Delta_{\la}$ as required.
\end{proof}

The following corollary verifies \cite[Conjecture~6.16]{GLP:23} for type $\tilde{\sA}_n$. 

\begin{cor}\label{cor:recognise1}
The set of elements $w\in \Wext$ such that the matrix $\pi_{\la}(T_w;\sB_\la)$ attains the bound $\ell(\sw_{\la'})$ is a subset of the two-sided cell~$\Delta_{\la}$. 
\end{cor}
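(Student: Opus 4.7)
The plan is essentially immediate: this corollary is nothing more than a rephrasing of the second assertion of Theorem~\ref{thm:fullbounded}, which has already been established. Specifically, Theorem~\ref{thm:fullbounded} says that whenever the matrix $\pi_{\la}(T_w;\sB_{\la})$ has an entry of degree equal to $\ell(\sw_{\la'})$, the element $w$ must belong to the two-sided cell $\Delta_{\la}$. Consequently, if $w\in\Wext$ lies in the set under consideration---namely the set of $w$ such that $\deg\pi_{\la}(T_w;\sB_{\la})=\ell(\sw_{\la'})$---then automatically $w\in\Delta_{\la}$, giving the claimed inclusion.

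So the single step of the proof is to let $w$ lie in the set described by the corollary, pick a matrix entry of $\pi_{\la}(T_w;\sB_{\la})$ achieving degree $\ell(\sw_{\la'})$ (which exists by assumption), and directly invoke Theorem~\ref{thm:fullbounded} to conclude $w\in\Delta_{\la}$. There is no real obstacle here; all the substantive content sits in Theorem~\ref{thm:fullbounded} itself, whose proof combined the killing property (Theorem~\ref{thm:fullkilling}), the anti-involution compatibility of $\pi_{\la}$ (Lemma~\ref{lem:conjugate}), and Lusztig's properties P8 and P11 to identify $w$ with an element of the two-sided cell of $\sw_{\la'}$.

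It is worth noting that this corollary is only ``half'' of the cell recognition statement: it shows only the inclusion $\{w:\deg\pi_{\la}(T_w;\sB_{\la})=\ell(\sw_{\la'})\}\subseteq\Delta_{\la}$, whereas the converse (that every $w\in\Delta_{\la}$ actually realises the bound) requires additional work and will presumably be obtained later via the asymptotic Plancherel formula; indeed the text forward-references Theorem~\ref{thm:recognise2a} for the full recognition property. Thus here I would state the proof as a single sentence appealing to Theorem~\ref{thm:fullbounded}, and flag that the opposite inclusion is deferred.
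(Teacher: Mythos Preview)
Your proposal is correct and matches the paper's own proof, which simply states ``This is immediate from Theorem~\ref{thm:fullbounded}.'' Your additional commentary about the converse inclusion being deferred to Theorem~\ref{thm:recognise2a} is also accurate and aligns with the paper's remarks following the corollary.
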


\begin{proof}
This is immediate from Theorem~\ref{thm:fullbounded}.
\end{proof}

Later we will be be able to improve on Corollary~\ref{cor:recognise1} to show that the set of elements recognised by $(\pi_{\la},M_{\la},\sB_{\la})$ is precisely $\Delta_{\la}$ (that is, $\pi_{\la}$ \textit{recognises} $\Delta_{\la}$, see Theorem~\ref{thm:recognise2a}). This improvement will require the asymptotic Plancherel Theorem.

\begin{remark}
The connection between the bound on matrix entries, and the bound on $\cQ_{\la}(p)$ for $\la$-folded alcove paths, is rather subtle (c.f. Theorem~\ref{thm:pathformula}). For example, consider $\tilde{\sA}_4$ with $\lambda=(2,1,1,1)$ (so $J_{\la}=\{1\}$), and let $\vec{w}=434234123$. We have $w\in\Delta_{\la}$ (one way to see this is to compute $[\pi_{\la}(T_w)]_{e,s_2s_3s_4}=\sq^6-3\sq^4+4\sq^2-4+4\sq^{-2}-3\sq^{-4}+\sq^{-6}$, and hence the bound $\ell(\sw_{\la'})=6$ is attained, and apply Theorem~\ref{thm:fullbounded}). However we note that individual paths can attain a higher degree. For example, the path $p_1=\hat{4}\hat{3}\hat{4}2\hat{3}\hat{4}\hat{1}2\hat{3}$  (where $\hat{i}$ indicates a fold) is a $\la$-folded alcove path of type $\vec{w}$ starting at $e$ and ending at $e$, and we have $\cQ_{\la}(p_1)=(\sq-\sq^{-1})^7$, which has degree~$7$. The leading term of $\cQ_{\la}(p_1)$ is cancelled by another path $p_2=\hat{4}\hat{3}\hat{4}\hat{2}\hat{3}\hat{4}\check{1}\hat{2}\hat{3}$ (where $\check{i}$ indicates a bounce) starting and ending at~$e$. Indeed we have $\cQ_{\la}(p_2)=-\sq^{-1}(\sq-\sq^{-1})^8$ and so $\cQ_{\la}(p_1)+\cQ_{\la}(p_2)=\sq^{-1}(\sq-\sq^{-1})^7$, which has degree $6$. In fact, further cancellations occur -- there are $14$ $\la$-folded alcove paths of type $\vec{w}$ starting and ending at $e$, and summing the associated $\cQ_{\la}(p)$ terms gives (by Theorem~\ref{thm:pathformula}) $[\pi_{\la}(T_w)]_{e,e}=-\sq+2\sq^{-1}-3\sq^{-5}+3\sq^{-7}-\sq^{-9}$. 
\end{remark}

\subsection{Leading matrices and the characters $\chi_{\la}^{\infty}$ of $\cJ$}\label{sec:leading}

The following definition is modelled by the work of Geck~\cite{Geck:02, Geck:11} in the finite dimensional case, and extends the concepts introduced by the second and fourth authors in~\cite{GP:19,GP:19b}. For $p(\zeta_{\la})\in(\ZZ[\sq^{-1}])[\zeta_{\la}]$ let $\sp(p(\zeta_{\la}))\in\ZZ[\zeta_{\la}]$ denote the specialisation of $p(\zeta_{\la})$ at $\sq^{-1}=0$. We extend this definition entrywise to matrices over $(\ZZ[\sq^{-1}])[\zeta_{\la}]$.

\begin{defn}\label{defn:leading}
The \textit{$\la$-leading matrix} of $w\in\Wext$ is 
$$
\fc_{\la}(w)=\sp\big(\sq^{-\ell(\sw_{\la'})}\pi_{\la}(C_w;\sB_{\la})\big)\in\mathrm{Mat}_{N_{\la}}(\ZZ[\zeta_{\la}]).
$$
This specialisation exists by Theorem~\ref{thm:fullbounded}, and if $\fc_{\la}(w)\neq 0$ then $w\in\Delta_{\la}$ by Corollary~\ref{cor:recognise1}. 
\end{defn}

In particular, by Proposition~\ref{prop:formofmatrix} we have
\begin{align}\label{eq:leadinglap}
\fc_{\la}(\sw_{\la'})=E_{\su_{\la},\su_{\la}},
\end{align}
where $E_{u,v}$ denotes the matrix with $1$ in the $(u,v)$-place, and $0$ elsewhere.

Let
$$
\fC_{\la}=\mathrm{span}_{\ZZ}\{\fc_{\la}(w)\mid w\in\Delta_{\la}\}\quad\text{and}\quad 
\fC=\bigoplus_{\la\vdash n+1}\fC_{\la}.
$$

The following lemma shows that we may replace $C_w$ by $T_w$ in the definition of $\fc_{\la}(w)$. We will use this result frequently.

\begin{lemma}\label{lem:replaceCbyT}
We have $\fc_{\la}(w)=\sp\big(\sq^{-\ell(\sw_{\la'})}\pi_{\la}(T_w;\sB_{\la})\big)$.
\end{lemma}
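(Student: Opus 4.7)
The plan is to expand $C_w$ in the $T$-basis and argue that the triangularity correction terms contribute nothing after specialisation. Explicitly, by the defining properties of the Kazhdan–Lusztig basis recalled in Section~\ref{sec:KLsec}, we have
\[
C_w = T_w + \sum_{v<w} P_{v,w} T_v, \qquad P_{v,w}\in \sq^{-1}\ZZ[\sq^{-1}].
\]
Applying $\pi_{\la}$ and reading off matrices in the basis $\sB_{\la}$ yields
\[
\pi_{\la}(C_w;\sB_{\la}) = \pi_{\la}(T_w;\sB_{\la}) + \sum_{v<w} P_{v,w}\,\pi_{\la}(T_v;\sB_{\la}).
\]

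Now I would invoke the boundedness result (Theorem~\ref{thm:fullbounded}) to control each summand: every entry of $\pi_{\la}(T_v;\sB_{\la})$ has degree in $\sq$ at most $\ell(\sw_{\la'})$. Since $\deg P_{v,w}\leq -1$, the product $P_{v,w}\,\pi_{\la}(T_v;\sB_{\la})$ has entries of degree at most $\ell(\sw_{\la'})-1$. Multiplying the displayed identity by $\sq^{-\ell(\sw_{\la'})}$ and specialising at $\sq^{-1}=0$, the entire sum on the right vanishes, leaving
\[
\fc_{\la}(w) \;=\; \sp\!\big(\sq^{-\ell(\sw_{\la'})}\pi_{\la}(C_w;\sB_{\la})\big) \;=\; \sp\!\big(\sq^{-\ell(\sw_{\la'})}\pi_{\la}(T_w;\sB_{\la})\big),
\]
which is exactly the claimed identity.

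There is essentially no obstacle here: the statement is a direct consequence of the triangular change of basis between $C_w$ and $T_w$, together with the fact that the Kazhdan–Lusztig polynomials contribute only strictly negative powers of $\sq$. The only non-trivial input used is the boundedness Theorem~\ref{thm:fullbounded}, which has already been established, and which guarantees that the $-1$ saved from $P_{v,w}$ is enough to kill the lower-order terms after rescaling and specialisation.
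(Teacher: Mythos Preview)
Your proof is correct and is precisely the argument the paper gives: it invokes Theorem~\ref{thm:fullbounded} together with the triangularity $C_w=T_w+\sum_{v<w}P_{v,w}T_v$ with $P_{v,w}\in\sq^{-1}\ZZ[\sq^{-1}]$. You have simply spelled out the details.
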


\begin{proof}
This follows from Theorem~\ref{thm:fullbounded} and triangularity between the $C_w$ and $T_w$ bases.
\end{proof}

Recall the definition of $\gamma_{x,y,z^{-1}}\in\ZZ$ from Definition~\ref{def:gamma}. In the following proposition we show that $\fC_{\la}$ is an associative $\ZZ$-algebra. 

\begin{prop}\label{prop:Zalgebra}
The $\ZZ$-module $\fC_{\la}$ is an associative $\ZZ$-algebra under matrix multiplication. Moreover, for $x,y\in\Delta_{\la}$ we have
$$
\fc_{\la}(x)\fc_{\la}(y)=\sum_{z\in\Delta_{\la}}\gamma_{x,y,z^{-1}}\fc_{\la}(z).
$$
Thus the linear map $\cJ_{\la}\to \fC_{\la}$, $\st_w\mapsto\fc_{\la}(w)$, is a surjective homomorphism of unital rings. 
\end{prop}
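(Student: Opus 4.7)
The plan is to lift the Kazhdan--Lusztig product $C_xC_y = \sum_z h_{x,y,z}C_z$ from $\Hext$ through $\pi_\la$, multiply by $\sq^{-2\ell(\sw_{\la'})}$, and specialise at $\sq^{-1}=0$. The left side becomes $\fc_\la(x)\fc_\la(y)$ by Definition~\ref{defn:leading}, so the task reduces to analysing each term on the right according to which two-sided cell $\Delta_\mu$ contains $z$.

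The argument splits into three cases. If $\mu \not\geq \la$ then $\pi_\la(C_z;\sB_\la) = 0$ by the killing property (Theorem~\ref{thm:fullkilling}). If $\mu > \la$, then Lemma~\ref{lem:basic} yields $\ba(z) = \ba_\mu < \ba_\la = \ell(\sw_{\la'})$, so $\deg h_{x,y,z} \leq \ell(\sw_{\la'}) - 1$; moreover, Corollary~\ref{cor:recognise1} combined with Definition~\ref{defn:leading} forces $\fc_\la(z) = 0$, which means that $\sq^{-\ell(\sw_{\la'})}\pi_\la(C_z;\sB_\la)$ lies in $\sq^{-1}\ZZ[\sq^{-1}][\zeta_\la]$. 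The product $\sq^{-2\ell(\sw_{\la'})}h_{x,y,z}\pi_\la(C_z;\sB_\la)$ then has strictly negative degree in $\sq$ and vanishes under specialisation. Finally, for $z \in \Delta_\la$ we have $\ba(z) = \ell(\sw_{\la'})$, so by Definition~\ref{def:gamma} the polynomial $h_{x,y,z}$ equals $\gamma_{x,y,z^{-1}}\sq^{\ell(\sw_{\la'})}$ plus strictly lower terms, while $\sq^{-\ell(\sw_{\la'})}\pi_\la(C_z;\sB_\la)$ specialises to $\fc_\la(z)$. Combining these gives
$$
\fc_\la(x)\fc_\la(y) = \sum_{z \in \Delta_\la}\gamma_{x,y,z^{-1}}\fc_\la(z),
$$
which simultaneously proves closure of $\fC_\la$ under matrix multiplication and establishes the displayed formula; associativity is then inherited from the ambient matrix algebra $\mathrm{Mat}_{N_\la}(\ZZ[\zeta_\la])$. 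The structure constants on the right match exactly those defining $\st_x\st_y = \sum_z\gamma_{x,y,z^{-1}}\st_z$ in $\cJ_\la$, so the linear map $\st_w \mapsto \fc_\la(w)$ is automatically a ring homomorphism, and surjectivity is immediate from the definition of $\fC_\la$. For the unital claim, Lusztig's theory (a consequence of P1--P15, which hold in the equal-parameter setting of affine type $\sA$) gives that $\sum_{d \in \cD \cap \Delta_\la}\st_d$ is the identity of $\cJ_\la$, and its image under a surjective ring homomorphism must act as the identity of $\fC_\la$.

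The main delicate point is the sharpened degree bound $\deg \pi_\la(C_z;\sB_\la) \leq \ell(\sw_{\la'})-1$ for $z \notin \Delta_\la$ used in the middle case; this is extracted from Corollary~\ref{cor:recognise1} via the triangular change of basis between $T_w$ and $C_w$, which is encapsulated in Lemma~\ref{lem:replaceCbyT}. Everything else is a clean accounting of degrees combined with the standard multiplication in $\Hext$.
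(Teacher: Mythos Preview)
Your proof is correct and follows essentially the same route as the paper: apply $\pi_{\la}$ to $C_xC_y=\sum_z h_{x,y,z}C_z$, rescale by $\sq^{-2\ell(\sw_{\la'})}$, use the killing property (Theorem~\ref{thm:fullkilling}) to restrict to $\mu\geq\la$, and then specialise. The paper's treatment of the case $\mu>\la$ is marginally more economical---it observes only that $\deg(\sq^{-\ell(\sw_{\la'})}h_{x,y,z})\leq 0$ and that a nonzero specialisation of the product forces $\deg\pi_{\la}(C_z;\sB_{\la})=\ell(\sw_{\la'})$, hence $z\in\Delta_{\la}$ by Corollary~\ref{cor:recognise1}---whereas you invoke two separate strict degree bounds (on $h_{x,y,z}$ via Lemma~\ref{lem:basic} and on $\pi_{\la}(C_z;\sB_{\la})$ via $\fc_{\la}(z)=0$), either of which alone would suffice; but this is harmless redundancy, not an error.
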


\begin{proof}
For $x,y\in\Delta_{\la}$ we have
$$
[\sq^{-\ell(\sw_{\la'})}\pi_{\la}(C_x;\sB_{\la})][\sq^{-\ell(\sw_{\la'})}\pi_{\la}(C_y;\sB_{\la})]=\sum_{z\in\Wext}[\sq^{-\ell(\sw_{\la'})}h_{x,y,z}][\sq^{-\ell(\sw_{\la'})}\pi_{\la}(C_z;\sB_{\la})].
$$
By Definition~\ref{defn:leading} the left hand side specialises to $\fc_{\la}(x)\fc_{\la}(y)$. By Theorem~\ref{thm:fullkilling} the sum on the right is over $z\in\Delta_{\mu}$ with $\mu\geq\la$ (for all other terms vanish). Thus the sum is over elements $z$ in two-sided cells higher than, or equal to, $\Delta_{\la}$, and hence $\deg(\sq^{-\ell(\sw_{\la'})}h_{x,y,z})\leq 0$ for these terms. Thus each term $[\sq^{-\ell(\sw_{\la'})}h_{x,y,z}][\sq^{-\ell(\sw_{\la'})}\pi_{\la}(C_z;\sB_{\la})]$ can be specialised at $\sq^{-1}=0$. If this specialisation is nonzero, then necessarily $\deg\pi_{\la}(C_z;\sB_{\la})=\ell(\sw_{\la'})$, and so $z\in\Delta_{\la}$ by Corollary~\ref{cor:recognise1}, and the result follows. 
\end{proof}

It follow from Lemmas~\ref{lem:conjugate} and~\ref{lem:replaceCbyT} that
\begin{align}\label{eq:conjugation}
\fc_{\la}(w^{-1})=\fc_{\la}(w)^*
\end{align} 
where, as in Section~\ref{sec:leading}, $\fc_{\la}(w)^*$ denotes the conjugate transpose matrix. 

For $\la\vdash n+1$ define $\pi_{\la}^{\infty}:\cJ\to \mathrm{Mat}_{N_{\la}}(\ZZ[\zeta_{\la}])$ by 
\begin{align}\label{eq:asymptoticrepresentation}
\pi_{\la}^{\infty}(A)=\sum_{w\in \Delta_{\la}}a_w\fc_{\la}(w)\quad\text{where}\quad A=\sum_{w\in \Wext}a_w\st_w.
\end{align}
In particular, note that $\pi_{\la}^{\infty}(\st_w)=0$ if $w\notin \Delta_{\la}$. Proposition~\ref{prop:Zalgebra} shows that $\pi_{\la}^{\infty}$ gives a matrix representation of $\cJ$. Let $\chi_{\la}^{\infty}$ be the character of $\pi_{\la}^{\infty}$, and so
$$
\chi_{\la}^{\infty}(A)=\tr(\pi_{\la}^{\infty}(A))\quad\text{for $A\in\cJ$}
$$
where $\tr$ denotes matrix trace. 

\section{The asymptotic Plancherel Theorem}\label{sec:asymptoticplancherel}

In this section we prove an \textit{asymptotic Plancherel Theorem} for type $\tilde{\sA}_n$. This notion was introduced in \cite{GP:19,GP:19b}, and Proposition~\ref{prop:degrees} confirms \cite[Conjecture~9.8]{GP:19} and \cite[Conjecture~6.15]{GLP:23} for affine type~$\sA$. The asymptotic Plancherel Theorem will play a crucial role, often leading to efficient proofs of certain statements (for example, see Theorems~\ref{thm:recognise2a}, \ref{thm:recognise2b}, and Claim 3 in the proof of Theorem~\ref{thm:leading0}). We note that other interesting applications of the Plancherel Theorem in Kazhdan-Lusztig theory have recently been made by Dawydiak~\cite{Daw:23}.

\subsection{The canonical trace on $\cJ$}

In this subsection (and only in this subsection) let $(W,S)$ be a Coxeter system of arbitrary type, let $\varphi:W\to \mathbb{Z}_{\geq 0}$ be a weight function with $\varphi(s)>0$, and let $H$ be the associated weighted Hecke algebra over $\sR=\ZZ[\sq,\sq^{-1}]$, as in \cite{Bon:17,Lus:03}. We assume that the weighted Coxeter system $(W,S,\varphi)$ is bounded as in \cite[\S13.2]{Lus:03}, and that properties P1--P15 hold. Thus the asymptotic algebra $\cJ$ of $H$ is an associative $\ZZ$-algebra (see \cite[\S18.3]{Lus:03}).

Recall the definition of the set $\cD$ of distinguished involutions, and the integers $n_d$ $(d\in \cD$) from Section~\ref{sec:KLsec}. For $w\in W$ we write $d(w)$ for the unique element of~$\cD$ in the right cell containing~$w$ (c.f. P13). By P5 we have 
$
n_{d(x)}=\gamma_{x,x^{-1},d(x)}=\pm 1.
$

Recall that the \textit{canonical trace} on $H$ is the $\sR$-linear function $\Tr:H\to \sR$ with $\Tr(T_w)=\delta_{w,e}$ (see \cite[\S8.1]{GP:00}). Induction on $\ell(v)$ shows that $\Tr(T_uT_{v^{-1}})=\delta_{u,v}$ for all $u,v\in W$. This implies that $\Tr(h_1h_2)=\Tr(h_2h_1)$ for $h_1,h_2\in H$, and that 
\begin{align*}
\langle h_1,h_2\rangle=\Tr(h_1h_2^*)
\end{align*}
defines a nondegenerate symmetric bilinear form on $H$, with $\langle T_u,T_v\rangle=\delta_{u,v}$ (recall here that $\ast$ is defined in (\ref{eq:star})). We now define a canonical trace on the asymptotic algebra~$\cJ$, following the finite case given in \cite[\S20.1(b)]{Lus:03}. 

\begin{defn}\label{defn:canonicalasymptotictrace}
Define a linear map $\Tr^{\infty}:\cJ\to \ZZ$ and a bilinear form $\langle \cdot,\cdot\rangle^{\infty}:\cJ\times\cJ\to\ZZ$ by 
$$
\Tr^{\infty}\left(\sum a_w\st_w\right)=\sum_{d\in \cD}n_da_d\quad\text{and}\quad \langle A,B\rangle^{\infty}=\Tr^{\infty}(AB^*)
$$
for $A,B\in\cJ$, where $\left(\sum a_w\st_w\right)^*=\sum a_w\st_{w^{-1}}$. 
\end{defn}

\begin{thm}\label{thm:innerproductonJ} With the above notation and assumptions, we have:
\begin{compactenum}[$(1)$]
\item The linear map $\Tr^{\infty}$ is a trace functional on $\cJ$. 
\item The bilinear form $\langle\cdot,\cdot\rangle^{\infty}$ is an inner product and $(\st_w)_{w\in W}$ is an orthonormal basis of $\cJ$.
\item For $A,B,C\in\cJ$ we have $\langle AB,C\rangle^{\infty}=\langle B,A^*C\rangle^{\infty}$. 
\end{compactenum}
\end{thm}

\begin{proof}
Since $
\st_x\st_y^*=\sum_{z\in W}\gamma_{x,y^{-1},z^{-1}}\st_z
$
we have
$
\Tr^{\infty}(\st_x\st_y^*)=\sum_{d\in \cD}n_d\gamma_{x,y^{-1},d}.
$
If $y\neq x$ then P2 gives $\Tr^{\infty}(\st_x\st_y^*)=0$, and if $y=x$ then by P3, P5, and P13 we have $\Tr^{\infty}(\st_x\st_y^*)=n_{d(x)}\gamma_{x,x^{-1},d(x)}=n_{d(x)}^2=1$. Thus 
$\Tr^{\infty}(\st_x\st_y^*)=\delta_{x,y}$ for all $x,y\in W$.
It follows that for $A,B\in \cJ$ with $A=\sum a_w\st_w$ and $B=\sum b_w\st_w$ we have 
\begin{align*}
\Tr^{\infty}(AB^*)&=\sum_{x,y\in W}n_{d(x)}n_{d(y)}a_xb_y\Tr^{\infty}(\st_x\st_y^*)=\sum_{x\in W}a_xb_{x}=\Tr^{\infty}(B^*A)
\end{align*}
and $\langle A,A\rangle^{\infty}=\sum_{w\in W} a_w^2$, and (1) and (2) follow.

Let $x,y,z\in W$. By P7 and the fact that $\gamma_{u,v,w}=\gamma_{v^{-1},u^{-1},w^{-1}}$ (for $u,v,w\in W$) we have
$
\gamma_{x^{-1},z,y^{-1}}=\gamma_{y^{-1},x^{-1},z}=\gamma_{x,y,z^{-1}}
$
and so by (2) we have
\begin{align*}
\langle \st_x\st_y,\st_z\rangle^{\infty}&=\gamma_{x,y,z^{-1}}=\gamma_{x^{-1},z,y^{-1}}=\langle \st_y,\st_x^*\st_z\rangle^{\infty}
\end{align*}
and (3) follows. 
\end{proof}

\begin{remark}
Theorem~\ref{thm:innerproductonJ} is expressed in the setup for general weighted Hecke algebras. It is clear that the statement of this theorem also applies to extended affine Hecke algebras (the proof applies verbatim). 
\end{remark}

\subsection{The Plancherel Theorem for $\tilde{\sA}_n$}\label{sec:plancherel}

We now return to the case that $\Hext$ is the extended affine Hecke algebra of type~$\tilde{\sA}_n$. The \textit{Plancherel Theorem} is a spectral decomposition of the canonical trace functional $\Tr:\Hext\to\ZZ[\sq,\sq^{-1}]$. We now explicitly describe this decomposition in type $\tilde{\sA}_n$, following \cite{AP:05,Opd:04}. Recall that $\sv=-\sq^{-1}$. Let
$$
C_{\la}(\sq)=\sq^{-n(n+1)}\prod_{i=1}^{r(\la)}\frac{\sq^{\la_i^2-\la_i}(1-\sq^{-2})^{\la_i}}{1-\sq^{-2\la_i}}\quad\text{and}\quad 
c^{\la}(\zeta_{\la})=\prod_{\substack{1\leq i<j\leq r(\la)\\ 1\leq k\leq \la_j}}\frac{1-\sv^{\la_i-\la_j+2k}z_i^{-1}z_j}{1-\sv^{-\la_i-\la_j+2k}z_i^{-1}z_j}.
$$
Note that each factor in the numerator of $c^{\la}(\zeta_{\la})$ has strictly negative degree, and each factor in the denominator has degree at least~$0$.

If $p(\zeta_{\la})\in\sR[\zeta_{\la}]$ we extend the notation from Section~\ref{sec:rings} and write 
$$
\bigg[\frac{p(\zeta_{\la})}{c^{\la}(\zeta_{\la})c^{\la}(\zeta_{\la}^{-1})}\bigg]_{\mathrm{ct}}
$$
for the coefficient of $\zeta_{\la}^0$ in the series expansion of $p(\zeta_{\la})/(c^{\la}(\zeta_{\la})c^{\la}(\zeta_{\la}^{-1}))$, where the rational function is expanded using 
\begin{align}\label{eq:expansion}
\frac{1}{1-\sv^{\la_i-\la_j+2k}z_i^{-1}z_j}=\sum_{r\geq 0}\sv^{(\la_i-\la_j+2k)r}z_i^{-r}z_j^r
\end{align}
(note that with this choice of expansion, the degree in $\sq$ remains bounded from above).

For $h_1,h_2\in \Hext$ define
$$
\langle h_1,h_2\rangle_{\la}=\frac{C_{\la}(\sq)}{|G_{\la}|}\bigg[\frac{\chi_{\la}(h_1h_2^*)}{c^{\la}(\zeta_{\la})c^{\la}(\zeta_{\la}^{-1})}\bigg]_{\mathrm{ct}}.
$$
The \textit{Plancherel Theorem} is the following ``spectral decomposition'' of the inner product $\langle\cdot,\cdot\rangle$ (hence the trace functional $\Tr$), proved in \cite{Opd:04} and \cite{AP:05}.

\begin{thm}[{\cite{Opd:04},\cite[Remark 5.6]{AP:05}}]\label{thm:plancherel}
For all $h_1,h_2\in \Hext$ we have 
$$
\langle h_1,h_2\rangle=\sum_{\la\vdash n+1}\langle h_1,h_2\rangle_{\la}.
$$
\end{thm}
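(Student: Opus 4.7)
The plan is to deduce the stated formula from Opdam's general Plancherel Theorem for affine Hecke algebras~\cite{Opd:04}, in its explicit type~$\tilde{\sA}_n$ form worked out by Aubert-Plymen~\cite{AP:05}. The proof is essentially a translation of those results into the notation of this paper, so the work lies in matching residual data, induced representations, $c$-functions, and normalising constants.

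First, I would recall Opdam's Plancherel formula in the form
$$
\Tr(h)=\sum_{\cO}\int_{\cO}\chi_{\cO,t}(h)\,d\mu_{\cO}(t),
$$
where $\cO$ ranges over $\Wfin$-orbits of \emph{residual cosets} in the complex torus $T_{\CC}=\mathrm{Hom}(P,\CC^\times)$, $\chi_{\cO,t}$ is the character of the tempered representation associated to the residual point $t$ on $\cO$, and $\mu_{\cO}$ is an explicit Plancherel measure supported on the compact part of $\cO$. Applying this to $\Tr(h_1 h_2^*)=\langle h_1,h_2\rangle$ reduces the theorem to identifying each orbit $\cO$ with a partition $\la\vdash n+1$ and matching the corresponding integral with $\langle h_1,h_2\rangle_{\la}$.

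Second, I would invoke the Heckman-Opdam classification of residual subspaces in type~$\tilde{\sA}_n$: these are indexed by partitions $\la\vdash n+1$, with the residual coset $L_\la$ attached to $\la$ being the set of characters of $P$ that agree with $\psi_\la$ (defined in Section~\ref{sec:reps}) modulo a twist by a character of $P/Q_\la\cong\sT_\la$. The stabiliser of $L_\la$ inside~$\Wfin$ is precisely the group $G_\la$ introduced in Section~\ref{sec:2}, whence the factor $|G_\la|^{-1}$ passing from the orbit sum to a sum over partitions. The tempered representation associated to $t\in L_\la$ is the parabolic induction from the Levi subalgebra $\cL_\la$ of the character obtained from $\psi_\la$ by substituting $\zeta_{\la}^{\gamma}\mapsto t(\gamma)$; this is by construction our $\pi_\la$. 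Consequently the integrand of the $\cO$-integral is $\chi_\la(h_1 h_2^*)$ evaluated at a point of the compact torus $T_{\la,\mathrm{cpt}}\subseteq L_\la$ whose character group is $P/Q_\la$.

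Third, I would compute the Plancherel density on $L_\la$. The general Opdam formula gives it as a constant times $\mu(t)\,dt$, where $\mu$ is Macdonald's $\mu$-function restricted to~$L_\la$; a direct residue calculation (carried out in~\cite[\S5]{AP:05}) expresses $\mu|_{L_\la}$ as $1/(c^{\la}(\zeta_\la)c^{\la}(\zeta_\la^{-1}))$ with $c^\la$ exactly our $c$-function, with the leftover normalisation assembling into the constant $C_\la(\sq)$. Fourier inversion on the compact torus then turns $\int_{T_{\la,\mathrm{cpt}}}(\cdot)\,dt$ into extraction of the constant coefficient in the Laurent expansion on $\ZZ[\zeta_\la]$; the specific series expansion in~\eqref{eq:expansion} is the one for which all denominators become geometric series with bounded degrees in $\sq$, guaranteeing convergence and justifying the formal $[\,\cdot\,]_{\mathrm{ct}}$ notation.

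The main obstacle is purely notational: verifying that the conventions (the normalisation of $\psi_\la$, the choice of variable $\sv=-\sq^{-1}$, the identification $G_\la\cong N_{\Wfin}(L_\la)/\mathrm{Stab}$, and the precise form of the $c$-function for the non-standard Levi $\Phi_{\la'}$) all agree with those of~\cite{Opd:04,AP:05}. Once these dictionaries are in place, no new analytic input is needed: bilinearity in $(h_1,h_2)$ and the explicit type~$\sA$ residue calculus of Aubert-Plymen yield the asserted decomposition term by term.
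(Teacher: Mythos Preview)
The paper does not prove this theorem at all: it is stated with attribution to \cite{Opd:04} and \cite[Remark~5.6]{AP:05} and used as a black box, with Remark~\ref{rem:analyticexpression1} explaining the translation between the analytic formulation in those references and the algebraic constant-term formulation used here. Your proposal is thus not competing with a proof in the paper but rather expanding on that remark; your outline of the dictionary (residual cosets $\leftrightarrow$ partitions, stabiliser $\leftrightarrow G_\la$, tempered representation $\leftrightarrow \pi_\la$, Plancherel density $\leftrightarrow C_\la(\sq)/(c^\la c^\la)$) is the right shape, and the paper's own Remark~\ref{rem:analyticexpression1} confirms the key points about the passage from the integral to the constant-term extraction and the regrouping that converts $1/|W_\la|$ into $1/|G_\la|$.
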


\begin{remark}\label{rem:analyticexpression1}
The Plancherel Theorem is usually expressed as an analytic statement, and some comments are required for the translation between Theorem~\ref{thm:plancherel} and this analytic statement from~\cite{AP:05,Opd:04}. Specialise $\sq\to q$ with $q>1$ a real number, and extend scalars of $\Hext$ to~$\mathbb{C}$. Let $\mathbb{T}$ be the group of complex numbers of modulus~$1$ and for $\la\vdash n+1$ let $dz_{\la}$ be normalised Haar measure on the group 
$
\mathbb{T}_{\la}=\{(z_1,z_2,\ldots,z_{r(\la)})\in\mathbb{T}^{r(\la)}\mid z_1^{\la_1}z_2^{\la_2}\cdots z_{r(\la)}^{\la_{r(\la)}}=1\}.
$
On specialising the ``variables'' $z_1,\ldots,z_{r(\la)}$ in our representations to complex numbers of modulus~$1$ with $z_1^{\la_1}z_2^{\la_2}\cdots z_{r(\la)}^{\la_{r(\la)}}=1$, properties of the Haar measure gives
$$
\langle h_1,h_2\rangle_{\la}=\frac{C_{\la}(q)}{|G_{\la}|}\int_{\mathbb{T}_{\la}}\frac{\chi_{\la}(h_1h_2^*)}{c^{\la}(z_{\la})c^{\la}(z_{\la}^{-1})}dz_{\la},
$$
where $\langle h_1,h_2\rangle_{\la}$ is defined as above. To see this, expand the integrand into a series in the variables $z_1,\ldots,z_{r(\la)}$  using~(\ref{eq:expansion}), noting that this choice of expansion gives an absolutely convergent series since $q>1$. Then integrate term by term using $\int_{\mathbb{T}_{\la}}z_{\la}^{\gamma}dz_{\la}=\delta_{\gamma,0}$, and hence the integral on the right hand side of the above equation gives the constant term (in $z_{\la}$) of the expansion, which by definition is $\langle h_1,h_2\rangle_{\la}$. Thus, since $c^{\la}(z_{\la}^{-1})=\overline{c^{\la}(z_{\la})}$ for $z_{\la}\in\mathbb{T}_{\la}$ (here the bar indicates complex conjugation) we obtain the analytic expression for the Plancherel Theorem:
$$
\langle h_1,h_2\rangle=\sum_{\la\vdash n+1}\frac{C_{\la}(q)}{|G_{\la}|}\int_{\mathbb{T}_{\la}}\chi_{\la}(h_1h_2^*)d\mu_{\la}(z_{\la})\quad\text{where}\quad d\mu_{\la}(z_{\la})=\frac{dz_{\la}}{|c^{\la}(z_{\la})|^2}.
$$
The measure $d\mu_{\la}$ is the \textit{Plancherel measure}. 
\end{remark}

\begin{example}
Consider $n=2$ (type $\tilde{\sA}_2$). Then $\cP(3)=\{(1,1,1),(2,1),(3)\}$. We have  
\begin{align*}
c^{(1,1,1)}(\zeta_{(1,1,1)})&=\frac{(1-\sq^{-2}z_1^{-1}z_2)(1-\sq^{-2}z_2^{-1}z_3)(1-\sq^{-2}z_1^{-1}z_3)}{(1-z_1^{-1}z_2)(1-z_2^{-1}z_3)(1-z_1^{-1}z_3)}&&\text{where $z_1z_2z_3=1$}\\
c^{(2,1)}(\zeta_{(2,1)})&=\frac{1+\sq^{-3}z_1^{-1}z_2}{1+\sq z_1^{-1}z_2}&&\text{where $z_1^2z_2=1$},
\end{align*} 
and $c^{(3)}(\zeta_{(3)})=1$. We have $|G_{(1,1,1)}|=6$ and $|G_{(2,1)}|=|G_{(3)}|=1$. For the analytic formulation of the Plancherel Theorem, note that if $\la=(3)$ then $\mathbb{T}_{\la}=\{z\in\mathbb{C}\mid z^3=1\}=\{1,\omega,\omega^2\}$ where $\omega=e^{2\pi i/3}$, and so the normalised Haar measure on $\mathbb{T}_{\la}$ is the discrete measure assigning mass $1/3$ to each atom, and we recover the analytic formulation of the Plancherel Theorem for $\tilde{\sA}_2$ computed explicitly in \cite[Theorem~4.3]{Par:14}. 
\end{example}

\subsection{Asymptotic Plancherel Theorem}\label{subsec:apt}

The following \textit{asymptotic Plancherel Theorem} gives a spectral decomposition of the inner product $\langle\cdot,\cdot\rangle^{\infty}$ on~$\cJ$ (and hence the trace functional $\Tr^{\infty}$ on~$\cJ$) from Definition~\ref{defn:canonicalasymptotictrace}, mirroring the standard Plancherel Theorem at the asymptotic level.

\begin{thm}\label{thm:AsymptoticPlancherel}
For $A,B\in\cJ$ we have
$$
\langle A,B\rangle^{\infty}=\sum_{\la\vdash n+1}\langle A,B\rangle_{\la}^{\infty}\quad\text{where}\quad \langle A,B\rangle_{\la}^{\infty}=\frac{1}{|G_{\la}|}\bigg[\chi_{\la}^{\infty}(AB^*)\prod_{\alpha\in\Phi_{G_{\la}}}(1-\zeta_{\la}^{\alpha})\bigg]_{\mathrm{ct}}.
$$
Moreover, for each $\la\vdash n+1$ the bilinear form $\langle\cdot,\cdot\rangle_{\la}^{\infty}$ is an inner product on the $\ZZ$-module $\cJ_{\la}$, and the elements $\st_w$, $w\in\Delta_{\la}$, form an orthonormal basis. 
\end{thm}

The proof of Theorem~\ref{thm:AsymptoticPlancherel} is given after the following two preliminary results. Note that the notation $\langle A,B\rangle_{\la}^{\infty}$ for $A,B\in\cJ$ is a natural extension of the notation $\langle f(\zeta_{\la}),g(\zeta_{\la})\rangle_{\la}^{\infty}$ for $f(\zeta_{\la}),g(\zeta_{\la})\in \ZZ[\zeta_{\la}]$ from Section~\ref{sec:rings}.

Recall the definition of the specialisation $\sp$ from Section~\ref{sec:leading}.

\begin{prop}\label{prop:degrees}
For $\la\vdash n+1$ we have
$$
\deg \frac{C_{\la}(\sq)}{c^{\la}(\zeta_{\la})c^{\la}(\zeta_{\la}^{-1})}=-2\ell(\sw_{\la'})
\quad\text{and}\quad
\sp\bigg(\frac{\sq^{2\ell(\sw_{\la'})}C_{\la}(\sq)}{c^{\la}(\zeta_{\la})c^{\la}(\zeta_{\la}^{-1})}\bigg)=\prod_{\alpha\in\Phi_{G_{\la}}}(1-\zeta_{\la}^{\alpha}). 
$$
\end{prop}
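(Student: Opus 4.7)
The plan is to compute the top $\sq$-degree and leading $\sq$-coefficient of the three factors $C_{\la}(\sq)$, $c^{\la}(\zeta_{\la})^{-1}$, and $c^{\la}(\zeta_{\la}^{-1})^{-1}$ separately, then combine. For $C_{\la}(\sq)$, observe that $(1-\sq^{-2})^{\la_i}$ and $(1-\sq^{-2\la_i})^{-1}$ each have top $\sq$-degree $0$ with leading coefficient $1$; hence $C_{\la}(\sq)$ has top $\sq$-degree $-n(n+1)+\sum_i(\la_i^2-\la_i)=-(n+1)^2+\sum_i\la_i^2$, with leading coefficient $1$.

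For $c^{\la}(\zeta_{\la})^{-1}=\prod_{i<j,\,1\le k\le \la_j}\frac{1-\sv^{-b_{ijk}}z_i^{-1}z_j}{1-\sv^{a_{ijk}}z_i^{-1}z_j}$ with $a_{ijk}=\la_i-\la_j+2k\ge 2$ and $b_{ijk}=\la_i+\la_j-2k\ge 0$, the prescribed expansion $(1-\sv^{a_{ijk}}z_i^{-1}z_j)^{-1}=\sum_{r\ge 0}\sv^{a_{ijk}r}z_i^{-r}z_j^r$ makes each denominator factor contribute top degree $0$ with leading coefficient $1$ (from $r=0$). Each numerator factor is a binomial contributing top $\sq$-degree $b_{ijk}$ with leading coefficient $(-1)^{b_{ijk}+1}z_i^{-1}z_j$ if $b_{ijk}>0$, or top degree $0$ with leading coefficient $1-z_i^{-1}z_j$ if $b_{ijk}=0$; note that $b_{ijk}=0$ holds exactly when $\la_i=\la_j$ and $k=\la_j$, which parametrises $\Phi_{G_{\la}}^+$. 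Summing, $\sum_{k=1}^{\la_j}(\la_i+\la_j-2k)=\la_j(\la_i-1)$ and then $\sum_{i<j}\la_j(\la_i-1)=\tfrac{1}{2}((n+1)^2-\sum_i\la_i^2)-\ba_{\la}$, using $\sum_{j\ge 1}(j-1)\la_j=\ba_{\la}$. The same value is the top $\sq$-degree of $c^{\la}(\zeta_{\la}^{-1})^{-1}$, and adding twice this to $\deg C_{\la}(\sq)$ yields total top degree $-2\ba_{\la}=-2\ell(\sw_{\la'})$, proving the first claim.

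For the second claim, take the product of the three leading coefficients. The $b_{ijk}>0$ factors contribute $(-1)^{b_{ijk}+1}z_i^{-1}z_j$ to $c^{\la}(\zeta_{\la})^{-1}$ and $(-1)^{b_{ijk}+1}z_iz_j^{-1}$ to $c^{\la}(\zeta_{\la}^{-1})^{-1}$, whose product is $1$, so they cancel pairwise. The $b_{ijk}=0$ factors, indexed by pairs $i<j$ with $\la_i=\la_j$, contribute $(1-z_i^{-1}z_j)(1-z_iz_j^{-1})=(1-\zeta_{\la}^{\tilde{e}_j-\tilde{e}_i})(1-\zeta_{\la}^{\tilde{e}_i-\tilde{e}_j})$; ranging $(i,j)$ over all such pairs and combining with the leading coefficient $1$ of $C_{\la}(\sq)$ gives $\prod_{\alpha\in\Phi_{G_{\la}}}(1-\zeta_{\la}^{\alpha})$, as claimed. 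The calculation is direct; the main (only) obstacle is the careful bookkeeping of signs and monomial factors to reveal the clean pairwise cancellation in the $b_{ijk}>0$ contributions.
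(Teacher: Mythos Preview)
Your proof is correct and follows essentially the same approach as the paper: compute $\deg C_{\la}(\sq)$ directly, compute $\deg 1/c^{\la}(\zeta_{\la})$ (and its conjugate) factor by factor using the prescribed expansion, sum to obtain $-2\ba_{\la}$, and then identify the leading coefficient by observing that the $b_{ijk}>0$ contributions from $1/c^{\la}(\zeta_{\la})$ and $1/c^{\la}(\zeta_{\la}^{-1})$ cancel pairwise while the $b_{ijk}=0$ contributions (occurring precisely when $\la_i=\la_j$, $k=\la_j$) assemble into $\prod_{\alpha\in\Phi_{G_{\la}}}(1-\zeta_{\la}^{\alpha})$. The only cosmetic difference is that the paper extracts the monomial $(-\sq)^{\la_i+\la_j-2k}$ from each factor before specialising, whereas you track leading coefficients with explicit signs $(-1)^{b_{ijk}+1}$; both lead to the same pairwise cancellation.
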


\begin{proof}
We have $\deg C_{\la}(\sq)=-n(n+1)+\sum_{i=1}^{r(\la)}\la_i(\la_i-1)=-(n+1)^2+\sum_{i=1}^{r(\la)}\la_i^2$, and 
$$
\deg\frac{1}{c^{\la}(\zeta_{\la})c^{\la}(\zeta_{\la}^{-1})}=\sum_{\substack{1\leq i<j\leq r(\la)\\1\leq k\leq \la_j}}2(\la_i+\la_j-2k)=\sum_{1\leq i<j\leq r(\la)}2\la_j(\la_i-1).
$$
Note that
$
\sum_{1\leq i<j\leq r(\la)}\la_j=\sum_{k\geq 1}(k-1)\la_k'=\ell(\sw_{\la'}),
$
and so
\begin{align*}
\deg\frac{C_{\la}(\sq)}{c^{\la}(\zeta_{\la})c^{\la}(\zeta_{\la}^{-1})}&=-(n+1)^2+\sum_{i=1}^{r(\la)}\la_i^2+\sum_{1\leq i<j\leq r(\la)}2\la_i\la_j-2\ell(\sw_{\la'})=-2\ell(\sw_{\la'})
\end{align*}
and hence the first statement. 

The specialisation of $\sq^{2\ell(\sw_{\la'})}C_{\la}(\sq)/(c^{\la}(\zeta_{\la})c^{\la}(\zeta_{\la}^{-1}))$ at $\sq^{-1}=0$ exists and is a nonzero rational function in~$\zeta_{\la}$ (by the first statement). Consider a typical term from $1/c^{\la}(\zeta_{\la})$:
\begin{align*}
\frac{1-\sv^{-\la_i-\la_j+2k}z_i^{-1}z_j}{1-\sv^{\la_i-\la_j+2k}z_i^{-1}z_j}&=(-\sq)^{\la_i+\la_j-2k}\frac{(-\sq)^{-\la_i-\la_j+2k}-z_i^{-1}z_j}{1-(-\sq)^{-\la_i+\la_j-2k}z_i^{-1}z_j}.
\end{align*}
The factor in front will be absorbed in the overall degree. Thus, on specialising, this term will contribute either $-z_i^{-1}z_j$ (in the case $\la_i+\la_j-2k>0$), or 
$
1-z_i^{-1}z_j
$
in the case that $\la_i+\la_j-2k=0$. The $-z_i^{-1}z_j$ term will cancel with the corresponding term from $1/c^{\la}(\zeta_{\la}^{-1})$, and so only the terms of the second type will ultimately appear. These terms occur if and only if $\la_i+\la_j=2k$ (with $1\leq k\leq \la_j$), and this forces $\la_i=\la_j$ and $k=\la_j$. Thus 
$$
\sp\bigg(\frac{\sq^{2\ell(\sw_{\la'})}C_{\la}(\sq)}{c^{\la}(\zeta_{\la})c^{\la}(\zeta_{\la}^{-1})}\bigg)=\prod_{1\leq i,j\leq r(\la),\,i\neq j,\,\la_i=\la_j}(1-z_i^{-1}z_j)=\prod_{\alpha\in\Phi_{G_{\la}}}(1-\zeta_{\la}^{-\alpha}).
$$
as required. 
\end{proof}

\begin{cor}\label{cor:defineasymptotic}
For $\la\vdash n+1$ and $u,v\in \Wext$ we have $\deg\langle T_u,T_v\rangle_{\la}\leq 0$, and if equality holds then $\deg\pi_{\la}(T_u;\sB_{\la})=\deg\pi_{\la}(T_v;\sB_{\la})=\ell(\sw_{\la'})$, and hence $u,v\in \Delta_{\la}$.
\end{cor}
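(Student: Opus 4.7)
The corollary is a degree bookkeeping argument combining four ingredients already in place: the degree computation of Proposition~\ref{prop:degrees}, the boundedness Theorem~\ref{thm:fullbounded}, the compatibility $\pi_{\la}(h^*)=\pi_{\la}(h)^*$ from Lemma~\ref{lem:conjugate}, and the recognition Corollary~\ref{cor:recognise1}.

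First I would rewrite
$$\chi_{\la}(T_u T_v^*) = \tr\bigl(\pi_{\la}(T_u;\sB_{\la})\,\pi_{\la}(T_v;\sB_{\la})^*\bigr),$$
using Lemma~\ref{lem:conjugate}. Since entrywise conjugation $\zeta_{\la}^{\gamma}\mapsto\zeta_{\la}^{-\gamma}$ preserves the $\sq$-degree of each matrix entry, Theorem~\ref{thm:fullbounded} bounds the degree of every entry of both factors by $\ell(\sw_{\la'})$. Expanding the trace as a sum over pairs $(w_1,w_2)\in{^\la}W\times{^\la}W$ of products of such entries, each summand has $\sq$-degree at most $2\ell(\sw_{\la'})$, and therefore so does $\chi_{\la}(T_u T_v^*)$.

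Combined with Proposition~\ref{prop:degrees}, the rational expression
$$\frac{C_{\la}(\sq)\,\chi_{\la}(T_u T_v^*)}{c^{\la}(\zeta_{\la})\,c^{\la}(\zeta_{\la}^{-1})}$$
has $\sq$-degree at most $-2\ell(\sw_{\la'})+2\ell(\sw_{\la'})=0$. Expanding in the directions $z_i^{-1}z_j$ according to~(\ref{eq:expansion}) produces a formal series in $\zeta_{\la}$ whose coefficients are Laurent polynomials in $\sq$, each of degree at most the total $\sq$-degree of the expression. In particular, the coefficient of $\zeta_{\la}^0$ has degree at most~$0$, giving $\deg\langle T_u,T_v\rangle_{\la}\le 0$.

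For the equality case, if $\deg\langle T_u,T_v\rangle_{\la}=0$ then the bound on the coefficient of $\zeta_{\la}^0$ is saturated, which forces the full expression above to have $\sq$-degree exactly $0$, and hence $\deg\chi_{\la}(T_u T_v^*)=2\ell(\sw_{\la'})$. The trace is a sum of products of entries, none of which can exceed degree $2\ell(\sw_{\la'})$; since the sum attains this degree at least one summand must attain it, forcing the existence of entries in both $\pi_{\la}(T_u;\sB_{\la})$ and $\pi_{\la}(T_v;\sB_{\la})^*$ of degree exactly $\ell(\sw_{\la'})$. Since $\ast$ preserves degrees this gives $\deg\pi_{\la}(T_u;\sB_{\la})=\deg\pi_{\la}(T_v;\sB_{\la})=\ell(\sw_{\la'})$, and Corollary~\ref{cor:recognise1} then yields $u,v\in\Delta_{\la}$.

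The only delicate step is verifying that the constant term operator $[\,\cdot\,]_{\mathrm{ct}}$, defined via the expansion~(\ref{eq:expansion}), preserves the $\sq$-degree bound of the rational function; this is straightforward from the shape of that expansion but is the one point worth stating carefully.
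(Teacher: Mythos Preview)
Your argument is correct and follows essentially the same route as the paper's proof: bound $\deg\chi_{\la}(T_uT_{v^{-1}})\le 2\ell(\sw_{\la'})$ via Theorem~\ref{thm:fullbounded}, combine with the degree $-2\ell(\sw_{\la'})$ from Proposition~\ref{prop:degrees}, and in the equality case trace back to force both matrices to attain the bound. You are simply more explicit than the paper about two points it leaves implicit, namely that taking $[\,\cdot\,]_{\mathrm{ct}}$ cannot raise the $\sq$-degree, and that a sum of terms each of degree $\le 2\ell(\sw_{\la'})$ attaining degree $2\ell(\sw_{\la'})$ forces some summand, hence both factors in that summand, to attain the bound.
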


\begin{proof}
By definition, we have 
$$
\langle T_u,T_v\rangle_{\la}=\frac{C_{\la}(\sq)}{|G_{\la}|}\bigg[\frac{\chi_{\la}(T_uT_{v^{-1}})}{c^{\la}(\zeta_{\la})c^{\la}(\zeta_{\la}^{-1})}\bigg]_{\mathrm{ct}}.
$$
By Theorem~\ref{thm:fullbounded}, $\deg\pi_{\la}(T_u;\sB_{\la})\leq \ell(\sw_{\la'})$ and $\deg\pi_{\la}(T_v;\sB_{\la})\leq \ell(\sw_{\la'})$. Thus $\deg\chi_{\la}(T_uT_{v^{-1}})\leq 2\ell(\sw_{\la'})$, and if equality holds then necessarily $\deg\pi_{\la}(T_u;\sB_{\la})=\deg\pi_{\la}(T_v;\sB_{\la})=\ell(\sw_{\la'})$, and so $u,v\in\Delta_{\la}$ (again by Theorem~\ref{thm:fullbounded}). The result now follows from Proposition~\ref{prop:degrees}.
\end{proof}

We now give the proof of the asymptotic Plancherel Theorem.

\begin{proof}[Proof of Theorem~\ref{thm:AsymptoticPlancherel}]
By bilinearity it is sufficient to prove that
$$
\sum_{\la\vdash n+1}\langle \st_u,\st_v\rangle_{\la}^{\infty}=\delta_{u,v}\quad\text{for all $u,v\in\Wext$}.
$$
If $u\not\sim_{LR} v$ then $\st_u\st_v^*=0$ and so $\chi_{\la}^{\infty}(\st_u\st_v^*)=0$, hence $\langle\st_u,\st_v\rangle_{\la}^{\infty}=0$ for all $\la\vdash n+1$. Thus suppose that $u,v\in\Delta_{\mu}$ for some $\mu\vdash n+1$. Since $\langle T_u,T_v\rangle=\delta_{u,v}$ the Plancherel Theorem (Theorem~\ref{thm:plancherel}) gives
$$
\delta_{u,v}=\sum_{\la\vdash n+1}\langle T_u,T_v\rangle_{\la},
$$
and by Corollary~\ref{cor:defineasymptotic} each specialisation $\sp\big(\langle T_u,T_v\rangle_{\la}\big)$ exists, and is zero unless $\la=\mu$. Thus 
$
\delta_{u,v}=\sp\big(\langle T_u,T_v\rangle_{\mu}\big). 
$
Now, by Theorem~\ref{thm:fullbounded} we have
$$
\chi_{\mu}(T_uT_{v^{-1}})=\sq^{2\ell(\sw_{\mu'})}\tr(\fc_{\mu}(u)\fc_{\mu}(v)^*)+(\textrm{terms of strictly lower degree}), 
$$
and thus by Proposition~\ref{prop:degrees} we have
\begin{align*}
C_{\mu}(\sq)\frac{\chi_{\mu}(T_uT_{v^{-1}})}{c^{\mu}(\zeta_{\mu})c^{\mu}(\zeta_{\mu}^{-1})}&=\chi_{\mu}^{\infty}(\st_u\st_v^*)\prod_{\alpha\in\Phi_{G_{\mu}}}(1-\zeta_{\mu}^{\alpha})+(\textrm{terms of degree $<0$}).
\end{align*}
Thus
\begin{align}\label{eq:spec}
\delta_{u,v}=\sp\big(\langle T_u,T_v\rangle_{\mu}\big)=\frac{1}{|G_{\mu}|}\bigg[\chi_{\mu}^{\infty}(\st_u\st_v^*)\prod_{\alpha\in\Phi_{G_{\mu}}}(1-\zeta_{\mu}^{\alpha})\bigg]_{\mathrm{ct}}=\langle \st_u,\st_v\rangle_{\mu}^{\infty}
\end{align}
as required. 
\end{proof}

The asymptotic Plancherel Theorem has the following important consequences. Note that the first theorem shows that the set of elements $w\in \Wext$ such that the matrix $\pi_{\la}(T_w;\sB_\la)$ attains the bound $\ell(\sw_{\la'})$ is precisely the two-sided cell~$\Delta_{\la}$ (improving Corollary \ref{cor:recognise1}).

\begin{thm}\label{thm:recognise2a}
We have $\fc_{\la}(w)\neq 0$ if and only if $w\in\Delta_{\la}$. 
\end{thm}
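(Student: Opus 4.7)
The plan is to use the asymptotic Plancherel Theorem as the main engine; essentially no new computation is required beyond what is already performed in the proof of Theorem~\ref{thm:AsymptoticPlancherel}.

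The ``only if'' direction is immediate from Corollary~\ref{cor:recognise1}: if $\fc_{\la}(w)\neq 0$ then by Definition~\ref{defn:leading} the matrix $\pi_{\la}(T_w;\sB_{\la})$ attains the bound $\ell(\sw_{\la'})$, and so $w\in\Delta_{\la}$ by Theorem~\ref{thm:fullbounded}.

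For the ``if'' direction, fix $w\in\Delta_{\la}$. The strategy is to show that $\langle\fc_{\la}(w),\fc_{\la}(w)\rangle_{\la}^{\infty}=1$, from which $\fc_{\la}(w)\neq 0$ follows immediately (the form $\langle\cdot,\cdot\rangle_{\la}^{\infty}$ being bilinear, anything paired with $0$ gives $0$). First I would start from the relation $\langle T_w,T_w\rangle=1$ and apply the Plancherel decomposition (Theorem~\ref{thm:plancherel}) to obtain
\[
1=\sum_{\mu\vdash n+1}\langle T_w,T_w\rangle_{\mu}.
\]
Next I would invoke Corollary~\ref{cor:defineasymptotic}, which asserts that $\deg\langle T_w,T_w\rangle_{\mu}\leq 0$, with equality forcing $w\in\Delta_{\mu}$. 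Since the two-sided cells partition $\Wext$ and $w\in\Delta_{\la}$, the specialisation at $\sq^{-1}=0$ kills every summand with $\mu\neq\la$, and so
\[
1=\sp\bigl(\langle T_w,T_w\rangle_{\la}\bigr).
\]

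Finally, to translate this into the asymptotic inner product, I would quote the calculation carried out inside the proof of Theorem~\ref{thm:AsymptoticPlancherel}: combining the degree formula of Proposition~\ref{prop:degrees} with the definition of the leading matrix gives
\[
\sp\bigl(\langle T_w,T_w\rangle_{\la}\bigr)=\langle\fc_{\la}(w),\fc_{\la}(w)\rangle_{\la}^{\infty}.
\]
Putting these together yields $\langle\fc_{\la}(w),\fc_{\la}(w)\rangle_{\la}^{\infty}=1$, which forces $\fc_{\la}(w)\neq 0$.

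There is no real obstacle: the content of the theorem is already encoded in the orthonormality part of Theorem~\ref{thm:AsymptoticPlancherel}, and the proof amounts to unwinding that statement on the diagonal. The only subtlety worth flagging explicitly is the use of Corollary~\ref{cor:defineasymptotic} to ensure that the Plancherel contributions from $\mu\neq\la$ vanish after specialisation; this is where the compatibility between the degree of the Plancherel measure and the boundedness of the representations (Theorem~\ref{thm:fullbounded}, Proposition~\ref{prop:degrees}) is crucially used.
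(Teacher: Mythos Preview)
Your proof is correct and follows essentially the same route as the paper. The only cosmetic difference is that the paper invokes Theorem~\ref{thm:AsymptoticPlancherel} directly (the orthonormality statement $\langle\fc_{\la}(w),\fc_{\la}(w)\rangle_{\la}^{\infty}=1$ for $w\in\Delta_{\la}$), whereas you unwind the relevant steps of its proof; either way the key input is the Plancherel decomposition together with Corollary~\ref{cor:defineasymptotic}.
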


\begin{proof}
By Corollary~\ref{cor:recognise1}, it only remains to show that if $w\in\Delta_{\la}$ then $\deg\pi_{\la}(T_w;\sB_{\la})=\ell(\sw_{\la'})$. If $w\in\Delta_{\la}$ then by the asymptotic Plancherel Theorem (Theorem~\ref{thm:AsymptoticPlancherel}) we have $1=\langle \st_w,\st_w\rangle_{\la}^{\infty}$ and thus $\deg\langle T_w,T_w\rangle_{\la}=0$ (see~(\ref{eq:spec})), and the result follows from Corollary~\ref{cor:defineasymptotic}. 
\end{proof}

\begin{thm}\label{thm:recognise2b}
We have $\cJ_{\la}\cong \fC_{\la}$ as $\ZZ$-algebras, with $\st_w\mapsto \fc_{\la}(w)$. Thus $\cJ\cong \fC$. 
\end{thm}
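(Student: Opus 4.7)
The plan is to show that the linear map
\[
\Phi_{\la}\colon \cJ_{\la}\to \fC_{\la},\qquad \st_w\mapsto \fc_{\la}(w),
\]
is a ring isomorphism by combining Proposition~\ref{prop:Zalgebra} with the Asymptotic Plancherel Theorem.

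First I would recall that by Proposition~\ref{prop:Zalgebra}, the map $\Phi_{\la}$ is already a surjective homomorphism of unital $\ZZ$-algebras: the structure constants $\gamma_{x,y,z^{-1}}$ of $\cJ_{\la}$ (Definition~\ref{def:gamma}) match exactly the expansion coefficients of $\fc_{\la}(x)\fc_{\la}(y)$ in the spanning set $\{\fc_{\la}(z)\mid z\in\Delta_{\la}\}$ of $\fC_{\la}$. So the only thing left to verify is injectivity, which amounts to showing that $\{\fc_{\la}(w)\mid w\in\Delta_{\la}\}$ is $\ZZ$-linearly independent (and thus a free $\ZZ$-basis of $\fC_{\la}$).

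For injectivity, I would invoke Theorem~\ref{thm:AsymptoticPlancherel}: the family $\{\fc_{\la}(w)\mid w\in\Delta_{\la}\}$ is orthonormal for the inner product $\langle\cdot,\cdot\rangle_{\la}^{\infty}$ on $\fC_{\la}$. Indeed, if $\sum_{w\in\Delta_{\la}}a_w\fc_{\la}(w)=0$ with $a_w\in\ZZ$ then pairing with $\fc_{\la}(v)$ gives $a_v=0$ for every $v\in\Delta_{\la}$ by orthonormality. Hence $\Phi_{\la}$ sends the $\ZZ$-basis $\{\st_w\mid w\in\Delta_{\la}\}$ of $\cJ_{\la}$ to a $\ZZ$-linearly independent family, and combined with surjectivity this proves it is a $\ZZ$-algebra isomorphism.

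The global statement $\cJ\cong \fC$ then follows immediately by taking the direct sum over $\la\vdash n+1$, since $\cJ=\bigoplus_{\la}\cJ_{\la}$ by definition and $\fC=\bigoplus_{\la}\fC_{\la}$ by construction. There is no real obstacle: all of the hard work has already been done in establishing the killing property (Theorem~\ref{thm:fullkilling}), boundedness (Theorem~\ref{thm:fullbounded}), the recognition property (Theorem~\ref{thm:recognise2a}), and the orthonormality coming from the Asymptotic Plancherel Theorem (Theorem~\ref{thm:AsymptoticPlancherel}); the present theorem is a clean combination of these ingredients.
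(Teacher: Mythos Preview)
Your proposal is correct and follows essentially the same approach as the paper: surjectivity comes from Proposition~\ref{prop:Zalgebra}, and injectivity is obtained by using the orthonormality of $\{\fc_{\la}(w)\mid w\in\Delta_{\la}\}$ from Theorem~\ref{thm:AsymptoticPlancherel} to deduce linear independence. The paper just spells out the pairing step explicitly (multiply by $\fc_{\la}(v)^*$, take trace, insert the asymptotic Plancherel density), which is exactly what your ``pair with $\fc_{\la}(v)$'' does; note also that Theorem~\ref{thm:recognise2a} is not actually needed here.
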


\begin{proof}
By Proposition~\ref{prop:Zalgebra} the map $\psi:\cJ_{\la}\to \fC_{\la},\st_w\mapsto\fc_{\la}(w)$ is a surjective ring homomorphism. To prove that $\psi$ is injective, if $a=\sum_{w\in\Delta_{\la}}a_w\st_w\in\cJ_{\la}$ with $\psi(a)=0$ then for each $v\in \Delta_{\la}$ we have
$
\sum_{w\in \Delta_{\la}}a_w\fc_{\la}(w)\fc_{\la}(v)^*=0.
$
Taking traces, multiplying by $\prod_{\alpha\in\Phi_{G_{\la}}}(1-\zeta_{\la}^{\alpha})$, and applying Theorem~\ref{thm:AsymptoticPlancherel} gives $a_v=0$. 
\end{proof}

\begin{remark}
Following Remark~\ref{rem:analyticexpression1}, the analytic expression for the asymptotic Plancherel Theorem is 
$$
\langle A,B\rangle^{\infty}=\sum_{\la\vdash n+1}\frac{1}{|G_{\la}|}\int_{\mathbb{T}_{\la}}\chi_{\la}^{\infty}(AB^*)d\mu_{\la}^{\infty}(z_{\la})\quad\text{where}\quad d\mu_{\la}^{\infty}(z_{\la})=\bigg|\prod_{\alpha\in\Phi_{G_{\la}}^+}(1-z_{\la}^{-\alpha})\bigg|^2\,dz_{\la}
$$
for $A,B\in\cJ$, where $d\mu_{\la}^{\infty}(z_{\la})$ is the \textit{asymptotic Plancherel measure}. 
\end{remark}

\begin{remark} The results of this subsection verify \cite[Conjecture~9.8]{GP:19} and \cite[Conjecture~6.15]{GLP:23} for affine type~$\sA$.
\end{remark}

The results of this section, together with the killing and boundedness properties of Section~\ref{sec:killbound}, imply that the matrix representations $\pi_{\la}$, $\la\vdash n+1$, form a  \textit{balanced system of cell representations} for $\tilde{\sA}_n$, in the sense of \cite[Definition~1.5]{GP:19b}. 

\begin{cor}\label{cor:balancedsystem} The matrix representations $\pi_{\la}(\,\cdot\, ;\sB_{\la})$, with $\la\vdash n+1$, form a balanced system of cell representations.
\end{cor}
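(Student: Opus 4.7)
The plan is to simply verify, one by one, the axioms of a balanced system of cell representations from \cite[Definition~1.5]{GP:19b}, citing the results already established in the paper. Recall that a balanced system of cell representations associated to the Hecke algebra $\Hext$ consists of a family of matrix representations, indexed by the two-sided cells, each of which (i) is bounded by the $\ba$-value of its two-sided cell, (ii) recognises that cell in the sense that the leading matrix $\fc_\la(w)$ is non-zero precisely when $w$ lies in the cell, (iii) has leading matrices indexed in rows and columns by the right and left cells of the two-sided cell, and (iv) the resulting leading matrices span a $\ZZ$-algebra isomorphic (via $\st_w\mapsto \fc_\la(w)$) to Lusztig's asymptotic algebra $\cJ_\la$ of the two-sided cell.

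First I would take the family $\{\pi_\la(\,\cdot\,;\sB_\la)\}_{\la\vdash n+1}$ as the candidate cell representation for the two-sided cell $\Delta_\la$. The value $\ba(\Delta_\la) = \ell(\sw_{\la'})$ is explained in the introduction (it follows from constancy of $\ba$ on two-sided cells, property P4, applied to the distinguished representative $\sw_{\la'}\in\Delta_\la$).

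Next I would verify the axioms in order. Boundedness of $\pi_\la(\,\cdot\,;\sB_\la)$ by $\ell(\sw_{\la'}) = \ba(\Delta_\la)$ is precisely Theorem~\ref{thm:fullbounded}, which also ensures the leading matrices $\fc_\la(w)\in\mathrm{Mat}_{N_\la}(\ZZ[\zeta_\la])$ in Definition~\ref{defn:leading} are well defined. Recognition of $\Delta_\la$ — namely that $\fc_\la(w)\neq 0$ if and only if $w\in\Delta_\la$ — is exactly Theorem~\ref{thm:recognise2a}. The row/column indexing by right/left cells, with each non-zero leading matrix $\fc_\la(w)$ supported in a single position corresponding to the pair of right cells $(\Gamma_i,\Gamma_j^{-1})$ containing $w$ and $w^{-1}$, is the content of Theorem~\ref{thm:indexing}. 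Finally, the isomorphism $\cJ_\la\cong \fC_\la$ with $\st_w\mapsto\fc_\la(w)$ is Theorem~\ref{thm:recognise2b}.

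There is essentially no obstacle here: the corollary is a bookkeeping assembly of the main theorems of Sections~\ref{sec:killbound}--\ref{sec:asymptotic}. The only subtlety, which would be worth flagging, is that depending on the precise formulation of \cite[Definition~1.5]{GP:19b} one may additionally need to record that the inner product structure on $\fC_\la$ provided by the asymptotic Plancherel Theorem (Theorem~\ref{thm:AsymptoticPlancherel}) makes the family $\{\fc_\la(w)\mid w\in\Delta_\la\}$ orthonormal — this is an integral part of the ``balanced'' terminology in \cite{GP:19,GP:19b}. Collecting these citations in the order above completes the verification and thus the proof of the corollary.
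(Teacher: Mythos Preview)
Your approach—verify the axioms of \cite[Definition~1.5]{GP:19b} one by one—is exactly what the paper does. However, your paraphrase of that definition is incomplete, and as a result you omit several of the required checks. The definition in \cite{GP:19b} has six axioms $\mathbf{B1}$--$\mathbf{B6}$, and the paper verifies all of them: $\mathbf{B1}$ is the \emph{killing property} (Theorem~\ref{thm:fullkilling}), which you do not mention at all; $\mathbf{B2}$ is boundedness (Theorem~\ref{thm:fullbounded}), which you have; $\mathbf{B3}$ is recognition (Theorem~\ref{thm:recognise2a}), which you have; $\mathbf{B4}$ is \emph{freeness} of the leading matrices over $\ZZ$, which the paper obtains from Theorem~\ref{thm:fullalgebra} via the freeness of the Schur functions (your citation of Theorem~\ref{thm:recognise2b} does imply this, but you should say so explicitly); $\mathbf{B5}$ is the identity $\fc_\la(z)\fc_\la(d_j)=\fc_\la(z)$ for $z\in\Ga_i\cap\Ga_j^{-1}$, obtained from Theorem~\ref{thm:fullalgebra}; and $\mathbf{B6}$ is \emph{monotonicity of the bounds} $\ba_\la=\ell(\sw_{\la'})$ with respect to the two-sided order, which follows from Lemma~\ref{lem:basic} and which you do not mention.

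So the gap is not conceptual but one of coverage: you need to add the killing property ($\mathbf{B1}$) and monotonicity of bounds ($\mathbf{B6}$), and replace your axioms (iii)--(iv) by the actual $\mathbf{B4}$ and $\mathbf{B5}$. The orthonormality from the asymptotic Plancherel Theorem is not one of the axioms and need not be invoked here.
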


\begin{proof}
We verify properties $\mathbf{B1}$--$\mathbf{B6}$ from \cite[Definition~1.5]{GP:19b}. Property $\mathbf{B1}$ is the killing property (Theorem~\ref{thm:fullkilling}), and property $\mathbf{B2}$ is boundedness (Theorem~\ref{thm:fullbounded}, with bounds $\ba_{\la}=\ell(\sw_{\la'})$). Property~$\mathbf{B6}$ (monotonicity of the bounds) is then immediate from Lemma~\ref{lem:basic}. Property $\mathbf{B3}$ is the statement that $\pi_{\la}$ recognises precisely $\Delta_{\la}$ (Theorem~\ref{thm:recognise2a}), and property $\mathbf{B4}$ (freeness of the leading matrices over $\ZZ$) follows from the isomorphism $\fC_{\la}\cong \cJ_{\la}$ in Theorem~\ref{thm:recognise2b}. Finally, if $z\in\Delta_{\la}$ let $d\in\cD$ be the unique distinguished involution with $z\sim_R d$. Then in $\cJ_{\la}$ we have $\st_d\st_z=\st_z$ (by P2, P7, P13), and hence by Theorem~\ref{thm:recognise2b} we have $\fc_{\la}(d)\fc_{\la}(z)=\fc_{\la}(z)$, verifying~$\mathbf{B5}$.
\end{proof}

\section{Maximal length double coset representatives}\label{sec:maximallengthelts}

By \cite[Proposition~2.4]{KL:79} it follows that if $w\in\Ga_{\la}$ (respectively $w\in \Ga_{\la}^{-1}$) then $\ell(\sw_{\la'}w)=\ell(w)-\ell(\sw_{\la'})$ (respectively $\ell(w\sw_{\la'})=\ell(w)-\ell(\sw_{\la'})$). In particular, if $w\in\Ga_{\la}\cap \Ga_{\la}^{-1}$ then $w$ is of maximal length in its double coset $W_{\la'}wW_{\la'}$. 

\begin{defn}\label{defn:maxlength}
For $\gamma\in P^{(\la)}$ let $\sm_{\gamma}$ be the  longest element of the double coset $W_{\la'}\su_{\la}^{-1}\tau_{\gamma}\su_{\la}W_{\la'}$. 
\end{defn}

Ultimately we will see in Theorem~\ref{thm:leading0} that $\Ga_{\la}\cap\Ga_{\la}^{-1}=\{\sm_{\ga}\mid \ga\in P^{(\la)}_+\}$. We first record the following result on $G_{\la}$-invariance.

\begin{prop}\label{prop:dominant} If $\gamma\in P^{(\la)}$ and $g\in G_{\la}$ then $\sm_{g\gamma}=\sm_{\gamma}$. 
\end{prop}

\begin{proof}
By \cite[Lemma~3.21]{GLP:23} we have
$
\tau_{g\gamma}=g\tau_{\gamma}g^{-1}$ for all $g\in G_{\la}$, and hence
$$
\su_{\la}^{-1}\tau_{g\gamma}\su_{\la}=(\su_{\la}^{-1}g\su_{\la})(\su_{\la}^{-1}\tau_{\gamma}\su_{\la})(\su_{\la}^{-1}g^{-1}\su_{\la}).
$$
By Proposition~\ref{prop:Gla1} we have $\su_{\la}^{-1}g\su_{\la}\in W_{\la'}$, so $\su_{\la}^{-1}\tau_{g\gamma}\su_{\la}\in W_{\la'}\su_{\la}^{-1}\tau_{\gamma}\su_{\la}W_{\la'}$, hence the result.
\end{proof}

\begin{remark}
In fact $\sm_{\gamma_1}=\sm_{\gamma_2}$ if and only if $\gamma_2\in G_{\la}\gamma_1$. The ``if'' direction is Proposition~\ref{prop:dominant}, and the ``only if'' direction follows from later results of this paper. Specifically, by the ``if'' direction we may assume that $\ga_1,\ga_2\in P^{(\la)}_+$, and then by Theorem~\ref{thm:leading0} if $\ga_1\neq\ga_2$ then $\pi_{\la}(C_{\sm_{\ga_1}})=\fs_{\ga_1}(\zeta_{\la})\pi_{\la}(\clap)\neq \fs_{\ga_2}(\zeta_{\la})\pi_{\la}(\clap)=\pi_{\la}(C_{\sm_{\ga_1}})$, and so $\sm_{\ga_1}\neq \sm_{\ga_2}$. 
\end{remark}

The following theorem plays an important role in computing $\Ga_{\la}\cap\Ga_{\la}^{-1}$. The proof of the theorem is technical, and so for readability we present it in Appendix~\ref{app:proofs}.

\begin{thm}\label{thm:mgamma}
Let $\gamma\in P^{(\la)}$. There exist $x,y\in W_{\la'}$ such that 
$\sm_{\gamma}=x\su_{\la}^{-1}\tau_{\gamma}\su_{\la}y$ with 
$$
\ell(\sm_{\gamma})=\ell(x)+\ell(\su_{\la}^{-1}\tau_{\gamma}\su_{\la})+\ell(y)\quad\text{and}\quad \ell(x)+\ell(y)=\ell(\sw_{\la'}).
$$
If $\gamma,\gamma'\in P^{(\la)}_+$ with $\gamma+Q_{\la}\peq_{\la}\gamma'+Q_{\la}$ then $\ell(\sm_{\gamma})\leq\ell(\sm_{\gamma'})$ with equality if and only if $\ga=\ga'$. 
\end{thm}

\begin{proof} See Appendix~\ref{app:proofs}. 
\end{proof}

\section{Lusztig's asymptotic algebra}\label{sec:asymptotic}

In this section we give our description of Lusztig's asymptotic algebra $\cJ=\bigoplus_{\la\vdash n+1}\cJ_{\la}$. Section~\ref{sec:pre} gives preliminary reductions, and outlines our strategy, and in Section~\ref{sec:62} we prove our main theorem. We make some concluding remarks, and give an example, in Section~\ref{sec:63}. 

\subsection{Preliminary reductions}\label{sec:pre}

Recall that $\Ga_{\la}$ denotes the right cell containing $\sw_{\la'}$, and thus $\Ga_{\la}^{-1}$ is the left cell containing~$\sw_{\la'}$. Let
$$
\cJ_{\Ga_{\la}\cap\Ga_{\la}^{-1}}=\mathrm{span}_{\ZZ}\{\st_w\mid w\in\Ga_{\la}\cap\Ga_{\la}^{-1}\}.
$$
It is clear that $\cJ_{\Ga_{\la}\cap\Ga_{\la}^{-1}}$ is a ring. The following fundamental reduction result, due to Xi, shows that the asymptotic algebra $\cJ_{\la}$ is completely determined by the structure of the ring $\cJ_{\Ga_{\la}\cap\Ga_{\la}^{-1}}$. Recall that $N_{\la}=(n+1)!/(\la_1!\la_2!\cdots\la_{r(\la)}!)$. 

\begin{thm}[{\cite[Theorems~2.3.2, 8.4.2]{Xi:02}}]\label{thm:xi}
We have $\cJ_{\la}\cong \mathrm{Mat}_{N_{\la}}(\cJ_{\Ga_{\la}\cap\Ga_{\la}^{-1}})$ as $\ZZ$-algebras. 
\end{thm}

Thus the main work in determining $\cJ_{\la}$ is reduced to computing the set $\Ga_{\la}\cap\Ga_{\la}^{-1}$ and understanding the ring $\cJ_{\Ga_{\la}\cap\Ga_{\la}^{-1}}$. Our strategy is as follows. By Theorem~\ref{thm:recognise2b} we have the isomorphism $\cJ_{\Ga_{\la}\cap\Ga_{\la}^{-1}}\cong \fC_{\Ga_{\la}\cap\Ga_{\la}^{-1}}$ where
$$
 \fC_{\Ga_{\la}\cap\Ga_{\la}^{-1}}=\mathrm{span}_{\ZZ}\{\fc_{\la}(w)\mid w\in\Ga_{\la}\cap\Ga_{\la}^{-1}\},
$$
and hence it suffices to consider our leading matrices. In Theorem~\ref{thm:indexing} we show that $w\in\Ga_{\la}\cap\Ga_{\la}^{-1}$ if and only if $\fc_{\la}(w)$ has a nonzero entry in the $(\su_{\la},\su_{\la})$-position, and using this we show in Proposition~\ref{prop:path} that each maximal length double coset representative $\sm_{\gamma}$, $\gamma\in P_+^{(\la)}$ lies in $\Ga_{\la}\cap\Ga_{\la}^{-1}$. Then, finally, in Theorem~\ref{thm:leading0} we show that $\Ga_{\la}\cap\Ga_{\la}^{-1}$ is equal to the set $\{\sm_{\gamma}\mid \gamma\in P_+^{(\la)}\}$, and explicitly compute the leading matrices $\fc_{\la}(\sm_{\gamma})$ in terms of Schur functions, thus showing that $\cJ_{\Ga_{\la}\cap\Ga_{\la}^{-1}}$ is isomorphic to the ring $\ZZ[\zeta_{\la}]^{G_{\la}}$. 

\subsection{The set $\Ga_{\la}\cap\Ga_{\la}^{-1}$ and the ring $\cJ_{\Ga_{\la}\cap\Ga_{\la}^{-1}}$}\label{sec:62}

The following theorem shows that our leading matrices can be used to detect the set $\Ga_{\la}\cap\Ga_{\la}^{-1}$.

\begin{thm}\label{thm:indexing}
We have $w\in\Ga_{\la}\cap\Ga_{\la}^{-1}$ if and only if the matrix $\fc_{\la}(w)$ has a non-zero entry in the $(\su_{\la},\su_{\la})$-position. Moreover, if $w\in\Ga_{\la}\cap\Ga_{\la}^{-1}$ then the matrix $\fc_{\la}(w)$ has a unique non-zero entry. 
\end{thm}

\begin{proof}
If $w\in\Ga_{\la}\cap\Ga_{\la}^{-1}$ then by P2, P7 and P13 we have $\st_{\sw_{\la'}}\st_w=\st_w$ and $\st_w\st_{\sw_{\la'}}=\st_w$. Thus by Theorem~\ref{thm:recognise2b} we have $\fc_{\la}(\sw_{\la'})\fc_{\la}(w)=\fc_{\la}(w)$ and $\fc_{\la}(w)\fc_{\la}(\sw_{\la'})=\fc_{\la}(w)$. Since $\fc_{\la}(\sw_{\la'})=E_{\su_{\la},\su_{\la}}$ (see~(\ref{eq:leadinglap})), the former forces all nonzero entries of $\fc_{\la}(w)$ to lie on the $\su_{\la}$-row of the matrix, and the latter forces all nonzero elements of $\fc_{\la}(w)$ to lie on the $\su_{\la}$-column of the matrix. Hence $\fc_{\la}(w)$ has a unique nonzero entry, and this entry is in position $(\su_{\la},\su_{\la})$. 

Conversely, if $\fc_{\la}(w)$ has a nonzero entry in the $(\su_{\la},\su_{\la})$-entry, then $\fc_{\la}(\sw_{\la'})\fc_{\la}(w)\neq 0$. Thus by Theorem~\ref{thm:recognise2b} we have $\st_{\sw_{\la'}}\st_w\neq 0$, and so there is $y\in W$ with $\gamma_{\sw_{\la'},w,y}\neq 0$. Therefore $w\sim_R \sw_{\la'}$ (using P8). Similarly since $\fc_{\la}(w)\fc_{\la}(\sw_{\la'})\neq 0$ we have $w\sim_L\sw_{\la'}$, and so $w\in\Ga_{\la}\cap\Ga_{\la}^{-1}$. 
\end{proof}

In the following important proposition we use the combinatorics of $\lambda$-folded alcove paths to show that the $(\su_{\la},\su_{\la})$-entry of the leading matrix $\fc_{\la}(\sm_{\gamma})$ is non-zero, which implies that $\sm_{\gamma}\in\Ga_{\la}\cap\Ga_{\la}^{-1}$ by Theorem~\ref{thm:indexing}.

\begin{prop}\label{prop:path}
If $\gamma\in P^{(\la)}_+$ then there is an integer $c>0$ such that
$$
[\fc_{\la}(\sm_{\gamma})]_{\su_{\la},\su_{\la}}=c\zeta_{\la}^{\gamma}+(\textrm{$\ZZ$-linear combination of terms $\zeta_{\la}^{\gamma'}$ with $\gamma\not\peq_{\la}\gamma'$}).
$$
\end{prop}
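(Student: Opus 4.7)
The plan is to compute $[\pi_{\la}(T_{\sm_{\gamma}};\sB_{\la})]_{\su_{\la},\su_{\la}}$ via the alcove-path formula of Theorem~\ref{thm:pathformula}, using the decomposition from Theorem~\ref{thm:mgamma}, and then specialise via Lemma~\ref{lem:replaceCbyT}. By Theorem~\ref{thm:mgamma} we may write $\sm_{\gamma}=x\,\su_{\la}^{-1}\tau_{\gamma}\su_{\la}\,y$ with $x,y\in W_{\la'}$, $\ell(x)+\ell(y)=\ba_{\la}$, and length additivity. Fix a reduced expression $\vec{\sm_{\gamma}}$ respecting this decomposition so that the $T$-basis expansion satisfies $T_{\sm_{\gamma}}=T_{x}T_{\su_{\la}^{-1}\tau_{\gamma}\su_{\la}}T_{y}$. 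Theorem~\ref{thm:pathformula} then yields
$$[\pi_{\la}(T_{\sm_{\gamma}};\sB_{\la})]_{\su_{\la},\su_{\la}}=\sum_{p\in\cP_{\la}(\vec{\sm_{\gamma}},\su_{\la})_{\su_{\la}}}\cQ_{\la}(p)\,\zeta_{\la}^{\wt(p)},$$
and after multiplying by $\sq^{-\ba_{\la}}$ and specialising at $\sq^{-1}=0$ only those paths with $f(p)-b(p)=\ba_{\la}$ survive, each contributing $(-1)^{b(p)}\zeta_{\la}^{\wt(p)}$.

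First I would construct an explicit ``straight-through'' path $p_{0}$ of type $\vec{\sm_{\gamma}}$ starting at $\su_{\la}$ that folds at every step of the initial $\vec{x}$-segment (staying at $\su_{\la}$, folding at hyperplanes lying inside $\cA_{\la}$ since $\Phi(x)\subseteq\Phi_{\la'}^{+}$ and these hyperplanes intersect $\cA_{\la}$), crosses positively through the middle segment $\vec{\su_{\la}^{-1}\tau_{\gamma}\su_{\la}}$ (thereby realising the pseudo-translation $\tau_{\gamma}$ and landing at $\tau_{\gamma}\su_{\la}$), and folds at every step of the final $\vec{y}$-segment. This gives $b(p_{0})=0$, $f(p_{0})=\ba_{\la}$, and $\wt(p_{0})=\gamma$, so $p_{0}$ contributes $+\zeta_{\la}^{\gamma}$.

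Second, I would show that the sum over all contributing paths decomposes as (i)~paths whose weight is exactly $\gamma$, whose signed sum is a strictly positive integer $c$, and (ii)~paths whose weight $\gamma'$ satisfies $\gamma\not\peq_{\la}\gamma'$. For (i), one identifies all path configurations compatible with the ``straight-through'' middle block and $\theta^{\la}(p)=\su_{\la}$: these arise from rearranging folds and crossings within the outer $\vec{x}$ and $\vec{y}$ blocks while preserving the endpoint in the pseudo-translation class $\tau_{\gamma}\su_{\la}$. Because no bounces occur in the pure-folding configurations and the folds are all in directions inside $\cA_{\la}$, all signs are $+1$, giving $c\geq 1$. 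For (ii), the claim is that any fold or bounce inside the middle segment perturbs the endpoint's projection to $P^{(\la)}$ out of $\gamma+Q^{\la}_{+}$; this is the content of the dominance-order bound.

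The main obstacle will be establishing (ii). One needs a careful bookkeeping of how a fold at a hyperplane $H_{\alpha,k}$ inside the middle segment modifies the accumulated translation, tracking the contribution through the pseudo-translation structure (Proposition~\ref{prop:latticeiso}) and the explicit projection $P\to P^{(\la)}$ (Proposition~\ref{prop:bijectionP} and Remark~\ref{rem:bijectionP}). The key structural input is that folding at a hyperplane whose root lies outside $\Phi_{G_{\la}}$ reflects the translation component by a vector whose image in $P/Q_{\la}$ is not dominated by the original $\gamma+Q_{\la}$ in the $G_{\la}$-orbit partial order; combined with the length monotonicity from Theorem~\ref{thm:mgamma}, this forces $\wt(p)+Q_{\la}\not\succeq_{\la}\gamma+Q_{\la}$ for all such deviant paths.
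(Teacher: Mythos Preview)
Your outline agrees with the paper through the construction of $p_{0}$: decompose $\sm_{\gamma}$ via Theorem~\ref{thm:mgamma}, apply the path formula, and build the fold--cross--fold path. The gap is in how you handle the remaining paths.

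For part~(ii), your plan is to argue \emph{geometrically} that a fold inside the middle segment reflects the accumulated translation to something that cannot $\lambda$-dominate $\gamma$. This is not what the paper does, and I do not see how to make it work: a single fold at a hyperplane in direction $\alpha\notin\Phi_{G_{\la}}$ does not have any clean dominance behaviour in $P/Q_{\la}$, and several folds can partially cancel. The paper bypasses all of this with a pure \emph{length count}. Any $p\in\cP_{\la}(\vec{\sm_{\gamma}},\su_{\la})_{\su_{\la}}$ with $\wt(p)=\gamma'$ has $\mathrm{end}(p)=\tau_{\gamma'}\su_{\la}$, so deleting the $N=f(p)+b(p)$ fold/bounce letters from $\vec{\sm_{\gamma}}$ gives an expression for $\su_{\la}^{-1}\tau_{\gamma'}\su_{\la}$; hence $\ell(\su_{\la}^{-1}\tau_{\gamma'}\su_{\la})\leq\ell(\sm_{\gamma})-N$. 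Since $\ell(\sm_{\gamma'})=\ell(\su_{\la}^{-1}\tau_{\gamma'}\su_{\la})+\ba_{\la}$ (Theorem~\ref{thm:mgamma}), one obtains
\[
\deg\cQ_{\la}(p)\;\leq\; f(p)\;\leq\; N\;\leq\;\ba_{\la}+\ell(\sm_{\gamma})-\ell(\sm_{\gamma'}).
\]
Now the monotonicity in Theorem~\ref{thm:mgamma} does the work: if $\gamma\prec_{\la}\gamma'$ then $\ell(\sm_{\gamma'})>\ell(\sm_{\gamma})$ and the path does not survive specialisation. So the monotonicity is used to bound \emph{degrees}, not to control weights directly.

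This same length bound also repairs your part~(i), which as stated is incomplete. You assert that all weight-$\gamma$ contributors arise from ``rearranging folds within the outer blocks'' with $b(p)=0$, but you give no reason a path could not fold inside the middle segment, pick up bounces, and still return to weight~$\gamma$. The inequality above with $\gamma'=\gamma$ gives $N\leq\ba_{\la}$; combined with $\deg\cQ_{\la}(p)=f(p)-b(p)=\ba_{\la}$ this forces $b(p)=0$ and $f(p)=\ba_{\la}$, so every surviving weight-$\gamma$ path contributes $+1$ and $c>0$ follows.
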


\begin{proof}
By Theorem~\ref{thm:mgamma} we have $\sm_{\gamma}=x\su_{\la}^{-1}\tau_{\gamma}\su_{\la} y$ with $x,y\in W_{\la'}$ and $\ell(\sm_{\gamma})=\ell(x)+\ell(\su_{\la}^{-1}\tau_{\gamma}\su_{\la})+\ell(y)$ and $\ell(x)+\ell(y)=\ell(\sw_{\la'})$. Let $p_0$ be the path starting at $\su_{\la}$ of type $\vec{\sm}_{\gamma}=x\cdot (\su_{\la}^{-1}\tau_{\gamma}\su_{\la})\cdot y$ (where we choose any reduced expressions for $x$, $\su_{\la}^{-1}\tau_{\gamma}\su_{\la}$, and $y$) such that the first $\ell(x)$ steps are folds, the next $\ell(\su_{\la}^{-1}\tau_{\gamma}\su_{\la})$ steps are crossings, and the final $\ell(y)$ steps are folds. The following observations show that $p_0$ is a $\la$-folded alcove path.
\begin{compactenum}[$(1)$]
\item Since $\ell(\su_{\la}s_j)=\ell(\su_{\la})+1$ and $\su_{\la}s_j\in {^\la}W$ for all $j\in J_{\la'}$ the first $\ell(x)$ steps of $p_0$ are positive folds (not bounces). 
\item To check that the next $\ell(\su_{\la}^{-1}\tau_{\gamma}\su_{\la})$ steps can be taken to be crossings we must show that this part of the path remains in~$\cA_{\la}$ (and hence there are no forced bounces). Note that the starting alcove of this part of the path is $\su_{\la}\cA_0$ (which lies in $\cA_{\la}$) and the end alcove is $\su_{\la}(\su_{\la}^{-1}\tau_{\gamma}\su_{\la})\cA_0=\tau_{\gamma}\su_{\la}\cA_0$. Thus by \cite[Theorem~3.11]{GLP:23} the end alcove lies in $\cA_{\la}$, and since the path is of reduced type, and $\cA_{\la}$ is convex (being an intersection of half-spaces) the entire path from $\su_{\la}\cA_0$ to $\tau_{\la}\su_{\la}\cA_0$ lies in $\cA_{\la}$ (see \cite[Proposition~3.94]{AB:08}). 
\item To show that the final $\ell(y)$ steps are positive folds, note first that if $s\in J_{\la'}$ then $\tau_{\la}\su_{\la}s\cA_0\subseteq\cA_{\la}$ (since $\su_{\la}s\in {^\la}W$, and apply \cite[Theorem~3.11]{GLP:23}). Moreover, since $\tau_{\gamma}\su_{\la}=t_{\gamma}\sy_{\gamma}\su_{\la}$ and $\ell(\sy_{\gamma}\su_{\la}s)=\ell(\sy_{\gamma}\su_{\la})+1$ we have that $\tau_{\gamma}\su_{\la}\cA_0$ lies on the positive side of the hyperplane separating $\tau_{\gamma}\su_{\la}\cA_0$ from $\tau_{\gamma}\su_{\la}s\cA_0$, and hence the result. 
\end{compactenum}
Now, since $\ell(x)+\ell(y)=\ell(\sw_{\la'})$ we have $\cQ_{\la}(p_0)=(\sq-\sq^{-1})^{\ell(\sw_{\la'})}$, and since $\mathrm{end}(p_0)=\tau_{\gamma}\su_{\la}$ we have $\wt(p_0)=\gamma$ and $\theta^{\la}(p_0)=\su_{\la}$. Thus by Theorem~\ref{thm:pathformula} we have
$$
[\pi_{\la}(T_{\sm_{\gamma}};\sB_{\la})]_{\su_{\la},\su_{\la}}=(\sq-\sq^{-1})^{\ell(\sw_{\la'})}\zeta_{\la}^{\gamma}+\sum_{p\in\cP_{\la}(\vec{\sm}_{\gamma},\su_{\la})_{\su_{\la}}\backslash\{p_0\}}\cQ_{\la}(p)\zeta_{\la}^{\wt(p)}. 
$$
It follows from Theorem~\ref{thm:fullbounded} and the above equation that
$$
\sq^{-\ell(\sw_{\la'})}[\pi_{\la}(T_{\sm_{\gamma}};\sB_{\la})]_{\su_{\la},\su_{\la}}=\zeta_{\la}^{\gamma}+(\textrm{$\ZZ[\sq^{-1}]$-linear combination of terms $\zeta_{\la}^{\gamma'}$ with $\gamma'\in P$}),
$$
and so to prove the proposition it is sufficient to show that if $p\in\cP_{\la}(\vec{\sm}_{\gamma},\su_{\la})_{\su_{\la}}$ with $\wt(p)=\gamma'$ and $\gamma\peq_{\la}\gamma'$ then:
\begin{compactenum}[(a)]
\item if $\gamma\prec_{\la}\gamma'$ then $\deg\cQ_{\la}(p)< \ell(\sw_{\la'})$;
\item if $\gamma=\gamma'$ then $\deg\cQ_{\la}(p)\leq \ell(\sw_{\la'})$, and if equality holds then $\cQ_{\la}(p)$ has positive leading coefficient.
\end{compactenum}

Consider a path $p\in\cP_{\la}(\vec{\sm}_{\gamma},\su_{\la})_{\su_{\la}}$ with $\wt(p)=\gamma'$. Since $\theta^{\la}(p)=\su_{\la}$ \cite[Corollary~3.13]{GLP:23} gives $\mathrm{end}(p)=\tau_{\gamma'}\su_{\la}$. Let $N=f(p)+b(p)$ be the total number of folds and bounces in $p$. If $\vec{\sm}_{\gamma}=s_1s_2\cdots s_{\ell}\pi$ then, since $p$ starts at $\su_{\la}$, we have
$
\mathrm{end}(p)=\su_{\la}s_1\cdots \hat{s}_{i_1}\cdots \hat{s}_{i_N}\cdots s_{\ell}\pi,
$
where the folds and bounces of $p$ occur at the indices $i_k$ with $1\leq i_1<i_2<\cdots<i_N\leq \ell$ (and $\hat{s}_{i_k}$ denotes omission of the generator in the expression). It follows that 
$
\ell(\su_{\la}^{-1}\tau_{\gamma'}\su_{\la})\leq \ell(\sm_{\gamma})-N,
$
and thus $N\leq \ell(\sm_{\gamma})-\ell(\su_{\la}^{-1}\tau_{\gamma'}\su_{\la})$. Since $\ell(\sm_{\gamma'})=\ell(\su_{\la}^{-1}\tau_{\gamma'}\su_{\la})+\ell(\sw_{\la'})$ we have
$$
N\leq \ell(\sw_{\la'})+\ell(\sm_{\gamma})-\ell(\sm_{\gamma'}).
$$
Since $\cQ_{\la}(p)=(-\sq)^{-b(p)}(\sq-\sq^{-1})^{f(p)}$ it follows that $\deg\cQ_{\la}(p)\leq \ell(\sw_{\la'})+\ell(\sm_{\gamma})-\ell(\sm_{\gamma'})$ with equality if and only if $f(p)=\ell(\sw_{\la'})+\ell(\sm_{\gamma})-\ell(\sm_{\gamma'})$ and $b(p)=0$. Both (a) and (b) now follow from the monotonicity statement in Theorem~\ref{thm:mgamma}.
\end{proof}

The following theorem is the main result of this paper. It gives a $\la$-relative version of \cite[Proposition~8.6]{Lus:83} (see also \cite[Theorem~2.22(b)]{NR:03}), and recovers the main result of Xi~\cite{Xi:02}. We note that our description of $\Ga_{\la}\cap\Ga_{\la}^{-1}$ and $\cJ_{\Ga_{\la}\cap\Ga_{\la}^{-1}}$ is rather different to the descriptions given by Xi~\cite{Xi:02} and Kim and Pylyavskyy~\cite{KP:23}. In particular, our description of $\Ga_{\la}\cap\Ga_{\la}^{-1}$ is in terms of the geometry of the fundamental $\la$-alcove~$\cA_{\la}$, and our description of the ring $\cJ_{\Ga_{\la}\cap\Ga_{\la}^{-1}}$ is in terms of symmetric functions and alcove paths, rather than representation rings of algebraic groups.

\begin{thm}\label{thm:leading0}
We have $\Ga_{\la}\cap\Ga_{\la}^{-1}=\{\sm_{\gamma}\mid \gamma\in P^{(\la)}_+\}$, and if $\gamma\in P^{(\la)}_+$ then 
$$
\pi_{\la}(C_{\sm_{\gamma}})=\fs_{\gamma}(\zeta_{\la})\pi_{\la}(C_{\sw_{\la'}}). 
$$
Moreover, $\fc_{\la}(\sm_{\gamma})=\fs_{\gamma}(\zeta_{\la})E_{\su_{\la},\su_{\la}}$, and the linear map
$$
\cJ_{\Ga_{\la}\cap\Ga_{\la}^{-1}}\to \ZZ[\zeta_{\la}]^{G_{\la}}\quad\text{with}\quad  \st_{\sm_{\gamma}}\mapsto \fs_{\gamma}(\zeta_{\la})
$$
is an isomorphism of unital rings. 
\end{thm}

\begin{proof}
We argue as follows.
\smallskip

\noindent\textit{Claim 1: If $w\in\Ga_{\la}\cap\Ga_{\la}^{-1}$ then $\pi_{\la}(C_w)=\mathfrak{f}_w(\zeta_{\la})\pi_{\la}(\clap)$ for some $\mathfrak{f}_w(\zeta_{\la})\in\ZZ[\zeta_{\la}]^{G_{\la}}$.} 
\smallskip

\noindent\textit{Proof of Claim 1.} If $w\in \Ga_{\la}\cap\Ga_{\la}^{-1}$ then by Theorem~\ref{thm:indexing} the matrix $\pi_{\la}(C_w;\sB_{\la})$ attains the bound $\ell(\sw_{\la'})$ in the $(\su_{\la},\su_{\la})$-position, and only in this position. Define $\mathfrak{f}_{w}(\zeta_{\la})=\sq^{-\ell(\sw_{\la'})}[\pi_{\la}(C_w;\sB_{\la})]_{\su_{\la},\su_{\la}}$, and so $\mathfrak{f}_w(\zeta_{\la})\in(\ZZ[\sq^{-1}])[\zeta_{\la}]$. Since $\ell(w\sw_{\la'})=\ell(\sw_{\la'}w)=\ell(w)-\ell(\sw_{\la'})$ Lemma~\ref{lem:CJformula}, combined with Theorem~\ref{thm:satake1}, gives
\begin{align}\label{eq:string}
\sq^{-2\ell(\sw_{\la'})}W_{\la'}(\sq^2)^2\pi_{\la}(C_w)=\pi_{\la}(\clap C_w\clap)=f_{\la}(C_w)\pi_{\la}(\clap).
\end{align}
Reading the $(\su_{\la},\su_{\la})$-entries (using Proposition~\ref{prop:formofmatrix}) gives
$
\sq^{-\ell(\sw_{\la'})}W_{\la'}(\sq^2)^2\,\mathfrak{f}_w(\zeta_{\la})=\sq^{\ell(\sw_{\la'})}f_{\la}(C_w).
$
In particular, $f_{\la}(C_w)$ is divisible (in $\sR[\zeta_{\la}]$) by $W_{\la'}(\sq^2)^2$, and we have
$$
\mathfrak{f}_w(\zeta_{\la})=\frac{\sq^{2\ell(\sw_{\la'})}}{W_{\la'}(\sq^2)^2}f_{\la}(C_w).
$$
It then follows from Corollary~\ref{cor:fbar} that $\overline{\mathfrak{f}_w(\zeta_{\la})}=\mathfrak{f}_w(\zeta_{\la})$, and since $\mathfrak{f}_w(\zeta_{\la})\in(\ZZ[\sq^{-1}])[\zeta_{\la}]$ this forces $\mathfrak{f}_w(\zeta_{\la})\in\ZZ[\zeta_{\la}]$. Then by Theorem~\ref{thm:symmetry} we have $\mathfrak{f}_w(\zeta_{\la})\in\ZZ[\zeta_{\la}]^{G_{\la}}$, and~(\ref{eq:string}) gives $\pi_{\la}(C_w)=\mathfrak{f}_w(\zeta_{\la})\pi_{\la}(\clap)$, completing the proof of Claim 1. 
\smallskip

%\goodbreak

\noindent\textit{Claim 2: We have $\{\sm_{\gamma}\mid \gamma\in P^{(\la)}_+\}\subseteq \Ga_{\la}\cap \Ga_{\la}^{-1}$, and if $\gamma\in P^{(\la)}$ then $\mathfrak{f}_{\sm_{\gamma}}(\zeta_{\la})=\fs_{\gamma}(\zeta_{\la})$.} 
\smallskip

\noindent\textit{Proof of Claim 2.} By Proposition~\ref{prop:path} we have $\deg[\pi_{\la}(T_{\sm_{\gamma}};\sB_{\la})]_{\su_{\la},\su_{\la}}=\ell(\sw_{\la'})$, and so by Theorem~\ref{thm:indexing} we have $\sm_{\gamma}\in\Ga_{\la}\cap\Ga_{\la}^{-1}$, hence the claimed containment. 

Let $\gamma\in P^{(\la)}$. Since $\sm_{g\gamma}=\sm_{\gamma}$ for all $g\in G_{\la}$ (see Proposition~\ref{prop:dominant}) we may assume that $\gamma\in P^{(\la)}_+$. By Proposition~\ref{prop:path} we have
\begin{align}\label{eq:positive}
\mathfrak{f}_{\sm_{\gamma}}(\zeta_{\la})=c\zeta_{\la}^{\gamma}+(\text{$\ZZ$-linear combination of terms $\zeta_{\la}^{\gamma'}$ with $\gamma\not\peq_{\la}\gamma'$}),
\end{align}
where $c>0$ is an integer. Since $\mathfrak{f}_{\sm_{\gamma}}(\zeta_{\la})\in\ZZ[\zeta_{\la}]^{G_{\la}}$, and since the Schur functions $\fs_{\gamma}(\zeta_{\la})$, $\gamma\in P^{(\la)}_+$, form a $\ZZ$-basis of this ring we have 
$$
\mathfrak{f}_{\sm_{\gamma}}(\zeta_{\la})=\sum_{\gamma'\in (P/Q_{\la})_+}a_{\gamma'}\fs_{\gamma'}(\zeta_{\la})
$$
for some integers $a_{\gamma'}$. By Theorem~\ref{thm:AsymptoticPlancherel} we have $\langle \mathfrak{f}_{\sm_{\gamma}}(\zeta_{\la}),\mathfrak{f}_{\sm_{\gamma}}(\zeta_{\la})\rangle_{\la}^{\infty}=1$, and then Lemma~\ref{lem:uniquebasis} gives $\sum_{\gamma'}a_{\gamma'}^2=1$. Thus $\mathfrak{f}_{\sm_{\gamma}}(\zeta_{\la})=\epsilon\fs_{\gamma_1}(\zeta_{\la})$ for some $\gamma_1\in P^{(\la)}_+$ with $\epsilon\in\{-1,1\}$. By~(\ref{eq:positive}) and the positivity in Lemma~\ref{lem:uniquebasis} we have $\epsilon=1$. Moreover, the triangularity in Lemma~\ref{lem:uniquebasis} gives $\gamma\peq_{\la}\gamma_1$, and hence $\gamma_1=\gamma$ by~(\ref{eq:positive}) completing the proof of Claim~2.
\smallskip

\noindent\textit{Claim 3: We have $\Ga_{\la}\cap\Ga_{\la}^{-1}=\{\sm_{\gamma}\mid \gamma\in P^{(\la)}_+\}$.}
\smallskip

\noindent\textit{Proof of Claim 3.} If $w\in\Ga_{\la}\cap\Ga_{\la}^{-1}$ then it follows from Claim 1 that $\fc_{\la}(w)=\mathfrak{f}_w(\zeta_{\la})E_{\su_{\la},\su_{\la}}$, with $\mathfrak{f}_w(\zeta_{\la})\in \ZZ[\zeta_{\la}]^{G_{\la}}$. If $w\notin\{\sm_{\gamma}\mid \gamma\in P^{(\la)}_+\}$ then by Claim 2 and the asymptotic Plancherel Theorem (Theorem~\ref{thm:AsymptoticPlancherel}) we have $\langle \mathfrak{f}_{w}(\zeta_{\la}),\fs_{\gamma}(\zeta_{\la})\rangle_{\la}^{\infty}=0$ for all $\gamma\in P^{(\la)}_+$, contradicting the fact that the Schur functions $\fs_{\gamma}(\zeta_{\la})$, $\gamma\in P^{(\la)}_+$, form a basis of $\ZZ[\zeta_{\la}]^{G_{\la}}$. Thus $w=\sm_{\gamma}$ for some $\gamma\in P^{(\la)}_+$. Claim 2 provides the reverse containment. 

It is now clear that the linear map $\fC_{\Ga_{\la}\cap\Ga_{\la}^{-1}}\to \ZZ[\zeta_{\la}]^{G_{\la}}$ with $\fc_{\la}(\sm_{\gamma})\mapsto \fs_{\gamma}(\zeta_{\la})$ is a ring isomorphism (see Theorem~\ref{thm:recognise2b}), and the proof of the theorem is complete. 
\end{proof}

\goodbreak

\begin{cor}\label{cor:aalg}
For each $\la\vdash n+1$ we have $\cJ_{\la}\cong \mathsf{Mat}_{N_{\la}}(\ZZ[\zeta_{\la}]^{G_{\la}})$. 
\end{cor}

\begin{proof}
This is immediate from Theorem~\ref{thm:leading0} and Xi's reduction result Theorem~\ref{thm:xi}. 
\end{proof}

We can now complete the proof of Lemma~\ref{lem:span}.

\begin{proof}[Proof of Lemma~\ref{lem:span}]
By Theorem~\ref{thm:leading0} we have 
$
f_{\la}(C_{\sm_{\gamma}})=\sq^{-2\ell(\sw_{\la'})}W_{\la'}(\sq^2)^2\fs_{\gamma}(\zeta_{\la}),
$
and the Schur functions form a basis of $\sR[\zeta_{\la}]^{G_{\la}}$. 
\end{proof}

\begin{remark}
The description of~$\cJ_{\Ga_{\la}\cap\Ga_{\la}^{-1}}$ in~\cite[Theorem~8.4.5]{Xi:02} is in terms of representation rings. To make the translation with our description given in Theorem~\ref{thm:leading0}, for each $\la\vdash n+1$ let $u_{\la}$ be a unipotent element of $\mathsf{SL}_{n+1}(\CC)$ with Jordan blocks given by the partition~$\la$. Let $F_{\la}$ be the maximal reductive subgroup of the centraliser in $\mathsf{SL}_{n+1}(\CC)$ of~$u_{\la}$. Then $\ZZ[\zeta_{\la}]^{G_{\la}}$ is isomorphic to the representation ring of~$F_{\la}$. 

We also note that Xi works with the ``affine symmetric group'' $\Waff\rtimes \ZZ$, and our extended affine Weyl group $\Wext$ is a quotient of this group. Thus Xi's results apply to $\mathsf{GL}_{n+1}(\CC)$. The setup of our paper can readily be adapted to work with the affine symmetric group and its associated extended affine Hecke algebra, however we have chosen to present our work directly in terms of the extended affine Weyl group $P\rtimes \Wfin$ (and hence $\mathsf{SL}_{n+1}(\CC)$) as this setting more naturally extends to other types (where an ``extended affine Weyl group'' is typically taken to be a group of the form $L\rtimes \Wfin$ with $L$ a lattice with $Q\leq L\leq P$).  
\end{remark}

\subsection{Concluding remarks and an example}\label{sec:63}

The distinguished involution $\sw_{\la'}$ has the property that $\fc_{\la}(\sw_{\la'})=E_{\su_{\la},\su_{\la}}$. We conjecture that all distinguished involutions behave similarly. Specifically, let $d_1,\ldots,d_{N_{\la}}$ denote the distinguished involutions of $\Delta_{\la}$, and let $\Ga_i$ be the right cell containing $d_i$. We make the following conjecture.

\begin{conjecture}\label{conj:Duflo}
There exists an ordering of the basis $\sB_{\la}$ of $M_{\la}$ such that the leading matrices of the distinguished involutions are given by $\fc_{\la}(d_i)=E_{ii}$
\end{conjecture}
 
 Example~\ref{ex:A34} below verifies this conjecture for $\tilde{\sA}_3$ with $\la=(2,2)$, and we have verified the conjecture for $\tilde{\sA}_3$ for all partitions $\la$. 

The following theorem shows that if Conjecture~\ref{conj:Duflo} holds, then our leading matrices explicitly realise the isomorphism from Xi's reduction theorem (Theorem~\ref{thm:xi}).

\begin{thm}\label{thm:fullalgebra}
Assume that Conjecture~\ref{conj:Duflo} holds. Then there exist weights $\ga_1,\ldots,\ga_{N_{\la}}\in P^{(\la)}$ and signs $\epsilon_1,\ldots,\epsilon_{N_{\la}}\in\{-1,1\}$ such that
\begin{align*}
D^{-1}\fC_{\la}D=\mathrm{Mat}_{N_{\la}}(\ZZ[\zeta_{\la}]^{G_{\la}})
\end{align*}
where $D=\mathrm{diag}(\epsilon_1\zeta_{\la}^{\ga_1},\ldots,\epsilon_{N_{\la}}\zeta_{\la}^{\ga_{N_{\la}}})$. Moreover, there is a function $\hh:\Delta_{\la}\to P^{(\la)}_+$ such that  
\begin{compactenum}[$(a)$]
\item for each $1\leq i,j\leq N_{\la}$ the map $\hh:\Ga_i\cap\Ga_j^{-1}\to P^{(\la)}_+$ is bijective, and 
\item if $w\in \Ga_i\cap\Ga_j^{-1}$ then $D^{-1}\fc_{\la}(w)D=\fs_{\hh(w)}(\zeta_{\la})E_{ij}$. 
\end{compactenum}
\end{thm}

\begin{proof} 
By assumption we have $\fc_{\la}(d_i)=E_{ii}$. A similar argument to Theorem~\ref{thm:indexing} now shows that if $w\in\Ga_i\cap\Ga_j^{-1}$ then the matrix $\fc_{\la}(w)$ has a unique nonzero entry, and this nonzero entry is in position $(i,j)$. Specifically, since $w\in \Ga_i$ we have $\st_{d_i}\st_w=\st_w$ and since $w\in \Ga_j^{-1}$ we have $\st_w\st_{d_j}=\st_w$, and hence by Theorem~\ref{thm:recognise2b} we have $\fc_{\la}(d_i)\fc_{\la}(w)=\fc_{\la}(w)=\fc_{\la}(w)\fc_{\la}(d_j)$, showing that all nonzero entries of $\fc_{\la}(w)$ are in both the $i$th row and the $j$th column.

Fix bijections $\phi_{ij}:\Gamma_i\cap\Gamma_j^{-1}\to\Ga_{\la}\cap\Ga_{\la}^{-1}$, as in \cite[\S2.3]{Xi:02}. Write $E_{ij}(a)$ for the matrix with~$a$ in the $(i,j)$-position and $0$ elsewhere. Xi's reduction theorem (\cite[Theorem~2.3.2]{Xi:02}) says that
\begin{compactenum}[$(1)$]
\item the map $\st_w\mapsto \st_{\phi_{ii}(w)}$ induces a ring isomorphism $\cJ_{\Ga_i\cap\Ga_i^{-1}}\to \cJ_{\Ga_{\la}\cap\Ga_{\la}^{-1}}$, and
\item the map $\st_w\mapsto E_{ij}(\st_{\phi_{ij}(w)})$, for $w\in \Ga_i\cap\Ga_j^{-1}$, defines a $\ZZ$-algebra  isomorphism from $\cJ_{\la}$ to the ring $\mathrm{Mat}_{N_{\la}}(\cJ_{\Ga_{\la}\cap\Ga_{\la}^{-1}})$ of $N_{\la}\times N_{\la}$ matrices with entries in $\cJ_{\Ga_{\la}\cap\Ga_{\la}^{-1}}$. 
\end{compactenum}

 Let $w_{ij}=\phi^{-1}_{ij}(\sw_{\la'})\in\Ga_i\cap\Ga_j^{-1}$. By \cite[Lemma~2.3.1]{Xi:02} we have $w_{ii}=d_i$ and $w_{ji}=w_{ij}^{-1}$. Since $E_{ij}(\st_{\sw_{\la'}})E_{ji}(\st_{\sw_{\la'}})=E_{ii}(\st_{\sw_{\la'}}^2)=E_{ii}(\st_{\sw_{\la'}})$ it follows (under the isomorphism in (2) above) that 
$
\st_{w_{ij}}\st_{w_{ij}^{-1}}=\st_{d_i}. 
$
By the first paragraph of the proof we have $\fc_{\la}(w_{ij})=a(\zeta_{\la})E_{ij}$ for some $a(\zeta_{\la})\in\ZZ[\zeta_{\la}]$. Then the above equation, and the isomorphism from Theorem~\ref{thm:recognise2b}, gives (using~(\ref{eq:conjugation}))
$
E_{ii}=\fc_{\la}(d_i)=\fc_{\la}(w_{ij})\fc_{\la}(w_{ij}^{-1})=a(\zeta_{\la})a(\zeta_{\la}^{-1})E_{ii},
$
and hence $a(\zeta_{\la})a(\zeta_{\la}^{-1})=1$. Writing $a(\zeta_{\la})=\sum_{\gamma}c_{\gamma}\zeta_{\la}^{\gamma}$ (with $c_{\gamma}\in\ZZ$ and the sum over $\gamma\in P^{(\la)}$) it follows that $\sum_{\gamma}c_{\gamma}^2=1$, and hence $a(\zeta_{\la})=\epsilon_{ij} \zeta_{\la}^{\gamma_{ij}}$ for some $\gamma_{ij}\in P^{(\la)}$ and $\epsilon_{ij}=\pm 1$. Thus the elements $w_{ij}$ satisfy
$
\fc_{\la}(w_{ij})=\epsilon_{ij}\zeta_{\la}^{\gamma_{ij}}E_{ij},
$
with $\epsilon_{ji}=\epsilon_{ij}$ and $\gamma_{ji}=-\gamma_{ij}$ (with $\epsilon_{ii}=1$). 

We claim that, for all $i,j,k$,
\begin{align}\label{eq:triangle}
\gamma_{ij}+\gamma_{jk}+\gamma_{ki}=0\quad \text{and}\quad \epsilon_{ij}\epsilon_{jk}\epsilon_{ki}=1.
\end{align}
To see this, it follows from (2) above that $\st_{w_{ij}}\st_{w_{jk}}\st_{w_{ki}}=\st_{d_i}$. On the other hand,
$$
\fc_{\la}(w_{ij})\fc_{\la}(w_{jk})\fc_{\la}(w_{ki})=\epsilon_{ij}\epsilon_{jk}\epsilon_{ki}\zeta_{\la}^{\gamma_{ij}+\gamma_{jk}+\gamma_{ki}}E_{ii},
$$
and since $\fc_{\la}(d_i)=E_{ii}$ the claim follows (using Theorem~\ref{thm:recognise2b}). 

It is now elementary, using~(\ref{eq:triangle}), that one may choose weights $\ga_1,\ldots,\ga_{N_{\la}}\in P^{(\la)}$ and signs $\epsilon_1,\ldots,\epsilon_{N_{\la}}\in\{-1,1\}$  such that $\epsilon_{ij}=\epsilon_i\epsilon_j$ and $\zeta_{\la}^{\ga_{ij}}=\zeta_{\la}^{\ga_i}\zeta_{\la}^{-\ga_j}$. This is a consequence of the following: if $z_{ij}$, $1\leq i,j\leq n$, are invertible satisfying $z_{ij}z_{jk}=z_{ik}$ for all $1\leq i,j,k\leq n$ then there exist invertible $x_1,\ldots,x_n$ such that $x_ix_j^{-1}=z_{ij}$ for all $1\leq i,j\leq n$. The proof is induction on $n$ with the induction step given by setting $x_{n+1}=x_1z_{1,n+1}^{-1}$. It follows that
\begin{align*}
D^{-1}\fc_{\la}(\sw_{ij})D=E_{ij}\quad\text{where}\quad D=\mathrm{diag}(\epsilon_1\zeta_{\la}^{\ga_1},\ldots,\epsilon_{N_{\la}}\zeta_{\la}^{\ga_{N_{\la}}}).
\end{align*}

Let $\cR_{ij}=\{[D^{-1}AD]_{ij}\mid A\in\fC_{\la}\}$. We claim that $\cR_{ij}=\ZZ[\zeta_{\la}]^{G_{\la}}$. To see this, suppose that $a(\zeta_{\la}),b(\zeta_{\la})\in \cR_{ij}$. Then there exist $A,B\in \fC_{\la}$ such that $[D^{-1}AD]_{ij}=a(\zeta_{\la})$ and $[D^{-1}BD]_{ij}=b(\zeta_{\la})$. Then
\begin{align*}
D^{-1}\fc_{\la}(d_i)A\fc_{\la}(\sw_{ji})B\fc_{\la}(d_j)D&=D^{-1}E_{ii}AD(D^{-1}\fc_{\la}(\sw_{ji})D)D^{-1}BE_{jj}D\\
&=E_{ii}D^{-1}ADE_{ji}D^{-1}BDE_{jj}\\
&=a(\zeta_{\la})b(\zeta_{\la})E_{ij},
\end{align*}
and so $\cR_{ij}$ is a ring. Moreover
$$
D^{-1}\fc_{\la}(\sw_{1i})A\fc_{\la}(\sw_{j1})D=D^{-1}\fc_{\la}(\sw_{1i})D(D^{-1}AD)D^{-1}\fc_{\la}(\sw_{j1})D=a(\zeta_{\la})E_{11},
$$ 
and so $\fc_{\la}(\sw_{1i})A\fc_{\la}(\sw_{j1})=a(\zeta_{\la})E_{11}$, and hence $a(\zeta_{\la})\in\ZZ[\zeta_{\la}]^{G_{\la}}$. So $\cR_{ij}\subseteq \ZZ[\zeta_{\la}]^{G_{\la}}$. On the other hand,
$
D^{-1}\fc_{\la}(w_{i1})\fc_{\la}(\sm_{\gamma})\fc_{\la}(w_{1j})D=\fs_{\gamma}(\zeta_{\la})E_{ij},
$
and so $\ZZ[\zeta_{\la}]^{G_{\la}}\subseteq\cR_{ij}$, completing the proof of the claim. Thus $D^{-1}\fC_{\la}D=\mathrm{Mat}_{N_{\la}}(\ZZ[\zeta_{\la}]^{G_{\la}})$.

Let $w\in \Ga_i\cap \Ga_j^{-1}$, and write $D^{-1}\fc_{\la}(w)D=a(\zeta_{\la})E_{ij}$ for some $a(\zeta_{\la})\in \ZZ[\zeta_{\la}]^{G_{\la}}$. An argument similar to the proof of Claim 2 in Theorem~\ref{thm:leading0} gives that $a(\zeta_{\la})=\epsilon_w\fs_{\hh(w)}(\zeta_{\la})$ for some $\epsilon_w\in\{-1,1\}$ and $\hh(w)\in P^{(\la)}_+$. The map $\hh:\Ga_i\cap \Ga_j^{-1}\to P^{(\la)}_+$ is injective (using Theorem~\ref{thm:AsymptoticPlancherel}) and surjective (as $\cR_{ij}=\ZZ[\zeta_{\la}]^{G_{\la}}$). We claim that $\epsilon_w=1$. We have
$$
D^{-1}\fc_{\la}(\sw_{1i})\fc_{\la}(w)\fc_{\la}(\sw_{j1})D=\epsilon_w\fs_{\hh(w)}(\zeta_{\la})E_{11}=\epsilon_wD^{-1}\fc_{\la}(\sm_{\hh(w)})D,
$$
and so $\fc_{\la}(\sw_{1i})\fc_{\la}(w)\fc_{\la}(\sw_{j1})=\epsilon_w\fc_{\la}(\sm_{\hh(w)})$. Thus
$
\st_{\sw_{1i}}\st_{w}\st_{\sw_{j1}}=\epsilon_w\st_{\sm_{\hh(w)}}. 
$
It is known that the structure constants of $\cJ_{\la}$ are nonnegative (see \cite[(3.1.1)]{Lus:85}), and hence $\epsilon_w=1$. Thus we have $D^{-1}\fc_{\la}(w)D=\fs_{\hh(w)}(\zeta_{\la})E_{ij}$ for all $w\in\Ga_i\cap\Ga_j^{-1}$.
\end{proof}

\begin{remark}
Conjugation by $D$ in the above theorem amounts to choosing a basis associated to a (signed) ``fundamental domain'' for the action of $\sT_{\la}$ on $\cA_{\la}$ different from the standard fundamental domain~${^\la}W$. See \cite[Section~5.5]{GLP:23} for details. Specifically, the basis chosen is $\{\xi_{\la}\otimes \epsilon_{i}X_{\tau_{\gamma_i}^{-1}u_{i}}\mid 1\leq i\leq N_{\la}\}$ where ${^\la}W=\{u_i\mid 1\leq i\leq N_{\la}\}$ is ordered as in Theorem~\ref{thm:indexing}. 
\end{remark}

We make the following conjecture. 

\begin{conjecture}\label{conj:signs}
In Theorem~\ref{thm:fullalgebra} we have $\epsilon_i=1$ for all $1\leq i\leq N_{\la}$. 
\end{conjecture}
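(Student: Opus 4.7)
The conjecture reduces, via the cocycle identity $\epsilon_{ij}\epsilon_{jk}\epsilon_{ki}=1$ (established in the proof of Theorem~\ref{thm:fullalgebra}) together with $\epsilon_{ii}=1$ (from $\fc_{\la}(d_i)=E_{ii}$) and $\epsilon_{ji}=\epsilon_{ij}$ (from $\fc_{\la}(w_{ij}^{-1})=\fc_{\la}(w_{ij})^{\ast}$), to the single assertion that $\epsilon_{1i}=+1$ for all $2\le i\le N_{\la}$. Indeed, taking $k=1$ in the cocycle identity yields $\epsilon_{ij}=\epsilon_{1i}\epsilon_{1j}$, so setting $\epsilon_1=1$ and $\epsilon_i:=\epsilon_{1i}$ gives $\epsilon_{ij}=\epsilon_i\epsilon_j$ as required, and all $\epsilon_i=+1$ is equivalent to $\fc_{\la}(w_{1i})=+\zeta_{\la}^{\gamma_{1i}}E_{1i}$ for every $i$, where $w_{1i}=\phi_{1i}^{-1}(\sw_{\la'})\in\Ga_{\la}\cap\Ga_i^{-1}$.

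My first step is to fix the bijection $\phi_{1i}$ concretely, following Xi's construction~\cite[\S 2.3]{Xi:02} in terms of tabloids (or equivalently the affine matrix ball construction of Kim-Pylyavskyy~\cite{KP:23}), so that $w_{1i}$ admits an explicit description and an amenable reduced expression. Writing $u_i\in{^{\la}W}$ for the element indexing the $i$th basis vector of $M_{\la}$ (so that $u_1=\su_{\la}$ and $\fc_{\la}(d_i)=E_{u_i,u_i}$), Theorems~\ref{thm:indexing} and~\ref{thm:recognise2a} ensure that the leading $\sq$-term of $[\pi_{\la}(T_{w_{1i}};\sB_{\la})]_{\su_{\la},u_i}$ is precisely $\epsilon_{1i}\zeta_{\la}^{\gamma_{1i}}\sq^{\ell(\sw_{\la'})}$.

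The core of the argument then proceeds via the path formula (Theorem~\ref{thm:pathformula}), mimicking the strategy of Proposition~\ref{prop:path}. I would construct an explicit $\la$-folded alcove path $p_0$ starting at $\su_{\la}$ of the chosen reduced type, exhibiting $\ell(\sw_{\la'})$ positive folds in the interior of $\cA_{\la}$, with all remaining steps being crossings inside $\cA_{\la}$ and no bounces ($b(p_0)=0$). Such a path contributes $(\sq-\sq^{-1})^{\ell(\sw_{\la'})}\zeta_{\la}^{\wt(p_0)}$, supplying a positive leading monomial $+\zeta_{\la}^{\wt(p_0)}$; once one verifies $\wt(p_0)=\gamma_{1i}$ from the endpoint $\tau_{\gamma_{1i}}u_i$, this single path accounts for the conjectured value of $\epsilon_{1i}\zeta_{\la}^{\gamma_{1i}}$.

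The main obstacle is controlling the other top-degree paths. A priori there may be several $\la$-folded alcove paths $p$ of type $\vec{w}_{1i}$ (starting at $\su_{\la}$, ending in the $u_i$-coset) with $f(p)-b(p)=\ell(\sw_{\la'})$ and $\wt(p)=\gamma_{1i}$, each contributing $(-1)^{b(p)}\zeta_{\la}^{\gamma_{1i}}$; paths with odd $b(p)$ could spoil positivity unless they pair off. One would therefore need either to prove uniqueness of $p_0$ among top-degree paths, or to construct an explicit involutive pairing on the odd-bounce top-degree paths producing the requisite cancellation — both highly sensitive to the choice of reduced expression for $w_{1i}$ and to the local geometry of $\cA_{\la}$. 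A cleaner alternative route is to invoke the geometric realisation of $\cJ_{\la}$ due to Bezrukavnikov-Ostrik~\cite{BO:04}, in which $\cJ_{\la}\cong\mathrm{Mat}_{N_{\la}}(\ZZ[\zeta_{\la}]^{G_{\la}})$ carries a canonical positive basis; comparing that realisation with the matrix realisation of Theorem~\ref{thm:fullalgebra} should force the signs appearing in $D$ to be absorbable into pure $\zeta_{\la}$-monomials, thereby establishing the conjecture.
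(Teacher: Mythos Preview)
The paper does not prove this statement: it is explicitly left as an open conjecture (Conjecture~\ref{conj:signs}), so there is no proof in the paper to compare your proposal against. Your task is therefore not to match an existing argument but to supply one where none exists.

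Your reduction to $\epsilon_{1i}=+1$ via the cocycle identity is correct and is exactly the freedom exploited in the proof of Theorem~\ref{thm:fullalgebra}. However, your proposal is not a proof but a strategy outline, and you are candid about the gap: controlling the sign of the leading coefficient of $[\pi_{\la}(T_{w_{1i}};\sB_{\la})]_{\su_{\la},u_i}$ via the path formula requires either uniqueness of the top-degree path or a sign-cancelling involution on the remaining top-degree paths, and you do not supply either. This is the genuine content of the conjecture, and it is not resolved by what you have written. Note also that your proposed path $p_0$ with $f(p_0)=\ell(\sw_{\la'})$ folds and $b(p_0)=0$ bounces is modelled on Proposition~\ref{prop:path}, but that proposition relies on the special structure of $\sm_{\gamma}$ (specifically Theorem~\ref{thm:mgamma}) to guarantee the folds can be taken at the start and end; for a general $w_{1i}$ no such decomposition is available, and even the existence of such a $p_0$ is not obvious.

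Your alternative suggestion of invoking Bezrukavnikov--Ostrik is too vague to assess: one would need to produce a concrete compatibility between their basis and the leading matrices $\fc_{\la}(w)$, and it is not clear that their positivity statement transports to the specific diagonal conjugation $D$ of Theorem~\ref{thm:fullalgebra}. As written, neither route closes the gap, and the conjecture remains open.
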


\begin{example}\label{ex:A34}
Consider $\sA_3$ with $\la=(2,2)$ (see Examples~\ref{ex:A3}, \ref{ex:A32}, and~\ref{ex:A33}). We have $\su_{\la}=s_2$, and 
$
{^\la}W=\{s_2,e,s_2s_1,s_2s_3,s_2s_1s_3,s_2s_1s_3s_2\}
$ (ordered with $\su_{\la}$ being the first element). Writing $\mathsf{Q}=\sq-\sq^{-1}$, the path formula in Theorem~\ref{thm:pathformula} gives
\begin{align*}
\pi_{\la}(T_1)&=\scriptsize{\begin{bmatrix}
\mathsf{Q}&0&1&0&0&0\\
0&-\sq^{-1}&0&0&0&0\\
1&0&0&0&0&0\\
0&0&0&\mathsf{Q}&1&0\\
0&0&0&1&0&0\\
0&0&0&0&0&-\sq^{-1}
\end{bmatrix}}
&
\pi_{\la}(T_2)&=\scriptsize{\begin{bmatrix}
0&1&0&0&0&0\\
1&\mathsf{Q}&0&0&0&0\\
0&0&-\sq^{-1}&0&0&0\\
0&0&0&-\sq^{-1}&0&0\\
0&0&0&0&\mathsf{Q}&1\\
0&0&0&0&1&0
\end{bmatrix}}\\
\scriptsize{\pi_{\la}(T_3)}&=\scriptsize{\begin{bmatrix}
\mathsf{Q}&0&0&1&0&0\\
0&-\sq^{-1}&0&0&0&0\\
0&0&\mathsf{Q}&0&1&0\\
1&0&0&0&0&0\\
0&0&1&0&0&0\\
0&0&0&0&0&-\sq^{-1}
\end{bmatrix}}
&
\pi_{\la}(T_0)&=\scriptsize{\begin{bmatrix}
0&0&0&0&0&z_1/z_2\\
0&0&0&0&z_1/z_2&0\\
0&0&-\sq^{-1}&0&0&0\\
0&0&0&-\sq^{-1}&0&0\\
0&z_2/z_1&0&0&\mathsf{Q}&0\\
z_2/z_1&0&0&0&0&\mathsf{Q}
\end{bmatrix}}\\
\pi_{\la}(T_{\sigma})&=\scriptsize{\begin{bmatrix}
0&0&0&0&z_1&0\\
0&0&0&z_1&0&0\\
0&z_2&0&0&0&0\\
0&0&0&0&0&z_1\\
z_2&0&0&0&0&0\\
0&0&z_2&0&0&0
\end{bmatrix}}
\end{align*}
The distinguished involutions are
\begin{align*}
d_1&=s_1s_3&d_2&=s_2s_1s_3s_2&d_3&=s_3s_2s_0s_3&
d_4&=s_1s_2s_0s_1&d_5&=s_0s_2&d_6&=s_0s_1s_3s_0,
\end{align*}
and we compute $\fc_{\la}(d_i)=E_{ii}$ (verifying Conjecture~\ref{conj:Duflo} in this case). After making choices of the bijections $\phi_{ij}:\Ga_i\cap\Ga_j^{-1}\to\Ga_1\cap\Ga_1^{-1}$ the elements $w_{ij}=\phi_{ij}^{-1}(\sw_{\la'})$ (see the proof of Theorem~\ref{thm:fullalgebra}) are
\begin{align*}
w_{12}&=s_1s_3s_2&w_{13}&=s_1s_3s_0\sigma&w_{14}&=s_1s_3s_2\sigma&w_{15}&=s_1s_3\sigma&w_{16}&=s_1s_3s_2\sigma^2\\
w_{23}&=s_2s_1s_3s_0\sigma&w_{24}&=s_2s_1s_3s_2\sigma&w_{25}&=s_2s_1s_3\sigma&w_{26}&=s_2s_1s_3s_2\sigma^2&w_{34}&=s_3s_2s_0s_1\\
w_{35}&=s_3s_2s_0&w_{36}&=s_3s_2s_0s_1\sigma&w_{45}&=s_1s_2s_0&w_{46}&=s_1s_2s_0s_1\sigma&w_{56}&=s_2s_0s_1\sigma
\end{align*}
(with $w_{ii}=d_i$ and $w_{ji}=w_{ij}^{-1}$). We compute 
\begin{align*}
\fc_{\la}(w_{12})&=E_{12},&\fc_{\la}(w_{13})&=z_1E_{13},&\fc_{\la}(w_{14})&=z_1E_{14},&\fc_{\la}(w_{15})&=z_1E_{15},&\fc_{\la}(w_{16})&=z_1^2E_{16}.
\end{align*}
The conjugating matrix (from Theorem~\ref{thm:fullalgebra}) may thus be taken as
$$
D=\mathrm{diag}(1,1,z_1^{-1},z_1^{-1},z_1^{-1},z_1^{-2}).
$$
Then $D^{-1}\fc_{\la}(w_{ij})D=E_{ij}$ for all $1\leq i,j\leq 6$ (note that all signs $\epsilon_i$ are $+1$, see Conjecture~\ref{conj:signs}), and we have $D^{-1}\fC_{\la}D=\mathrm{Mat}_6(\ZZ[\zeta_{\la}]^{G_{\la}})$. For example, if $w=s_2s_0s_1s_3s_2s_0s_1s_3s_2s_0\sigma^2$ we compute
$$
D^{-1}\fc_{\la}(w)D=(z_1^5z_2+z_1z_2^5+z_1^2+z_2^2+z_1z_2)E_{56}=\fs_{5\tilde{e}_1+\tilde{e}_2}(\zeta_{\la})E_{56},
$$
showing that $w\in \Ga_5\cap\Ga_6^{-1}$ and $h_{\la}(w)=3e_1+2e_2+e_3\in P^{(\la)}_+$. 
\end{example}

\begin{appendix}

\section{Proof of Theorem~\ref{thm:mgamma}}\label{app:proofs}

In this appendix we prove Theorem~\ref{thm:mgamma} (see Theorems~\ref{thm:doublecoset} and~\ref{thm:domlength}). In Section~\ref{sec:double} we give general analysis of maximal length double coset representatives in affine Weyl groups, before turning attention to the specific elements $\sm_{\ga}$ in Section~\ref{sec:double2} where we show that $\ell(\sm_{\ga})=\ell(\su_{\la}^{-1}\tau_{\ga}\su_{\la})+\ell(\sw_{\la'})$. In Section~\ref{sec:length} we compute $\ell(\su_{\la}^{-1}\tau_{\ga}\su_{\la})$, and in Section~\ref{sec:monotone} we use this length formula to prove monotonicity of $\ell(\sm_{\ga})$ with respect to $\peq_{\la}$.

\subsection{Maximal length double coset representatives in affine Weyl groups}\label{sec:double}

This section gives some general facts about maximal length double coset representatives in affine Weyl groups. 

\begin{lemma}\label{lem:characterise} 
Let $\gamma\in P$ and $u\in \Wfin$. For $1\leq j\leq n$ we have:
\begin{compactenum}[$(1)$]
\item $\ell(s_jt_{\gamma}u)=\ell(t_{\gamma}u)-1$ if and only if $\langle \gamma,\alpha_j\rangle\leq 0$ and if $\langle\gamma,\alpha_j\rangle=0$ then $u^{-1}\alpha_j<0$.
\item $\ell(t_{\gamma}us_j)=\ell(t_{\gamma}u)-1$ if and only if $\langle u^{-1}\gamma,\alpha_j\rangle\geq 0$ and if $\langle u^{-1}\gamma,\alpha_j\rangle=0$ then $u\alpha_j<0$. 
\end{compactenum}
\end{lemma}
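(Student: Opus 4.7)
The plan is to translate each length condition into a statement about affine roots via the standard criterion: for $w\in\Wext$ and a simple reflection $s_i\in\Saff$, one has $\ell(s_iw)=\ell(w)-1$ (resp.\ $\ell(ws_i)=\ell(w)-1$) if and only if $w^{-1}\alpha_i$ (resp.\ $w\alpha_i$) is a negative affine root. I would recall that $\Wext$ acts on the affine roots $\beta+k\delta$ ($\beta\in\Phi$, $k\in\ZZ$) via $u(\beta+k\delta)=u\beta+k\delta$ for $u\in\Wfin$ and $t_\lambda(\beta+k\delta)=\beta+(k-\langle\lambda,\beta\rangle)\delta$ for $\lambda\in P$, and that the positive affine roots are precisely those with $k>0$, or $k=0$ and $\beta\in\Phi^+$.

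For part~(1), I would compute
\[
(t_\gamma u)^{-1}\alpha_j \;=\; u^{-1}t_{-\gamma}(\alpha_j+0\delta) \;=\; u^{-1}\alpha_j+\langle\gamma,\alpha_j\rangle\delta,
\]
and then read off from the positivity criterion above that this affine root is negative precisely when $\langle\gamma,\alpha_j\rangle<0$, or $\langle\gamma,\alpha_j\rangle=0$ and $u^{-1}\alpha_j\in-\Phi^+$, which is the stated condition.

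For part~(2), the analogous computation is
\[
(t_\gamma u)\alpha_j \;=\; t_\gamma(u\alpha_j+0\delta) \;=\; u\alpha_j-\langle u^{-1}\gamma,\alpha_j\rangle\delta,
\]
and the same positivity criterion shows that this is negative precisely when $\langle u^{-1}\gamma,\alpha_j\rangle>0$, or $\langle u^{-1}\gamma,\alpha_j\rangle=0$ and $u\alpha_j\in-\Phi^+$.

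There is no real obstacle here: since $j\neq 0$, the affine root involved starts life as $\alpha_j+0\delta$, so the computation never leaves the most elementary case of the affine action. The only mildly delicate point is the boundary situation where the $\delta$-coefficient vanishes, forcing positivity to be decided by the finite part of the affine root; this is exactly what the second clause of each statement records.
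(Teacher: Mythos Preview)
Your proof is correct, but it proceeds along a different route from the paper's. You invoke the standard affine inversion criterion (for a simple reflection $s_j$, $\ell(s_jw)=\ell(w)-1$ iff $w^{-1}\alpha_j$ is a negative affine root, and dually on the right) and then compute the single affine root $(t_\gamma u)^{\pm 1}\alpha_j$ directly; the case split on the sign of the $\delta$-coefficient then gives the two clauses of the statement immediately. The paper instead uses Macdonald's explicit length formula
\[
\ell(t_{\gamma}u)=\sum_{\alpha\in\Phi^+}|\langle\gamma,\alpha\rangle-\chi^-(u^{-1}\alpha)|,
\]
writes $s_jt_\gamma u=t_{s_j\gamma}s_ju$, exploits that $s_j$ permutes $\Phi^+\setminus\{\alpha_j\}$, and reads off the difference $\ell(t_\gamma u)-\ell(s_jt_\gamma u)$ from the single surviving term. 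Your approach is shorter and more conceptual for this particular lemma; the paper's approach has the advantage that the same length formula is reused later in the appendix (in the computation of $\ell(\su_{\la}^{-1}\tau_\gamma\su_{\la})$), so the machinery is already in place. Both are entirely standard; the only minor caveat is that the affine-root formalism with $\delta$ is not set up explicitly in the paper, so in context you would need a sentence pointing to a reference for that.
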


\begin{proof}
It is sufficient to prove (1), because (2) follows by applying (1) to $(t_{\gamma}u)^{-1}=t_{-u^{-1}\gamma}u^{-1}$. By counting hyperplanes separating $\cA_0$ from $t_{\gamma}u\cA_0$, we have (see \cite[(2.4.1)]{Mac:03})
$$
\ell(t_{\gamma}u)=\sum_{\alpha\in\Phi^+}|\langle\gamma,\alpha\rangle-\chi^-(u^{-1}\alpha)|,
$$
where $\chi^-(\alpha)=1$ if $\alpha\in-\Phi^+$, and $0$ otherwise. The simple reflection $s_j$ permutes $\Phi^+\backslash\{\alpha_j\}$, and since $s_jt_{\gamma}u=t_{s_j\gamma}s_ju$ it follows that
\begin{align*}
\ell(s_jt_{\gamma}u)&=|\langle \gamma,\alpha_j\rangle+\chi^-(-u^{-1}\alpha_j)|+\sum_{\alpha\in\Phi^+\backslash\{\alpha_j\}}|\langle\gamma,\alpha\rangle-\chi^-(u^{-1}\alpha)|,
\end{align*}
and so $\ell(t_{\gamma}u)-\ell(s_jt_{\gamma}u)=|\langle\gamma,\alpha_j\rangle-\chi^-(u^{-1}\alpha_j)|-|\langle\gamma,\alpha_j\rangle+\chi^-(-u^{-1}\alpha_j)|$. Thus $\ell(s_jt_{\gamma}u)=\ell(t_{\gamma}u)-1$ if and only if either $\langle\gamma,\alpha_j\rangle<0$, or $\langle\gamma,\alpha_j\rangle=0$ and $u^{-1}\alpha_j<0$. 
\end{proof}

\begin{lemma}\label{lem:maxlength}
Let $\gamma\in P$, $u\in \Wfin$, and $J\subseteq \{1,2,\ldots,n\}$. Let
\begin{align*}
L_J(\gamma,u)&=\{\alpha\in\Phi_{J}^+\mid \langle \gamma,\alpha\rangle> 0\text{ or $\langle \gamma,\alpha\rangle=0$ and $u^{-1}\alpha>0$}\}\\
R_J(\gamma,u)&=\{\alpha\in\Phi_{J}^+\mid \langle u^{-1}\gamma,\alpha\rangle<0\text{ or $\langle u^{-1}\gamma,\alpha\rangle=0$ and $u\alpha>0$}\}.
\end{align*}
Then there exist (unique) elements $x,x'\in W_J$ with $\Phi(x)=L_J(\gamma,u)$ and $\Phi(x')=R_J(\gamma,u)$. Moreover $x^{-1}t_{\gamma}u$ is of maximal length in $W_Jt_{\gamma}u$, and $t_{\gamma}ux'$ is of maximal length in $t_{\gamma}uW_J$, and $\ell(x^{-1} t_{\gamma}u)=\ell(x)+\ell(t_{\gamma}u)$ and $\ell(t_{\gamma}ux')=\ell(t_{\gamma}u)+\ell(x')$.
\end{lemma}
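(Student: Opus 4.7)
The plan is to reduce everything to Lemma~\ref{lem:characterise} combined with an induction on $|L_J(\gamma,u)|$. The key observation is that Lemma~\ref{lem:characterise}(1) gives the equivalence
\[
\ell(s_j t_\gamma u) = \ell(t_\gamma u) + 1 \iff \alpha_j \in L_J(\gamma,u),
\]
so simple roots in $L_J(\gamma,u)$ correspond to length-increasing left multiplications by generators of $W_J$. My first auxiliary lemma will be: if no simple root $\alpha_j$ with $j \in J$ lies in $L_J(\gamma,u)$, then $L_J(\gamma,u) = \emptyset$. I would prove the contrapositive: write any $\alpha \in \Phi_J^+$ as $\sum_{j \in J} c_j \alpha_j$ with $c_j \geq 0$; then $\langle \gamma,\alpha\rangle = \sum c_j \langle \gamma,\alpha_j\rangle \le 0$, and equality forces $\langle \gamma,\alpha_j\rangle = 0$ together with $u^{-1}\alpha_j \in -\Phi^+$ for every $j$ with $c_j > 0$. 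Writing each such $u^{-1}\alpha_j$ as a nonpositive combination of the simple roots of $\Phi$ then gives $u^{-1}\alpha \in -\Phi^+$, so $\alpha \notin L_J(\gamma,u)$.

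Next I would induct on $|L_J(\gamma,u)|$. For the base case $|L_J(\gamma,u)|=0$, take $x = e$. Otherwise choose $j \in J$ with $\alpha_j \in L_J(\gamma,u)$, so $\ell(s_j t_\gamma u) = \ell(t_\gamma u)+1$, and consider $s_j t_\gamma u = t_{s_j \gamma}(s_j u)$. A short computation using $\langle s_j\gamma,\alpha\rangle = \langle\gamma,s_j\alpha\rangle$ and $(s_j u)^{-1}\alpha = u^{-1}(s_j\alpha)$ shows that for $\alpha \in \Phi_J^+ \setminus \{\alpha_j\}$ we have $\alpha \in L_J(s_j\gamma,s_j u)$ iff $s_j\alpha \in L_J(\gamma,u)$, while $\alpha_j$ lies in exactly one of $L_J(\gamma,u)$ and $L_J(s_j\gamma,s_j u)$. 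Hence $|L_J(s_j\gamma,s_j u)| = |L_J(\gamma,u)| - 1$, so by induction there exists $x_1 \in W_J$ with $\Phi(x_1) = L_J(s_j\gamma,s_j u)$ and $\ell(x_1^{-1} s_j t_\gamma u) = \ell(x_1) + \ell(s_j t_\gamma u)$. Setting $x = s_j x_1$, the condition $\alpha_j \notin \Phi(x_1)$ gives $\ell(x) = \ell(x_1) + 1$ and, via the standard formula $\Phi(s_j x_1) = \{\alpha_j\} \cup s_j \Phi(x_1)$, we recover $\Phi(x) = L_J(\gamma,u)$ and the required length additivity.

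To complete the argument I would verify that $x^{-1} t_\gamma u$ really is of maximal length in $W_J t_\gamma u$, by checking $L_J(x^{-1}\gamma, x^{-1}u) = \emptyset$ and invoking the lemma from the first paragraph. Splitting $\Phi_J^+$ into $\Phi_J^+ \setminus \Phi(x^{-1})$ and $\Phi(x^{-1})$ and using $\Phi(x^{-1}) = \{-x^{-1}\beta : \beta \in \Phi(x)\} = \{-x^{-1}\beta : \beta \in L_J(\gamma,u)\}$, each $\alpha$ in either piece yields, after substituting $\beta = \pm x\alpha$, an immediate contradiction with $\Phi(x) = L_J(\gamma,u)$. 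Uniqueness of $x$ is the standard fact that an element of a Coxeter group is determined by its inversion set. Finally, the right-hand statement about $t_\gamma u x'$ follows by applying the left-hand result just proved to $(t_\gamma u)^{-1} = t_{-u^{-1}\gamma} u^{-1}$, noting that $L_J(-u^{-1}\gamma, u^{-1}) = R_J(\gamma,u)$ and that taking inverses preserves lengths and double coset structure. The main obstacle is bookkeeping: carefully tracking how $L_J$ transforms under left multiplication by $s_j$ (especially the boundary behaviour at $\alpha_j$ itself) and ensuring the induction step produces the correct inversion set are where the argument needs the most care, but both amount to straightforward case analyses.
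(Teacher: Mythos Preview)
Your proof is correct and follows essentially the same approach as the paper: induction on $|L_J(\gamma,u)|$, the transformation law $L_J(s_j\gamma,s_j u)=s_j(L_J(\gamma,u)\setminus\{\alpha_j\})$, and reduction of the right-coset statement to the left via $(t_\gamma u)^{-1}=t_{-u^{-1}\gamma}u^{-1}$. Your auxiliary lemma (no simple root in $L_J$ implies $L_J=\emptyset$) makes explicit what the paper treats as a parenthetical remark, and your separate verification that $L_J(x^{-1}\gamma,x^{-1}u)=\emptyset$ is redundant since it already falls out of the induction, but neither affects correctness.
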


\begin{proof}
It is sufficient to prove the results concerning the coset $W_Jt_{\gamma}u$ (for the other statements follow by considering $(t_{\gamma}u)^{-1}=t_{-u^{-1}\gamma}u^{-1}$). We argue by induction on $|L_J(\gamma,u)|$. If $|L_J(\gamma,u)|=0$ then Lemma~\ref{lem:characterise} gives $\ell(s_jt_{\gamma}u)=\ell(t_{\gamma}u)-1$ for all $j\in J$. Thus $t_{\gamma}u$ is of maximal length in $W_Jt_{\gamma}u$, and the result follows with $x=e$. Suppose $|L_J(\gamma,u)|>0$. Then there is $j\in J$ with $\alpha_j\in L_J(\gamma,u)$ (otherwise $L_J(\gamma,u)=\emptyset$) and so $\ell(s_jt_{\gamma}u)=\ell(t_{\gamma}u)+1$ by Lemma~\ref{lem:characterise}. We have $s_jt_{\gamma}u=t_{s_j\gamma}(s_ju)$, and we claim that 
\begin{align}\label{eq:evolve}
L_J(s_j\gamma,s_ju)=s_j(L_J(\gamma,u)\backslash\{\alpha_j\}). 
\end{align}
Note first that $\alpha_j\notin L_J(s_j\gamma,s_ju)$ (for otherwise either $\langle s_j\gamma,\alpha_j\rangle>0$ or $\langle s_j\gamma,\alpha_j\rangle=0$ and $u^{-1}s_j\alpha_j>0$, and so either $\langle \gamma,\alpha_j\rangle<0$ or $\langle \gamma,\alpha_j\rangle=0$ and $u^{-1}\alpha_j<0$, contradicting the fact that $\alpha_j\in L_J(\gamma,u)$). Now, to prove~(\ref{eq:evolve}), suppose that $\alpha\in L_J(s_j\gamma,s_ju)$. Then either $\langle \gamma,s_j\alpha\rangle>0$ or $\langle \gamma,s_j\alpha\rangle=0$ and $u^{-1}s_j\alpha>0$. Since $\alpha\in\Phi^+\backslash \{\alpha_j\}$, and since $s_j$ permutes this set of positive roots, it follows that $s_j\alpha\in L_J(\gamma,u)$ and so $\alpha\in s_j(L_J(\gamma,u)\backslash\{\alpha_j\})$. The converse is similar. 

It follows by induction using~(\ref{eq:evolve}) that there exist $j_1,\ldots,j_k\in J$ such that, writing $x^{-1}=s_{j_1}\cdots s_{j_k}$, we have
$
\ell(x^{-1}t_{\gamma}u)=k+\ell(t_{\gamma}u),
$
and such that $L_J(x\gamma,xu)=\emptyset$. By Lemma~\ref{lem:characterise} we have that $x^{-1}t_{\gamma}u$ is of maximal length in $W_Jt_{\gamma}u$, and it follows from~(\ref{eq:evolve}) that $\Phi(x)=L_J(\gamma,u)$. 
\end{proof}

\begin{cor}\label{cor:doublecoset1}
Let $\gamma\in P$, $u\in \Wfin$, and $J\subseteq \{1,2,\ldots,n\}$. Let $m$ be the longest element of $W_Jt_{\gamma}uW_J$. Then 
$$
m=x^{-1}t_{\gamma}uy\quad\text{with}\quad \ell(m)=\ell(x)+\ell(t_{\gamma}u)+\ell(y),
$$
where $\Phi(x)=L_J(\gamma,u)$ and $\Phi(y)=R_J(x^{-1}\gamma,x^{-1}u)$. In particular, 
$$
\ell(m)-\ell(t_{\gamma}u)=|L_J(\gamma,u)|+|R_J(x^{-1}\gamma,x^{-1}u)|.
$$
\end{cor}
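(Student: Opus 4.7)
The plan is to apply Lemma~\ref{lem:maxlength} twice in succession. The first application, with $W_J$ acting on the left of $t_\gamma u$, will yield $x \in W_J$ with $\Phi(x) = L_J(\gamma, u)$ such that $x^{-1} t_\gamma u$ is the longest element of $W_J t_\gamma u$, with $\ell(x^{-1} t_\gamma u) = \ell(x) + \ell(t_\gamma u)$. Rewriting $x^{-1} t_\gamma u = t_{x^{-1} \gamma}(x^{-1} u)$ and applying the lemma again, this time with $W_J$ acting on the right, will produce $y \in W_J$ with $\Phi(y) = R_J(x^{-1} \gamma, x^{-1} u)$ such that $x^{-1} t_\gamma u y$ is the longest element of $(x^{-1} t_\gamma u) W_J$, with $\ell(x^{-1} t_\gamma u y) = \ell(x) + \ell(t_\gamma u) + \ell(y)$.

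Setting $m_0 = x^{-1} t_\gamma u y$, the next task is to verify that $m_0 = m$, i.e.\ that $m_0$ is the longest element of the full double coset $W_J t_\gamma u W_J$ (and not merely of a left or right coset). The strategy is to show that $J$ is contained in both descent sets of $m_0$. The right descent condition $\ell(m_0 s) < \ell(m_0)$ for all $s \in J$ follows directly from the maximality of $m_0$ in $(x^{-1} t_\gamma u) W_J$. For the left, the maximality of $x^{-1} t_\gamma u$ in $W_J t_\gamma u$ combined with the length-additivity $\ell(m_0) = \ell(x^{-1} t_\gamma u) + \ell(y)$ gives
\[
\ell(s m_0) \leq \ell(s x^{-1} t_\gamma u) + \ell(y) < \ell(x^{-1} t_\gamma u) + \ell(y) = \ell(m_0)
\]
for every $s \in J$.

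Once both descent sets of $m_0$ contain $J$, a standard Coxeter-theoretic argument identifies $m_0$ as the unique longest element of $W_J m_0 W_J = W_J t_\gamma u W_J$. The length formula $\ell(m) - \ell(t_\gamma u) = |L_J(\gamma, u)| + |R_J(x^{-1} \gamma, x^{-1} u)|$ is then immediate from $\ell(x) = |\Phi(x)|$ and $\ell(y) = |\Phi(y)|$.

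The main obstacle I anticipate is justifying the last Coxeter-theoretic step: that $J$-containment in both descent sets forces $m_0$ to be the longest in its $W_J$-double coset. The cleanest route will be via the decomposition of $W_J m_0 W_J$ through a minimum length representative $d$ combined with length-additivity in the factorisations $m_0 = \sw_J \cdot (\sw_J m_0)$ and $m_0 = (m_0 \sw_J) \cdot \sw_J$ implied by the descent conditions, which together pin down the length of $m_0$ as the maximum achievable by any $w_1 m_0 w_2$ with $w_1, w_2 \in W_J$.
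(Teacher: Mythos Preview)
Your proposal is correct and follows essentially the same approach as the paper: apply Lemma~\ref{lem:maxlength} first on the left to obtain $x$, then on the right to obtain $y$, and conclude that $x^{-1}t_{\gamma}uy$ is the longest element of the double coset. The paper's proof states the last step (``thus $x^{-1}t_{\gamma}uy$ is of maximal length in $W_Jt_{\gamma}uW_J$'') without justification, whereas you spell it out via the descent-set argument using $\ell(sm_0)\leq\ell(sx^{-1}t_{\gamma}u)+\ell(y)$; this extra care is fine and the argument is sound.
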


\begin{proof}
By Lemma~\ref{lem:maxlength} we have that $x^{-1}t_{\gamma}u=t_{x^{-1}\gamma}(x^{-1}u)$ is of maximal length in $W_Jt_{\gamma}u$, where $\Phi(x)=L_J(\gamma,u)$, and that $\ell(x^{-1}t_{\gamma}u)=\ell(x)+\ell(t_{\gamma}u)$. Then, again using Lemma~\ref{lem:maxlength}, we have that $(x^{-1}t_{\gamma}u)y$ is of maximal length in $x^{-1}t_{\gamma}uW_J$, where $\Phi(y)=R_J(x^{-1}\gamma,x^{-1}u)$, and that 
$$
\ell((x^{-1}t_{\gamma}u)y)=\ell(x^{-1}t_{\gamma}u)+\ell(y)=\ell(x)+\ell(t_{\gamma}u)+\ell(y).
$$
Thus $x^{-1}t_{\gamma}uy$ is of maximal length in $W_Jt_{\gamma}uW_J$, and the result follows. 
\end{proof}

\subsection{The elements $\sm_{\gamma}$}\label{sec:double2}

In this section we prove that $\ell(\sm_{\ga})=\ell(\su_{\la}^{-1}\tau_{\ga}\su_{\la})+\ell(\sw_{\la'})$ (see Theorem~\ref{thm:doublecoset}). The length $\ell(\su_{\la}^{-1}\tau_{\ga}\su_{\la})$ is computed in the next section (see Theorem~\ref{thm:lengthconj}).

\begin{prop}\label{prop:doublecoset2}
Let $\gamma\in P^{(\la)}$. We have 
$
\ell(\sm_{\gamma})-\ell(\su_{\la}^{-1}\tau_{\gamma}\su_{\la})=|L(\gamma)|+|R(\gamma)|,
$
where
\begin{align*}
L(\gamma)&=\{\beta\in\su_{\la}\Phi_{\la'}^+\mid \langle \gamma,\beta\rangle>0\text{ or $\langle \gamma,\beta\rangle=0$ and $\su_{\la}^{-1}\sy_{\gamma}^{-1}\beta>0$}\},\\
R(\gamma)&=\{\beta\in\su_{\la}\Phi_{\la'}^+\mid \langle\sw_{\la}\gamma,\beta\rangle<0\text{ or } \langle\sw_{\la}\gamma,\beta\rangle=0\text{ and }\su_{\la}^{-1}\sy_{\gamma}\beta>0\text{ and }\su_{\la}^{-1}\sy_{\gamma}\beta\notin\Phi_{\la'}^+\}.
\end{align*} 
\end{prop}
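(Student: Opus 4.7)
The plan is to apply Corollary~\ref{cor:doublecoset1} directly. Writing $\su_{\la}^{-1}\tau_{\ga}\su_{\la} = t_{\ga'}u$ where $\ga' = \su_{\la}^{-1}\ga$ and $u = \su_{\la}^{-1}\sy_{\ga}\su_{\la}$, and applying the corollary with $J = J_{\la'}$, we obtain
\[
\ell(\sm_{\ga}) - \ell(\su_{\la}^{-1}\tau_{\ga}\su_{\la}) = |L_{J_{\la'}}(\ga',u)| + |R_{J_{\la'}}(x^{-1}\ga',x^{-1}u)|
\]
with $x \in W_{\la'}$ satisfying $\Phi(x) = L_{J_{\la'}}(\ga',u)$. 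The goal is then to show via the substitution $\beta = \su_{\la}\alpha$ that the two terms on the right equal $|L(\ga)|$ and $|R(\ga)|$ respectively. As a preliminary step, one checks that $\su_{\la}\Phi_{\la'}^+ \subseteq \Phi^+$; this follows because Corollary~\ref{cor:lengthaddula} gives $\su_{\la}\alpha_{j'} > 0$ for every $j' \in J_{\la'}$, and in type $\sA$ the sum of these positive simple images remains positive for any root of~$\Phi_{\la'}^+$.

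The key identity driving the proof is
\[
\sy_{\ga}^{-1}\ga = \sw_{\la}\ga.
\]
This follows immediately from the factorisation $\sy_{\ga} = \sw_{J_{\la}\setminus J_{\la}(\ga)}\sw_{J_{\la}}$ in~(\ref{eq:defny}): since $\ga \in P^{(\la)}$, we have $\langle\ga,\alpha_j\rangle = 0$ for $j \in J_{\la} \setminus J_{\la}(\ga)$, so $\sw_{J_{\la}\setminus J_{\la}(\ga)}\ga = \ga$. Using this, $\langle u^{-1}\ga',\alpha\rangle = \langle\sy_{\ga}^{-1}\ga,\beta\rangle = \langle\sw_{\la}\ga,\beta\rangle$, while $u^{-1}\alpha = \su_{\la}^{-1}\sy_{\ga}^{-1}\beta$. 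Substituting into the definition of $L_{J_{\la'}}(\ga',u)$ transparently yields the definition of $L(\ga)$.

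For the second cardinality, the computation $(x^{-1}u)^{-1}x^{-1}\ga' = u^{-1}\ga'$ shows that $R_{J_{\la'}}(x^{-1}\ga',x^{-1}u)$ consists of those $\alpha \in \Phi_{\la'}^+$ with either $\langle\sw_{\la}\ga,\beta\rangle < 0$ or $[\langle\sw_{\la}\ga,\beta\rangle = 0$ and $x^{-1}\delta > 0]$, where $\delta = u\alpha = \su_{\la}^{-1}\sy_{\ga}\beta$. The hard part is converting the condition ``$x^{-1}\delta > 0$'' into ``$\delta > 0$ and $\delta \notin \Phi_{\la'}^+$'' when $\langle\sw_{\la}\ga,\beta\rangle = 0$, and this will require a careful case analysis depending on whether $\delta$ lies in $\Phi_{\la'}$. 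The case $\delta \notin \Phi_{\la'}$ is immediate since $x \in W_{\la'}$ preserves the sign of roots outside $\Phi_{\la'}$. The two remaining cases (with $\delta \in \pm\Phi_{\la'}^+$) require showing that the relevant $\pm\sy_{\ga}\beta$ does or does not lie in $L(\ga) = \su_{\la}\Phi(x)$; concretely, when $\langle\sw_{\la}\ga,\beta\rangle = 0$ one computes $\langle\ga,\sy_{\ga}\beta\rangle = \langle\sw_{\la}\ga,\beta\rangle = 0$ and $\su_{\la}^{-1}\sy_{\ga}^{-1}(\sy_{\ga}\beta) = \alpha > 0$, placing $\sy_{\ga}\beta$ in $L(\ga)$ (case $\delta \in \Phi_{\la'}^+$); and symmetrically $-\sy_{\ga}\beta$ fails the positivity condition in the definition of $L(\ga)$ (case $\delta \in -\Phi_{\la'}^+$).

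The main obstacle is this final case analysis, specifically verifying that $\Phi(x) = \su_{\la}^{-1}L(\ga)$ interacts with $\delta$ in exactly the way needed so that the three cases produce precisely the conditions defining $R(\ga)$. Once this bijection $\alpha \leftrightarrow \beta = \su_{\la}\alpha$ is set up, counting gives $|R_{J_{\la'}}(x^{-1}\ga',x^{-1}u)| = |R(\ga)|$ and the proposition follows.
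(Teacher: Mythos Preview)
Your proposal is correct and follows essentially the same approach as the paper: apply Corollary~\ref{cor:doublecoset1} with $t_{\su_{\la}^{-1}\ga}(\su_{\la}^{-1}\sy_{\ga}\su_{\la})$, use the identity $\sy_{\ga}^{-1}\ga=\sw_{\la}\ga$, and then reduce the condition $x^{-1}\delta>0$ (for $\delta=\su_{\la}^{-1}\sy_{\ga}\beta$) to the stated condition via $\Phi(x)=\su_{\la}^{-1}L(\ga)$. The only cosmetic difference is that the paper organises the final equivalence as a forward/backward argument by contradiction, whereas you split into the three cases $\delta\notin\Phi_{\la'}$, $\delta\in\Phi_{\la'}^+$, $\delta\in-\Phi_{\la'}^+$; the computations in each case match.
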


\begin{proof}
Since $\su_{\la}^{-1}\tau_{\ga}\su_{\la}=t_{\su_{\la}^{-1}\ga}\su_{\la}^{-1}\sy_{\ga}\su$ Corollary~\ref{cor:doublecoset1} gives $\ell(\sm_{\gamma})-\ell(\su_{\la}^{-1}\tau_{\gamma}\su_{\la})=|L(\gamma)|+|R'(\gamma)|$ with $L(\ga)$ as given in the statement of the proposition, and $R'(\ga)$ given by 
$$
R'(\ga)=\{\beta\in\su_{\la}\Phi_{\la'}^+\mid \langle \sy_{\gamma}^{-1}\gamma,\beta\rangle<0\text{ or $\langle \sy_{\gamma}^{-1}\gamma,\beta\rangle=0$ and $x^{-1}\su_{\la}^{-1}\sy_{\gamma}\beta>0$}\}
$$
where $x\in\Wfin$ is defined by $\Phi(x)=\su_{\la}^{-1}L(\gamma)$. It remains to show that $R'(\ga)=R(\ga)$. By~(\ref{eq:defny}) we have $\sy_{\gamma}^{-1}=\sw_{\la}\sw_{J_{\la}\backslash J_{\la}(\gamma)}$. Moreover, since $\ga\in P^{(\la)}$ and $J_{\la}(\ga)=\{j\in J_{\la}\mid \langle\ga,\alpha_j\rangle=1\}$ we have $\langle\ga,\alpha_j\rangle=0$ for all $j\in J_{\la}\backslash J_{\la}(\gamma)$, and hence $\sy_{\ga}^{-1}\ga=\sw_{\la}\ga$. 

To conclude the proof we must show that if $\langle\sw_{\la}\gamma,\beta\rangle=0$, then $x^{-1}\su_{\la}^{-1}\sy_{\ga}\beta>0$ if and only if $\su_{\la}^{-1}\sy_{\ga}\beta>0$ and $\su_{\la}^{-1}\sy_{\ga}\beta\notin\Phi_{\la'}^+$. Suppose first that $x^{-1}\su_{\la}^{-1}\sy_{\ga}\beta>0$. If $\su_{\la}^{-1}\sy_{\ga}\beta<0$ then $-\su_{\la}^{-1}\sy_{\ga}\beta\in\Phi(x)=\su_{\la}^{-1}L(\ga)$, and so $\beta'=-\sy_{\ga}\beta\in L(\ga)$. Since $\langle\ga,\beta'\rangle=-\langle\sw_{\la}\ga,\beta\rangle=0$ the definition of $L(\ga)$ forces $\su_{\la}^{-1}\sy_{\ga}^{-1}\beta'>0$, however $\su_{\la}^{-1}\sy_{\ga}^{-1}\beta'=-\su_{\la}^{-1}\beta\in -\Phi_{\la'}^+$, a contradiction. Thus $\su_{\la}^{-1}\sy_{\ga}\beta>0$, and so $\sy_{\ga}\beta\notin L(\ga)$ (because $\su_{\la}^{-1}\sy_{\ga}\beta\notin\Phi(x)$). If $\su_{\la}^{-1}\sy_{\ga}\beta\in\Phi_{\la'}^+$ then $\beta'=\sy_{\ga}\beta\in\su_{\la}\Phi_{\la'}^+$ and since $\langle\ga,\beta'\rangle=\langle\sw_{\la}\ga,\beta\rangle=0$ and $\su_{\la}^{-1}\sy_{\ga}^{-1}\beta'=\su_{\la}^{-1}\beta>0$ we have $\beta'=\sy_{\ga}\beta\in L(\ga)$, a contradiction. Conversely, suppose that $\langle\sw_{\la}\ga,\beta\rangle=0$ with $\su_{\la}^{-1}\sy_{\ga}\beta>0$ and $\su_{\la}^{-1}\sy_{\ga}\beta\notin\Phi_{\la'}^+$. If $x^{-1}\su_{\la}^{-1}\sy_{\ga}\beta<0$ then $\su_{\la}^{-1}\sy_{\ga}\beta\in \Phi(x)=\su_{\la}^{-1}L(\ga)$ and so $\sy_{\ga}\beta\in L(x)\subseteq \su_{\la}\Phi_{\la'}^+$, contradicting $\su_{\la}^{-1}\sy_{\ga}\beta\notin\Phi_{\la'}^+$ and concluding the proof. 
\end{proof}

Recall that $\la[k,i]$ denotes the entry in the $k$th row and $i$th column of $\bt_{r}(\la)$. 

\begin{lemma}\label{lem:imageroots}
We have
$$
\su_{\la}\Phi^+=\{e_{\la[k,i]}-e_{\la[l,j]}\mid 0\leq k,l\leq r(\la),\,\,\,1\leq i\leq j\leq \la_l,\,\text{ and if $i=j$ then $k<l$}\}.
$$
\end{lemma}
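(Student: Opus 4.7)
The plan is to unpack the definition of $\su_{\la}$ and compute its action on $\Phi^+$ directly via a combinatorial change of coordinates. Recall that $\su_{\la}$ is defined by reading the entries of $\bt_r(\la)$ column by column, top to bottom, left to right. Concretely, a cell in row $k$ and column $c$ of the Young diagram of $\la$ contributes the entry $\la[k,c]$ (which is defined precisely when $c \leq \la_k$, equivalently $k \leq \la_c'$), and the column-reading visits this cell at position $m = \la'(c-1)+k$, where $\la'(c-1) = \la'_1+\cdots+\la'_{c-1}$. Hence I would first establish the explicit formula $\su_{\la}(\la'(c-1)+k) = \la[k,c]$, which gives a bijection between $\{1,\ldots,n+1\}$ and cells $(k,c)$ of the Young diagram with $1 \leq k \leq \la_c'$.

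Next, I would translate the order on $\{1,\ldots,n+1\}$ into the lexicographic (column-major) order on cells. For $a,b \in \{1,\ldots,n+1\}$ write $a = \la'(i-1)+k$ with $1 \leq k \leq \la_i'$ and $b = \la'(j-1)+l$ with $1 \leq l \leq \la_j'$; then $a < b$ if and only if either $i < j$, or $i=j$ and $k < l$. Applying $\su_{\la}$ to $e_a - e_b$ yields $e_{\la[k,i]} - e_{\la[l,j]}$. Together these observations identify $\su_{\la}\Phi^+$ with the set of roots $e_{\la[k,i]} - e_{\la[l,j]}$ where $1 \leq i \leq j$, $k \leq \la_i'$, $l \leq \la_j'$, and $k < l$ when $i = j$.

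Finally, I would rewrite the cell-existence conditions in the form given in the lemma. The identity $\la_c' \geq k \iff \la_k \geq c$ (the defining property of the conjugate partition) translates $k \leq \la_i'$ into $i \leq \la_k$ (making $\la[k,i]$ defined) and $l \leq \la_j'$ into $j \leq \la_l$ (making $\la[l,j]$ defined). Note that in the case $i=j$ with $k < l$, the inequality $\la_k \geq \la_l$ forces $i \leq \la_k$ automatically, so under the convention that $\la[k,i]$ only appears when it is defined, the stated condition $1 \leq i \leq j \leq \la_l$ together with ``if $i=j$ then $k<l$'' captures exactly the image. This is a direct verification, so I do not anticipate any real obstacle; the only care needed is to treat the implicit cell-existence constraints consistently with the paper's indexing conventions on the Young diagram.
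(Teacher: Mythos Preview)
Your proposal is correct and follows the same idea as the paper's proof: the paper simply observes that, by the definition of $\su_{\la}$ via column reading, $\su_{\la}\Phi^+$ consists of the roots $e_{i_1}-e_{i_2}$ with $i_1$ preceding $i_2$ in the column reading of $\bt_r(\la)$, and then reads off the stated description. Your argument is a fully unpacked version of this same observation, with the explicit formula $\su_{\la}(\la'(c-1)+k)=\la[k,c]$ and the translation of the natural order to column-major order on cells; your remark about the implicit condition $i\leq \la_k$ being forced (or understood as a definedness convention) is a useful clarification that the paper leaves tacit.
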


\begin{proof}
It follows from the definition of $\su_{\la}$ that $\su_{\la}\Phi^+$ consists of the roots $e_{i_1}-e_{i_2}$ such that $i_1$ occurs before $i_2$ in the column reading of $\bt_r(\la)$, and hence the result. 
\end{proof}

\begin{lemma}\label{lem:poscriteria}
Let $\beta=e_{\la[k,i]}-e_{\la[l,j]}$ with $1\leq k<l\leq r(\la)$, $1\leq i\leq \la_k$, and $1\leq j\leq \la_l$. Then
\begin{compactenum}[$(1)$]
\item $\su_{\la}^{-1}\beta>0$ if and only if $i\leq j$.
\item $\su_{\la}^{-1}\beta\in \Phi_{\la'}^+$ if and only if $i=j$. 
\end{compactenum}
\end{lemma}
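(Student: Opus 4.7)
The plan is to reduce both statements to simple inequalities involving the partial sums $\la'(m)=\la'_1+\cdots+\la'_m$, by computing $\su_{\la}^{-1}$ explicitly on entries of $\bt_r(\la)$.

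First I would establish the key computation: since $\su_{\la}$ is defined by reading $\bt_r(\la)$ down columns from left to right, the entry $\la[k,i]$ (row $k$, column $i$) is the $(\la'(i-1)+k)$-th entry in the one-line notation of $\su_{\la}$. Hence
\[
\su_{\la}^{-1}(\la[k,i])=\la'(i-1)+k,
\qquad\text{so}\qquad
\su_{\la}^{-1}\beta=e_{\la'(i-1)+k}-e_{\la'(j-1)+l}.
\]
All subsequent arguments reduce to comparing these two indices. Note that the hypotheses $i\leq \la_k$ and $j\leq \la_l$ translate (via conjugation) to $k\leq \la'_i$ and $l\leq \la'_j$, which I would use repeatedly.

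For part~(1), I would split into three cases:
\begin{itemize}
\item If $i<j$: then $\la'(j-1)-\la'(i-1)\geq \la'_i\geq k\geq 1$, hence $\la'(j-1)+l>\la'(i-1)+k$, so $\su_{\la}^{-1}\beta>0$.
\item If $i=j$: the comparison reduces to $k$ versus $l$, which gives $\su_{\la}^{-1}\beta>0$ since $k<l$ by hypothesis.
\item If $i>j$: then $\la'(i-1)-\la'(j-1)\geq \la'_j\geq l>k$, so $\la'(i-1)+k>\la'(j-1)+l$ and $\su_{\la}^{-1}\beta<0$.
\end{itemize}
These three cases together give $\su_{\la}^{-1}\beta>0$ iff $i\leq j$.

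For part~(2), I would first observe that $\Phi_{\la'}^+$ consists precisely of roots $e_p-e_q$ with $p<q$ such that $p$ and $q$ lie in a common ``block'' $\{\la'(m-1)+1,\ldots,\la'(m)\}$ (these blocks are the rows of $\bt_r(\la')$, equivalently the columns of $\bt_r(\la)$). Since $1\leq k\leq \la'_i$ and $1\leq l\leq \la'_j$, the index $\la'(i-1)+k$ lies in block $i$ and $\la'(j-1)+l$ lies in block $j$. Thus $\su_{\la}^{-1}\beta\in\Phi_{\la'}^+$ forces $i=j$; conversely when $i=j$ the two indices lie in block $i$ and the hypothesis $k<l$ ensures positivity, yielding membership in $\Phi_{\la'}^+$.

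The main obstacle is essentially bookkeeping: one must consistently distinguish $\la[k,i]$ (an \emph{entry} of $\bt_r(\la)$) from its \emph{position} $\la'(i-1)+k$ in the column reading, and verify that each inequality derived from the partition shape is sharp enough to handle the boundary case $i=j$, where the hypothesis $k<l$ does the work. No nontrivial structure theory is needed beyond Lemma~\ref{lem:imageroots} and the description of $J_{\la'}$.
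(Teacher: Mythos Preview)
Your proof is correct and is precisely the fleshing-out that the paper's one-line argument (``this follows directly from the definition of $\su_{\la}$'') invites: the explicit formula $\su_{\la}^{-1}(\la[k,i])=\la'(i-1)+k$ reduces both claims to elementary index comparisons, and your case analysis handles the boundaries cleanly. There is nothing to add.
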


\begin{proof}
Again this follows directly from the definition of $\su_{\la}$.
\end{proof}

In particular, we have 
$$
\su_{\la}\Phi_{\la'}^+=\{e_{\la[k,i]}-e_{\la[l,i]}\mid 1\leq k<l\leq r(\la),\,1\leq i\leq \la_{l}\}.
$$
It is convenient to decompose $\su_{\la}\Phi_{\la'}^+$ as follows:
\begin{align}\label{eq:decomposeu}
\su_{\la}\Phi_{\la'}^+=\bigsqcup_{1\leq k<l\leq r(\la)}A_{k,l}\quad\text{where}\quad A_{k,l}=\{e_{\la(k-1)+i}-e_{\la(l-1)+i}\mid 1\leq i\leq \la_{l}\}.
\end{align}

For the next three results we will fix $1\leq k<l\leq r(\la)$ and $1\leq i\leq \la_l$, and set 
\begin{compactenum}[$(1)$]
\item $\beta=e_{\la[k,i]}-e_{[l,i]}\in A_{k,l}$,
\item $\gamma\in P^{(\la)}$ with $\gamma+Q_{\la}=\sum_{j=1}^{r(\la)}a_j\tilde{e}_j$, and $a_j=\la_jb_j+c_j$ with $0\leq c_j<\la_j$ for $1\leq j\leq r(\la)$,
\item $c_j^*=\la_j-c_j$ for $1\leq j\leq r(\la)$. 
\end{compactenum}

The following lemmas are helpful in determining $L(\ga)$ and $R(\gamma)$. 

\begin{lemma}\label{lem:conditions1}
Let $\beta$ and $\gamma$ be as above. We have $\su_{\la}^{-1}\sy_{\gamma}^{-1}\beta>0$ if and only if either:
\begin{compactenum}[$(a)$]
\item $i\leq c_k$ and $i\leq c_l$ with $c_k^*\leq c_l^*$, or
\item $i>c_k$ and $i\leq c_l$, or 
\item $i>c_k$ and $i>c_l$ with $c_l\leq c_k$. 
\end{compactenum}
\end{lemma}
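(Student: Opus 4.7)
The plan is to unpack the permutation $\sy_{\gamma}$ explicitly on each row of $\bt_r(\la)$, apply it to $\beta$, and then read off when $\su_{\la}^{-1}\sy_{\gamma}^{-1}\beta>0$ using Lemma~\ref{lem:poscriteria}. The conditions $(a)$, $(b)$, $(c)$ will then drop out of a straightforward case analysis.

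First, I would use Remark~\ref{rem:bijectionP} to write $\gamma$ explicitly: in row $k$ of the tableau, the entries are $b_k+1$ in the first $c_k$ columns and $b_k$ in the last $c_k^*=\la_k-c_k$ columns. Then from \cite[Lemma~2.8]{GLP:23} we have
$$\Phi(\sy_{\gamma})=\{\alpha\in\Phi_{\la}^+\mid\langle\gamma,\alpha\rangle=1\},$$
and the rank one computation shows that the row-$k$ contribution is exactly $\{e_{\la[k,a]}-e_{\la[k,b]}\mid 1\leq a\leq c_k<b\leq \la_k\}$. Consequently $\sy_{\gamma}$ acts on row $k$ by a block-swap: writing $\sy_{\gamma}e_{\la[k,j]}=e_{\la[k,\sigma_k(j)]}$, the permutation $\sigma_k\in\mathfrak{S}_{\la_k}$ satisfies
$$\sigma_k^{-1}(j)=\begin{cases}j+c_k^*&\text{if }1\leq j\leq c_k,\\ j-c_k&\text{if }c_k<j\leq\la_k.\end{cases}$$

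With this formula in hand, for $\beta=e_{\la[k,i]}-e_{\la[l,i]}$ I would compute $\sy_{\gamma}^{-1}\beta=e_{\la[k,\sigma_k^{-1}(i)]}-e_{\la[l,\sigma_l^{-1}(i)]}$. Since $k<l$, Lemma~\ref{lem:poscriteria}$(1)$ says that $\su_{\la}^{-1}\sy_{\gamma}^{-1}\beta>0$ is equivalent to $\sigma_k^{-1}(i)\leq \sigma_l^{-1}(i)$. The range $1\leq i\leq \la_l\leq\la_k$ splits into four subcases according to whether $i\leq c_k$ or $i>c_k$ and $i\leq c_l$ or $i>c_l$; substituting the piecewise formula for $\sigma_k^{-1}$ and $\sigma_l^{-1}$ turns each subcase into a trivial inequality:
\begin{itemize}
\item $i\leq c_k,\ i\leq c_l$: the inequality becomes $c_k^*\leq c_l^*$, which is case $(a)$.
\item $i>c_k,\ i\leq c_l$: the inequality becomes $-c_k\leq c_l^*$, which is automatic, giving case $(b)$.
\item $i>c_k,\ i>c_l$: the inequality becomes $c_l\leq c_k$, which is case $(c)$.
\item $i\leq c_k,\ i>c_l$: the inequality becomes $c_k^*\leq -c_l$, which is impossible since $c_k^*\geq 1$ and $c_l\geq 0$.
\end{itemize}
Thus $\su_{\la}^{-1}\sy_{\gamma}^{-1}\beta>0$ precisely in the three listed cases, completing the proof.

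There is no real obstacle here; the only slightly delicate point is correctly translating the description of $\Phi(\sy_{\gamma})$ into the inverse permutation $\sigma_k^{-1}$ (so that both indices of $\beta$ are being moved the correct way), which is why I want to record the formula for $\sigma_k^{-1}$ rather than $\sigma_k$ before doing the case analysis.
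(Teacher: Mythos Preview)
Your proposal is correct and follows essentially the same route as the paper's proof: both compute $\sy_{\gamma}^{-1}\beta=e_{\la[k,i']}-e_{\la[l,j']}$ with the identical piecewise formula for $i'=\sigma_k^{-1}(i)$ and $j'=\sigma_l^{-1}(i)$, and then invoke Lemma~\ref{lem:poscriteria}(1) to reduce to the inequality $i'\leq j'$, yielding the four-case split. The only difference is cosmetic: the paper derives the formula from the factorisation $\sy_{\gamma}=\sw_{J_{\la}\backslash J_{\la}(\gamma)}\sw_{\la}$ by a direct calculation, whereas you recover the same row permutation from the inversion set $\Phi(\sy_{\gamma})$.
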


\begin{proof}
Recall that $\sy_{\ga}=\sw_{J_{\la}\backslash J_{\la}(\ga)}\sw_{\la}$. A direct calculation gives $\sy_{\ga}^{-1}\beta=e_{\la[k,i']}-e_{\la[l,j']}$ where
\begin{align*}
i'&=\begin{cases}
c_k^*+i&\text{if $i\leq c_k$}\\
i-c_k&\text{if $i>c_k$}
\end{cases}&
j'&=\begin{cases}
c_l^*+i&\text{if $i\leq c_l$}\\
i-c_l&\text{if $i>c_l$}.
\end{cases}
\end{align*}
Now use that fact that $\su_{\la}^{-1}\sy_{\ga}^{-1}\beta>0$ if and only if $i'\leq j'$ (see Lemma~\ref{lem:poscriteria}). 
\end{proof}

\begin{lemma}\label{lem:conditions2}
Let $\beta$ and $\gamma$ be as above. We have $\su_{\la}^{-1}\sy_{\gamma}\beta>0$ with $\su_{\la}^{-1}\sy_{\gamma}\beta\notin \Phi_{\la'}^+$ if and only if either:
\begin{compactenum}[$(a)$]
\item $i\leq c_k^*$ and $i\leq c_l^*$ with $c_k< c_l$, or 
\item $i>c_k^*$ and $i\leq c_l^*$, or
\item $i>\la_k^*$ and $i>\la_l^*$ with $c_l^*< c_k^*$.
\end{compactenum}
\end{lemma}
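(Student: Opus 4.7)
The strategy is to mirror the proof of Lemma~\ref{lem:conditions1}: make $\sy_\ga\beta$ explicit and then read off positivity and membership in $\Phi_{\la'}^+$ from Lemma~\ref{lem:poscriteria}. Using the factorisation $\sy_\ga=\sw_{J_\la\setminus J_\la(\ga)}\sw_\la$ from~(\ref{eq:defny}), I would first apply $\sw_\la$. Since $\sw_\la$ is the longest element of the Young subgroup $W_\la$, it reverses each row of $\bt_r(\la)$, sending $e_{\la[m,i]}$ to $e_{\la[m,\la_m-i+1]}$. Hence $\sw_\la\beta=e_{\la[k,\la_k-i+1]}-e_{\la[l,\la_l-i+1]}$.

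Next I would identify $J_\la(\ga)$ concretely. From Remark~\ref{rem:bijectionP}, the tableau of $\ga$ has $k$th row consisting of $c_k$ copies of $b_k+1$ followed by $\la_k-c_k$ copies of $b_k$, so $\langle\ga,\alpha_j\rangle=1$ exactly when $j=\la(k-1)+c_k$ (and only for those $k$ with $c_k\geq 1$). Consequently $W_{J_\la\setminus J_\la(\ga)}$ decomposes row by row as a product of two symmetric groups acting on the column blocks $\{1,\dots,c_k\}$ and $\{c_k+1,\dots,\la_k\}$, and its longest element reverses each such block separately. Applying this to $\sw_\la\beta$ and splitting on whether the column index $\la_m-i+1$ (for $m\in\{k,l\}$) lies in the first or second block yields the explicit formula $\sy_\ga\beta=e_{\la[k,i'']}-e_{\la[l,j'']}$, where $i''=c_k+i$ if $i\leq c_k^*$ and $i''=i-c_k^*$ if $i>c_k^*$, and analogously for $j''$ in terms of $c_l,c_l^*$.

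By Lemma~\ref{lem:poscriteria}, the condition that $\su_\la^{-1}\sy_\ga\beta>0$ and $\su_\la^{-1}\sy_\ga\beta\notin\Phi_{\la'}^+$ is exactly the strict inequality $i''<j''$. A routine four-way case split on the signs of $i-c_k^*$ and $i-c_l^*$ then produces precisely the three alternatives in the statement: cases (a) and (c) reduce $i''<j''$ to $c_k<c_l$ and $c_l^*<c_k^*$ respectively, and case (b) reduces it to $c_l+c_k^*>0$, which is automatic since $0\leq c_j<\la_j$ forces $c_j^*\geq 1$; the remaining case $i\leq c_k^*$ with $i>c_l^*$ gives $i''-j''=c_k+c_l^*\geq 1>0$, so $i''>j''$ and this case contributes nothing. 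The only real subtlety is the bookkeeping of the two Young-subgroup reversals and not confusing $c_j$ with $c_j^*$; once $\sy_\ga\beta$ is expressed in the form above, the case analysis is immediate.
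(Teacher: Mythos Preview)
Your proof is correct and follows essentially the same approach as the paper: compute $\sy_\ga\beta=e_{\la[k,i']}-e_{\la[l,j']}$ explicitly (the paper states the same formulae for $i',j'$ without the intermediate step), then invoke Lemma~\ref{lem:poscriteria} to reduce the condition to the strict inequality $i'<j'$ and perform the four-way case split. Your more detailed derivation via the factorisation $\sy_\ga=\sw_{J_\la\setminus J_\la(\ga)}\sw_\la$ simply makes explicit what the paper calls a ``direct calculation''.
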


\begin{proof}
Similarly to in Lemma~\ref{lem:conditions1}, a direct calculation gives $\sy_{\ga}\beta=e_{\la[k,i']}-e_{\la[l,j']}$ where
\begin{align*}
i'&=\begin{cases}
i-c_k^*&\text{if $i>c_k^*$}\\
c_k+i&\text{if $i\leq c_k^*$}
\end{cases}&
j'&=\begin{cases}
i-c_l^*&\text{if $i>c_l^*$}\\
c_l+i&\text{if $i\leq c_l^*$.}
\end{cases}
\end{align*}
The result follows from the fact that $\su_{\la}^{-1}\sy_{\ga}\beta>0$ if and only if $i'\leq j'$, and if $i'=j'$ then $\su_{\la}^{-1}\sy_{\ga}\beta\in \Phi_{\la'}^+$ (see Lemma~\ref{lem:poscriteria}). 
\end{proof}

In the following proposition we determine $L(\ga)$ and $R(\ga)$. 

\begin{prop}\label{prop:classifyLandR}
Let $\beta$ and $\ga$ be as above. We have $\beta\in L(\ga)$ if and only if either $b_k-b_l\geq 1$, or $b_k-b_l=0$ with $c_k\geq c_l$ and either:
\begin{compactenum}[$(a)$]
\item $c_k^*>c_l^*$ with $c_l<i\leq \la_l$, or
\item $c_k^*\leq c_l^*$ with $1\leq i\leq \la_l$.
\end{compactenum}
We have $\beta\in R(\ga)$ if and only if either $b_k-b_l\leq -1$, or $b_k-b_l=0$ with $c_k^*>c_l^*$ and either:
\begin{compactenum}[$(a)$]
\item $c_k\geq c_l$ with $c_l^*<i\leq \la_l$, or
\item $c_k<c_l$ with $1\leq i\leq \la_l$.
\end{compactenum}
\end{prop}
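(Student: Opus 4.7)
The plan is to compute each of the two defining conditions (the pairing and the secondary positivity) directly in terms of the tableau data for $\gamma$, and then check membership in $L(\gamma)$ (respectively $R(\gamma)$) by a four-by-three case analysis. All the combinatorial preparation has already been done: Remark~\ref{rem:bijectionP} gives the entries of the tableau representing $\gamma$, and Lemmas~\ref{lem:conditions1} and~\ref{lem:conditions2} handle the secondary positivity conditions. So the work lies entirely in a bookkeeping exercise.

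First I would compute the pairing $\langle \gamma,\beta\rangle$ with $\beta=e_{\lambda[k,i]}-e_{\lambda[l,i]}$. Using the description from Remark~\ref{rem:bijectionP}, the $\lambda[k,i]$-coefficient of $\gamma$ equals $b_k+1$ when $i\le c_k$ and $b_k$ when $i>c_k$, and similarly in row $l$. This yields four sub-cases giving $\langle\gamma,\beta\rangle\in\{b_k-b_l,\ b_k-b_l+1,\ b_k-b_l-1,\ b_k-b_l\}$ according to whether $i$ sits in the first $c_k$ (resp.\ $c_l$) columns of its row. For $R(\gamma)$ the analogous computation for $\langle \sw_\lambda\gamma,\beta\rangle$ uses that $\sw_\lambda$ reverses each row of $\bt_r(\lambda)$, so $\sw_\lambda e_{\lambda[k,i]}=e_{\lambda[k,\lambda_k+1-i]}$; the cases are now indexed by $i$ versus $c_k^*,c_l^*$.

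Next I would resolve each row of the case table: (i) when the pairing is nonzero and of the right sign, $\beta$ is in (or out of) the set unambiguously; (ii) when the pairing vanishes, I invoke Lemma~\ref{lem:conditions1} for $L(\gamma)$ or Lemma~\ref{lem:conditions2} for $R(\gamma)$ to decide membership. The strict sign cases immediately yield that $b_k-b_l\ge 1$ forces $\beta\in L(\gamma)$ and $\beta\notin R(\gamma)$, while $b_k-b_l\le -1$ forces the opposite. All the nontrivial content concerns the balanced case $b_k=b_l$, where one has to combine the four geometric sub-cases with the appropriate secondary condition (Lemma~\ref{lem:conditions1}(a)--(c) for $L$, Lemma~\ref{lem:conditions2}(a)--(c) for $R$).

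Finally, in the balanced case I would take the union of the sub-cases that survive and simplify. For $L(\gamma)$ this produces exactly the trichotomy $1\le i\le c_l$ (from Lemma~\ref{lem:conditions1}(a)), $c_l<i\le c_k$ (pairing $+1$), and $c_k<i\le \lambda_l$ (from Lemma~\ref{lem:conditions1}(c)); these combine cleanly into the two announced ranges depending on whether $c_k^*\le c_l^*$ or $c_k^*>c_l^*$, all within the hypothesis $c_k\ge c_l$. For $R(\gamma)$ the parallel argument with the $c_k^*$-parameters isolates the hypothesis $c_k^*>c_l^*$ and splits by whether $c_k\ge c_l$ or $c_k<c_l$. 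The main obstacle is purely organisational: keeping straight which of the four geometric cases invokes which part of Lemma~\ref{lem:conditions1} or~\ref{lem:conditions2}, and checking that the $c_k^*=c_l^*$ and $c_k^*<c_l^*$ cases genuinely contribute nothing (the latter requires noting that $c_k^*<c_l^*$ forces $c_k>c_l$, which kills Lemma~\ref{lem:conditions2}(a), while Lemma~\ref{lem:conditions2}(c) requires $c_l^*<c_k^*$, a contradiction).
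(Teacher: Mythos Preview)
Your approach is essentially the paper's: compute $\langle\gamma,\beta\rangle$ (respectively $\langle\sw_\lambda\gamma,\beta\rangle$) from the tableau data, split into the four geometric sub-cases indexed by $i$ versus $c_k,c_l$ (respectively $c_k^*,c_l^*$), and invoke Lemma~\ref{lem:conditions1} (respectively Lemma~\ref{lem:conditions2}) whenever the pairing vanishes. The organisation and the final simplification you describe for the balanced case match the paper's proof.

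There is one small slip in your summary. You write that ``the strict sign cases immediately yield that $b_k-b_l\ge 1$ forces $\beta\in L(\gamma)$'' (and dually for $b_k-b_l\le -1$). This is not quite immediate: when $b_k-b_l=1$ and $i>c_k$, $i\le c_l$, the pairing is $b_k-b_l-1=0$, so this falls into your case~(ii) and requires Lemma~\ref{lem:conditions1}(b) to conclude $\su_\lambda^{-1}\sy_\gamma^{-1}\beta>0$ and hence $\beta\in L(\gamma)$. Symmetrically, when $b_k-b_l=-1$ and $i\le c_k$, $i>c_l$, the pairing vanishes and one must check (via the failure of all three clauses of Lemma~\ref{lem:conditions1}) that $\su_\lambda^{-1}\sy_\gamma^{-1}\beta<0$, giving $\beta\notin L(\gamma)$. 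The paper handles $b_k-b_l\in\{-1,0,1\}$ as separate bullet points for exactly this reason. Your framework (i)--(ii) covers these cases correctly once you apply it; just be aware that the boundary values $b_k-b_l=\pm 1$ are not purely ``strict sign'' and each contributes one vanishing-pairing sub-case that needs the secondary check.
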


\begin{proof}
By Proposition~\ref{prop:bijectionP} we have
\begin{align*}
\langle\gamma,\beta\rangle&=\begin{cases}
b_k-b_l&\text{if $i\leq c_k$ and $i\leq c_l$, or $i>c_k$ and $i>c_l$}\\
b_k-b_l+1&\text{if $i\leq c_k$ and $i>c_l$}\\
b_k-b_l-1&\text{if $i>c_k$ and $i\leq c_l$,}
\end{cases}
\end{align*}
and it follows that:
\begin{compactenum}
\item[\raisebox{0.35ex}{\tiny$\bullet$}] If $b_k-b_l\leq -2$ then $\langle\ga,\beta\rangle<0$ and so $\beta\notin L(\ga)$. 
\item[\raisebox{0.35ex}{\tiny$\bullet$}] If $b_k-b_l=-1$ then $\beta\notin L(\ga)$ unless $i\leq c_k$ and $i>c_l$ and $\su_{\la}^{-1}\sy_{\ga}^{-1}\beta>0$. However by Lemma~\ref{lem:conditions1} if $i\leq c_k$ and $i>c_l$ then $\su_{\la}^{-1}\sy_{\ga}^{-1}\beta<0$, and so $\beta\notin L(\ga)$ whenever $b_k-b_l=-1$. 
\item[\raisebox{0.35ex}{\tiny$\bullet$}] If $b_k-b_l=0$ then we have the following:
\begin{compactenum}[$(a)$]
\item If $i\leq c_k$ and $i>c_l$ then $\beta\in L(\ga)$. 
\item If $i>c_k$ and $i\leq c_l$ then $\beta\notin L(\ga)$. 
\item If $i\leq c_k$ and $i\leq c_l$ we have $\langle\ga,\beta\rangle=0$, and using Lemma~\ref{lem:conditions1}, we have $\beta\in L(\ga)$ if and only if $c_k^*\leq c_l^*$.  
\item Similarly, if $i>c_k$ and $i>c_l$ we have $\beta\in L(\ga)$ if and only if $c_l\leq c_k$. 
\end{compactenum} 
\item[\raisebox{0.35ex}{\tiny$\bullet$}] If $b_k-b_l=1$ then $\beta\in L(\ga)$ unless $i>c_k$ and $i\leq c_l$ with $\su_{\la}^{-1}\sy_{\ga}^{-1}\beta<0$, but by Lemma~\ref{lem:conditions1} if $i>c_k$ and $i\leq c_l$ then $\su_{\la}^{-1}\sy_{\ga}^{-1}\beta>0$. Thus $\beta\in L(\ga)$ whenever $b_k-b_l=1$. 
\item[\raisebox{0.35ex}{\tiny$\bullet$}] If $b_k-b_l\geq 2$ then $\langle\ga,\beta\rangle>0$ and so $\beta\in L(\ga)$. 
\end{compactenum}
It follows from the above observations that if $b_k=b_l$ then $\beta\in L(\ga)$ if and only if either $c_l<i\leq c_k$, or
$i\leq c_k$ and $i\leq c_l$ with $c_k^*\leq c_l^*$, or
$i>c_k$ and $i>c_l$ with $c_l\leq c_k$, and this in turn is equivalent to the inequalities stated in the lemma (noting that the situation $c_k<c_l$ and $c_k^*\leq c_l^*$ is impossible, since it implies that $\la_k\leq \la_l-(c_l-c_k)<\la_l$). 

For $R(\ga)$, by Proposition~\ref{prop:bijectionP} and the fact that $\sw_{\la}\beta=e_{\la[k,\la_k-i+1]}-e_{\la[l,\la_l-i+1]}$ we have
\begin{align*}
\langle\sw_{\la}\gamma,\beta\rangle&=\begin{cases}
b_k-b_l&\text{if $i\leq c_k^*$ and $i\leq  c_l^*$, or $i>c_k^*$ and $i> c_l^*$}\\
b_k-b_l+1&\text{if $i> c_k^*$ and $i\leq c_l^*$}\\
b_k-b_l-1&\text{if $i\leq c_k^*$ and $i> c_l^*$}
\end{cases}
\end{align*}
and the analysis is then completely analogous to the $L(\ga)$ case, making use of Lemma~\ref{lem:conditions2}.
\end{proof}

\begin{cor}\label{cor:de}
Let $\ga\in P^{(\la)}$. For $1\leq k<l\leq r(\la)$ we have
$$
|L(\ga)\cap A_{k,l}|+|R(\ga)\cap A_{k,l}|=\la_l. 
$$
\end{cor}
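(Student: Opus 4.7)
The plan is to apply Proposition~\ref{prop:classifyLandR} directly and perform a case analysis based on the sign of $d:=b_k-b_l$, observing that $|A_{k,l}|=\la_l$ so it suffices to track how the $\la_l$ roots $\beta_i=e_{\la(k-1)+i}-e_{\la(l-1)+i}$ (for $1\leq i\leq \la_l$) are distributed between $L(\ga)\cap A_{k,l}$ and $R(\ga)\cap A_{k,l}$.

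First I would dispose of the cases $d\geq 1$ and $d\leq -1$. If $d\geq 1$, Proposition~\ref{prop:classifyLandR} puts every $\beta_i$ in $L(\ga)$ and no $\beta_i$ in $R(\ga)$, so the sum equals $\la_l$; the case $d\leq -1$ is symmetric.

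The substance lies in the case $d=0$, which I would split into the four sub-cases determined by the pair of inequalities $(c_k\text{ vs }c_l)$ and $(c_k^*\text{ vs }c_l^*)$. If $c_k\geq c_l$ and $c_k^*\leq c_l^*$, Proposition~\ref{prop:classifyLandR} gives $|L(\ga)\cap A_{k,l}|=\la_l$ and $|R(\ga)\cap A_{k,l}|=0$; if $c_k<c_l$ and $c_k^*>c_l^*$, the roles swap. If $c_k\geq c_l$ and $c_k^*>c_l^*$, clause (a) of both classifications applies, yielding $|L(\ga)\cap A_{k,l}|=\la_l-c_l$ (the indices $c_l<i\leq\la_l$) and $|R(\ga)\cap A_{k,l}|=\la_l-c_l^*=c_l$ (the indices $c_l^*<i\leq\la_l$), which sum to $\la_l$.

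The main (and really only) subtlety is the remaining sub-case $c_k<c_l$ together with $c_k^*\leq c_l^*$, where the case-by-case contributions would sum to $0$ rather than~$\la_l$. I would resolve this by ruling out the sub-case entirely: the two inequalities combine to give $\la_k-\la_l\leq c_k-c_l<0$, i.e.\ $\la_k<\la_l$, which contradicts $k<l$ since $\la$ is a partition. This exhausts all possibilities and establishes the equality. No other step requires real ingenuity; the identity really is a bookkeeping check of Proposition~\ref{prop:classifyLandR} made consistent by the partition condition $\la_k\geq\la_l$.
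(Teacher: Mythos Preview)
Your proposal is correct and follows essentially the same approach as the paper: both apply Proposition~\ref{prop:classifyLandR}, dispose of the cases $|b_k-b_l|\geq 1$ immediately, and then split the case $b_k=b_l$ according to the inequalities between $c_k,c_l$ and $c_k^*,c_l^*$, ruling out the sub-case $c_k<c_l$ with $c_k^*\leq c_l^*$ via the partition condition $\la_k\geq\la_l$. The paper's proof is slightly terser in that it simply asserts this sub-case is impossible, whereas you spell out the one-line inequality argument.
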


\begin{proof}
Write $\gamma+Q_{\la}=\sum a_j\tilde{e}_j$ with $a_j=\la_jb_j+c_j$ as before. By Proposition~\ref{prop:classifyLandR} if $|b_k-b_l|\geq 1$ then $|L(\ga)\cap A_{k,l}|+|R(\ga)\cap A_{k,l}|=\la_l$. Moreover, if $b_k-b_l=0$ then 
$$
|L(\ga)\cap A_{k,l}|=\begin{cases}
\la_l-c_l&\text{if $b_k-b_l=0$ with $c_k\geq c_l$ and $c_k^*>c_l^*$}\\
\la_l&\text{if $b_k-b_l=0$ with $c_k\geq c_l$ and $c_k^*\leq c_l^*$}\\
0&\text{otherwise},
\end{cases}
$$
and 
$$
|R(\ga)\cap A_{k,l}|=\begin{cases}
c_l&\text{if $b_k-b_l=0$ with $c_k\geq  c_l$ and $c_k^*>c_l^*$}\\
\la_l&\text{if $b_k-b_l=0$ with $c_k< c_l$ and $c_k^*> c_l^*$}\\
0&\text{otherwise},
\end{cases}
$$
and hence the result (again, noting that the situation $c_k<c_l$ and $c_k^*\leq c_l^*$ is impossible). 
\end{proof}

\begin{thm}\label{thm:doublecoset}
Let $\gamma\in P^{(\la)}$. There exist $x,y\in W_{\la'}$ such that 
$\sm_{\gamma}=x\su_{\la}^{-1}\tau_{\gamma}\su_{\la}y$ with 
$$
\ell(\sm_{\gamma})=\ell(x)+\ell(\su_{\la}^{-1}\tau_{\gamma}\su_{\la})+\ell(y)\quad\text{and}\quad \ell(x)+\ell(y)=\ell(\sw_{\la'}).
$$
\end{thm}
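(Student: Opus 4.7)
The plan is to read the theorem off the preparatory results of this appendix in two clean steps.

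First, I would realise $\sm_{\gamma}$ as the longest element of the double coset $W_{\la'}\,\su_{\la}^{-1}\tau_{\gamma}\su_{\la}\,W_{\la'}$, rewriting $\su_{\la}^{-1}\tau_{\gamma}\su_{\la}=t_{\su_{\la}^{-1}\gamma}(\su_{\la}^{-1}\sy_{\gamma}\su_{\la})$ to match the $t_{\mu}u$ hypothesis of Corollary \ref{cor:doublecoset1}. Applying that corollary with $J=J_{\la'}$ immediately produces $x,y\in W_{\la'}$ satisfying $\sm_{\gamma}=x\,\su_{\la}^{-1}\tau_{\gamma}\su_{\la}\,y$ and the length-additive identity $\ell(\sm_{\gamma})=\ell(x)+\ell(\su_{\la}^{-1}\tau_{\gamma}\su_{\la})+\ell(y)$. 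This reduces the theorem to proving $\ell(x)+\ell(y)=\ell(\sw_{\la'})$.

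Next, Proposition \ref{prop:doublecoset2} rewrites $\ell(x)+\ell(y)$ as $|L(\gamma)|+|R(\gamma)|$, with $L(\gamma),R(\gamma)\subseteq \su_{\la}\Phi_{\la'}^+$ as defined there. I would then exploit the partition $\su_{\la}\Phi_{\la'}^+=\bigsqcup_{1\leq k<l\leq r(\la)}A_{k,l}$ of \eqref{eq:decomposeu} (with $|A_{k,l}|=\la_l$) and sum Corollary \ref{cor:de} over the pairs $k<l$ to obtain
$$
|L(\gamma)|+|R(\gamma)|=\sum_{1\leq k<l\leq r(\la)}\la_l=\sum_{l=1}^{r(\la)}(l-1)\la_l=\ba_{\la}=\ell(\sw_{\la'}),
$$
using the identity $\ba_{\la}=\sum_{i\geq 1}(i-1)\la_i=\ell(\sw_{\la'})$ recorded in Section \ref{sec:tableaudefns}.

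The hard part will not be this final assembly but the block-level identity of Corollary \ref{cor:de}: that $|L(\gamma)\cap A_{k,l}|+|R(\gamma)\cap A_{k,l}|=\la_l$ for every pair $k<l$, regardless of $\gamma$. Neither cardinality alone admits a clean closed form — each depends delicately on the signs of $b_k-b_l$, $c_k-c_l$ and $c_k^*-c_l^*$ — so the cancellation of all this $\gamma$-dependence at the block level is the true substance of the argument. It will be supplied by the case analysis of Proposition \ref{prop:classifyLandR}, itself underpinned by the positivity criteria for $\su_{\la}^{-1}\sy_{\gamma}^{\pm 1}\beta$ established in Lemmas \ref{lem:conditions1} and \ref{lem:conditions2}.
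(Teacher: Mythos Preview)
Your proposal is correct and follows essentially the same route as the paper: invoke Corollary~\ref{cor:doublecoset1} (implicitly via Proposition~\ref{prop:doublecoset2}) to produce $x,y$ with length additivity and $\ell(x)+\ell(y)=|L(\gamma)|+|R(\gamma)|$, then sum Corollary~\ref{cor:de} over the blocks $A_{k,l}$ using~\eqref{eq:decomposeu} to obtain $\ell(\sw_{\la'})$. Your identification of Corollary~\ref{cor:de} as the substantive step is also accurate.
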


\begin{proof}
By (\ref{eq:decomposeu}) and Corollary~\ref{cor:de} we have 
$$
|L(\ga)|+|R(\ga)|=\sum_{1\leq k<l\leq r(\la)}\big(|L(\ga)\cap A_{k,l}|+|R(\ga)\cap A_{k,l}|\big)=\sum_{1\leq k<l\leq r(\la)}\la_l=\sum_{l\geq 1}(l-1)\la_l,
$$
and the latter equals~$\ell(\sw_{\la'})$. The result now follows from Proposition~\ref{prop:doublecoset2}. 
\end{proof}

\subsection{The length of $\su_{\la}^{-1}\tau_{\ga}\su_{\la}$}\label{sec:length}

In this section we compute the length of $\su_{\la}^{-1}\tau_{\ga}\su_{\la}$ (see Theorem~\ref{thm:lengthconj}). We begin with some preliminary observations. For $1\leq k<l\leq r(\la)$ define
\begin{align*}
\beta^+(k,l;i,j)&=e_{\la[k,i]}-e_{\la[l,j]}&&\text{for $1\leq i\leq j\leq  \la_l$}\\
\beta^-(k,l;i,j)&=e_{\la[l,j]}-e_{\la[k,i]}&&\text{for $1\leq j\leq \la_l$ and $j<i\leq \la_k$}.
\end{align*}
Let $B_{k,l}=B_{k,l}^+\cup B_{k,l}^-$, where
\begin{align*}
B_{k,l}^+=\{\beta^+(k,l;i,j)\mid 1\leq i\leq j\leq \la_l\}\quad\text{and}\quad
B_{k,l}^-=\{\beta^-(k,l;i,j)\mid 1\leq j\leq \la_l\text{ and }j<i\leq \la_k\}.
\end{align*}
By Lemma~\ref{lem:imageroots} the set $\su_{\la}\Phi^+\backslash\Phi_{\la}$ is the disjoint union of the sets $B_{k,l}$ over $1\leq k<l\leq r(\la)$. Let
$$
\Phi_{k,l}^+=B_{k,l}^+\cup(-B_{k,l}^-). 
$$
Note that $\Phi_{k,l}^+$ consists precisely of the roots $e_p-e_q\in\Phi^+$ with $p$ in row $k$ of $\bt_r(\la)$ and $q$ in row $l$ of $\bt_r(\la)$. Thus $\Phi^+\backslash\Phi_{\la}$ is the disjoint union of the sets $\Phi_{k,l}^+$ over $1\leq k<l\leq r(\la)$. 

As in the previous section, we write $\ga=\sum_{m=1}^{r(\la)} a_m\tilde{e}_m$, where $a_m=\la_mb_m+c_m$ with $0\leq c_m<\la_m$ for $1\leq m\leq r(\la)$. Let $c_m^*=\la_m-c_m$.

\begin{lemma}\label{lem:conditions3} Let $\ga\in P^{(\la)}$ and let $\beta^+=\beta^+(k,l;i,j)$ {$($}respectively $\beta^-=\beta^-(k,l;i,j)${$)$}. We have $\su_{\la}^{-1}\sy_{\gamma}^{-1}\beta^+>0$ {$($}respectively $\su_{\la}^{-1}\sy_{\gamma}^{-1}\beta^->0${$)$} if and only if either:
\begin{compactenum}[$(a)$]
\item $i\leq c_k$, $j\leq c_l$ with $i+c_k^*\leq j+c_l^*$ {$($}respectively $i\leq c_k$, $j\leq c_l$ with $j+c_l^*<i+c_k^*${$)$}, or
\item $i\leq c_k$, $j>c_l$ with $i+c_k^*\leq j-c_l$ {$($}respectively $i\leq c_k$, $j>c_l${$)$}, or
\item $i>c_k$, $j\leq c_l$ {$($}respectively $i>c_k$, $j\leq c_l$ with $j+c_l^*<i-c_k${$)$}, or
\item $i>c_k$, $j>c_l$ with $i-c_k\leq j-c_l$ {$($}respectively $i>c_k$, $j>c_l$ with $j-c_l<i-c_k${$)$}. 
\end{compactenum}
\end{lemma}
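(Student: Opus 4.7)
The plan is to mirror the approach used in the proof of Lemma~\ref{lem:conditions1} (which is the special case $\beta = \beta^+(k,l;i,i)$), by first computing $\sy_{\gamma}^{-1}\beta^{\pm}$ explicitly and then applying Lemma~\ref{lem:poscriteria} to determine when $\su_{\la}^{-1}$ applied to the result is positive. The key observation is that $\sy_{\ga}^{-1} = \sw_\la^{-1}\sw_{J_\la\setminus J_\la(\ga)}^{-1}$ preserves each row of $\bt_r(\la)$, so its action on $e_{\la[k,i]}$ only involves the row-$k$ data $c_k, c_k^*$, and the action on $e_{\la[l,j]}$ only involves $c_l, c_l^*$; this decouples the two indices and allows the full $\beta^{\pm}(k,l;i,j)$ to be handled by the same formulas that were derived in the proof of Lemma~\ref{lem:conditions1}.

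Concretely, I would extract from the proof of Lemma~\ref{lem:conditions1} the identity
$$
\sy_{\ga}^{-1}(e_{\la[m,p]}) = e_{\la[m,p']},\quad p' = \begin{cases} p + c_m^* & \text{if } p\leq c_m,\\ p - c_m & \text{if } p>c_m,\end{cases}
$$
valid for every row $m$, where $\ga+Q_\la=\sum a_m\tilde e_m$ with $a_m=\la_mb_m+c_m$ and $c_m^*=\la_m-c_m$. Applying this to each summand gives $\sy_{\ga}^{-1}\beta^+(k,l;i,j) = e_{\la[k,i']}-e_{\la[l,j']}$ and $\sy_{\ga}^{-1}\beta^-(k,l;i,j) = e_{\la[l,j']}-e_{\la[k,i']}$, where $i',j'$ are given by the above formula applied to $(k,i)$ and $(l,j)$ respectively. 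By Lemma~\ref{lem:poscriteria} we then have $\su_{\la}^{-1}\sy_{\ga}^{-1}\beta^+>0$ iff $i'\leq j'$, and $\su_{\la}^{-1}\sy_{\ga}^{-1}\beta^->0$ iff $i'>j'$ (since $-\beta^- = e_{\la[k,i']}-e_{\la[l,j']}$ with $k<l$).

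It then remains to split into the four cases according to whether $i\le c_k$ or $i>c_k$, and whether $j\le c_l$ or $j>c_l$, substitute the appropriate expressions for $i'$ and $j'$, and translate the inequality $i'\le j'$ (respectively $i'>j'$) into the conditions listed in the lemma. Most cases are direct: for instance in the $\beta^+$ case with $i\le c_k$ and $j\le c_l$ one obtains $c_k^*+i\le c_l^*+j$, which is exactly $(a)$. The interesting subtlety is case $(c)$ for $\beta^+$ and case $(b)$ for $\beta^-$, where the lemma asserts positivity without any further inequality constraint; in those cases I would use the side constraints from the definition of $\beta^\pm$ (namely $i\le j\le\la_l$ for $\beta^+$ and $j<i\le\la_k$ for $\beta^-$, together with $k<l$ so $\la_k\ge\la_l$) to deduce that the relevant inequality is automatic.

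The only part I expect to require genuine care is precisely the verification of these automatic cases, because it requires combining the row-length hierarchy $\la_k\ge\la_l$ (coming from $k<l$), the parameter identity $c_m+c_m^*=\la_m$, and the placement constraints $i\le j$ or $j<i$; the remaining cases are essentially transcriptions of the defining inequality $i'\le j'$ (or $i'>j'$). Once these are dispatched the proof is complete, and the result then feeds directly into the length computations of Section~\ref{sec:length} used to derive $\ell(\su_\la^{-1}\tau_\ga\su_\la)$.
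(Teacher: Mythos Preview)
Your approach is correct and essentially identical to the paper's: the paper also computes $\sy_{\ga}^{-1}\beta^{\pm}$ row-by-row (obtaining exactly your formulas for $i',j'$), then invokes the criterion $\su_{\la}^{-1}(e_{\la[k,i']}-e_{\la[l,j']})>0\iff i'\leq j'$ to reduce to a case split. The paper is terser (it does not spell out why cases~(c) for $\beta^+$ and~(b) for $\beta^-$ are automatic), but your justification via the constraints $i\leq j$ and $j<i$ together with $c_k^*,c_l^*\geq 1$ is exactly right.
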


\begin{proof}
As in Lemma~\ref{lem:conditions1}, a direct calculation gives $\sy_{\ga}^{-1}\beta^+=e_{\la[k,i']}-e_{\la[l,j']}$ and $\sy_{\ga}^{-1}\beta^-=e_{\la[l,j']}-e_{\la[k,i']}$, where
\begin{align*}
i'&=\begin{cases}
c_k^*+i&\text{if $i\leq c_k$}\\
i-c_k&\text{if $i>c_k$}
\end{cases}&
j'&=\begin{cases}
c_l^*+j&\text{if $j\leq c_l$}\\
j-c_l&\text{if $j>c_l$}.
\end{cases}
\end{align*}
We have $\su_{\la}^{-1}\sy_{\ga}^{-1}\beta^+>0$ if and only if $i'\leq j'$, and $\su_{\la}^{-1}\sy_{\ga}^{-1}\beta^->0$ if and only if $j'<i'$.
\end{proof}

\begin{lemma}\label{lem:samerow}
If $\ga\in P^{(\la)}$ and $\alpha\in\Phi^+$ with $\su_{\la}\alpha\in\Phi_{\la}$ then
$
\langle\gamma,\su_{\la}\alpha\rangle=\chi^-(\su_{\la}^{-1}\sy_{\gamma}^{-1}\su_{\la}\alpha).
$
\end{lemma}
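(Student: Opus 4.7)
}
The plan is to reduce both sides of the identity to the same concrete condition on the indices describing $\su_{\la}\alpha$, using the explicit descriptions of the tableau of $\gamma$, of the action of $\sy_{\gamma}$ on standard basis vectors, and of the permutation $\su_{\la}$ in terms of column reading of $\bt_r(\la)$.

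First, I would identify the shape of $\su_{\la}\alpha$. Since $\alpha\in\Phi^+$, Lemma~\ref{lem:imageroots} writes $\su_{\la}\alpha=e_{\la[k,i]}-e_{\la[l,j]}$ with $i\le j$, and $k<l$ when $i=j$. Requiring $\su_{\la}\alpha\in\Phi_{\la}$ forces $k=l$ (both indices must lie in a common row of $\bt_r(\la)$), which in turn forces $i<j$; thus in fact $\su_{\la}\alpha\in\Phi_{\la}^+$ and takes the form $\su_{\la}\alpha=e_{\la[k,i]}-e_{\la[k,j]}$ with $1\le i<j\le\la_k$.

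Next I would compute the left-hand side. Writing $\gamma+Q_{\la}=\sum_m a_m\tilde{e}_m$ with $a_m=\la_m b_m+c_m$ and $0\le c_m<\la_m$, Remark~\ref{rem:bijectionP} shows that in row $k$ of the tableau representing $\gamma$ the value at column $p$ is $b_k+1$ when $p\le c_k$ and $b_k$ when $p>c_k$. Hence $\langle\gamma,\su_{\la}\alpha\rangle=d_{\la[k,i]}-d_{\la[k,j]}$ equals $1$ when $i\le c_k<j$ and $0$ otherwise (the case $c_k<i\le j$ and the case $i\le j\le c_k$ both giving $0$, while $j\le c_k<i$ is incompatible with $i<j$).

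Then I would compute the right-hand side using the formula for $\sy_{\gamma}^{-1}$ extracted from the proof of Lemma~\ref{lem:conditions1}: $\sy_{\gamma}^{-1}e_{\la[k,p]}=e_{\la[k,p'']}$ where $p''=c_k^*+p$ if $p\le c_k$ and $p''=p-c_k$ if $p>c_k$. Thus $\sy_{\gamma}^{-1}\su_{\la}\alpha=e_{\la[k,i'']}-e_{\la[k,j'']}$ stays in row~$k$. From the column-reading definition of $\su_{\la}$, the inverse $\su_{\la}^{-1}$ maps $\la[k,p]\mapsto\sum_{m<p}\la_m'+k$, so within row~$k$ it preserves the column order; therefore $\su_{\la}^{-1}(e_{\la[k,i'']}-e_{\la[k,j'']})\in -\Phi^+$ exactly when $i''>j''$. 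A short four-case analysis ($i,j$ on either side of $c_k$) shows $i''>j''$ iff $i\le c_k<j$ (the cases $i,j\le c_k$ and $i,j>c_k$ both preserve $i<j$, while $j\le c_k<i$ is excluded and $i\le c_k<j$ gives $i''=c_k^*+i\ge c_k^*+1>\la_k-c_k\ge j-c_k=j''$). Thus $\chi^-(\su_{\la}^{-1}\sy_{\gamma}^{-1}\su_{\la}\alpha)=1$ precisely when $i\le c_k<j$, matching the left-hand side.

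The only mild obstacle is the case bookkeeping in the last step; all ingredients are supplied by Lemma~\ref{lem:imageroots}, Proposition~\ref{prop:bijectionP}/Remark~\ref{rem:bijectionP}, and the formula for $\sy_{\gamma}^{\pm1}$ already established in the proofs of Lemmas~\ref{lem:conditions1}--\ref{lem:conditions2}, so no new tool is required.
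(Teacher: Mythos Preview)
Your proposal is correct and follows essentially the same route as the paper's proof: identify $\su_{\la}\alpha=e_{\la[k,i]}-e_{\la[k,j]}$ with $i<j$ (the paper cites Lemma~\ref{lem:imageroots} here just as you do), compute $\langle\gamma,\su_{\la}\alpha\rangle$ from the tableau description of~$\gamma$ (the paper cites Proposition~\ref{prop:bijectionP}), apply the same explicit formula for $\sy_{\gamma}^{-1}$ within row~$k$, and compare column indices to determine the sign of $\su_{\la}^{-1}\sy_{\gamma}^{-1}\su_{\la}\alpha$. The case split and the key inequality $i+c_k^*>j-c_k$ (equivalently $i+\la_k>j$) in the case $i\le c_k<j$ are identical to the paper's.
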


\begin{proof}
Let $\beta=\su_{\la}\alpha$. Since $\beta\in\Phi_{\la}$ we have $\beta=e_{\la[k,i]}-e_{\la[k,j]}$ for some $1\leq k\leq r(\la)$, with $1\leq i<j\leq \la_k$ (see Lemma~\ref{lem:imageroots}). Let $\gamma+Q_{\la}=\sum_{j=1}^{r(\la)}a_j\tilde{e}_j$. Write $a_k=\la_k b_k+c_k$ with $0\leq c_k<\la_k$. By Proposition~\ref{prop:bijectionP} we have
$$
\langle\ga,\beta\rangle=\begin{cases}
0&\text{if either $i,j\leq c_k$ or $i,j>c_k$}\\
1&\text{if $i\leq c_k<j$}.
\end{cases}
$$
We claim that in the first case $\su_{\la}^{-1}\sy_{\ga}^{-1}\beta>0$ while in the second case $\su_{\la}^{-1}\sy_{\ga}^{-1}\beta<0$. For if $i,j\leq c_k$ then as in Lemma~\ref{lem:conditions3} we have $\sy_{\ga}^{-1}\beta=e_{\la[k,i+c_k^*]}-e_{\la[k,j+c_k^*]}$ and so $\su_{\la}^{-1}\sy_{\ga}^{-1}\beta>0$ (as $i+c_k^*<j+c_k^*$, see Lemma~\ref{lem:poscriteria} for a similar argument). Similarly if $i,j>c_k$. On the other hand, if $i\leq c_k<j$ then we have $\sy_{\ga}^{-1}\su_{\la}\beta=e_{\la[k,i+c_k^*]}-e_{\la[k,j-c_k]}$, and since $i+c_k^*>j-c_k$ (because $i+\la_k>j$) we have $\su_{\la}^{-1}\sy_{\ga}^{-1}\beta<0$. Hence the result.
\end{proof}

\begin{thm}\label{thm:lengthconj}
Let $\gamma\in P^{(\la)}$ with $\gamma+Q_{\la}=\sum_{k=1}^{r(\la)}a_k\tilde{e}_k$ where $a_k=\la_kb_k+c_k$ with $0\leq c_k<\la_k$, and let $c_k^*=\la_k-c_k$. Then 
$$
\ell(\su_{\la}^{-1}\tau_{\gamma}\su_{\la})=\sum_{1\leq k<l\leq r(\la)}z(\ga,k,l) \quad\text{where}\quad z(\ga,k,l)=\begin{cases}
|\la_la_k-\la_ka_l|&\text{if $b_k\neq b_l$}\\
c_l^*|c_k-c_l|+c_l|c_k^*-c_l^*|&\text{if $b_k=b_l$.}\\
\end{cases}
$$
\end{thm}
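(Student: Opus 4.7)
The plan is to compute $\ell(\su_{\la}^{-1}\tau_{\ga}\su_{\la})$ directly from the classical length formula for translations. Writing $\su_{\la}^{-1}\tau_{\ga}\su_{\la}=t_{\su_{\la}^{-1}\ga}(\su_{\la}^{-1}\sy_{\ga}\su_{\la})$ and invoking the formula $\ell(t_{\mu}u)=\sum_{\alpha\in\Phi^+}|\langle\mu,\alpha\rangle-\chi^-(u^{-1}\alpha)|$ already recorded in the proof of Lemma~\ref{lem:characterise}, then changing variables $\beta=\su_{\la}\alpha$, I obtain
$$\ell(\su_{\la}^{-1}\tau_{\ga}\su_{\la})=\sum_{\beta\in\su_{\la}\Phi^+}\big|\langle\ga,\beta\rangle-\chi^-(\su_{\la}^{-1}\sy_{\ga}^{-1}\beta)\big|.$$

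The first reduction is to discard the roots $\beta\in\su_{\la}\Phi^+\cap\Phi_{\la}$: Lemma~\ref{lem:samerow} shows the summand vanishes for such $\beta$. Hence the sum is supported on $\su_{\la}\Phi^+\setminus\Phi_{\la}=\bigsqcup_{1\le k<l\le r(\la)}B_{k,l}$, and it suffices to show that each block $B_{k,l}=B_{k,l}^+\cup B_{k,l}^-$ contributes exactly $z(\ga,k,l)$.

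Inside $B_{k,l}$, for $\beta^\pm(k,l;i,j)$ the inner product $\langle\ga,\beta^\pm\rangle$ is read off from Proposition~\ref{prop:bijectionP} according to whether $i\lesseqgtr c_k$ and $j\lesseqgtr c_l$, and takes a value in $\{b_k-b_l-1,b_k-b_l,b_k-b_l+1\}$ (negated on $B_{k,l}^-$). The indicator $\chi^-(\su_{\la}^{-1}\sy_{\ga}^{-1}\beta)$ in each of these four regions is read off from Lemma~\ref{lem:conditions3}. The computation then reduces to a four-region case check, split further according to whether $b_k>b_l$, $b_k=b_l$, or $b_k<b_l$. In the unequal case the sign of $\langle\ga,\beta\rangle-\chi^-(\cdots)$ is constant on each region, so the sum collapses to a linear combination of rectangle cardinalities; using the identity $\la_la_k-\la_ka_l=\la_l\la_k(b_k-b_l)+\la_lc_k-\la_kc_l$ one recognises the total as $|\la_la_k-\la_ka_l|$. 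In the equal case $b_k=b_l$, the summand is $0$ or $1$ in each region and a direct enumeration (paralleling the one underlying Proposition~\ref{prop:classifyLandR}) yields $c_l^*|c_k-c_l|+c_l|c_k^*-c_l^*|$.

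The main obstacle will be the bookkeeping in the $b_k=b_l$ case: one cannot ignore the $\chi^-$-corrections, and one must separately handle the sub-cases $c_k\gtreqless c_l$ (equivalently $c_k^*\lesseqgtr c_l^*$) in each of the four regions $\{i\le c_k,i>c_k\}\times\{j\le c_l,j>c_l\}$ in order to see the cancellations that produce the closed form. Once this case is in hand, the $b_k\neq b_l$ case is a more mechanical enumeration, and summing over $1\le k<l\le r(\la)$ yields the stated formula.
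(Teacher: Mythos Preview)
Your outline matches the paper's proof: same length formula, same use of Lemma~\ref{lem:samerow} to discard $\Phi_{\la}$, same decomposition $\su_{\la}\Phi^+\setminus\Phi_{\la}=\bigsqcup B_{k,l}$, and the same case split on $b_k-b_l$. One refinement the paper makes in the case $b_k\ne b_l$ is worth knowing: once the sign of $h(\ga,\beta)=\langle\ga,\beta\rangle-\chi^-(\su_{\la}^{-1}\sy_{\ga}^{-1}\beta)$ is constant on $B_{k,l}^{+}$ and on $B_{k,l}^{-}$, the block sum is $\pm\langle\ga,2\rho_{k,l}\rangle\mp S$ with $S=\sum_{B_{k,l}^+}\chi^-(\su_{\la}^{-1}\sy_{\ga}^{-1}\beta)-\sum_{B_{k,l}^-}\chi^-(\su_{\la}^{-1}\sy_{\ga}^{-1}\beta)$, and the paper shows $S=0$ by rewriting it as $|\Phi(\sy_{\ga}\su_{\la})\cap\Phi_{k,l}|-|\Phi(\su_{\la})\cap\Phi_{k,l}|$ and using $\Phi(\sy_{\ga}\su_{\la})=\Phi(\sy_{\ga})\sqcup\sy_{\ga}\Phi(\su_{\la})$ together with $\sy_{\ga}^{-1}\Phi_{k,l}=\Phi_{k,l}$. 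This avoids the region-by-region enumeration you propose; your direct count would also work but is longer.

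There is a genuine pitfall in your parenthetical ``$c_k\gtreqless c_l$ (equivalently $c_k^*\lesseqgtr c_l^*$)''. Since $c_m^*=\la_m-c_m$ and $k<l$ gives $\la_k\ge\la_l$, the implications $c_k>c_l\Rightarrow c_k^*\le c_l^*$ and its reverse are \emph{false} in general; both $c_k>c_l$ and $c_k^*>c_l^*$ can hold simultaneously. The paper's enumeration in the case $b_k=b_l$ accordingly splits into \emph{three} sub-cases ($c_k\le c_l$; $c_k>c_l$ with $c_k^*\le c_l^*$; $c_k>c_l$ with $c_k^*>c_l^*$), and it is precisely the third sub-case that produces the expression $c_l^*|c_k-c_l|+c_l|c_k^*-c_l^*|$ rather than $|\la_la_k-\la_ka_l|$ (cf.~Remark~\ref{rem:simplifications}). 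If you run your enumeration assuming only two sub-cases you will not recover the stated formula.
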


\begin{proof}
For $\beta\in\Phi$ let 
$
h(\ga,\beta)=\langle\gamma,\beta\rangle-\chi^-(\su_{\la}^{-1}\sy_{\gamma}^{-1}\beta),
$
and so since $\su_{\la}^{-1}\tau_{\ga}\su_{\la}=t_{\su_{\la}^{-1}\ga}\su_{\la}^{-1}\sy_{\ga}\su_{\la}$ we have
$$
\ell(\su_{\la}^{-1}\tau_{\gamma}\su_{\la})=\sum_{\beta\in\su_{\la}\Phi^+}|h(\ga,\beta)|.
$$
By Lemma~\ref{lem:samerow} we have $h(\ga,\beta)=0$ if $\beta\in\Phi_{\la}$, so we can omit these roots from the analysis. Then $\su_{\la}\Phi^+\backslash \Phi_{\la}$ is a disjoint union of the sets $B_{k,l}$ with $1\leq k<l\leq r(\la)$, and hence
$$
\ell(\su_{\la}^{-1}\tau_{\ga}\su_{\la})=\sum_{1\leq k<l\leq r(\la)}\sum_{\beta\in B_{k,l}}|h(\ga,\beta)|.
$$ 
Thus to prove the theorem it is sufficient to prove that
\begin{align}\label{eq:firstclaim}
\sum_{\beta\in B_{k,l}}|h(\ga,\beta)|=z(\ga,k,l).
\end{align}

Let $\beta^+=\beta^+(k,l;i,j)\in B_{k,l}^+$, where $1\leq i\leq j\leq \la_l$, and let $\beta^-=\beta^-(k,l;i,j)\in B_{k,l}^-$, where $1\leq j\leq \la_l$ and $j<i\leq \la_k$. Since
\begin{align}\label{eq:pluscase}
\langle\ga,\beta^+\rangle=\begin{cases}
b_k-b_l&\text{if $i\leq c_k$, $j\leq c_l$ or $i>c_k$, $j>c_l$}\\
b_k-b_l+1&\text{if $i\leq c_k$, $j>c_l$}\\
b_k-b_l-1&\text{if $i>c_k$, $j\leq c_l$}.
\end{cases}
\end{align}
Thus it follows that if $b_k-b_l\leq -1$ then $h(\ga,\beta^+)\leq 0$, and if $b_k-b_l\geq 1$ then $h(\ga,\beta^+)\geq 0$ (note that if $b_k-b_l=1$ with $i>c_k$ and $j\leq c_l$ then by Lemma~\ref{lem:conditions3} we have $\su_{\la}^{-1}\sy_{\ga}^{-1}\beta^+>0$ and so $h(\ga,\beta^+)=-\chi^-(\su_{\la}^{-1}\sy_{\ga}^{-1}\beta^+)=0$).

Similarly, since 
\begin{align}\label{eq:negcase}
\langle\ga,\beta^-\rangle=\begin{cases}
b_l-b_k&\text{if $i\leq c_k$, $j\leq c_l$ or $i>c_k$, $j>c_l$}\\
b_l-b_k+1&\text{if $i>c_k$, $j\leq c_l$}\\
b_l-b_k-1&\text{if $i\leq c_k$, $j> c_l$}
\end{cases}
\end{align}
it follows that if $b_k-b_l\geq 1$ then $h(\ga,\beta^-)\leq 0$, and if $b_k-b_l\leq -1$ then $h(\ga,\beta^-)\geq 0$ (note that if $b_k-b_l=-1$ with $i\leq c_k$ and $j>c_l$ then by Lemma~\ref{lem:conditions3} we have $h(\ga,\beta^-)=0$).

To prove~(\ref{eq:firstclaim}) we consider the following cases. 
\smallskip

\noindent \textit{Case 1: Suppose that $b_k-b_l\geq 1$.} The above observations give
\begin{align*}
\sum_{\beta\in B_{k,l}}|h(\ga,\beta)|&=\sum_{\beta\in B_{k,l}^+}h(\ga,\beta)-\sum_{\beta\in B_{k,l}^-}h(\ga,\beta)\\
&=\sum_{\beta\in B_{k,l}^+}\langle\ga,\beta\rangle-\sum_{\beta\in B_{k,l}^-}\langle\ga,\beta\rangle-\sum_{\beta\in B_{k,l}^+}\chi^-(\su_{\la}^{-1}\sy_{\ga}^{-1}\beta)+\sum_{\beta\in B_{k,l}^-}\chi^-(\su_{\la}^{-1}\sy_{\ga}^{-1}\beta). 
\end{align*}
The first two terms combine to give $\langle\ga,2\rho_{k,l}\rangle$, where
$$
\rho_{k,l}=\frac{1}{2}\sum_{\beta\in B_{k,l}^+\cup(-B_{k,l}^-)}\beta=\frac{1}{2}\sum_{\beta\in\Phi_{k,l}^+}\beta,
$$
and so $\sum_{\beta\in B_{k,l}}|h(\ga,\beta)|=\langle\ga,2\rho_{k,l}\rangle-S$ where $S=\sum_{\beta\in B_{k,l}^+}\chi^-(\su_{\la}^{-1}\sy_{\ga}^{-1}\beta)-\sum_{\beta\in B_{k,l}^-}\chi^-(\su_{\la}^{-1}\sy_{\ga}^{-1}\beta)$. We claim that $S=0$. To see this, note that since $B_{k,l}^+=\Phi_{k,l}^+\backslash\Phi(\su_{\la})$ and $-B_{k,l}^-=\Phi(\su_{\la})\cap \Phi_{k,l}$ we have
\begin{align*}
S
&=\sum_{\beta\in \Phi_{k,l}^+\backslash \Phi(\su_{\la})}\chi^-(\su_{\la}^{-1}\sy_{\ga}^{-1}\beta)-\sum_{\beta\in \Phi(\su_{\la})\cap\Phi_{k,l}}\chi^-(-\su_{\la}^{-1}\sy_{\ga}^{-1}\beta)\\
&=\sum_{\beta\in\Phi_{k,l}^+}\chi^-(\su_{\la}^{-1}\sy_{\ga}^{-1}\beta)-\sum_{\beta\in\Phi(\su_{\la})\cap\Phi_{k,l}}\big[\chi^-(\su_{\la}^{-1}\sy_{\ga}^{-1}\beta)+\chi^-(-\su_{\la}^{-1}\sy_{\ga}^{-1}\beta)\big].
\end{align*}
Each term in the second sum is $1$, and hence 
$
S=|\Phi(\sy_{\ga}\su_{\la})\cap \Phi_{k,l}|-|\Phi(\su_{\la})\cap \Phi_{k,l}|. 
$
Since $\ell(\sy_{\ga}\su_{\la})=\ell(\su_{\la})+\ell(\sy_{\ga})$ we have $\Phi(\sy_{\ga}\su_{\la})=\Phi(\sy_{\ga})\sqcup\sy_{\ga}\Phi(\su_{\la})$ and so $|\Phi(\sy_{\ga}\su_{\la})\cap \Phi_{k,l}|=|\Phi(\sy_{\ga})\cap \Phi_{k,l}|+|\Phi(\su_{\la})\cap \sy_{\ga}^{-1}\Phi_{k,l}|$. But $\Phi(\sy_{\ga})\cap \Phi_{k,l}=\emptyset$ (as $\Phi(\sy_{\ga})\subseteq\Phi_{\la}^+$) and $\sy_{\ga}^{-1}\Phi_{k,l}=\Phi_{k,l}$, and so $S=0$. Thus
$$
\sum_{\beta\in B_{k,l}}|h(\ga,\beta)|=\langle\ga,2\rho_{k,l}\rangle,
$$
but $2\rho_{k,l}=\la_l\sum_{i=1}^{\la_k}e_{\la[k,i]}-\la_k\sum_{j=1}^{\la_l}e_{\la[l,j]}$ and so by Proposition~\ref{prop:bijectionP} $\langle\ga,2\rho_{k,l}\rangle=\la_la_k-\la_ka_l$, and hence~(\ref{eq:firstclaim}) holds. 
\smallskip

\noindent\textit{Case 2: Suppose that $b_k-b_l\leq -1$.} Then the above observations, along with the calculations in Case 1, give
\begin{align*}
\sum_{\beta\in B_{k,l}}|h(\ga,\beta)|&=-\sum_{\beta\in B_{k,l}^+}h(\ga,\beta)+\sum_{\beta\in B_{k,l}^-}h(\ga,\beta)=-\langle\ga,2\rho_{k,l}\rangle=\la_ka_l-\la_la_k
\end{align*}
and so again~(\ref{eq:firstclaim}) holds. 
\smallskip

\noindent\textit{Case 3: Suppose that $b_k-b_l=0$.} We first note that in this case 
\begin{align}\label{eq:zcases}
z(\ga,k,l)=\begin{cases}
\la_kc_l-\la_lc_k&\text{if $c_k\leq c_l$ (and hence $c_k^*\geq c_l^*$)}\\
\la_lc_k-\la_kc_l&\text{if $c_k>c_l$ and $c_k^*\leq c_l^*$}\\
c_l^*(c_k-c_l)+c_l(c_k^*-c_l^*)&\text{if $c_k>c_l$ and $c_k^*>c_l^*$}.
\end{cases}
\end{align}
From~(\ref{eq:pluscase}) and~(\ref{eq:negcase}) we have the following (noting that $1-\chi^-(\cdot)=\chi^+(\cdot)$ where $\chi^+(\cdot)$ is the characteristic function of $\Phi^+$). 
\begin{compactenum}
\item[\raisebox{0.35ex}{\tiny$\bullet$}] If $i\leq c_k$ and $j\leq c_l$ then $|h(\ga,\beta^+)|=\chi^-(\su_{\la}^{-1}\sy_{\ga}^{-1}\beta^+)$ and $|h(\ga,\beta^-)|=\chi^-(\su_{\la}^{-1}\sy_{\ga}^{-1}\beta^-)$.
\item[\raisebox{0.35ex}{\tiny$\bullet$}] If $i>c_k$ and $j>c_l$ then $|h(\ga,\beta^+)|=\chi^-(\su_{\la}^{-1}\sy_{\ga}^{-1}\beta^+)$ and $|h(\ga,\beta^-)|=\chi^-(\su_{\la}^{-1}\sy_{\ga}^{-1}\beta^-)$.
\item[\raisebox{0.35ex}{\tiny$\bullet$}] If $i\leq c_k$, $j>c_l$ then $|h(\ga,\beta^+)|=\chi^+(\su_{\la}^{-1}\sy_{\ga}^{-1}\beta^+)$ and $|h(\ga,\beta^-)|=1$ (see Lemma~\ref{lem:conditions3}).
\item[\raisebox{0.35ex}{\tiny$\bullet$}] If $i>c_k$ and $j\leq c_l$ then $|h(\ga,\beta^+)|=1$ (see Lemma~\ref{lem:conditions3}) and $|h(\ga,\beta^-)|=\chi^+(\su_{\la}^{-1}\sy_{\ga}^{-1}\beta^-)$. 
\end{compactenum}
Thus, by Lemma~\ref{lem:conditions3}, we have
$
\sum_{\beta\in B_{k,l}}|h(\ga,\beta)|=|X_1|+\cdots+|X_8|, 
$
where
\begin{align*}
X_1&=\{(i,j)\mid 1\leq i\leq j\leq \la_l,\,i\leq c_k,\,j\leq c_l,\,i+c_k^*>j+c_l^*\}\\
X_2&=\{(i,j)\mid 1\leq i\leq j\leq \la_l,\,i> c_k,\,j> c_l,\,i-c_k>j-c_l\}\\
X_3&=\{(i,j)\mid 1\leq i\leq j\leq \la_l,\,i\leq c_k,\,j> c_l,\,i+c_k^*\leq j-c_l\}\\
X_4&=\{(i,j)\mid 1\leq i\leq j\leq \la_l,\,i> c_k,\,j\leq c_l\}\\
X_5&=\{(i,j)\mid 1\leq j\leq \la_l,\,j<i\leq \la_k,\,i\leq c_k,\,j\leq c_l,\,i+c_k^*\leq j+c_l^*\}\\
X_6&=\{(i,j)\mid 1\leq j\leq \la_l,\,j<i\leq \la_k,\,i> c_k,\,j> c_l,\,i-c_k\leq  j-c_l\}\\
X_7&=\{(i,j)\mid 1\leq j\leq \la_l,\,j<i\leq \la_k,\,i\leq c_k,\,j> c_l\}\\
X_8&=\{(i,j)\mid 1\leq j\leq \la_l,\,j<i\leq \la_k,\,i> c_k,\,j\leq c_l,\,i-c_k> j+c_l^*\}.
\end{align*}
We compute the cardinalities of these sets (to do so it helps to draw a sketch of the corresponding regions in the $(i,j)$-plane). The analysis divides into the following sub-cases. 
\begin{compactenum}[$(a)$]
\item Suppose that $c_k\leq c_l$ (then necessarily $c_k^*=\la_k-c_k\geq \la_l-c_l=c_l^*$). Then we compute
\begin{align*}
|X_1|&=\begin{cases}
c_k(c_k^*-c_l^*)-\frac{1}{2}(\la_k-\la_l)(\la_k-\la_l-1)&\text{if $c_l>c_k^*-c_l^*$}\\
c_k(c_l+1)-\frac{1}{2}c_k(c_k+1)&\text{if $c_l\leq c_k^*-c_l^*$}
\end{cases}\\
|X_8|&=
\begin{cases}
\frac{1}{2}(c_k^*-c_l^*-1)(c_k^*-c_l^*)&\text{if $c_l>c_k^*-c_l^*$}\\
c_l(c_k^*-c_l^*)-\frac{1}{2}c_l(c_l+1)&\text{if $c_l\leq c_k^*-c_l^*$,}
\end{cases}
\end{align*}
and $|X_2|=(c_l-c_k)(\la_l-c_l)$, $|X_4|=\frac{1}{2}(c_l-c_k)(c_l-c_k+1)$, and $|X_3|=|X_5|=|X_6|=|X_7|=0$. Summing gives $\sum_{\beta\in B_{k,l}}|h(\ga,\beta)|=\la_kc_l-\la_lc_k$ as required (see~(\ref{eq:zcases})).
\item Suppose that $c_k>c_l$ and $c_k^*\leq c_l^*$. Then $|X_1|=|X_2|=|X_4|=|X_8|=0$, $|X_3|=\frac{1}{2}(c_l^*-c_k^*)(c_l^*-c_k^*+1)$, $|X_5|=c_l(c_l^*-c_k^*)$, and
\begin{align*}
|X_6|&=\begin{cases}
(\la_l-c_k)(c_k-c_l)+\frac{1}{2}(c_k-c_l)(c_k-c_l+1)-\frac{1}{2}(c_l^*-c_k^*)(c_l^*-c_k^*+1)&\text{if $c_k\leq\la_l$}\\
\frac{1}{2}c_l^*(c_l^*+1)-\frac{1}{2}(c_l^*-c_k^*)(c_l^*-c_k^*+1)&\text{if $c_k>\la_l$}
\end{cases}\\
|X_7|&=\begin{cases}
\frac{1}{2}(c_k-c_l-1)(c_k-c_l)&\text{if $c_k\leq \la_l$}\\
\frac{1}{2}(c_k-c_l-1)(c_k-c_l)-\frac{1}{2}(c_k-\la_l-1)(c_k-\la_l)&\text{if $c_k>\la_l$.}
\end{cases}
\end{align*}
Thus $\sum_{\beta\in B_{k,l}}|h(\ga,\beta)|=\la_lc_k-\la_kc_l$ as required.
\item Suppose that $c_k>c_l$ and $c_k^*>c_l^*$. Then $|X_2|=|X_3|=|X_4|=|X_5|=0$, and 
\begin{align*}
|X_1|&=\begin{cases}
\frac{1}{2}c_l(c_l+1)&\text{if $c_l\leq c_k^*-c_l^*$}\\
(c_l-(c_k^*-c_l^*))(c_k^*-c_l^*)+\frac{1}{2}(c_k^*-c_l^*)(c_k^*-c_l^*+1)&\text{if $c_l>c_k^*-c_l^*$}
\end{cases}\\
|X_6|&=\begin{cases}
(\la_l-c_k)(c_k-c_l)+\frac{1}{2}(c_k-c_l)(c_k-c_l+1)&\text{if $c_k\leq\la_l$}\\
\frac{1}{2}c_l^*(c_l^*+1)&\text{if $c_k>\la_l$}
\end{cases}\\
|X_7|&=\begin{cases}
\frac{1}{2}(c_k-c_l-1)(c_k-c_l)&\text{if $c_k\leq \la_l$}\\
\frac{1}{2}(c_k-c_l-1)(c_k-c_l)-\frac{1}{2}(c_k-\la_l-1)(c_k-\la_l)&\text{if $c_k>\la_l$}
\end{cases}\\
|X_8|&=\begin{cases}
\frac{1}{2}c_l(c_l-1)+c_l(c_k^*-\la_l)&\text{if $c_l\leq c_k^*-c_l^*$}\\
\frac{1}{2}(c_k^*-c_l^*-1)(c_k^*-c_l^*)&\text{if $c_l>c_k^*-c_l^*$.}
\end{cases}
\end{align*}
Thus $|X_1|+|X_8|=c_l(c_k^*-c_l^*)$ (in both cases) and $|X_6|+|X_7|=c_l^*(c_k-c_l)$ (in both cases), and hence $\sum_{\beta\in B_{k,l}}|h(\ga,\beta)|=c_l^*(c_k-c_l)+c_l(c_k^*-c_l^*)$ as required, completing the proof.\qedhere
\end{compactenum}
\end{proof}

\begin{remark}\label{rem:simplifications}
Note that by~(\ref{eq:zcases}) we have $z(\ga,k,l)=|\la_la_k-\la_ka_l|$ in all cases except when $b_k=b_l$ with $c_k>c_l$ and $c_k^*>c_l^*$. Moreover, if $\la=(d^r)$ (that is one block of row length $d$ with $r$ rows) then the case $c_k>c_l$ and $c_k^*>c_l^*$ cannot occur, and thus $z(\ga,k,l)=d|a_k-a_l|$ in all cases. Thus in the $1$-block case we have the simplified formula
$$
\ell(\su_{\la}^{-1}\tau_{\ga}\su_{\la})=d\sum_{1\leq k<l\leq r}|a_k-a_l|.
$$
\end{remark}

\begin{example}
Consider the $\la=[4,2]$ case with $\ga=2\tilde{e}_1+\tilde{e}_2$. The $b_1=b_2=0$, $c_1=2$, $c_2=1$, $c_1^*=2$, and $c_2^*=1$, and so $c_1>c_2$ and $c_1^*>c_2^*$. The formula from Theorem~\ref{thm:lengthconj} gives $\ell(\su_{\la}^{-1}\tau_{\ga}\su_{\la})=c_2^*(c_1-c_2)+c_2(c_1^*-c_2^*)=2$. Indeed a direct calculation gives $\su_{\la}^{-1}\tau_{\ga}\su_{\la}=s_1s_4\sigma^3$. 
\end{example}

\subsection{Monotonicity of $\ell(\sm_{\ga})$ with respect to $\peq_{\la}$}\label{sec:monotone}

The following theorem will be used in the important Proposition~\ref{prop:path}. The broken symmetry in the definition of $z(\ga,k,l)$ (that is, the existence of two cases in the definition) complicates the proof.

\begin{thm}\label{thm:domlength}
If $\gamma,\gamma'\in P^{(\la)}_+$ with $\gamma+Q_{\la}\peq_{\la}\gamma'+Q_{\la}$ then $\ell(\sm_{\gamma})\leq\ell(\sm_{\gamma'})$ with equality if and only if $\ga=\ga'$. 
\end{thm}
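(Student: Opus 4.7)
The plan is to invoke Theorem~\ref{thm:doublecoset}, which gives $\ell(\sm_{\gamma}) = \ell(\su_\la^{-1}\tau_\gamma\su_\la) + \ell(\sw_{\la'})$ and similarly for $\gamma'$. Setting $F(\ga) := \ell(\su_\la^{-1}\tau_\ga\su_\la)$ the statement reduces to the strict monotonicity of $F$ on $P^{(\la)}_+$ with respect to $\preceq_\la$. Using Theorem~\ref{thm:lengthconj} we have $F(\ga) = \sum_{1 \leq k<l \leq r(\la)} z(\ga,k,l)$, so the goal is to prove $\sum_{k<l} z(\ga,k,l) < \sum_{k<l} z(\ga',k,l)$ whenever $\ga \prec_\la \ga'$ with both dominant.

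Next I would reduce to a single ``cover'' step. By Lemma~\ref{lem:membershipQ^la}, $\ga' - \ga + Q_\la \in Q^\la_+$, which is the $\mathbb{Z}_{\geq 0}$-span of the simple positive roots of $G_\la$ (the vectors $\tilde e_i - \tilde e_{i+1}$ with $\la_i = \la_{i+1}$, noting that equal row lengths in a partition must be contiguous). Writing this difference as a sum of $N \geq 1$ simple positive roots and arguing by induction on $N$, it suffices to treat the case where $\ga'$ is obtained from $\ga$ by the move $(a_i, a_{i+1}) \mapsto (a_i+1, a_{i+1}-1)$ with $\la_i = \la_{i+1} =: d$. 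Since intermediate weights $\ga_s$ in such a chain need not be dominant, I would prove the following inequality for \emph{any} $\ga \in P^{(\la)}$ and any such simple positive root $\alpha$: writing $\ga'' \in P^{(\la)}$ for the element with the above coordinate change,
\[
F(\ga'') - F(\ga) \;=\; d\bigl(|a_i - a_{i+1} + 2| - |a_i - a_{i+1}|\bigr) + \sum_{m < i} \delta_m + \sum_{m > i+1} \delta_m,
\]
where $\delta_m$ collects the contributions from the pairs involving the row of index $m$ with rows $i$ and $i+1$. I would show $\delta_m \geq 0$ for every $m$ by a case analysis on whether $b_m$ coincides with $b_i$ or $b_{i+1}$ (using both cases of the formula for $z$ from Theorem~\ref{thm:lengthconj}); the key underlying fact is that, for fixed $\la_k, \la_l$ and fixed $a_k$, the function $a_l \mapsto z(\cdot,k,l)$ is piecewise-linear convex, so the opposite shifts $a_i \to a_i+1$ and $a_{i+1} \to a_{i+1}-1$ combined give a non-negative net contribution from each such $m$.

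With the lemma $\delta_m \geq 0$ in hand, the proof of strict monotonicity follows by ordering the simple-root decomposition of $\ga' - \ga$ so that the very first increment $\tilde e_i - \tilde e_{i+1}$ acts on a pair with $a_i \geq a_{i+1}$, which is possible since $\ga$ itself is dominant and at least one simple root must occur at an index where dominance holds at the source. For that first step, the $(i,i+1)$ contribution is $+2d > 0$, giving strict inequality; all subsequent steps contribute $\geq 0$ by the same lemma (applied without dominance). The main obstacle I expect is the case analysis for $\delta_m \geq 0$: the formulas for $z$ branch on $b_k = b_l$ versus $b_k \neq b_l$, and the moves $a_i \to a_i+1$, $a_{i+1} \to a_{i+1}-1$ can cause $b_i$ to increase by $1$ (when $c_i = d-1$) or $b_{i+1}$ to decrease by $1$ (when $c_{i+1} = 0$), so each fixed $m$ splits into several sub-cases according to the configuration of $(b_m, c_m)$ versus $(b_i, c_i)$ and $(b_{i+1}, c_{i+1})$. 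Verifying non-negativity in each sub-case is routine but tedious, and I anticipate this calculation filling most of the appendix proof.
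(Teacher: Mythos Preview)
Your overall shape is right—reduce via Theorem~\ref{thm:doublecoset} to strict monotonicity of $F(\gamma)=\ell(\su_\la^{-1}\tau_\gamma\su_\la)$ and use the additive formula $F(\gamma)=\sum_{k<l}z(\gamma,k,l)$—but the reduction to \emph{simple} roots introduces a genuine gap. Your key lemma, that $\delta_m\ge 0$ for every $\gamma\in P^{(\la)}$, is false. Already inside a single block (row length $d$, so $z(\gamma,k,l)=d\,|a_k-a_l|$) your convexity argument gives, for $m\ne i,i+1$,
\[
\delta_m=d\bigl(\Delta f(a_i)-\Delta f(a_{i+1}-1)\bigr),\qquad f(a)=|a_m-a|,
\]
and convexity of $f$ only yields $\delta_m\ge 0$ when $a_i\ge a_{i+1}-1$. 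If $a_i<a_m<a_{i+1}$ (possible once you leave the dominant chamber) then $\delta_m=-2d$. Concretely, with three rows of length $d$ and $(a_1,a_2,a_3)=(2,0,4)$, the move $(a_2,a_3)\mapsto(1,3)$ gives $\delta_1=-2d$ and $F$ drops by $4d$. So neither the per-step nonnegativity nor your ``convexity $\Rightarrow$ opposite shifts cancel'' heuristic survives without dominance.

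Nor can you rescue the argument by ordering the simple roots cleverly: there is in general no sequence of simple-root increments from a dominant $\gamma$ to a dominant $\gamma'$ in which the acting pair always satisfies $a_i\ge a_{i+1}$. For instance, in a single block with $(a_1,a_2,a_3)=(2,2,2)$ and target $(3,3,0)$ one has $\gamma'-\gamma=\alpha_1+2\alpha_2$, and every ordering of these three simple increments hits a step where the relevant inequality fails. This is exactly why the paper does \emph{not} reduce to simple roots. It first restricts the difference to a single block, and then invokes the description of covers in the dominance order (Stembridge) to reduce to a single increment $\tilde e_i-\tilde e_j$ with $i<j$ in that block but $j$ not necessarily $i+1$, so that \emph{both} $\gamma$ and $\gamma'$ remain dominant. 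With dominance in hand, the intra-block contribution is $2\la_0(j-i)>0$, and the cross-block case analysis (the paper's (\ref{eq:multicase1}) and (\ref{eq:multicase2})) explicitly eliminates the would-be negative cases by contradicting $a_i\ge a_j$ (equivalently $c_i\ge c_j$ when $b_i=b_j$). Your anticipated case split is correct in spirit, but it must be carried out for a general root $\tilde e_i-\tilde e_j$ under the standing dominance hypothesis, not for a simple root without it.
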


\begin{proof}
By Theorem~\ref{thm:doublecoset} it suffices to prove that 
$
\ell(\su_{\la}^{-1}\tau_{\ga}\su_{\la})<\ell(\su_{\la}^{-1}\tau_{\ga'}\su_{\la})
$ with equality if and only if $\ga=\ga'$. Let $M(\ga)=\ell(\su_{\la}^{-1}\tau_{\ga}\su_{\la})$ and let $B_1,\ldots,B_t\subseteq\{1,2,\ldots,r(\la)\}$ be the blocks of $\bt_r(\la)$ (for example, if $\la=(6,6,4,4,4,2,1,1)$ as in Example~\ref{ex:running2} then $B_1=\{1,2\}$, $B_2=\{3,4,5\}$, $B_3=\{6\}$, and $B_7=\{7,8\}$). By Theorem~\ref{thm:lengthconj} we have
\begin{align}\label{eq:blocklength}
M(\ga)=\sum_{p=1}^t\sum_{k,l\in B_p,\,k<l}z(\ga,k,l)+\sum_{1\leq p<q\leq t}\sum_{k\in B_p,\,l\in B_q}z(\ga,k,l),
\end{align}
and similarly for $M(\ga')$. Write $\ga'+Q_{\la}=\ga+\epsilon+Q_{\la}$ with $\epsilon\in Q^{\la}_+$. From Lemma~\ref{lem:membershipQ^la} we have $\epsilon=\sum_{i=1}^{r(\la)}d_i\tilde{e}_i$ where $\sum_{i\in B_p}d_i=0$ for all $1\leq p\leq t$ and $d_1+\cdots+d_i\geq 0$ for all $1\leq i\leq r(\la)$. Thus $\epsilon=\epsilon_1+\cdots+\epsilon_t$ where $\epsilon_p=\sum_{i\in B_p}d_i\tilde{e}_i\in Q^{\la}_+$, and for any $1\leq p\leq t$ we have $\ga+\epsilon_p+Q_{\la}\in (P/Q_{\la})_+$ with $\ga+Q_{\la}\peq_{\la}\ga+\epsilon_p+Q_{\la}\peq_{\la}\ga'+Q_{\la}$. Thus it suffices to prove the result in the case $\ga'+Q_{\la}=\ga+\epsilon_p+Q_{\la}$ for some $1\leq p\leq t$ (that is, $\ga+Q_{\la}$ and $\ga'+Q_{\la}$ only differ in one block), and then by well known properties of the dominance order on the symmetric group (see, for example, \cite[Corollary~2.7]{Ste:98}) it suffices to prove the result when $\epsilon_p=\tilde{e}_i-\tilde{e}_j$ for some $i,j\in B_p$ with $i<j$. 

Thus we henceforth assume that $\ga+Q_{\la},\ga'+Q_{\la}\in (P/Q_{\la})_+$ with $\ga'+Q_{\la}=\ga+\tilde{e}_i-\tilde{e}_j+Q_{\la}$ for some $i,j\in B_p$ with $i<j$. Write $\ga+Q_{\la}=\sum_{m=1}^{r(\la)}a_m\tilde{e}_m$ and $\ga'+Q_{\la}=\sum_{m=1}^{r(\la)}a_m'\tilde{e}_m$. Thus $a_m'=a_m$ unless $m\in\{i,j\}$, and $a_i'=a_i+1$ and $a_j'=a_j-1$. Note that if $b_k> b_l$ then we have 
\begin{align}\label{eq:aineq}
\la_la_k-\la_ka_l=\la_k\la_l(b_k-b_l)+(\la_lc_k-\la_kc_l)>\la_k\la_l-\la_k\la_l=0,
\end{align}
and similarly if $b_k<b_l$ then $\la_la_k-\la_ka_l<0$.

Let $B_{<p}=B_1\cup\cdots\cup B_{p-1}$ and $B_{>p}=B_{p+1}\cup\cdots\cup B_t$. Since $z(\ga',k,l)=z(\ga,k,l)$ whenever $k,l\notin\{i,j\}$ the formula~(\ref{eq:blocklength}) gives
\begin{align*}
M(\ga')-M(\ga)&=\sum_{k,l\in B_p,\,k<l}\left(z(\ga',k,l)-z(\ga,k,l)\right)\\
&\quad+\sum_{k\in B_{<p}}\left(z(\ga',k,i)-z(\ga,k,i)+z(\ga',k,j)-z(\ga,k,j)\right)\\
&\quad+\sum_{l\in B_{>p}}\left(z(\ga',i,l)-z(\ga,i,l)+z(\ga',j,l)-z(\ga,j,l)\right).
\end{align*}
We must show that $M(\ga')-M(\ga)>0$. We analyse each sum separately. Let $\la_0=\la_k$ for any $k\in B_p$ (so $\la_0$ is the length of the rows in block $B_p$). 

Consider the first sum. For $k,l\in B_p$ with $k<l$ we have (by dominance within the block) $z(\ga,k,l)=\la_0(a_k-a_l)$ and $z(\ga',k,l)=\la_0(a_k'-a_l')$. Then, since $a_k'=a_k$ and $a_l=a_l'$ whenever $k,l\notin\{i,j\}$ and $a_i'=a_i+1$ and $a_j'=a_j-1$ we see that
$$
\sum_{k,l\in B_p,\,k<l}\left(z(\ga',k,l)-z(\ga,k,l)\right)=2\la_0(j-i)>0. 
$$

Now consider the second and third sums in the above formula for $M(\ga')-M(\ga)$. To prove the theorem it is sufficient, given the strict inequality for the first sum, to show that each summand is nonnegative. We have $a_i'=a_i+1$ and $a_j'=a_j-1$, and 
\begin{align*}
b_{i}'&=\begin{cases}
b_{i}&\text{if $c_{i}<\la_{i}-1$}\\
b_{i}+1&\text{if $c_{i}=\la_{i}-1$}
\end{cases}&
c_{i}'&=\begin{cases}
c_{i}+1&\text{if $c_{i}<\la_{i}-1$}\\
0&\text{if $c_{i}=\la_{i}-1$}
\end{cases}\\
b_{j}'&=\begin{cases}
b_{j}&\text{if $c_{j}>0$}\\
b_{j}-1&\text{if $c_{j}=0$}
\end{cases}&
c_{j}'&=\begin{cases}
c_{j}-1&\text{if $c_{j}>0$}\\
\la_{j}-1&\text{if $c_{j}=0$.}
\end{cases}
\end{align*}

We first analyse the second sum. For $k\in B_{<p}$ let $$x_1(k)=z(\ga',k,i)-z(\ga,k,i)\quad\text{and}\quad x_2(k)=z(\ga',k,j)-z(\ga,k,j),$$ and 
set $x(k)=x_1(k)+x_2(k)$. We must show that $x(k)\geq 0$ for all $k\in B_{<p}$.

First consider $x_1(k)$. As before, let $c_m^*=\la_m-c_m$ for $1\leq m\leq r(\la)$. We claim that
\begin{align}\label{eq:multicase1}
x_1(k)&=\begin{cases}
-\la_k&\text{if $b_k>b_i$}\\
-\la_k&\text{if $b_k=b_i=b_i'$ with $c_k>c_i$ and $c_k^*<c_i^*$}\\
\la_k-2(c_i^*-c_i-1+c_k)&\text{if $b_k=b_i=b_i'$ with $c_k>c_i$ and $c_k^*\geq c_i^*$}\\
\la_k&\text{if $b_k=b_i$ with $c_k\leq c_i$}\\
\la_k-2(c_k-c_i)&\text{if $b_k=b_i$ with $b_i'=b_i+1$ and $c_k> c_i$}\\
\la_k&\text{if $b_k<b_i$}. 
\end{cases}
\end{align}
To prove~(\ref{eq:multicase1}) we consider the following cases. 
\begin{compactenum}
\item[\raisebox{0.35ex}{\tiny$\bullet$}] Suppose that $b_k>b_i$. If either $b_k\geq b_i+2$, or $b_k=b_i+1$ with $b_i'=b_i$, then $b_k>b_i,b_i'$ and so by~(\ref{eq:aineq}) 
$
x_1(k)=\left(\la_0a_k-\la_k(a_i+1)\right)-\left(\la_0a_k-\la_ka_i\right)=-\la_k.
$
If $b_k=b_i+1$ with $b_i'=b_i+1$ then $c_i=\la_0-1$, and $c_i'=0$ (so ${c_i'}^*=\la_0$). Since $b_k>b_i$ we have
$z(\ga',k,i)=|{c'_i}^*(c_k-c_i')|+|c_i'(c_k^*-{c_i'}^*)|=\la_0c_k$ and $z(\ga,k,i)=\la_0a_k-\la_ka_i$, and so 
\begin{align*}
x_1(k)&=\la_0c_k-(\la_0a_k-\la_ka_i)=\la_0\la_k(b_i-b_k)+\la_kc_i=-\la_0\la_k+\la_k(\la_0-1)=-\la_k.
\end{align*}
\item[\raisebox{0.35ex}{\tiny$\bullet$}] Suppose that $b_k=b_i$ with $b_i'=b_i$. So $c_i<\la_i-1$ and $c_i'=c_i+1$, so ${c_i'}^*=\la_0-c_i-1=c_i^*-1$. Then we have $z(\ga,k,i)=c_i^*|c_k-c_i|+c_i|c_k^*-c_i^*|$ and 
\begin{align*}
z(\ga',k,i)&=(c_i^*-1)|c_k-c_i-1|+(c_i+1)|c_k^*-c_i^*+1|.
\end{align*} 
We have the following sub-cases. Suppose first that $c_k>c_i$ and $c_k^*<c_i^*$. Then 
\begin{align*}
x_1(k)&=(c_i^*-1)(c_k-c_i-1)+(c_i+1)(c_i^*-c_k^*-1)-c_i^*(c_k-c_l)-c_i(c_i^*-c_k^*)=-\la_k.
\end{align*}
Now suppose that $c_k>c_i$ and $c_k^*\geq c_i^*$. Then
\begin{align*}
x_1(k)&=(c_i^*-1)(c_k-c_i-1)+(c_i+1)(c_k^*-c_i^*+1)-c_i^*(c_k-c_i)-c_i(c_k^*-c_i^*)\\
&=\la_k-2(c_i^*-c_i-1+c_k).
\end{align*}
Finally suppose that $c_k\leq  c_i$. Then $c_k^*=\la_k-c_k\geq \la_i-c_i=c_i^*$, and 
\begin{align*}
x_1(k)&=(c_i^*-1)(c_i+1-c_k)+(c_i+1)(c_k^*-c_i^*+1)-c_i^*(c_i-c_k)-c_i(c_k^*-c_i^*)=\la_k,
\end{align*}
completing the analysis for this case. 
\item[\raisebox{0.35ex}{\tiny$\bullet$}] Suppose that $b_k=b_i$ with $b_i'=b_i+1$. So $c_i=\la_0-1$ and $c_i^*=1$. Since $b_k=b_i<b_i'$ we have (c.f.~(\ref{eq:aineq})) 
$
z(\ga',k,i)=-\la_0a_k+\la_k(a_i+1)=\la_0c_k^*.
$ 
Since $b_k=b_i$ we have 
$$
z(\gamma,k,i)=c_i^*|c_k-c_i|+c_i|c_k^*-c_i^*|=|c_k-c_i|+(\la_0-1)(c_k^*-1),
$$
and so if $c_k> c_i$ we have $x_1(k)=\la_k-2(c_k-c_i)$ while if $c_k\leq c_i$ we have $x_1(k)=\la_k$. 
\item[\raisebox{0.35ex}{\tiny$\bullet$}] Finally suppose that $b_k\leq b_i-1$. Thus $b_k<b_i$ and $b_k<b_i'$, and so 
$$
x_1(k)=-\left(\la_0a_k-\la_k(a_i+1)\right)+\left(\la_0a_k-\la_ka_i\right)=\la_k,
$$
completing the proof of the claim. 
\end{compactenum}

We now consider $x_2(k)$. We claim that
\begin{align}\label{eq:multicase2}
x_2(k)&=\begin{cases}
\la_k&\text{if $b_k>b_j'$}\\
\la_k&\text{if $b_k=b_j'=b_j$ with $c_k>c_j'$ and $c_k^*< c_j'^*$}\\
-\la_k+2(c_j'^*-c_j'-1+c_k)&\text{if $b_k=b_j'=b_j$ with $c_k> c_j'$ and $c_k^*\geq c_j'^*$}\\
-\la_k&\text{if $b_k=b_j'$ with $c_k\leq c_j'$}\\
-\la_k+2(c_k-c_j')&\text{if $b_k=b_j'$ with $b_j=b_j'+1$ and $c_k> c_j'$}\\
%-\la_k&\text{if $b_k=b_j'$ with $b_j=b_j'+1$ and $c_k<c_j'$}\\
-\la_k&\text{if $b_k<b_j'$}.
\end{cases}
\end{align}

The proof of the claim is very similar to the case of $x_1(k)$. The details are as follows. 
\begin{compactenum}
\item[\raisebox{0.35ex}{\tiny$\bullet$}] Suppose that $b_k>b_j'$. If $b_k>b_j$ then by~(\ref{eq:aineq}) we have 
$
x_2(k)=(\la_0a_k-\la_ka_j')-(\la_0a_k-\la_ka_j)=\la_k.
$
If $b_k\leq b_j$ then since $b_j'\in \{b_j,b_j-1\}$ we necessarily have $b_k=b_j$ with $b_j'=b_j-1$. Thus $c_j=0$ and $c_j'=\la_0-1$, and we have
$z(\gamma',k,j)=\la_ja_k-\la_k(a_j-1)=\la_0c_k+\la_k$ and 
$z(\gamma,k,j)=c_j^*|c_k-c_j|+c_j|c_k^*-c_j^*|=\la_0c_k$, and hence $x_2(k)=\la_k$. 
\item[\raisebox{0.35ex}{\tiny$\bullet$}] Suppose that $b_k=b_j'=b_j$. Then $c_j=c_j'+1$ and $c_j^*=c_j'^*-1$, and so 
\begin{align*}
x_2(k)&=c_j'^*|c_k-c_j'|+c_j'|c_k^*-c_j'^*|-(c_j'^*-1)|c_k-c_j'-1|-(c_j'+1)|c_k^*-c_j'^*+1|
\end{align*}
If $c_k>c_j'$ and $c_k^*<c_j'^*$ we compute $x_2(k)=\la_k$. If $c_k>c_j'$ and $c_k^*\geq c_j'^*$ then $x_2(k)=-\la_k+2(c_j'^*-c_j'-1+c_k)$. If $c_k\leq c_j'$ then $c_k^*=\la_k-c_k\geq \la_0-c_j'=c_j'^*$, and it follows that $x_2(k)=-\la_k$. 
\item[\raisebox{0.35ex}{\tiny$\bullet$}] Suppose that $b_k=b_j'$ and $b_j=b_j'+1$. In this case we have $c_j=0$ and $c_j'=\la_0-1$. Since $b_k<b_j$ we compute $z(\gamma,k,j)=\la_ka_j-\la_0a_k=\la_0c_k^*$, giving  
$$
x_2(k)=c_j'^*|c_k-c_j'|+c_j'|c_k^*-c_j'^*|-\la_0c_k^*=|c_k-c_j'|-c_k^*-c_j'.
$$
If $c_k> c_j'$ then $x_2(k)=-\la_k+2(c_k-c_j')$, and if $c_k\leq c_j'$ then $x_2(k)=-\la_k$. 
\item[\raisebox{0.35ex}{\tiny$\bullet$}] Finally, suppose that $b_k<b_j'$. Then $b_k<b_j$ and so $x_2(k)=-\la_k$. 
\end{compactenum}

\medskip

We now use~(\ref{eq:multicase1}) and~(\ref{eq:multicase2}) to show that $x(k)=x_1(k)+x_2(k)\geq 0$. Recall that $a_i\geq a_j$, and so $b_i\geq b_j$, and moreover if $b_i=b_j$ then $c_i\geq c_j$ (by dominance in a block). 
\begin{compactenum}[$(1)$]
\item Suppose that $x_1(k)=-\la_k$. By~(\ref{eq:multicase1}) there are two subcases.
\begin{compactenum}[(a)]
\item Suppose that $b_k>b_i$. Then $b_k>b_j'$ (for otherwise $b_i<b_k\leq b_j'\leq b_j$, contradicting dominance), and hence $x_2(k)=\la_k$ and so $x(k)=0$. 
\item Suppose that $b_k=b_i=b_i'$ with $c_k>c_i$ and $c_k^*<c_i^*$. If $b_k\leq b_j'$ then $b_j\geq b_j'\geq b_k=b_i$ which forces $b_i=b_j$ and $b_j=b_j'=b_k$. Thus $c_i\geq c_j$, and hence $c_i^*\leq c_j^*$, and since $c_j'=c_j-1$ and $c_j'^*=c_j^*+1$ we have 
$c_j'+1\leq c_i<c_k$ and $c_k^*<c_i^*\leq c_j'^*-1$. Thus $c_j'<c_k$ and $c_k^*< c_j'^*$. Thus again $x_2(k)=\la_k$, and so $x(k)=0$. 
\end{compactenum}
\item Suppose that $x_1(k)=\la_k$. Then by~(\ref{eq:multicase2}) we have $x(k)\geq 0$ in all cases (for example, if $x_2(k)=-\la_k+2(c_j'^*-c_j'-1+c_k)$ then $c_k>c_j'$ and hence $x(k)=2(c_k-c_j')+2(c_j'^*-1)\geq 0$). 
\item Suppose that $x_1(k)=\la_k-2(c_i^*-c_i-1+c_k)$. Then $b_k=b_i=b_i'$ with $c_k>c_i$ and $c_k^*\geq c_i^*$. If $b_k>b_j'$ then $x_2(k)=\la_k$ and so
$
x(k)=2(\la_k-c_i^*+c_i+1-c_k)=2(c_k^*-c_i^*+c_i+1)\geq 0.
$ 
If $b_k\leq b_j'$ then as in (1)(b) we have $b_i=b_j$, $b_j=b_j'=b_k$, $c_i\geq c_j$, and $c_i^*\leq c_j^*$. We have $c_j'=c_j-1\leq c_i-1<c_k$. If $c_k^*<c_j'^*$ we have $x_2(k)=\la_k$ and $x(k)\geq 0$ (as above), and if $c_k^*\geq c_j'^*$ then $x_2(k)=-\la_k+2(c_j'^*-c_j'-1+c_k)$ and so 
$
x(k)=2(c_j'^*-c_i^*)+2(c_i-c_j')\geq 0. 
$
\item Suppose that $x_1(k)=\la_k-2(c_k-c_i)$. Then $b_k=b_i$ with $b_i'=b_i+1$ and $c_k> c_i$. If $x_2(k)=\la_k$ then $x(k)=2(c_k^*+c_i)\geq 0$. If $x_2(k)=-\la_k+2(c_k-c_j')$ then we have $b_j=b_j'+1$ and so $c_j'=\la_0-1$, and since $b_i'=b_i+1$ we also have $c_i=\la_0-1$, and hence $x(k)=2(c_i-c_j')=0$. If $x_2(k)=-\la_k+2(c_j'^*-c_j'-1+c_k)$ then in particular $b_k=b_j=b_j'$ and so $b_i=b_j$. Thus $c_i\geq c_j=c_j'+1>c_j'$, and we have $x(k)=2c_j'^*+2(c_i-c_j')\geq 0$. We claim that $x_2(k)=-\la_k$ is impossible. To see this, we either have (a) $b_k=b_j'$ with $c_k\leq c_j'$, in which case necessarily $b_j'=b_j$ (otherwise $b_j=b_j'+1=b_k+1=b_i+1$ contradicting dominance) and hence $b_i=b_j$, but then $c_i<c_k\leq c_j'=c_j-1$ contradicting $c_i\geq c_j$, or (b) $b_k<b_j'$, but then $b_i=b_k<b_j'\leq b_j$ again contradicting dominance. 
 \end{compactenum}
\medskip

Thus we have shown that $x(k)\geq 0$ for all $k\in B_{<p}$. It remains to consider the third sum in the formula for $M(\ga')-M(\ga)$. For $l\in B_{>p}$ let $y_1(l)=z(\ga',i,l)-z(\ga,i,l)$, $y_2(l)=z(\ga',j,l)-z(\ga,j,l)$, and 
set $y(l)=y_1(l)+y_2(l)$. A similar analysis to the above shows that $y(l)\geq 0$ for all $l\in B_{>p}$, completing the proof.
\end{proof}
\end{appendix}

\bibliographystyle{plain}

%\bibliography{/Users/jamesp/Dropbox/Research/bibtex/Parkinson}
%\bibliography{/Users/jwparkinson/Dropbox/Research/bibtex/Parkinson}
%\bibliography{/Users/jparkinson/Dropbox/Research/bibtex/Parkinson}

\end{document}